\theoremstyle{plain}
\newcommand{\F}{\mathbb{F}}
\newcommand{\cA}{\mathcal{A}}
\newcommand{\X}{\mathcal{X}}
\newcommand\be{\begin{equation}}
\newcommand\ee{\end{equation}}
\newcommand\benn{\begin{equation*}}
\newcommand\eenn{\end{equation*}}
\newcommand\bea{\begin{eqnarray}}
\newcommand\eea{\end{eqnarray}}
\newcommand\beann{\begin{eqnarray*}}
\newcommand\eeann{\end{eqnarray*}}
\newcommand{\bk}{\backslash}
\newcommand{\R}{\mathbb{R}}
\newcommand{\Or}{\ensuremath{{\mathcal O}}}
\newcommand{\ga}{\alpha}     
\newcommand{\gd}{{\delta_\Gamma}}     
\newcommand{\vep}{\varepsilon} 
\newcommand{\supp}{\operatorname{supp}}
\renewcommand{\>}{\right\rangle}
\newtheorem{Thm}[equation]{Theorem}
\newtheorem{Cor}[equation]{Corollary}
\newtheorem{Prop}[equation]{Proposition}
\newtheorem{Lem}[equation]{Lemma}
\newtheorem{Def}[equation]{Definition}
\numberwithin{equation}{section}
\newcommand{\q}{\mathbb{Q}}
\newcommand{\e}{\epsilon}
\newcommand{\z}{\mathbb{Z}}
\renewcommand{\q}{\mathbb{Q}}
\newcommand{\n}{\mathbb{N}}
\renewcommand{\c}{\mathbb{C}}
\newcommand{\br}{\mathbb{R}}
\newcommand{\ba}{\backslash}
\newcommand{\G}{\Gamma}
\newcommand{\g}{\gamma}
\renewcommand{\P}{\mathcal P}
\newcommand{\la}{\langle}
\newcommand{\ra}{\rangle}
\newcommand{\SL}{\operatorname{SL}}
\newcommand{\GL}{\operatorname{GL}}
\newcommand{\Spin}{\operatorname{Spin}}
\renewcommand{\O}{\operatorname{O}}
\newcommand{\SO}{\operatorname{SO}}
\newcommand{\SU}{\operatorname{SU}}
\newcommand{\I}{\operatorname{I}}
\newcommand{\Isom}{\operatorname{Isom}}
\newcommand{\PSL}{\op{PSL}}
\newcommand{\bi}{\begin{itemize}}
\newcommand{\ei}{\end{itemize}}
\newcommand{\op}{\operatorname}
\newcommand{\vs}{\vskip 5pt}
\newcommand{\bH}{\mathbb H}
\begin{document}

\title[Apollonian circle packings]{Apollonian circle packings and closed horospheres on hyperbolic $3$-manifolds}

\author{Alex Kontorovich and Hee Oh \\ (with appendix by Hee Oh and Nimish Shah)}

\address{Mathematics department,
Brown University, Providence, RI}
\email{alexk@math.brown.edu}
\thanks{Kontorovich is supported by an NSF Postdoc, grant DMS 0802998.}
\address{Mathematics department, Brown university, Providence, RI
and Korea Institute for Advanced Study, Seoul, Korea}
\email{heeoh@math.brown.edu}
\thanks{Oh is partially supported by NSF grant DMS 0629322.}

\address{Department of Mathematics, The Ohio State University, Columbus, OH}
\email{shah@math.ohio-state.edu}
\begin{abstract} We show that for a given bounded Apollonian circle packing $\mathcal P$,
there exists a constant $c>0$ such that the number of circles 
of curvature at most $T$  is asymptotic to $c\cdot T^\alpha$
as $T\to \infty$. Here $\alpha\approx 1.30568(8)$ is the residual dimension of the packing. 
For $\mathcal P$ integral,
let $\pi^\P(T)$ denote the number of circles with prime curvature less than $T$.
 Similarly let $\pi_2^\P(T)$ be the number of pairs of tangent circles with prime curvatures less than $T$. We obtain the upper bounds $\pi^\P(T)\ll T^\ga/\log T$ and $\pi_2^\P(T)\ll T^\ga/(\log T)^2$, which are sharp up to constant multiple.
The main ingredient of our proof is the effective equidistribution of expanding closed horospheres in the unit tangent bundle of a geometrically finite hyperbolic $3$-manifold $\G\ba \bH^3$ under the assumption that the critical exponent of $\G$ exceeds one.
\end{abstract}
\subjclass[2000]{Primary22E40}

\keywords{Apollonian circle packing, Horospheres, Kleinian group}

\maketitle
\tableofcontents
\section{Introduction}
A set of four mutually tangent circles in the plane with distinct points of tangency is called a {\it Descartes configuration}.
Given a Descartes configuration, one  can construct four new circles, each of which is tangent to three of the given ones.
 Continuing
to repeatedly fill the interstices between mutually tangent circles with further tangent circles,
we arrive
at an infinite circle
packing. It is  called an {\it Apollonian circle packing}, after the great geometer Apollonius of Perga (262-190 BC).

See Figure \ref{f1} showing the first three generations of this procedure, where each circle
is labeled with its curvature (that is, the reciprocal of its radius).
 Unlike the inner circles, the bounding circle is oriented so that its ``outward'' normal vector points into the packing. In Figure \ref{f2}, the outermost circle has curvature $-1$ (the sign conveys its orientation).

The astute reader would do well to peruse the lovely series of papers by Graham, Lagarias, Mallows, Wilks, and Yan on this beautiful topic, especially \cite{GrahamLagariasMallowsWilksYanI} and  \cite{GrahamLagariasMallowsWilksYanI-n}, as well as the recent letter of Sarnak
to Lagarias \cite{SarnakToLagarias} which inspired this paper.

\begin{figure}
 \includegraphics [width=1.5in]{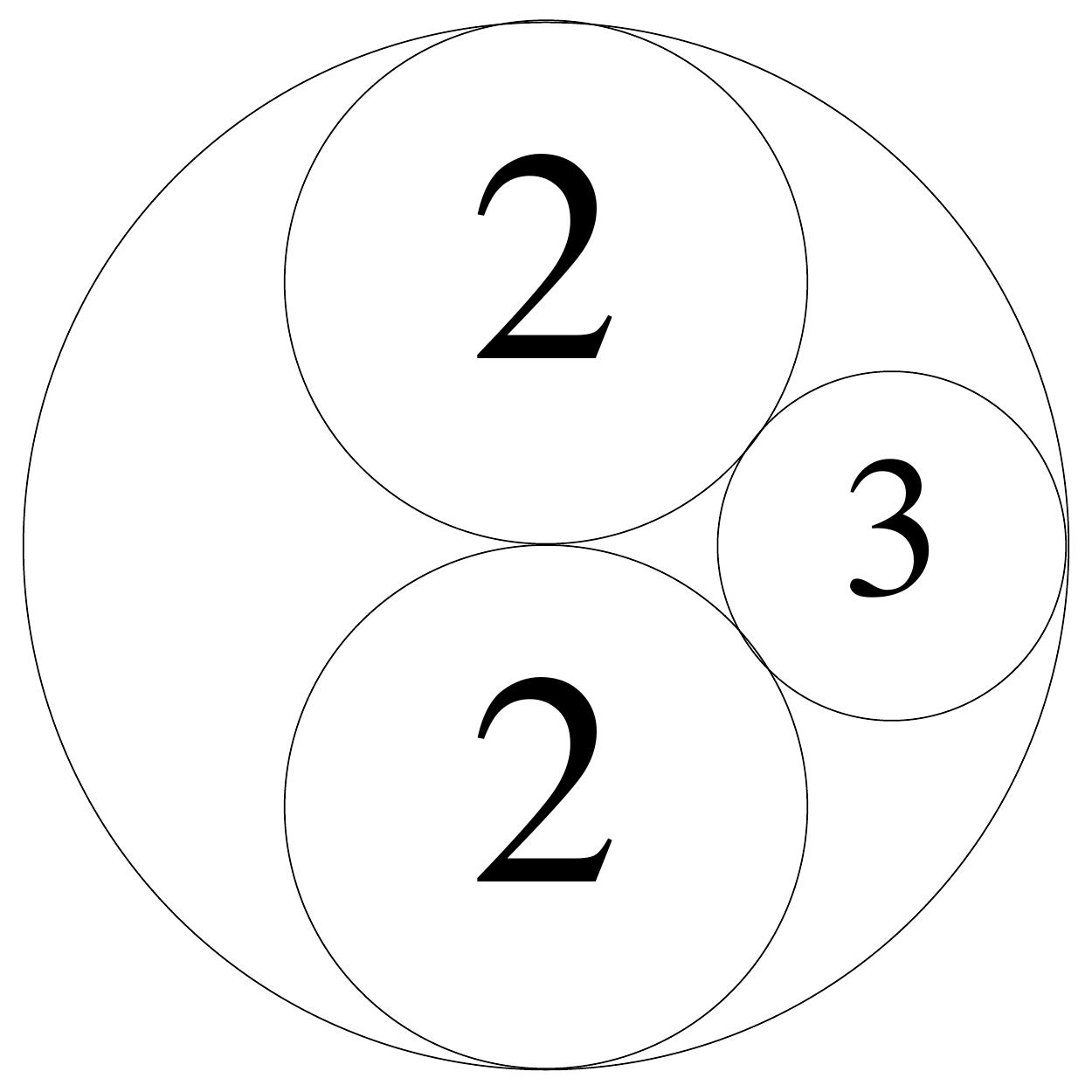}
\includegraphics [width=1.5in]{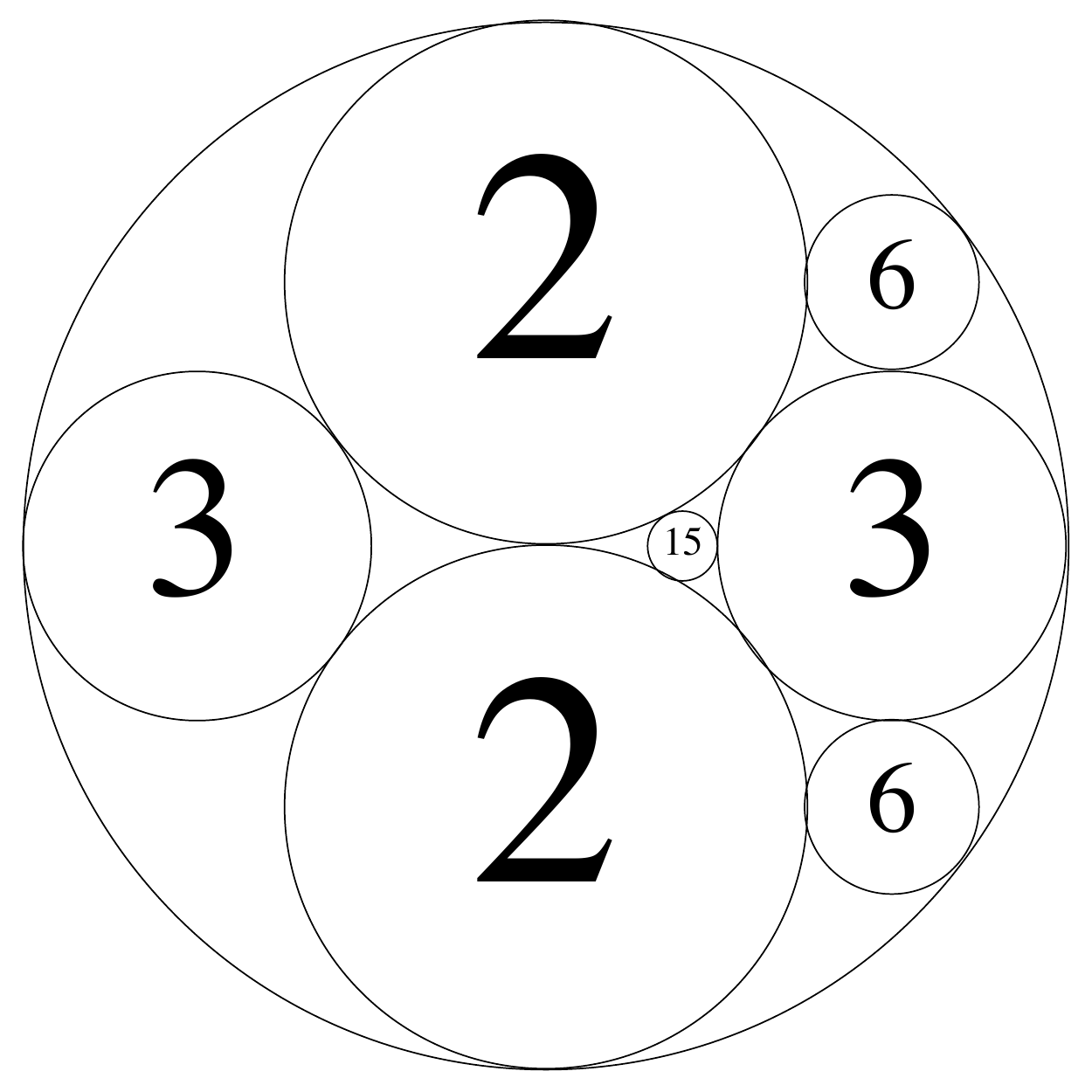}
 \includegraphics [width=1.5in]{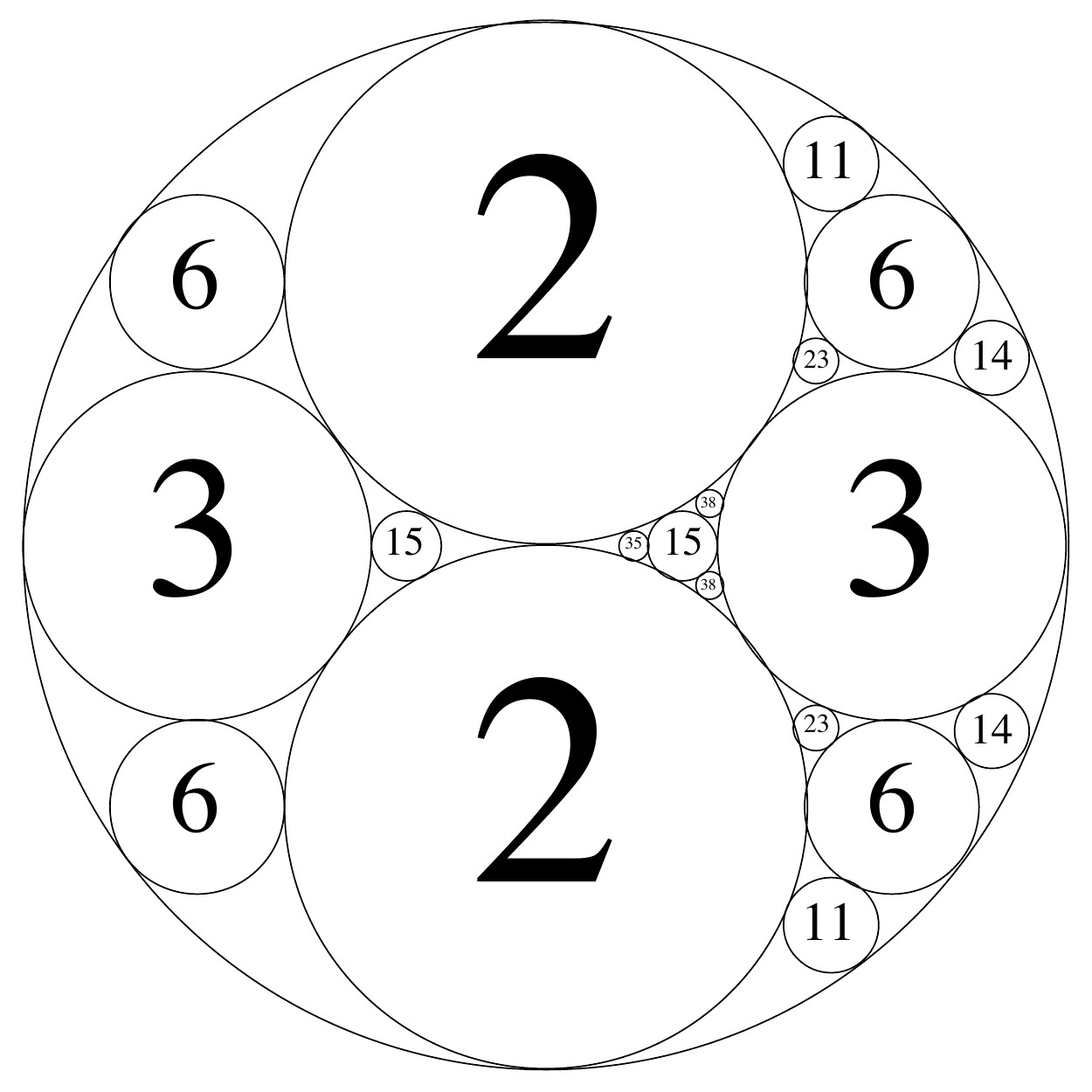}
\caption{Generations 1-3.}
\label{f1}
\end{figure}

\begin{figure}
 \includegraphics [width=2in]{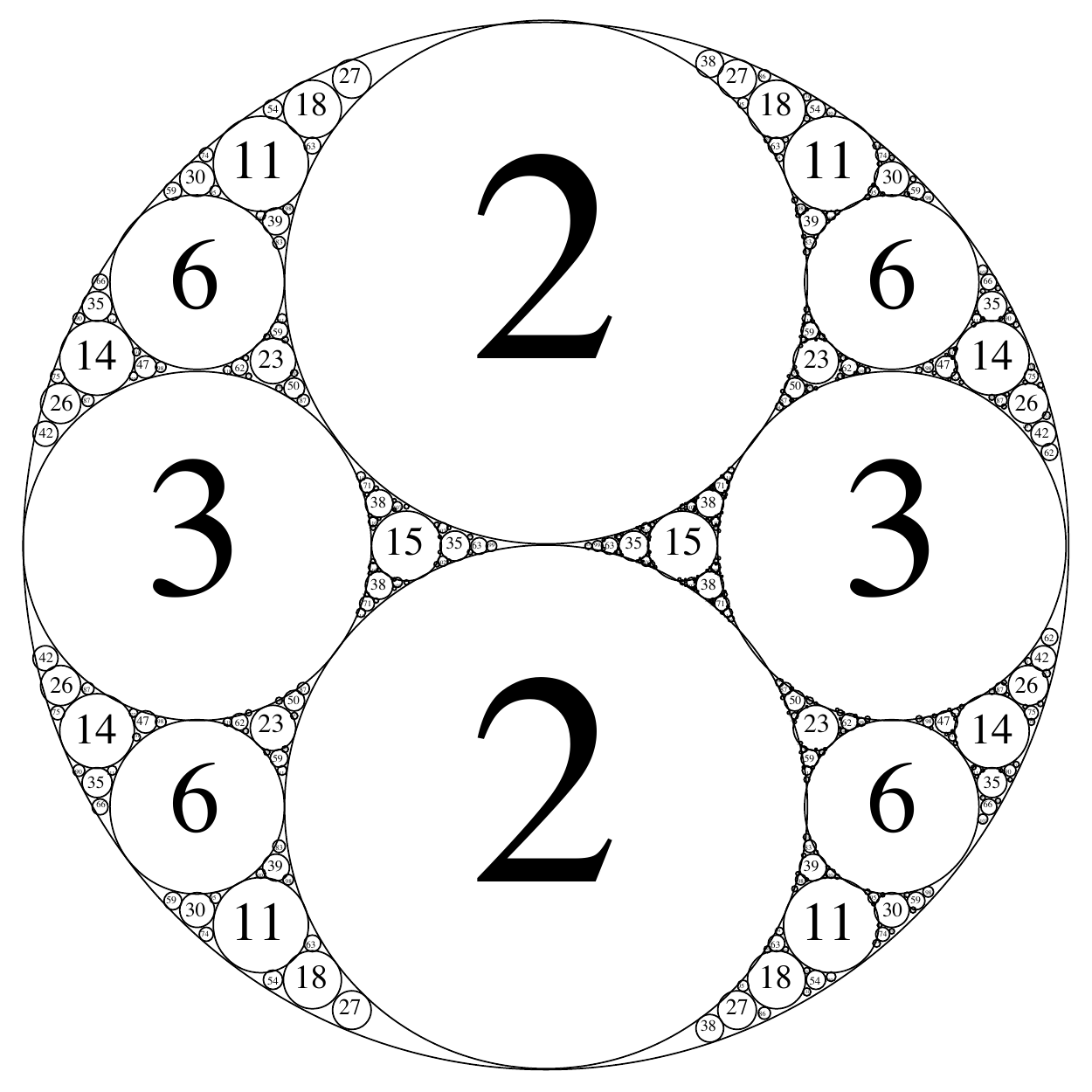}
\caption{A bounded Apollonian circle packing.}
\label{f2}
\end{figure}

\vs

\noindent{\bf Counting circles in an Apollonian packing:}
Let $\mathcal P$ be either a bounded Apollonian circle packing or
an unbounded one which is congruent to the packing in Figure \ref{InfPack}.


 For $\mathcal P$ bounded, denote by $N^{\mathcal P}
(T)$ the number of circles in $\mathcal P$ in the packing whose curvature is at most $T$,
i.e., whose radius is at least $1/T$.
For $\mathcal P$ congruent to the packing in Figure \ref{InfPack},
 one alters the definition of $N^\P (T)$ to count circles in a fixed period.


\begin{figure}
 \includegraphics[width=2in] {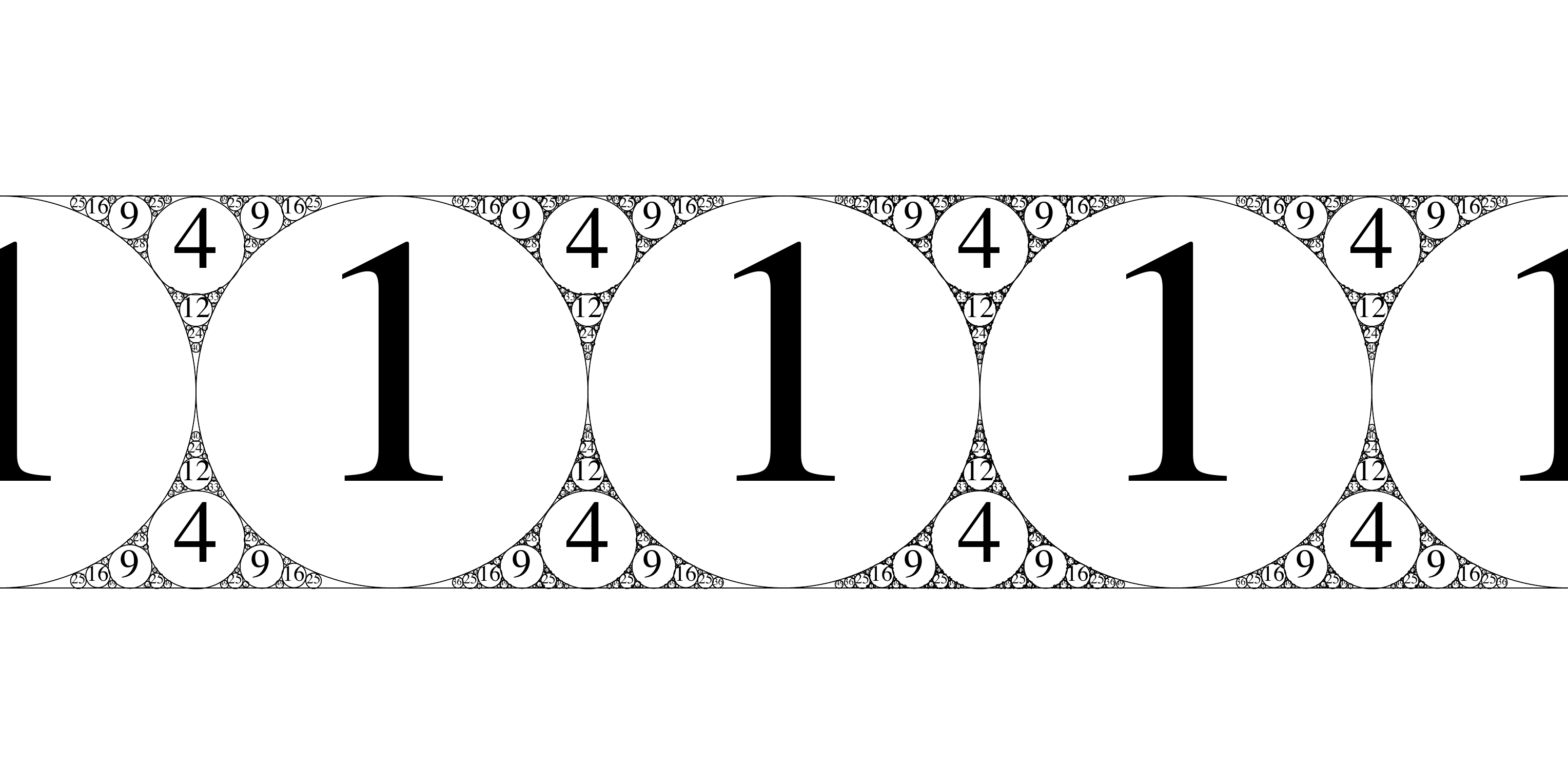}
\caption{An Apollonian circle packing bounded by two parallel lines.}
\label{InfPack}
\end{figure}

It is easy to see that $N^{\mathcal P}(T)$ is  finite for any given $T>0$.
The main goal of this paper is to obtain asymptotic formula for $N^{\mathcal P}(T)$
 as $T$ tends to infinity.
To describe our results,  recall that
the residual set of $\mathcal P$ is defined to be
 the subset of the plane remaining after the removal of all of the interiors
of circles in $\mathcal P$ (where the circles are oriented so that the interiors are disjoint).
Let $\alpha=\alpha_{\mathcal P}$ denote the Hausdorff dimension of the residual set of $\mathcal P$. As any Apollonian packing can be moved to any other by a M\"obius
transformation, $\alpha$ does not depend on $\mathcal P$.
The value of $\ga$, though not known exactly, has been
numerically computed to be $1.30568(8)$ by McMullen \cite{McMullen1998}.

Boyd \cite{Boyd1982} showed in 1982
 that
$$\lim_{T\to \infty}\frac{\log N^{\mathcal P}(T)}{\log T}=\alpha.$$
This confirmed Wilker's prediction \cite{Wilker1977} which
was based on
computer experiments.

Regarding an asymptotic formula for $N^{\mathcal P}(T)$,
it was not clear from the literature  whether one should conjecture a strictly polynomial
growth rate.
In fact, Boyd's numerical experiments led him to wonder whether ``perhaps a relationship such as $N^{\mathcal P}(T)\sim c\cdot T^\alpha (\log (T/c'))^\beta$ might be more appropriate'' (see \cite{Boyd1982} page 250).

In this paper, we show that
 $N^{\mathcal P}(T)$ has purely polynomial asymptotic growth. By $f(T)\sim g(T)$
with $T\to \infty$,
we mean that $\lim_{T\to \infty}\frac{f(T)}{g(T)}=1$
\begin{Thm}\label{main1}
Given an Apollonian circle packing $\mathcal P$ which is either bounded or congruent to Figure \ref{InfPack},
there exists $c=c(\mathcal P)>0$ such
that as $T\to \infty$,
 $$N^{\mathcal P}(T)\sim  c\cdot T^{\alpha} .$$
\end{Thm}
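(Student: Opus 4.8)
The plan is to realize the circle count $N^{\mathcal P}(T)$ as an orbital counting problem for the Kleinian group $\G$ generated by the Apollonian group, acting on $\bH^3$, and then to deduce the asymptotics from equidistribution of expanding closed horospheres. First I would recall the standard dictionary between circles in the plane $\partial\bH^3=\C\cup\{\infty\}$ and geodesic planes (or, equivalently, points in the space of circles) in $\bH^3$: an Apollonian packing $\mathcal P$ is a single $\G$-orbit of circles, where $\G$ is a geometrically finite group with critical exponent $\delta_\G=\alpha>1$ (this last inequality is exactly the hypothesis needed for the horosphere equidistribution result advertised in the abstract). The curvature of a circle in the orbit is, up to a fixed constant depending only on the chosen normalization, an Archimedean height function evaluated along the orbit; more precisely, one fixes a reference circle $C_0$ (say the bounding circle) with stabilizer $H$ in $\G$, and the curvature of $\g C_0$ is comparable to $e^{\operatorname{dist}}$ for an appropriate horospherical distance, so that $\{$circles of curvature $\le T\}$ corresponds to $\{\g H : \text{height of } \g \le T\}$.

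Next I would set up the counting: write $N^{\mathcal P}(T)$ as a sum over $\G/H$ of the indicator that the curvature of $\g C_0$ is at most $T$, and then unfold this against the expanding horosphere. Concretely, the circles in the packing tangent to a fixed circle, or the circles in a fixed period in the unbounded case, are indexed by an orbit of the horospherical subgroup $N$, and letting the geodesic flow push the horosphere out by time $t$ with $e^t \asymp T$, the count becomes an integral of a fixed (compactly supported, or suitably decaying) test function against the pushed-forward horospherical measure on $\G\ba\bH^3$ (or its unit tangent bundle $\G\ba\operatorname{T}^1\bH^3$). The effective equidistribution theorem — that as $t\to\infty$ the normalized measure on the expanded closed horosphere converges to the Burger–Roblin measure, with a power-saving error term — then yields that this integral is asymptotic to $c\cdot e^{\delta_\G t}$ for an explicit constant $c$ given by the pairing of the test function with the Burger–Roblin measure, which is positive and finite because $\delta_\G=\alpha>1$ guarantees the relevant measures are finite on the relevant pieces. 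Substituting $e^t\asymp T$ gives $N^{\mathcal P}(T)\sim c\cdot T^\alpha$.

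The main obstacle, and the place requiring the most care, is twofold. First, the test function one is forced to integrate is \emph{not} compactly supported: the condition ``curvature $\le T$'' translates, after unfolding, to an integral over an unbounded region of the horosphere, so one must control the contribution near the cusps/thin parts of $\G\ba\bH^3$ and justify that the tail contributes a lower-order term. This is exactly where geometric finiteness and $\delta_\G>1$ are used — the latter ensures the Burger–Roblin measure gives finite mass to the relevant horospherical pieces and that the error terms in the equidistribution statement, which a priori depend on the test function's support and Sobolev norms, can be summed over a dyadic or exhaustive decomposition of the unbounded region. Second, one must handle the two geometric regimes (bounded packing, versus the periodic strip packing of Figure \ref{InfPack}) in parallel; in the periodic case the ``fixed period'' is genuinely the quotient by the translation lattice stabilizing the strip, which fits the horospherical picture naturally but requires identifying the right $N$-orbit. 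A subsidiary point is the passage from counting geodesic planes/circles in $\G\ba\bH^3$ to counting circles in the plane themselves — one needs the orbit map $\G/H\to\mathcal P$ to be a bijection (injectivity of the Apollonian group action on the packing), which is classical but should be cited. Once the non-compactness of the test function is dealt with, the positivity and finiteness of $c$ follows from the Burger–Roblin measure being a genuine (nonzero, finite) measure in the range $\delta_\G>1$, completing the argument.
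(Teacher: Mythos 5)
Your high-level strategy (reduce to an orbital count for a geometrically finite Kleinian group with $\delta_\G=\alpha>1$, then unfold against expanding closed horospheres and invoke effective equidistribution toward the Burger--Roblin measure) is indeed the paper's strategy, but there is a genuine gap at the very first step: the mechanism by which circles and their curvatures become a \emph{horospherical} counting problem. You propose to count the orbit $\G C_0$ of a reference circle, with $H=\operatorname{Stab}_\G(C_0)$, and assert that the curvature of $\g C_0$ is a horospherical height so that the circles are ``indexed by an orbit of the horospherical subgroup $N$.'' This is not correct: the stabilizer in $\PSL_2(\c)$ of a circle in $\hat\c$ (equivalently, of the geodesic plane it bounds) is a conjugate of $\PGL_2(\br)$, not a horospherical group, and the curvature of $\g C_0$ is a coordinate on the de Sitter quadric $\{Q_W=\mathrm{const}\}$, not on a null cone. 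Counting the orbit $\G C_0$ directly leads to equidistribution of orthogonal translates of totally geodesic planes (the route taken in the later paper \cite{OhShahcircle}), not of horospheres, and your unfolding step does not go through as written.

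The missing bridge is the reduction via Descartes quadruples. The root quadruple $\xi$ of curvatures lies on the null cone of the signature $(3,1)$ Descartes form $Q$, and the stabilizer in $\O_Q(\br)$ of a null vector is a conjugate of $NM$ --- this, and only this, is why closed horospheres $(N\cap\G)\ba N$ enter. One then needs the theorem of Graham--Lagarias--Mallows--Wilks--Yan (Theorem \ref{gm}): the multiset of curvatures in $\mathcal P$ is exactly the multiset of largest entries of the vectors $\xi\g^t$, $\g\in\mathcal A$, which yields $N^{\mathcal P}(T)=\#\operatorname{Stab}_{\mathcal A^t}(\xi)\cdot\#\{v\in\xi\mathcal A^t:\|v\|_{\max}<T\}+O(1)$ (Lemma \ref{reduction}). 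Only after this reduction does the count become $F_T(e)=\sum_{\g\in(N\cap\G)\ba\G}\chi_{B_T}(v_0\g)$, which unfolds into integrals $\psi^N(a_y)$ over expanding closed horospheres. Two further points remain that your sketch glosses over and the paper must supply: the max norm is not $K$-invariant, so the $K$-invariant equidistribution (Theorem \ref{lh}) must be upgraded to $M$-invariant test functions on the unit tangent bundle (Theorem \ref{lhtwo}, proved in the appendix via the Burger--Roblin classification); and the convergence and localization of $\phi_0^N(a_y)$ onto a bounded set properly covering $\Lambda_N(\G)$ (Propositions \ref{lg} and \ref{ppo}) is where $\delta_\G>1$ and geometric finiteness actually do their work --- not through ``finiteness of the Burger--Roblin measure,'' which is in fact an infinite Radon measure in this non-lattice setting.
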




\begin{figure}
 \includegraphics[width=2in] {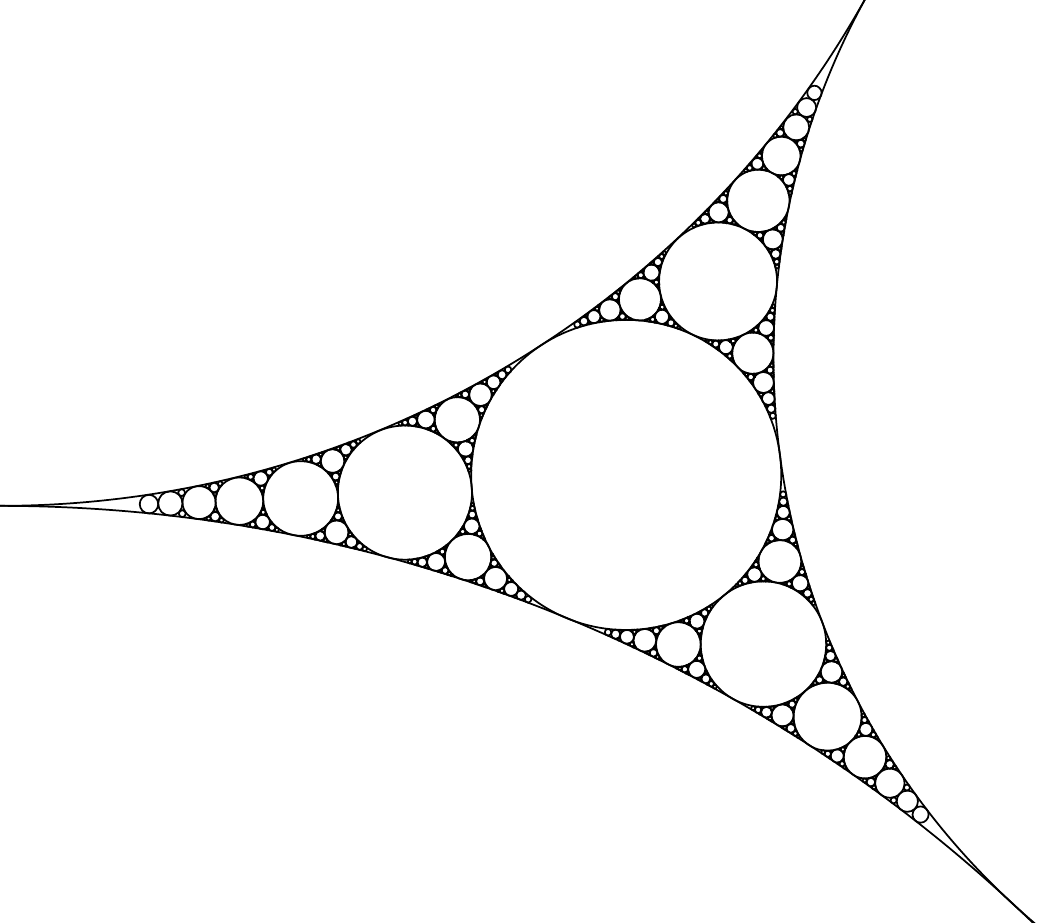}
\caption{Circles in a curvilinear triangle.}
\label{CurviLin}
\end{figure}

In \cite{Boyd1982}, Boyd actually considered the more general problem of counting those circles in a packing which are contained in a curvilinear triangle
${\mathcal R}$;
let $N^{\mathcal R}(T)$
count the circles
having curvature at most $T$
(see Fig. \ref{CurviLin}).
For this question, it does not matter whether or not the full packing is bounded; the counting function $N^{\mathcal R}(T)$ is always well-defined.  Since two such triangles are bi-Lipschitz equivalent, it follows from Theorem \ref{main1} that
there exist constants $c_1, c_2>0$ such that for all $T\gg 1$,
$$
c_1 \cdot T^\alpha \le N^{\mathcal R}(T) \le c_2 \cdot T^\alpha.
$$

Though we believe that the asymptotic formula $N^{\mathcal R}(T)\sim c \cdot T^\ga$ always holds, our techniques cannot yet establish this
in full generality.


\vs

\noindent{\bf Primes and twin primes in an integral packing:}
A quadruple $(a,b,c,d)$ of the curvatures of 
four circles in a Descartes configuration is called a {\it Descartes quadruple}.
The Descartes circle theorem (see e.g. \cite{Coxeter1968}) states that
 any Descartes quadruple $(a,b,c,d)$ satisfies
the quadratic equation\footnote{ Arguably the most elegant formulation of this theorem is the following excerpt from the poem ``The Kiss Precise'' by Nobel Laureate Sir Fredrick Soddy \cite{Soddy1936}:

{\tiny
\quad Four circles to the kissing come. / The smaller are the bender. /

\quad The bend is just the inverse of /  The distance from the center. /

\quad Though their intrigue left Euclid dumb /  There's now no need for rule of thumb. /

\quad Since zero bend's a dead straight line / And concave bends have minus sign, /

\quad The sum of the squares of all four bends / Is half the square of their sum.
}}:
\begin{equation}\label{dc}a^2+b^2+c^2+d^2=\frac{1}{2}{(a+b+c+d)}^2 .
\end{equation}


Given any three mutually tangent circles with distinct points of tangency and curvatures $a,b$ and $c$, there are exactly two
circles which are tangent to all of the given ones, having curvatures  $d$ and $d'$, say.
It easily follows from \eqref{dc} that
\be\label{dprime}
d+d'=2(a+b+c).
\ee
In particular, this shows that if a Descartes quadruple  $(a,b,c,d)$ corresponding to
the initial four circles in the packing $\mathcal P$ is integral,
then every circle in $\mathcal P$
also has integral curvature,
as first observed by Soddy in 1936 \cite{Soddy1937}. Such a packing
is called  {\it integral}. 

It is natural to inquire about the Diophantine properties of an
 integral Apollonian packing, such as how many circles in
$\mathcal P$ have prime curvatures. By rescaling, we may assume that $\mathcal P$ is {\it primitive}, that is,
the greatest common divisor of the curvatures is one.
We call a circle  {\it prime} if
its curvature is a prime number.
A pair of prime circles
which are tangent to each other will be
called {\it twin prime circles}.
It is easy to see that a primitive integral packing is either bounded or the one
pictured in Fig. \ref{InfPack}.

For $\mathcal P$ bounded,
denote by $\pi^{\mathcal P}(T)$ the
number of  prime circles in $\mathcal P$  of curvature at
most $T$, and  by $\pi^{\mathcal P}_2(T)$
 the number of twin prime circles in $\mathcal P$ of
 curvatures at most $T$.
 For $\mathcal P$ congruent to the packing in Figure \ref{InfPack},
 one alters the definition of $\pi^\P (T)$ and $\pi^{\mathcal P}_2(T)$ to count prime circles in a fixed period.

 Sarnak showed in \cite{SarnakToLagarias}
 that there are infinitely many prime and twin prime circles 
in any primitive integral
packing $\mathcal P$, and that
  $$\pi^{\mathcal P}(T)\gg\frac{T}{{(\log T)}^{3/2}} .$$

Using the recent results of
Bourgain, Gamburd and Sarnak in \cite{BourgainGamburdSarnak2008}
and \cite{BourgainGamburdSarnak2008-p} on the uniform spectral
gap property of Zariski dense subgroups of $\SL_2(\z [i])$, together with the Selberg's upper bound sieve,
we prove:
\begin{Thm}\label{thmTwo}
Given a
primitive integral
Apollonian circle packing $\mathcal P$,
\begin{enumerate}\item $ \pi^{\mathcal P}(T)\ll  \frac{T^{\alpha}}{\log T}$;\item
$ \pi^{\mathcal P}_2(T)\ll  \frac{T^{\alpha}}{(\log T)^2}. $
\end{enumerate} \end{Thm}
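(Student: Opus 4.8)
The plan is to run an upper-bound sieve on the sequence of curvatures appearing in $\mathcal P$, with the arithmetic input — equidistribution of curvatures in congruence classes — supplied by the effective horospherical equidistribution behind Theorem~\ref{main1}, made uniform over the modulus by the Bourgain--Gamburd--Sarnak spectral gap for the congruence quotients of the Apollonian group. Since we only want upper bounds, we need neither the full affine-sieve lower bound nor a main term: a positive level of distribution together with the nonnegativity and a one-sided size control of the local densities suffices for Selberg's upper-bound sieve (equivalently the Brun/Diamond--Halberstam--Richert combinatorial sieve).

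First I would set up the count group-theoretically. Fix a root Descartes quadruple $v_0$ for $\mathcal P$ and let $\cA$ be the Apollonian group acting on quadruples; each circle of $\mathcal P$ (beyond bounded ambiguity) corresponds to orbit data for $\cA v_0$, with curvature a fixed linear form $f$. Thus $\pi^{\mathcal P}(T)$ is, up to $O(1)$, the number of orbit points with $f(\gamma v_0)\le T$ prime, while $\pi_2^{\mathcal P}(T)$ counts those where an adjacent pair $d,d'=2(a+b+c)-d$ are both $\le T$ and both prime. Let $\mathcal N(T)$ denote the multiset of curvatures $\le T$, so $|\mathcal N(T)|=N^{\mathcal P}(T)\sim cT^\alpha$ by Theorem~\ref{main1}. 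The heart of the argument is the \emph{congruence count}: for squarefree $q$ put $N^{\mathcal P}(T;q)=\#\{\text{circles of curvature}\le T\text{ with curvature}\equiv 0\ (q)\}$, and prove
\[
N^{\mathcal P}(T;q)=\beta(q)\,N^{\mathcal P}(T)+O\!\bigl(q^{A}T^{\alpha-\epsilon_0}\bigr),
\]
uniformly in $q$, with $\beta$ multiplicative, $0\le\beta(p)<1$, and $\sum_{p\le z}\beta(p)\log p=\log z+O(1)$ (sieve dimension $1$). The main term comes from rerunning the proof of Theorem~\ref{main1} on the congruence cover $\G(q)\ba\bH^3$ — effective equidistribution of the expanding closed horosphere there — and the \emph{uniform} error term is precisely what the BGS uniform spectral gap delivers: the congruence quotients $\cA\to\SL_2(\Z[i]/q)$ form an expander family, so the relevant automorphic spectrum on $\{\G(q)\ba\bH^3\}$ stays bounded away from the bottom, yielding a power saving in $T$ with only polynomial dependence on $q$. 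The densities $\beta(p)$ are then read off from the reduction of $\cA v_0$ modulo $p$, and strong approximation for $\cA$ (again BGS, the reduction maps being surjective for almost all $p$) gives $\beta(p)=1/p+O(1/p^2)$.

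Feeding this into Selberg's upper-bound sieve with sifting level $z=T^{\eta}$ for small $\eta>0$: the number of curvatures $\le T$ with no prime factor $\le z$ is $\ll N^{\mathcal P}(T)\prod_{p\le z}(1-\beta(p))+\sum_{q\le z^2}q^{A}T^{\alpha-\epsilon_0}\ll N^{\mathcal P}(T)/\log z\ll T^{\alpha}/\log T$, provided $\eta$ is small relative to $\epsilon_0/(A+3)$. Since $\pi^{\mathcal P}(T)$ exceeds this sifted quantity by at most $O(z)=o(T^{\alpha}/\log T)$ circles (those of curvature $\le z$), part (1) follows once we invoke $N^{\mathcal P}(T)\asymp T^{\alpha}$ from Theorem~\ref{main1}. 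For (2) one runs the same scheme on the two-variable problem, sifting the product form $n\cdot n'$ along the pair $(d,d')$; after checking that $d$ and $d'=2(a+b+c)-d$ reduce to an ``independent'' pair modulo $p$ for all but finitely many $p$ — so the local density is $\asymp 1/p^2$ and the sieve dimension is $2$ — and treating the exceptional primes (notably $p=2$, where $d\equiv d'$) by hand, the two-dimensional upper-bound sieve gives $\pi_2^{\mathcal P}(T)\ll N^{\mathcal P}(T)/(\log T)^2\ll T^{\alpha}/(\log T)^2$.

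The main obstacle is the uniform congruence count — obtaining a positive level of distribution. This requires (i) making the closed-horosphere equidistribution genuinely effective on the congruence covers $\G(q)\ba\bH^3$ with error polynomial in $q$, and (ii) converting the BGS spectral gap — phrased for the $L^2$-spectrum of these covers via their expander estimates for $\cA$ modulo $q$ — into exactly that error term in the lattice-point/horosphere count; the archimedean main term $cT^\alpha$ is already Theorem~\ref{main1}, so what is new is threading the congruence variable through that argument uniformly. A secondary technical point is the local-density bookkeeping in the twin case: confirming that the joint reduction $(d,d')\bmod p$ fills the expected set outside a fixed finite set of primes, and handling those exceptional primes carefully enough that the sharp power $(\log T)^{-2}$, rather than merely $(\log T)^{-2+o(1)}$, comes out.
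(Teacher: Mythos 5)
Your proposal follows essentially the same route as the paper: a Selberg upper-bound sieve with level a small fixed power of $T$, where the uniform congruence count $|\cA_d(T)|=g(d)\X+O(d^{A}T^{\alpha-\epsilon_0})$ is obtained by running the effective horospherical counting of Theorem \ref{main1} on the congruence subgroups $\G_\xi(d)$, with uniformity in $d$ supplied by the Bourgain--Gamburd--Sarnak spectral gap and the local densities ($\sim 1/p$ for primes, $\sim 2/p$ for the twin case via the product form $x_1x_2$) computed from strong approximation and point counts on the reduction of the cone mod $p$. The only cosmetic difference is that the paper sieves a fixed coordinate of the Descartes quadruple and sums over the four coordinates via Lemma \ref{reduction}, rather than sieving the maximal entry directly.
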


\noindent{\bf Remarks:}
\begin{enumerate}
\item The number of pairs of tangent circles in $\mathcal P$ of curvatures at most $T$
is equal to $3N_{\mathcal P}(T)$ up to an additive constant (see Lemma
\ref{reduction}). Therefore,
in light of Theorem \ref{main1}, the upper bounds
in Theorem \ref{thmTwo} are only off by a constant multiple from the expected
asymptotics.

\item A suitably modified version of Conjecture 1.4 in \cite{BourgainGamburdSarnak2008},
 a generalization of Schinzel's hypothesis,
implies that for some $c, c_2>0$,
$$\pi^{\mathcal P}(T) \sim c \cdot \frac{T^{\alpha}}{\log T}\quad\text{and}\quad
\pi^{\mathcal P}_2(T) \sim c_2 \cdot \frac{T^{\alpha}}{(\log T)^2}
$$
 (see the discussion in \cite[Ex D]{BourgainGamburdSarnak2008}).
The constants $c$ and $c_2$ are detailed in \cite{EFuchs}.
\end{enumerate}

\vs

\noindent{\bf Orbital counting of a Kleinian group in a cone:}
By Descartes' theorem \eqref{dc}, any Descartes quadruple $(a,b,c,d)$ lies on the cone
\break
$Q(x)=0$,
where
 $Q$ denotes the
quadratic form
$$Q(a,b,c,d)=2(a^2+b^2+c^2+d^2)-{(a+b+c+d)}^2
.
$$
In light of \eqref{dprime}, one can ``flip'' the quadruple $(a,b,c,d)$ into $(a,b,c,d')$ via left-multiplication by $S_4$, where
$$S_1=\begin{pmatrix}  -1&2&2&2 \\0&1&0&0\\
0&0&1&0\\ 0&0&0&1 \end{pmatrix},\quad
 S_2=\begin{pmatrix} 1&0&0&0\\
 2&-1& 2&2 \\
0&0&1&0\\ 0&0&0&1 \end{pmatrix}, $$
$$
 S_3=\begin{pmatrix} 1&0&0&0\\
 0&1&0&0\\ 2&2& -1 &2 \\
 0&0&0&1 \end{pmatrix}, \quad  S_4=\begin{pmatrix} 1&0&0&0\\
 0&1&0&0 \\ 0&0&1&0\\ 2&2&2&-1 \end{pmatrix}. $$
Let $\mathcal A$ denote
the so-called Apollonian group generated by these reflections, that is $\mathcal{A}=\<S_1,S_2,S_3,S_4\>$, and let
 $\O_{Q}$ be the orthogonal group preserving $Q$. One can check that $\mathcal A<\O_Q(\z)$ and that $Q$ has
signature $(3,1)$. Therefore $\mathcal{A}$ is a Kleinian group; moreover $\mathcal A$ is of infinite index in $\O_Q(\z)$.


 For a fixed packing $\mathcal P$,
there is a labeling by the Apollonian group $\mathcal A$ of all the
 (unordered) Descartes quadruples in $\mathcal P$.
Moreover the counting problems for $N^{\mathcal P}(T)$ and $N^{\mathcal P}_2(T)$ for $\mathcal P$ bounded
 can be reduced to counting elements in the orbit
 $\xi_{\mathcal P}\cdot \mathcal A^t  \subset \br^4$
of maximum norm at most $T$, where $\xi_{\mathcal P}$ is the unique root quadruple of
$\mathcal P$ (see Def. \ref{root} and Lemma \ref{reduction}). For $\mathcal P$ congruent to Figure
\ref{InfPack}, the same reduction holds with $\xi_P$ given by $(0,0, c,c)$ where $c$ is the curvature of the largest circle in $\P$.

We prove the following more general counting theorem:
Let $\iota: \PSL_2(\c)\to \SO_{F}({\br})$ be a real linear representation,
where $F$ is a real quadratic form in $4$ variables with signature $(3,1)$. Let $\G<\PSL_2(\c)$ be
a geometrically finite Kleinian group. The limit set $\Lambda(\G)$
of $\G$ is the set of accumulation points of
$\G$-orbits in the ideal boundary $ \partial _\infty(\bH^3)$ of
the hyperbolic space $\bH^3$.
 We assume that the Hausdorff dimension $\delta_\G$
of $\Lambda(\G)$  is strictly bigger
 than one.
\begin{Thm}\label{count}
Let $v_0\in \br^4$ be a non-zero vector lying in the cone $F=0$
 with a discrete orbit $ v_0 \G \subset \br^4$.
Then for any norm $\|\cdot \|$ on $\br^4$,  as $T\to \infty$,
$$\# \{v\in v_0\G: \| v\|<T\}\sim c \cdot T^{\delta_{\G}}
 $$
where $c>0$ is explicitly given  in Theorem \ref{reduction4}.
\end{Thm}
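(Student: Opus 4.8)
The plan is to reduce the orbital counting problem to an equidistribution statement for expanding closed horospheres in the geometrically finite hyperbolic $3$-manifold $\G\ba\boldH^3$. First I would identify $\SO_F(\br)^\circ$ with $\PSL_2(\c)$ via $\iota$, and realize the vector $v_0$ lying on the cone $F=0$ as a point fixed by (the image of) the upper-triangular unipotent subgroup $N$ of $\PSL_2(\c)$, up to the action of a one-parameter diagonal subgroup $A=\{a_t\}$ that dilates $v_0$ by $e^t$. Concretely, the stabilizer of $v_0$ in $\PSL_2(\c)$ is conjugate to $N$ (a $2$-dimensional unipotent group, since the cone is a single $\PSL_2(\c)$-orbit and the point stabilizer of a null vector is a horospherical subgroup), so $v_0\G$ is in bijection with $N\ba\G$ outside a finite set, and the condition $\|v_0\g\|<T$ translates, via the Cartan-type decomposition $\g = n\, a_{t}\, k$, into a constraint of the shape $e^{t} \ll T$ with an error governed by the $K$- and $N$-components through the norm $\|\cdot\|$.

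Next I would set up the measure-theoretic count. Fix the closed horosphere $H_s = N a_{-s} \G \ba \G$ inside $\G\ba\PSL_2(\c)$ (or rather work with the expanding translates $a_{s} H_0$), parametrized by a fundamental domain for $N\cap\G$ acting on $N$; since $\delta_\G>1$, the group $N\cap\G$ is either trivial or has rank at most $1$, so this horosphere has infinite Burger--Roblin measure in the relevant direction and must be truncated. The heart of the argument is the effective equidistribution theorem promised in the abstract: as $s\to\infty$, the translate $a_s H_0$ equidistributes, after normalization by $e^{(\delta_\G - 1)s}$ wait — by the appropriate weight — toward the Burger--Roblin measure $m^{\mathrm{BR}}$ on $\G\ba\PSL_2(\c)$, with a power-saving error term coming from the spectral gap (the bottom of the $L^2$-spectrum of the Laplacian on $\G\ba\boldH^3$ being $\delta_\G(2-\delta_\G)$, isolated since $\delta_\G>1$). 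Integrating a smooth approximation to the indicator of $\{\|v_0\g\| < T\}$ against this equidistributing family, and unfolding, yields $\#\{v\in v_0\G : \|v\|<T\} \sim c\cdot T^{\delta_\G}$ with $c$ expressed as an integral of a homogeneous degree-$\delta_\G$ function over $N\ba\PSL_2(\c)$ against the Patterson--Sullivan density, which is exactly the constant recorded in Theorem \ref{reduction4}.

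The steps, in order: (1) linear-algebra normalization identifying $(F, v_0)$ with the standard null vector for $\PSL_2(\c)$ and computing $\mathrm{Stab}(v_0)$ and the $A$-action; (2) rewrite the count as $\sum_{\g \in (N\cap\G)\ba\G} \chi_T(v_0\g)$ and express $\chi_T$ in coordinates $(n,t,k)$; (3) interpret this as evaluating a test function along the expanding horosphere $a_s H_0$; (4) apply the effective equidistribution of closed horospheres (proved in the body of the paper, using the spectral gap and the structure theory of geometrically finite groups) to extract the main term; (5) control the truncation/tail coming from the cusps and the noncompact directions of the horosphere, using the finiteness of the Burger--Roblin measure after truncation and decay estimates for the Patterson--Sullivan measure near cusps; (6) assemble the constant $c$. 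The main obstacle I expect is step (5) combined with the non-compactness in step (3): because $\G$ is only geometrically finite (not cocompact or even finite-covolume in the usual sense) and the horosphere $H_0$ is itself noncompact, one cannot directly integrate a compactly supported bump against it — one must justify interchanging the truncation parameter with $s\to\infty$, which requires quantitative control on how much of the mass of $a_s H_0$ escapes into the thin parts of $\G\ba\boldH^3$. This is precisely where the hypothesis $\delta_\G>1$ is essential (it makes the relevant Burger--Roblin measure finite after truncation and gives the spectral gap), and handling it carefully — likely via the appendix's dynamical input on unipotent flows together with Sullivan's shadow lemma for the Patterson--Sullivan density — is the technical crux.
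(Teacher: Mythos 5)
Your proposal follows essentially the same route as the paper: normalize so that $\operatorname{Stab}_G(v_0)$ is the horospherical subgroup (in fact it is $NM$ rather than $N$, though Lemma \ref{stab} gives $\G\cap NM=\G\cap N$ for torsion-free $\G$), unfold the count over $(N\cap\G)\ba\G$ into an integral of expanding closed-horosphere averages via the $NAK$ decomposition, apply the effective equidistribution theorem (with the appendix's Burger--Roblin upgrade needed for non-$K$-invariant norms), and control the noncompactness of the horosphere by restricting to a bounded set properly covering $\Lambda_N(\G)$. The only slips are minor: the stabilizer point just noted, and the claim that $\delta_\G>1$ forces $N\cap\G$ to have rank at most one --- it can have rank two, in which case the closed horosphere is a torus and the truncation issue you identify as the crux simply disappears.
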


There are two main difficulties preventing existing counting methods from tackling
the above asymptotic formula. The first is that $\G$ is not required to be a lattice in $\PSL_2(\c)$
(recall that the Apollonian group $\mathcal A$ has  infinite index in $\O_Q(\z)$),
so Patterson-Sullivan theory enters in the spectral decomposition of
the hyperbolic manifold $\G\ba \bH^3$.
The second difficulty noted by Sarnak in
\cite{SarnakToLagarias} stems from the fact that the stabilizer
of $v_0$ in $\G$ may not have enough unipotent elements;
in the application to Apollonian packings, the stabilizer is
indeed either finite or a rank one abelian subgroup, whereas the stabilizer of $v_0$ in
the ambient group $G$ is a compact extension of a rank two abelian subgroup.

In the similar situation of an infinite area hyperbolic {\it surface},
 that is when $\G<\PSL_2(\br)$,
 the counting problem in a cone with respect to a Euclidean norm was solved in Kontorovich's thesis \cite{Kontorovich2007}, under the assumption
that the stabilizer
of $v_0$ in $\G$ is co-compact in
the stabilizer
of $v_0$ in $\PSL_2(\br)$.
The methods developed here are quite different. Our approach to the counting problem is via the equidistribution of expanding closed horospheres on hyperbolic 3-manifolds (see the next subsection for
more detailed discussion). Our proof of the equidistribution works for hyperbolic surfaces
as well, and in particular solves the counting problem in
 \cite{Kontorovich2007} for any norm and without assumptions on the stabilizer.
In \cite{KontorovichOh2008},
we apply the methods of this paper to the problem of thin orbits of Pythagorean triples having few prime factors.




\vs
\noindent{\bf Equidistribution of expanding horospheres in
hyperbolic $3$-manifolds:}  Let $\G$ be a geometrically finite
 torsion-free discrete subgroup of $\PSL_2(\c)$.
The main ingredient of our proof of Theorem \ref{count}
 is the equidistribution of expanding  closed
 horospheres in $\G\ba \bH^3$.

The group $G=\PSL_2(\c)$ is the group of orientation preserving
isometries of the hyperbolic space $\bH^3=\{(x_1, x_2, y): y>0\}$.  The invariant measure on $\bH^3$
for the action of $G$
and the Laplace operator are  given
respectively by $$\frac{dx_1dx_2 dy}{y^3}\quad\text{and}\quad \Delta=-y^2\left(
\frac{\partial^2}{\partial y^2}+
\frac{\partial^2}{\partial {x_1^2}}+
\frac{\partial^2}{\partial {x_2^2}} \right)+y\frac{\partial}{\partial y}.$$

Set
\begin{align}\label{nak} N&=\{n_x:=\begin{pmatrix} 1& x \\
 0& 1\end{pmatrix}:x\in \c \}, \\
A&=\{a_y:=\begin{pmatrix} \sqrt y & 0\\ 0& \sqrt y^{-1}\end{pmatrix}: y>0\} ,\notag \\
 K&=\{g\in G: \bar g^t g=I\} \quad\text{and} \notag \\
  M&=\{\begin{pmatrix} e^{i\theta} & 0\\ 0& e^{-i\theta}\end{pmatrix}: \theta\in \br\} \notag.\end{align}

By the Iwasawa decomposition $G=NAK$,
 any element $g\in G$ can be written uniquely as $g=n_x a_y  k$
  with  $n_x\in N$, $a_y\in A$, and $k\in K$.
Via the map
$$n_x a_y (0,0, 1)=(x, y) ,$$
the hyperbolic space $\bH^3$ and
its unit tangent bundle $\op{T}^1(\bH^3)$
can be identified with the quotients $G/K$ and $G/M$ respectively.

 Denoting
by $[u]$ the image of $u\in G$ under the quotient map
$G\to G/M$, the
 horospheres
correspond to $N$-leaves $[u]N =[uN]$ in $G/M$;
note this is well-defined as $M$ normalizes $N$.
For a closed horosphere $\G\ba \G [u] N$,
the translates $\G\ba \G [u] N a_y$ represent closed horospheres
$\G \ba \G [u a_y] N$ which are expanding as $y\to 0$.

In the case of a hyperbolic surface $\Gamma\ba \bH^2$ of finite area,
it is a theorem of Sarnak's \cite{Sarnak1981} that
such long horocycle flows are equidistributed with respect to the Haar measure.
The
equidistribution of expanding horospheres
 for any {\it finite volume} hyperbolic manifold can
also be proved using the mixing of geodesic flows. This approach appears already
in Margulis's 1970 thesis \cite{Margulisthesis}; see also \cite{EskinMcMullen1993}.





In what follows we describe our equidistribution result for expanding closed horospheres
on any geometrically finite hyperbolic $3$ manifold.

Assuming $\delta_\G>1$, Sullivan \cite{Sullivan1979} showed, generalizing the work of Patterson \cite{Patterson1976},
that there exists a unique positive eigenfunction $\phi_0$
of the Laplacian operator $\Delta$ on $\G\ba \mathbb H^3$ of
lowest eigenvalue $\delta_\G(2-\delta_\G)$ and of unit $L^2$-norm, that is,
 $\int_{\G\ba \bH^3} \phi_0(x_1, x_2,y )^2 \frac{1}{y^3} {dx_1dx_2dy}=1$.
Moreover the base eigenvalue-value $\delta_\G(2-\delta_\G)$ is
isolated in the $L^2$-spectrum of $\Delta$ by Lax-Phillips \cite{LaxPhillips}.

Note that the closed leaf  $\G \ba \G  N$ inside $\op{T}^1(\G\ba \bH^3)$
is an embedding of one of the following: a complex plane, a cylinder, or a torus.
 As $\phi_0>0$, it is a priori not clear whether the integral
$$\phi_0^N(a_y):=\int_{n_x\in (N\cap \G )\ba  N}\phi_0(x,y) dx$$ converges.
We show that for any $y>0$,
 the integral $\phi_0^N(a_y)$
does converge and is of the form
$$\phi_0^N(a_y)=c_{\phi_0} y^{2-\delta_\G} + d_{\phi_0} y^{\delta_\G} $$
for some constants $c_{\phi_0}>0$ and $d_{\phi_0}\ge 0$.
By $f(y)\sim g(y)$ with $y\to 0$, we mean that $\lim_{y\to 0}\frac{f(y)}{g(y)}=1$. The measure
$dn$ on $N$ is Lebesgue: $dn_x=dx_1dx_2$.
\begin{Thm}\label{lh} Let $\G< G$ be a geometrically finite torsion-free discrete subgroup with
$\delta_\G>1$ and let $\G\ba \G N$ be closed.
 There exists $\e>0$ such that for any $\psi\in C_c^\infty (\G\ba G)^K=C_c(\G\ba \bH^3)$,
and for all small $y>0$
$$\int_{(N\cap \G) \ba N } \psi(\G \ba \G n a_y ) \; dn = c_{\phi_0}\cdot
 \langle
 \psi,\phi_0\rangle_{L^2(\G\ba \bH^3)} \cdot y^{2-\delta_\G } (1+O(y^\e)) $$
where the implied constant depends only on the Sobolev norm of $\psi$.
\end{Thm}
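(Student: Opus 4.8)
The plan is to deduce the equidistribution statement of Theorem~\ref{lh} from the spectral decomposition of $L^2(\G\ba \bH^3)$ together with the fact, due to Lax--Phillips, that the base eigenvalue $\delta_\G(2-\delta_\G)$ is isolated at the bottom of the spectrum. Concretely, write $L^2(\G\ba\bH^3) = \C\phi_0 \oplus \mathcal{H}'$, where $\mathcal{H}'$ carries the rest of the spectrum of $\Delta$, which (because of the spectral gap) lies in $[\lambda_1,\infty)$ with $\lambda_1 > \delta_\G(2-\delta_\G)$. Set $s_0 = \delta_\G$, so $s_0 > 1$, and let $s_1$ be the exponent attached to $\lambda_1$, i.e. $\lambda_1 = s_1(2-s_1)$ with $1 \le s_1 < s_0$ (or complementary-series exponents strictly between; these only improve the error). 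The quantity to analyze is the horospherical integral $\mathcal{I}_\psi(y) := \int_{(N\cap\G)\ba N}\psi(\G\, n a_y)\,dn$. The first step is to make sense of this integral and relate it to the Laplace spectrum: for each eigenfunction $\phi$ with eigenvalue $s(2-s)$, the horospherical average $\phi^N(a_y) = \int_{(N\cap\G)\ba N}\phi(n a_y)\,dn$ satisfies an ordinary differential equation in $y$ obtained by restricting $\Delta$ to $N$-invariant functions — namely $-y^2 (\phi^N)'' + y(\phi^N)' = s(2-s)\phi^N$ after integrating the $x$-derivatives away — whose solutions are spanned by $y^s$ and $y^{2-s}$. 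This is exactly what underlies the claimed formula $\phi_0^N(a_y) = c_{\phi_0} y^{2-\delta_\G} + d_{\phi_0} y^{\delta_\G}$, and the main work in this first step is proving convergence of $\phi_0^N(a_y)$ and positivity of $c_{\phi_0}$; convergence should follow from Sullivan's shadow-lemma decay estimates for $\phi_0$ on the closed horosphere, and $c_{\phi_0}>0$ from $\phi_0>0$.

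The second step is to expand $\psi$ spectrally, $\psi = \langle\psi,\phi_0\rangle\,\phi_0 + \psi'$ with $\psi'\in\mathcal{H}'$, and to bound the contribution of $\psi'$ to $\mathcal{I}_\psi(y)$. For $\psi$ smooth and compactly supported, $\psi$ lies in the domain of all powers of $\Delta$, so $\psi'$ is as well; applying spectral theory, $\mathcal{I}_{\psi'}(y)$ should be controlled by an integral $\int_{\lambda_1}^\infty y^{2-\Re s(\lambda)} \, d\mu_\psi(\lambda)$ against the spectral measure of $\psi$, which is $O(y^{2-s_1+\text{(gap)}})$ — in any case $O(y^{2-\delta_\G+\e})$ for some $\e>0$ because $s_1 < s_0 = \delta_\G$, with the implied constant controlled by a Sobolev norm of $\psi$ (one needs enough $L^2$-Sobolev regularity to make the spectral integral converge, which is where the Sobolev-norm dependence in the statement enters). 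Combining, $\mathcal{I}_\psi(y) = \langle\psi,\phi_0\rangle\,\phi_0^N(a_y) + O_{\mathcal{S}(\psi)}(y^{2-\delta_\G+\e})$, and then inserting $\phi_0^N(a_y) = c_{\phi_0}y^{2-\delta_\G}(1 + (d_{\phi_0}/c_{\phi_0})y^{2\delta_\G-2})$ and noting $2\delta_\G - 2 > 0$ absorbs the lower-order term into the error gives exactly the asserted asymptotic with the stated main term $c_{\phi_0}\langle\psi,\phi_0\rangle y^{2-\delta_\G}(1+O(y^\e))$.

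There is a technical wrinkle I would address carefully: the horospherical average of a general $\phi'\in\mathcal{H}'$ need not converge absolutely, and the spectral integral must be handled in an $L^2$-sense rather than pointwise. The cleanest route is to test against $\psi$ first and only then decompose: consider $y \mapsto \mathcal{I}_\psi(y)$ directly, observe it satisfies the same second-order ODE modulo the projection onto $\phi_0$, and use the resolvent/unitarity of the $N$-action combined with the gap to push all error terms into $\mathcal{H}'$; alternatively, approximate $\psi'$ by a finite spectral truncation, handle the truncated sum termwise, and bound the tail in $L^2$ using smoothness. Either way the key analytic input is uniform decay of $N$-periods of eigenfunctions across the spectrum, which is a consequence of the spectral gap plus elliptic regularity.

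\medskip
\noindent\textbf{Expected main obstacle.} The hardest part is \emph{not} the $\phi_0$ main term but the uniform control of the complementary/principal series contributions near the bottom of the spectrum: one must show that no eigenvalue or spectral mass accumulates at $\delta_\G(2-\delta_\G)$ (this is precisely Lax--Phillips, which we may invoke) \emph{and} that the horospherical periods of the corresponding generalized eigenfunctions decay with a uniform power saving in $y$, with the constant polynomial in a fixed Sobolev norm of $\psi$. Making the period integrals converge — both for $\phi_0$ on the closed (possibly non-compact) horosphere $\G\ba\G N$ and for the error terms — hinges on Patterson--Sullivan decay estimates for $\phi_0$ along horospheres, and getting the exponent $\e$ and the dependence on $\psi$ exactly right is the delicate bookkeeping step. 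This is also where the hypothesis $\delta_\G > 1$ is essential: it guarantees $\phi_0$ is genuinely $L^2$, that the base eigenvalue is below the continuous spectrum, and that the error exponent $2-\delta_\G+\e$ is smaller than the main exponent $2-\delta_\G$ only after using $s_1 < \delta_\G$, which itself requires the gap.
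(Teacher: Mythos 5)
There is a genuine gap at the heart of your second step. You propose to decompose $\psi=\langle\psi,\phi_0\rangle\phi_0+\psi'$ and then to bound $\mathcal I_{\psi'}(y)$ ``by an integral against the spectral measure.'' But the horospherical period $\int_{(N\cap\G)\ba N}\psi'(na_y)\,dn$ is an integral over a set of measure zero in $\G\ba G$; it is not an $L^2$ inner product, so the spectral theorem for $\Delta$ gives you no direct control over it. The quantity the spectral gap actually controls is the decay of \emph{matrix coefficients} $\langle a_y w_1,w_2\rangle$ for vectors in representations without low complementary series (Proposition \ref{decay} and Corollary \ref{matco} in the paper). For a single tempered generalized eigenfunction the $N$-period is formally $c\,y^{1-it}+d\,y^{1+it}$ with constants depending on the eigenfunction, and there is no uniform, spectrally integrable bound on these constants; your fallback suggestions (testing against $\psi$ first, or a finite spectral truncation with an $L^2$ tail bound) do not repair this, because an $L^2$ bound on the tail says nothing about its restriction to the horosphere. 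The missing idea is to \emph{thicken} the horosphere: replace $\int_B\psi(na_y)\,dn$ by the honest inner product $\langle a_y\psi,\rho_{\eta,\e}\rangle_{L^2(\G\ba G)}$, where $\rho_{\eta,\e}$ is an $\e$-approximation of the horospherical piece in the transversal $AN^-M$-directions (Definition \ref{def}, Propositions \ref{hugh} and \ref{mat}); only then can Corollary \ref{matco} be applied.

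Even after thickening, the naive error analysis fails in a way your sketch does not anticipate: the thickening error is $O\bigl((\e+y)\,\mathcal I_\eta(\hat\psi)(a_y)\bigr)$, i.e.\ it is itself a horospherical integral of a companion function $\hat\psi$, hence of size $\e\, y^{2-\delta_\G}$ --- the same order as the main term up to the factor $\e$ --- while the spectral-gap error from Corollary \ref{matco} is $O(y^{2-s_\G}\e^{-5/2})$ and blows up as $\e\to0$. Optimizing $\e$ against $y$ in one step does not close the argument; the paper resolves this with a finite recursion $\mathcal I_\eta(\psi_i)(a_y)=\langle a_y\psi_i,\rho_\e\rangle+O((\e+y)\mathcal I_\eta(\psi_{i+1})(a_y))$, iterated $\ell$ times with $\ell$ chosen in terms of the gap $\delta_\G-s_\G$, before choosing $\e=y^{(\delta_\G-s_\G)/(p+1)}$. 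Your first step (the ODE for $\phi_0^N(a_y)$, convergence via the Patterson--Sullivan representation, positivity of $c_{\phi_0}$, and the localization to a bounded set properly covering $\Lambda_N(\G)$) is essentially the paper's Proposition \ref{ppo} and Corollary \ref{ppoo} and is sound, but without the thickening and the bootstrap the deduction of the theorem does not go through.
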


Thus as $y\to 0$, the integral
of any function $\psi \in C_c(\G\ba G)^K$  along
the orthogonal translate $\G\ba\G Na_y$ converges to $0$ with the speed of order $y^{2-\delta_{\G}}$. It also follows
that for $\psi\in C_c(\G\ba G)^K$, as $y\to 0$,
$$\int_{(N\cap \G) \ba N } \psi(\G \ba \G n a_y) \; dn \sim
 \la \psi, \phi_0\ra \cdot \int_{ (N\cap \G) \ba N } \phi_0(\G \ba \G n a_y) \; dn. $$

We denote by $\tilde \Omega_\G$ the set of vectors $(p, \vec v)$ in
the unit tangent bundle $\op{T}^1( \bH^3)$
such that the end point of the geodesic ray tangent to $\vec v$
belongs to the limit set $\Lambda(\G)$ and by
$\hat \Omega_\G$ its image under the projection of
$\op{T}^1(\bH^3)$ to $\op{T}^1(\G\ba \bH^3)$.

Roblin \cite{Roblin2003},
 generalizing the work of Burger \cite{Burger1990}, showed that, up to a constant multiple,
there exists a unique Radon measure
 $\hat \mu$  on $\op{T}^1(\G\ba \bH^3)$ invariant for the horospherical foliations
which is supported on $\hat \Omega_\G$ and gives zero measure
to all closed horospheres.

In the appendix \ref{app} written jointly
by Shah and the second named author, the following theorem is deduced from Theorem \ref{lh},
based on the aforementioned measure classification of
 Burger and Roblin.
In view of the isomorphism $\op{T}^1(\G\ba \bH^3)=\G\ba G /M,$
the following theorem
shows that the orthogonal translations of closed horospheres in the expanding direction
are equidistributed in $\op{T}^1(\G\ba \bH^3)$ with respect to the Burger-Roblin
measure $\hat \mu$.

\begin{Thm}\label{lhtwo}  For any $\psi\in C_c( \G\ba G)^M$, as $y\to 0$,
$$
\int_{ (N\cap \G) \ba N } \psi(\G \ba \G n a_y) \; dn \sim
 c_{\phi_0}\cdot  y^{2-\delta_\G} \cdot \hat \mu(\psi)  $$
where $\hat \mu$ is normalized so that $\hat \mu (\phi_0)=1$.
\end{Thm}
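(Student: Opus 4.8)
The plan is to deduce Theorem~\ref{lhtwo} from Theorem~\ref{lh} together with the Burger--Roblin classification of horospherically invariant measures, by a soft measure-rigidity argument. For $0<y<1$ let $\mu_y$ be the Radon measure on $\G\ba G$ obtained by pushing forward the Haar measure $dn$ on $(N\cap\G)\ba N$ under $n\mapsto \G n a_y$ and rescaling by $y^{\delta_\G-2}$, so that $\mu_y(f)=y^{\delta_\G-2}\int_{(N\cap\G)\ba N}f(\G n a_y)\,dn$; let $\bar\mu_y$ be its image on $\G\ba G/M=\op{T}^1(\G\ba\bH^3)$, and let $\pi:\op{T}^1(\G\ba\bH^3)\to\G\ba\bH^3$ be the bundle projection. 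Since the test functions in Theorem~\ref{lhtwo} are right $M$-invariant, the assertion is exactly that $\bar\mu_y\to c_{\phi_0}\hat\mu$ in the vague topology as $y\to 0$, with $\hat\mu$ normalized by $\hat\mu(\phi_0)=1$; unwinding the definition of $\bar\mu_y$ then yields the stated asymptotic.

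First I would show that $\{\bar\mu_y\}$ is vaguely precompact. Given $\Phi\in C_c(\op{T}^1(\G\ba\bH^3))$, choose $\Psi\in C_c(\G\ba\bH^3)$ with $\Psi\ge 0$ and $\Psi\ge 1$ on the (compact) image $\pi(\supp\Phi)$; then $|\Phi|\le\|\Phi\|_\infty\,(\Psi\circ\pi)$, hence $|\bar\mu_y(\Phi)|\le\|\Phi\|_\infty\,\pi_*\bar\mu_y(\Psi)$, and by Theorem~\ref{lh} applied to the $K$-invariant function $\Psi$ one has $\pi_*\bar\mu_y(\Psi)\to c_{\phi_0}\langle\Psi,\phi_0\rangle$, so $\bar\mu_y(\Phi)$ is bounded in $y$. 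Thus every sequence $y_k\to 0$ has a subsequence along which $\bar\mu_{y_k}\to\bar\mu_\infty$ vaguely. Next I would note that each $\mu_y$ is already right $N$-invariant: for $n_0\in N$, writing $a_y n_0 a_y^{-1}=n_0'\in N$ (as $a_y$ normalizes the abelian group $N$) gives $\G n a_y n_0=\G(nn_0')a_y$, and $n\mapsto nn_0'$ preserves $dn$ on $(N\cap\G)\ba N$; since $M$ normalizes $N$, this invariance descends to $\bar\mu_y$ and passes to the limit, so $\bar\mu_\infty$ is an $N$-invariant (i.e. horospherical-foliation invariant) Radon measure on $\op{T}^1(\G\ba\bH^3)$.

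Now I would invoke Theorem~\ref{lh} a second time at the level of pushforwards: $\pi$ is proper, so $\pi_*$ is vaguely continuous and $\pi_*\bar\mu_\infty=\lim_k\pi_*\bar\mu_{y_k}$, while for $\Psi\in C_c(\G\ba\bH^3)$ Theorem~\ref{lh} gives $\pi_*\bar\mu_{y_k}(\Psi)\to c_{\phi_0}\langle\Psi,\phi_0\rangle$; hence $\pi_*\bar\mu_\infty=c_{\phi_0}\,\phi_0\,\tfrac{dx_1dx_2dy}{y^3}$, an absolutely continuous, nonzero measure on $\G\ba\bH^3$. Consequently $\bar\mu_\infty$ charges no closed horosphere $C$: its image $\pi(C)$ is $2$-dimensional, hence of zero volume, and $\bar\mu_\infty(C)\le\bar\mu_\infty(\pi^{-1}(\pi(C)))=\pi_*\bar\mu_\infty(\pi(C))=0$. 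By the theorem of Burger and Roblin, an $N$-invariant Radon measure on $\op{T}^1(\G\ba\bH^3)$ giving zero mass to all closed horospheres must be a multiple of $\hat\mu$; thus $\bar\mu_\infty=c\,\hat\mu$ for some $c\in(0,\infty)$ (positivity because $\pi_*\bar\mu_\infty\ne 0$). Finally, pairing with $\phi_0$ (legitimate since $\phi_0\in L^2$ with $\|\phi_0\|_{L^2}=1$ and $\hat\mu(\phi_0)<\infty$),
$$
c \;=\; c\,\hat\mu(\phi_0)\;=\;\int (\phi_0\circ\pi)\,d\bar\mu_\infty\;=\;\int \phi_0 \, d(\pi_*\bar\mu_\infty)\;=\;c_{\phi_0}\int \phi_0^2\,\tfrac{dx_1dx_2dy}{y^3}\;=\;c_{\phi_0}.
$$
Since every vague limit point equals $c_{\phi_0}\hat\mu$, the full family $\bar\mu_y$ converges to $c_{\phi_0}\hat\mu$, which is Theorem~\ref{lhtwo}.

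The step I expect to be the main obstacle is the input from Burger and Roblin, and in particular making the exclusion of closed-horosphere components rigorous: one must check that the (in general infinite) ergodic decomposition of the Radon measure $\bar\mu_\infty$ interacts correctly with $\pi_*$, so that absolute continuity of $\pi_*\bar\mu_\infty$ genuinely forces every ergodic component carried by a closed horosphere to vanish, and then apply Roblin's classification of horospherically invariant ergodic Radon measures in the appropriate form (slightly more than the uniqueness statement recalled above). Everything else---precompactness, $N$-invariance of the limit, and the evaluation of the constant---is routine once Theorem~\ref{lh} is available.
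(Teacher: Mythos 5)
Your overall architecture is exactly the one the paper follows in Appendix~\ref{app} (Theorem~\ref{fttt}): form the rescaled measures $\mu_y$, use Theorem~\ref{lh} to get vague precompactness, observe that any limit $\mu_0$ is $N$-invariant, rule out mass on closed horospheres, invoke Theorem~\ref{roblin}, and pin down the constant by pairing with $\phi_0$. The precompactness step, the $N$-invariance, the identification $\pi_*\mu_0=c_{\phi_0}\,\phi_0\,\tfrac{dx_1dx_2dy}{y^3}$, and the final normalization are all fine.

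The gap is in the exclusion step, and it is precisely the point you flag at the end as ``the main obstacle''---but it is not a routine check; the check fails. What you prove is that each \emph{individual} closed horosphere $C$ satisfies $\mu_0(C)=0$, because $\pi(C)$ is null for the absolutely continuous measure $\pi_*\mu_0$. What the Burger--Roblin classification needs (and what the paper actually proves) is that $\mu_0$ gives zero mass to the \emph{union} $\mathcal E_P$ of all closed-horosphere orbits. These are genuinely different because the closed horospheres come in continuous families: the $A$-translates of a cuspidal horosphere sweep out a three-dimensional ``singular tube'' $\G\ba\G g_0NA$, and the horospheres based at points of $\hat\c\setminus\Lambda(\G)$ fill out the open set $\op{T}^1(\G\ba\bH^3)\setminus\hat\Omega_\G$. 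An $N$-invariant Radon measure can be a diffuse superposition of the Lebesgue measures on the leaves of such a family---for instance the restriction of the Liouville measure to $\op{T}^1(\G\ba\bH^3)\setminus\hat\Omega_\G$, or $dn\otimes h(y)\,dy$ on a cuspidal tube---and such a measure gives zero mass to every single closed horosphere while charging the family; moreover its projection to $\G\ba\bH^3$ is absolutely continuous, so your argument cannot detect it. Concretely, $\hat\mu+\mathrm{Liouville}|_{\op{T}^1(\G\ba\bH^3)\setminus\hat\Omega_\G}$ satisfies every hypothesis you verify yet is not proportional to $\hat\mu$ (note you have also silently dropped the support condition ``whose support is $\hat\Omega_\G$'' from Theorem~\ref{roblin}). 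If $\mu_0=c\hat\mu+\sigma$ with $\sigma$ such a superposition, your last computation returns $c=c_{\phi_0}-\sigma(\phi_0\circ\pi)<c_{\phi_0}$, so the error propagates into the asymptotic constant.

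Closing this gap is the real content of the appendix: the paper shows $\mu_0(\mathcal E_P)=0$ by a quantitative argument in the cusps, using the two-sided bound $\phi_0\asymp y^{r-\delta}$ in a rank-$r$ cusp neighborhood, the estimate $\int_{D_\e K}\phi_0^2\,d\mu\ll y_\e^{r-2\delta}$, and a renormalization by the geodesic flow $a_{y_\e}$, with $\delta_\G>1$ making $y_\e^{2-2\delta}\to0$. Some input of this quantitative kind---or an independent proof that $\mu_0$ is carried by $\hat\Omega_\G$ and charges no cuspidal tube---is unavoidable; the soft projection argument does not substitute for it.
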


Theorem \ref{lhtwo} was proved by Roblin \cite{Roblin2003} when $(N\cap \G)\ba N$ is
compact with a different interpretation of the constant $c_{\phi_0}$. His proof
does not yield an effective version as in Theorem \ref{lh} but works for any
$\delta_\G>0$.

We also remark that the quotient type equidistribution results for {\it non-closed} horocycles
for geometrically finite surfaces were established by Schapira \cite{Schapira2005}.

We conclude the introduction by giving a brief outline of
the proof of Theorem \ref{lh}. By Dal'bo (Theorem \ref{dalbo}),
we have the following classification of
closed horospheres in terms of its base point in the
boundary $\partial_\infty(\bH^3)$: $\G\ba \G N$ is closed if and only if either $\infty\notin \Lambda(\G)$ or
$\infty$ is a bounded parabolic fixed point (see Def. \ref{boundedparabolic}).
This classification is used repeatedly in our analysis establishing the following facts:
\begin{enumerate}
\item For any bounded subset $B \subset (N\cap \G)\ba N$ which
 properly covers $(N\cap \G)\ba (\Lambda(\G)-\{\infty\})$ (cf. Def. \ref{proper}),
$$\int_{(N\cap \G) \ba N } \phi_0( n a_y) \; dn =
\int_{B} \phi_0(n a_y) \, dn  + O(y^{\delta_\G}).$$
\item  Denoting by $\rho_{B, \e}\in C_c(\G\ba G)$
 the $\e$-approximation of $B$ in the transversal direction,
$$\int_{B} \phi_0( n a_y) \; dn= \la a_y \phi_0, \rho_{B,\e} \ra + O(\e y^{2-\delta_\G}) +O(y^{\delta_\G}).$$
\item For $\psi\in C_c(\G\ba \bH^3)$,
there exists a compact subset $B=B(\text{supp}(\psi))\subset (N\cap \G)\ba N$
such that for all $0<y<1$
$$\int_{(N\cap \G)\ba N } \psi (na_y)\, dn =
\int_{B} \psi(n a_y) \, dn .$$
\end{enumerate}

These facts allow us to focus on
the integral of $\psi$ over a compact region, say, $B$, of
 $(N\cap \G)\ba N$ instead of the whole space.
In approximating $\int_{B}\psi(na_y)\, dn$ with
$\la a_y \psi, \rho_{B,\e}\ra$, the usual argument based on the contracting property
along the stable horospheres is not sufficient:
the error terms combine with those coming from the spectral gap to overtake the main term!
We develop a recursive argument which improves the error upon each iteration, and halts in finite time, once the main term is dominant. Using the spectral theory
of $L^2(\G\ba G)$ along with the assumption $\delta_\G>1$, we
get a control of the main term of $\la a_y \psi, \rho_{B,\e}\ra$ as $\la \psi, \phi_0\ra \cdot
\la a_y \phi_0, \rho_{B,\e}\ra $.

For the application to Apollonian packings, we need to consider the max norm, which necessitates
the extension of our argument  to the unit tangent bundle, that is, the deduction of Theorem
\ref{lhtwo} from \ref{lh} as done in Appendix \ref{app}.

 We finally remark that the power savings error term in (1) of Theorem \ref{lh} is crucial to prove Theorem \ref{thmTwo}.

\vs

After this paper was submitted, the asymptotic formula for counting circles
in a curvilinear triangle of any Apollonian packing
has been obtained in \cite{OhShahcircle}.
See also \cite{OhShahGFH} for similar counting results for hyperbolic and spherical
Apollonian circle packings. We also refer to \cite{OhICM} for a survey on recent
progress on counting circles.

\vs
\noindent{\bf Acknowledgments.}
We are grateful to Peter Sarnak for introducing us to this problem and for helpful discussions.
 We also thank Yves Benoist, Jeff Brock and Curt McMullen
for useful conversations.

\section{Reduction to orbital counting}\label{red}
\subsection{Apollonian group}\label{red1}
In a quadruple of mutually tangent circles, the curvatures $a,b,c,d$
satisfy the Descartes equation:
$$2(a^2+b^2+c^2+d^2)={(a+b+c+d)}^2$$
as observed by Descartes in 1643 (see \cite{Coxeter1968} for a proof).

Any quadruple $(a,b,c,d)$ satisfying this equation is called a
Descartes quadruple. A set of four mutually tangent circles with
disjoint interiors is called a Descartes configuration.


We denote by $Q$ the Descartes quadratic form given by
$$Q(a,b,c,d)=a^2+b^2+c^2+d^2-\frac{1}{2}{(a+b+c+d)}^2 .$$

Hence $v=(a,b,c,d)$ is a Descartes quadruple if and only if
$ Q (v)=0$.
The orthogonal group corresponding to $Q$ is
given by  $$\O_Q=\{g\in \GL_4 : Q(vg^t)=Q(v) \text{ for all $v\in \br^4$} \} .$$
One
 can easily check that
the Apollonian group $\mathcal A:=\la S_1, S_2, S_3, S_4\ra $ defined in the introduction
 is a subgroup of $\O_Q(\z):=\O_Q\cap \GL_4(\z)$.


\begin{Def}\label{dnp} {\rm
\begin{enumerate}
 \item For $\mathcal P$ bounded, denote by $N^{\mathcal P}
(T)$ the number of circles in $\mathcal P$ in the packing whose curvature is at most $T$,
i.e., whose radius is at least $1/T$.
Denote by $N^{\mathcal P}_2(T)$ the number of pairs of tangent circles in $\mathcal P$
of curvatures at most $T$.

\item For $\mathcal P$ congruent to the packing in Figure \ref{InfPack},
  $N^\P (T)$ denotes the number of circles between two largest
tangent circles including the lines and the largest circles.
Similarly, $N^{\mathcal P}_2(T)$ denotes the number of unordered pairs of tangent circles between two
largest tangent circles including the pairs containing  the lines and the largest circles.

\item For $\mathcal P$ bounded, denote by $\pi^{\mathcal P}(T)$ the
number of  prime circles in $\mathcal P$  of curvature at
most $T$.  Denote by $\pi^{\mathcal P}_2(T)$
 the number of twin prime circles in $\mathcal P$ of
 curvatures at most $T$.

\item For $\mathcal P$ congruent to the packing in Figure \ref{InfPack},
 one alters the definition of $\pi^\P (T)$ and $\pi^{\mathcal P}_2(T)$ to count prime circles in a fixed period.
\end{enumerate}

} \end{Def}

We will interpret $N^{\mathcal P}(T)$ and $N_2^{\mathcal P}(T)$ as orbital counting
functions on $\xi \mathcal A^t$ for a carefully chosen
Descartes quadruple $\xi$ of $\mathcal P$.


\begin{Def}\label{root}{\rm  A Descartes quadruple $v=(a,b,c,d)$ with $a+b+c+d>0$ is
a root quadruple if $a\le 0\le b\le c\le d$ and $a+b+c\ge d$.}\end{Def}

If $\mathcal P$ is bounded,
Theorem 3.2 in
\cite{GrahamLagariasMallowsWilksYanI-n} shows that $\mathcal P$ contains a unique Descartes root quadruple $\xi:=(a,b,c,d)$
with $a<0$.

\begin{Thm}\label{gm} \cite[Thm 3.3]{GrahamLagariasMallowsWilksYanI-n}
 The set of curvatures occurring in $\mathcal P$, counted with multiplicity,
consists of the four entries in $\xi$, together with the largest entry in each vector $\xi \gamma^t$ as $\gamma$ runs over all non-identity elements of the Apollonian group $\mathcal A$.
\end{Thm}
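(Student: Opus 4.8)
The plan is to exhibit a bijection between the circles of $\mathcal P$ not lying in the initial Descartes configuration $D_0$ (the one whose curvature vector is the root quadruple $\xi$) and the non-identity elements of $\mathcal A$, under which the curvature of a circle equals the largest entry of the corresponding vector $\xi\gamma^t$.

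First I would organize the packing into a tree of configurations. For a Descartes configuration $D=\{C_1,C_2,C_3,C_4\}$ of $\mathcal P$, let $\sigma_i(D)$ be obtained by replacing $C_i$ with the unique other circle tangent to the remaining three; by \eqref{dprime}, on ordered curvature vectors $\sigma_i$ is the map $v\mapsto vS_i^t$. Let $\mathcal T$ be the graph on the configurations of $\mathcal P$ with edges $\{D,\sigma_i(D)\}$. Every configuration is produced from $D_0$ by finitely many $\sigma_i$, so $\mathcal T$ is connected, and each $\sigma_i$ is a fixed-point-free involution on configurations (the two circles tangent to three mutually tangent ones being distinct), so $\mathcal T$ is $4$-regular. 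The first substantive step is to show $\mathcal T$ is a tree — equivalently, that $\mathcal A$ acts freely on configurations and is the free product $\mathbb Z/2 * \mathbb Z/2 * \mathbb Z/2 * \mathbb Z/2$. Granting this, $\mathcal T$ is the $4$-regular tree rooted at $D_0$, and $\gamma=S_{i_m}\cdots S_{i_1}\mapsto \sigma_{i_m}\cdots\sigma_{i_1}(D_0)$ is a bijection from $\mathcal A$ onto its vertex set sending $1$ to $D_0$. For a vertex $D\ne D_0$ with parent $D'$ reached from $D'$ by the last swap $\sigma_{i_m}$, call the circle of $D$ not in $D'$ the \emph{newborn} circle of $D$; it sits in slot $i_m$. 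Finally I would check that sending a circle $C\notin D_0$ to the unique configuration at which it is newborn is a bijection onto the vertices $\ne D_0$: every such $C$ is created at a finite stage, hence is newborn somewhere, and the set of configurations of $\mathcal P$ containing a fixed circle $C$ is a subtree of $\mathcal T$ (this set is a path, as one sees by inverting $C$ to a line and recognizing the resulting strip-type packing), so it has a unique vertex nearest $D_0$, which is where $C$ is newborn. Combining, the curvature of the newborn circle of the vertex $\gamma$ is the $i_m$-th entry of $\xi\gamma^t$, while the circles of $D_0$ contribute exactly the four entries of $\xi$.

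It then remains to prove the key \textbf{monotonicity lemma}: for every reduced word $\gamma=S_{i_m}\cdots S_{i_1}\ne 1$, the $i_m$-th entry of $\xi\gamma^t$ is the strict maximum of the four entries. I would prove this by induction on $m$ using two elementary inputs. (a) In a bounded packing the bounding circle is the unique circle of non-positive curvature and is larger than every other circle of $\mathcal P$, so the sum of any two entries of any curvature vector occurring in $\mathcal P$ is strictly positive. (b) Since $\mathcal P$ is bounded, by \cite[Theorem 3.2]{GrahamLagariasMallowsWilksYanI-n} and Definition~\ref{root} the sorted $\xi=(a,b,c,d)$ satisfies $a+b+c\ge d$, hence also $a+b\ge d-c\ge 0$; moreover $\xi\ne(0,0,c,c)$, which forces strictness where needed. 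For $m=1$, $\gamma=S_i$: the $i$-th entry of $\xi S_i^t$ is $2(\sum_{l\ne i}\xi_l)-\xi_i$, and its difference with any $\xi_j$, $j\ne i$, equals $\xi_j-\xi_i+2(\sum_{l\ne i,\,l\ne j}\xi_l)$, which is $>0$ by (a) together with the root inequalities in (b). For the step, write $\gamma=S_{i_m}\gamma'$ with $\gamma'=S_{i_{m-1}}\cdots S_{i_1}$ and $v'=\xi(\gamma')^t$; by induction $v'_{i_{m-1}}$ is the strict maximum of $v'$, and because $i_m\ne i_{m-1}$ the new $i_m$-th entry $2(\sum_{l\ne i_m}v'_l)-v'_{i_m}$ exceeds $v'_{i_{m-1}}$, since their difference is $v'_{i_{m-1}}-v'_{i_m}+2(\sum_{l\ne i_m,\,l\ne i_{m-1}}v'_l)$, positive by $v'_{i_{m-1}}>v'_{i_m}$ and (a); so it dominates every surviving entry and is the strict maximum of $\xi\gamma^t$. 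Together with the bijections above, this gives exactly the claimed description of the curvatures of $\mathcal P$ with multiplicity.

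The step I expect to be the main obstacle is showing that $\mathcal T$ is a tree (equivalently, the free-product structure of $\mathcal A$ and the freeness of its action on configurations). One approach is a ping-pong argument for the generators $S_i$ acting on $\mathbb R^4$; another is to bootstrap from the monotonicity lemma, noting that on any putative cycle in $\mathcal T$ the vertex maximizing the largest-entry function has, by the lemma, at most one incident edge along which that function does not strictly increase, contradicting the fact that a cycle vertex has two neighbors on the cycle unless the walk backtracks. Either way, some care is needed around the degenerate configurations with $a+b+c=d$ — there $\sigma_4$ preserves the curvature vector of $D_0$ but not the configuration itself, and correspondingly there are two circles of curvature $d$ — so isolating that case is where I would concentrate the effort, the remaining inequalities being the routine two-line computations indicated above.
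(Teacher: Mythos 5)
Your proposal is correct and follows essentially the same route as the paper, which defers the proof to Graham--Lagarias--Mallows--Wilks--Yan but records as "the key observation" exactly your monotonicity lemma (the newly inserted entry along a non-returning walk is the largest entry of the new vector), and then describes the same bijection $\gamma\mapsto C_\gamma$ between non-identity reduced words and newborn circles. Your explicit induction, the use of the root-quadruple inequalities for the base case, and your flagging of the degenerate $a+b+c=d$ configurations all match the intended argument.
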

In \cite{GrahamLagariasMallowsWilksYanI-n}, this is stated only for
an integral Apollonian packing, but the same proof works for any bounded packing.
Let $w^{(n)}$ be a non-returning walk away from the root quadruple $\xi$ along the Apollonian group, i.e., $w^{(n)}=\xi S_{i_1}^t\cdots S_{i_n}^t$ with $S_{i_k}\ne S_{i_{k+1}}$, $1\le k\le n-1$.
Then the key observation in the proof of above theorem is that
$w^{(n)}$ is obtained from $w^{(n-1)}$ by changing one entry, and moreover the new entry
inserted is always the largest entry in the new vector.

This theorem yields that for $T\gg 1$,
$$N^{\mathcal P}(T)=\# \{\gamma\in \mathcal A: \|\xi \gamma^t\|_{\max} <T\} +3 .$$

Consider the repeated generations of $\mathcal P$ with initial $4$ circles given by the root
quadruple. Then a geometric version of Theorem \ref{gm} is that for $n\ge 1$,
each reduced word $\gamma=S_{i_n}\cdots S_{i_1}$ of length $n$
corresponds to exactly one new
 circle, say $C_\gamma$, added at the $n$-th generation and the curvature of $C_\gamma$ is the maximum
among the entries of the quadruple $\xi \gamma^t$ (cf. \cite[Section 4]{GrahamLagariasMallowsWilksYanI}).
Thus the correspondence $\phi:\gamma \mapsto C_\gamma $ establishes a bijection between
the set of all non-identity elements of $\mathcal A$ and the set of all circles not in the zeroth generation. Hence
 the set $\{C_{\gamma} | \gamma \ne e ,\|\xi \gamma^t\|<T\}$ gives
 all circles (excluding those $4$ initial circles) of curvature at most $T$.

For each $\gamma \ne e$ in $\mathcal A$,
set $$\phi_2(\gamma)=\{ \{C_\gamma, C_\gamma(1) \}, \{C_\gamma, C_\gamma (2) \}, \{C_\gamma, C_\gamma(3) \} \}$$
where $C_\gamma(i)$, $i=1,2,3$, are the three circles corresponding to the quadruple $\xi \gamma^t$ besides $C_\gamma$. Noting that each $(C_\gamma, C_\gamma(i))$ gives a pair of tangent circles,
we claim that every pair of tangent circles arises as one of the triples in the image of $\phi_2$,
provided one of the circles in the pair does not come from the zeroth stage.
If $C$ and $D$ form such a pair and are not
from the initial stage, then they are different generations and it is
obvious that no two circles in the same generation touch each other.
If, say, $D$ is generated earlier than $C$,
 then for the element $\gamma\in \mathcal A$
giving $C=C_\gamma$, which is necessarily a non-identity element,
$D$ must be one of $C_\gamma (i)$'s. This is because it is clear from
the construction of the packing that every circle is tangent only {\it three} circles from previous generations.

Therefore $\phi_2$ yields a one to three correspondence from $\mathcal A\setminus\{e\}$ to
the set of all unordered pairs of tangent circles in $\mathcal P$, at least one circle of whose pair does
not correspond to the root quadruple.
Since there are $6$ pairs arising from the initial $4$ circles, we deduce
that
for $T\gg 1$,
\begin{align}\label{ntwo} N^{\mathcal P}_2(T) &=  3\cdot \# \{\gamma\in \mathcal A: \|\xi \gamma^t\|_{\max} <T\}
+3 . \end{align}

The above argument establishing \eqref{ntwo} was kindly explained to us by Peter Sarnak.



If $\mathcal P$ lies between two parallel lines, that is, congruent to Figure \ref{InfPack},
there exists the unique $c>0$ such that
$\mathcal P$ contains a Descartes quadruple $\xi:=(0,0,c,c)$.

In this case,
the stabilizer of $\xi$ in $\mathcal A ^t$ is
 generated by two reflections $S_3^t$ and $S_4^t$.
One can directly verify that
for all $T\gg 1$,
  $$N^{\mathcal P}(T)= { \#\{v \in \xi \mathcal A^t  :\|v \|_{\max} <T\}} +3 ;$$
and $$N^{\mathcal P}_2(T) =  3\cdot \# \{ v \in \xi\mathcal A^t: \|v \|_{\max} <T\}+3.$$


\begin{Lem}\label{reduction}
 Let $\mathcal P$ be either bounded or congruent to Fig. \ref{InfPack}. \begin{enumerate}
\item  For all $T\gg 1$,
$$N^{\mathcal P}(T)=\begin{cases} \# \op{Stab}_{\mathcal A^t}(\xi)\cdot \#\{v \in \xi \mathcal A^t  :
 \|v \|_{\max} <T\} + 3 &\text{for $\mathcal P$ bounded}\\
{ \#\{v \in \xi \mathcal A^t  :\|v \|_{\max} <T\}} +3&\text{otherwise} .\end{cases}
$$
\item  For all $T\gg 1$, $$ N^{\mathcal P}_2(T) = 3 N^{\mathcal P}(T) -6. $$
 \item The orbit $\xi \cA^t$ is discrete in $\br^4$.
\item For all $T\gg 1$,
$$\pi^{\mathcal P}(T) \ll \sum_{i=1}^4 \#\{v=(v_1,v_2,v_3,v_4) \in \xi \mathcal A^t  :
 \|v \|_{\max} <T, v_i\text{ is prime}\} .$$

\item For all $T\gg 1$,
 $$\pi_2^{\mathcal P}(T) \ll \sum_{1\le i\ne j\le 4} \#\{v \in \xi \mathcal A^t  :
 \|v \|_{\max} <T, v_i, v_j \text{ are primes}\}.$$

\end{enumerate}
\end{Lem}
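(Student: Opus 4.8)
The plan is to deduce all five parts from the combinatorial analysis carried out just above the statement. Recall from there Theorem~\ref{gm}, the bijection $\phi\colon\gamma\mapsto C_\gamma$ between $\mathcal A\setminus\{e\}$ and the circles of $\mathcal P$ lying outside the root configuration, the one-to-three correspondence $\phi_2$ onto tangent pairs, and the resulting identities valid for $T\gg1$: in the bounded case $N^{\mathcal P}(T)=\#\{\gamma\in\mathcal A:\|\xi\gamma^t\|_{\max}<T\}+3$ and $N^{\mathcal P}_2(T)=3\,\#\{\gamma\in\mathcal A:\|\xi\gamma^t\|_{\max}<T\}+3$, and in the case $\mathcal P$ congruent to Figure~\ref{InfPack} the same two identities with the counting set replaced by $\{v\in\xi\mathcal A^t:\|v\|_{\max}<T\}$.

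First I would record the elementary fact that $N^{\mathcal P}(T)<\infty$ for every $T>0$: in the bounded case the circles of curvature $\le T$ have pairwise disjoint interiors and lie inside the bounding disc, so there are at most $O(T^2)$ of them, and in the case of Figure~\ref{InfPack} the same disc-packing estimate applies inside one period. By the identities above this makes the relevant counting set finite, of cardinality $N^{\mathcal P}(T)-3$: namely $\{\gamma\in\mathcal A:\|\xi\gamma^t\|_{\max}<T\}$ in the bounded case — hence also $\{g\in\mathcal A^t:\|\xi g\|_{\max}<T\}$, via the bijection $\gamma\mapsto\gamma^t$ of $\mathcal A$ onto $\mathcal A^t$ — and $\{v\in\xi\mathcal A^t:\|v\|_{\max}<T\}$ in the case of Figure~\ref{InfPack}. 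Since $\xi\mathcal A^t\cap\{v:\|v\|_{\max}<T\}$ is in either case finite for every $T$, and a subset of $\br^4$ meeting every norm-ball in a finite set is closed and discrete, assertion (3) follows. Moreover, in the bounded case $\op{Stab}_{\mathcal A^t}(\xi)$ is contained in $\{g\in\mathcal A^t:\|\xi g\|_{\max}<T\}$ whenever $T>\|\xi\|_{\max}$, hence is finite; then every fibre of the orbit map $g\mapsto\xi g$ on $\mathcal A^t$ has cardinality $\#\op{Stab}_{\mathcal A^t}(\xi)$ and $\|\xi g\|_{\max}$ is constant on it, so $N^{\mathcal P}(T)-3=\#\{g\in\mathcal A^t:\|\xi g\|_{\max}<T\}=\#\op{Stab}_{\mathcal A^t}(\xi)\cdot\#\{v\in\xi\mathcal A^t:\|v\|_{\max}<T\}$, which is (1) in the bounded case; (1) for Figure~\ref{InfPack} is the recalled identity itself. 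Assertion (2) is then immediate: in both cases, eliminating the common count $\#\{\gamma\in\mathcal A:\|\xi\gamma^t\|_{\max}<T\}$ (respectively $\#\{v\in\xi\mathcal A^t:\|v\|_{\max}<T\}$) between the formulas for $N^{\mathcal P}$ and $N^{\mathcal P}_2$ gives $N^{\mathcal P}_2(T)=3\big(N^{\mathcal P}(T)-3\big)+3=3N^{\mathcal P}(T)-6$.

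For (4) and (5), which are only upper bounds, overcounting is harmless. A prime circle of curvature $<T$ is, apart from the $O(1)$ circles of the root configuration, of the form $C_\gamma$ for a $\gamma\ne e$; its curvature is the maximal entry of $v:=\xi\gamma^t$ and, being prime, equals $v_i$ for some $i$, with $\|v\|_{\max}=v_i<T$, so $v$ is counted by $\#\{v\in\xi\mathcal A^t:\|v\|_{\max}<T,\ v_i\ \mathrm{prime}\}$; summing over $i\in\{1,2,3,4\}$ and absorbing the constant $\#\op{Stab}_{\mathcal A^t}(\xi)$ (bounded case) or, for Figure~\ref{InfPack}, using the correspondence underlying the identities of (1), yields (4). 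Likewise a twin prime pair of curvatures $<T$, unless both of its circles lie in the root configuration, belongs to $\phi_2(\gamma)$ for some $\gamma\ne e$, hence equals $\{C_\gamma,C_\gamma(k)\}$ for some $k$; its two prime curvatures are two distinct entries $v_i$ (the maximal one) and $v_j$ of $v=\xi\gamma^t$, so $v$ is counted by $\#\{v\in\xi\mathcal A^t:\|v\|_{\max}<T,\ v_i,v_j\ \mathrm{prime}\}$, and summing over the ordered pairs $i\ne j$ gives (5). The only step not settled by pure group theory is the discreteness of $\xi\mathcal A^t$ together with the finiteness of $\op{Stab}_{\mathcal A^t}(\xi)$ — the stabilizer of $\xi$ in the ambient group $\O_Q(\br)$ being non-compact — which is exactly why the argument is anchored in the geometric finiteness of $N^{\mathcal P}(T)$.
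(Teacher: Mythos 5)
Your proposal is correct and follows essentially the same route as the paper: parts (1)--(3) are read off from the identities established in the preceding discussion (Theorem \ref{gm}, the bijection $\gamma\mapsto C_\gamma$ and the one-to-three correspondence $\phi_2$), using the finiteness of $N^{\mathcal P}(T)$ for discreteness and the finiteness of $\op{Stab}_{\mathcal A^t}(\xi)$ to pass from group elements to orbit points, while (4) and (5) are the same overcounting arguments via the fact that a prime curvature is the maximal entry of some $\xi\gamma^t$. The only difference is that you supply the routine justifications (the $O(T^2)$ disc-packing bound and the constancy of fibres of the orbit map) that the paper leaves implicit.
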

\begin{proof}
The first two claims follow immediately from
the discussion above, noting that the stabilizer of $\xi$ in $\mathcal A^t$ is finite for $\mathcal P$ bounded.
The third claim follows from the fact that
$N^{\mathcal P}(T)<\infty $ for any $T>0$.
For claims (4) and (5), note that for $\mathcal P$ bounded,
\begin{align*} \pi^\mathcal P(T)& \le
3+\#\{\gamma\in \mathcal A: \|\xi \gamma^t\|_{\max}  \text{ is prime} <T\} \\
&\ll \sum_{i=1}^4 \#\{v=(v_1,v_2,v_3,v_4) \in \xi \mathcal A^t  :
 \|v \|_{\max} <T, v_i\text{ is prime}\},
\end{align*}
and
\begin{align*} \pi_2^\mathcal P(T)& \le
5+\#\{\gamma\in \mathcal A: \|\xi \gamma^t\|_{\max}
\text{ is prime} <T, \text{one more entry of $\xi\gamma^t$ is prime}\} \\
&\ll \sum_{i=1}^4 \sum_{j\ne i} \#\{v=(v_1,v_2,v_3,v_4) \in \xi \mathcal A^t  :
 \|v \|_{\max} <T, v_i, v_j \text{ are primes}\}.
\end{align*}
The claim (4) and (5) for $\mathcal P$ congruent to Fig. \ref{InfPack} can be shown similarly.

\end{proof}

We remark that there are bounded packings which are not multiples of integral packings: for instance, $\xi=(3 - 2\sqrt3, 1, 1, 1)$ is a Descartes root quadruple which defines a bounded Apollonian packing. This is obvious from the viewpoint of geometry, but it is not at all clear {\it a priori} that the orbit $\xi\mathcal A^t$ should be discrete.
 Note also that there are other unbounded packings:
by applying a suitably chosen M\"obius transformation to a given packing, one can arrive at a packing which spreads uncontrollably to the entire plane,
 or one which is fenced off along one side by a single line.






\subsection{The residual set}
We consider the upper half-space model for the hyperbolic space: $$\mathbb H^3=
\{(x_1,x_2 ,y)\in \br^3:  y>0\}$$ with metric given by $\frac{dx_1^2+dx_2^2+ dy^2}{y^3}$.
A discrete subgroup of $\Isom(\bH^3)$ is called a Kleinian group.

The ideal boundary $\partial_\infty (\bH^3)$ of $\bH^3$
can be identified with the set of geodesic rays emanating from a fixed point $x_0\in \bH^3$.
The topology on  $\partial_\infty (\bH^3)$ is defined via the angles between
corresponding rays: two geodesic rays are close if and only if the angle between
 the corresponding rays is small.

In the upper half-space model, we can identify
$\partial_\infty (\bH^3)$ with the extended complex plane
 $\{(x_1, x_2, 0)\}\cup\{\infty\} =\c\cup\{\infty\}$, which is homeomorphic
to the sphere $\mathbb S^2$.
The space $\bH^3$ has the natural compactification $\bH^3\cup \partial_\infty(\bH^3)$
(cf. \cite[3.2]{Kapovichbook}).

\begin{Def} {\rm For a Kleinian group $\G$,
the limit set $\Lambda(\G)$ of $\G$ consists of limit points of an orbit $\G z$, $z\in \bH^3$ in the
ideal boundary $\hat \c =\c \cup \{\infty\}$. We denote by $\delta_{\G}$ the Hausdorff dimension of
$\Lambda(\G)$.}
\end{Def}
By Sullivan, $\delta_\G$ is equal to the critical exponent of $\G$ for geometrically
finite $\G$.
In this subsection,
we realize the action of $\mathcal A$ on Descartes quadruples arising from $\mathcal P$
as the action of
a subgroup, $G_\mathcal A(\mathcal P)$, of the M\"obius transformations on 
 $\hat \c$
in a way that the residual set of $\mathcal P$ coincides with the limit set of $G_{\mathcal A}(\mathcal P)$.
The residual set $\Lambda(\mathcal P)$ is defined to be the closure of all the circles in $\mathcal P$,
or equivalently, the complement in $\hat \c$ of the interiors
 of all circles in the packing $\mathcal P$ (where the circles are oriented so that the interiors are
disjoint).

An oriented Descartes configuration is a Descartes configuration in which the orientations of the circles are
compatible in the sense that either the interiors of all four oriented circles are disjoint or the
interiors are disjoint when all the orientations are reversed.
Given an ordered configuration $\mathcal D$ of four oriented circles
$(C_1,C_2,C_3,C_4)$ with curvatures $(b_1, b_2, b_3, b_4)$ and centers
$\{(x_i, y_i): 1\le i\le 4\}$, set
$$W_{\mathcal D}:=\begin{pmatrix} \bar b_1 & b_1 & b_1x_1& b_1y_1\\
 \bar b_2 & b_2 & b_2x_2& b_2y_2\\
  \bar b_3 & b_3 & b_3x_3& b_3y_3\\
 \bar b_4 & b_4 & b_4x_4& b_4y_4
\end{pmatrix},$$
where $\bar b_i$ is the curvature of the circle which is the reflection of $C_i$ through the unit circle centered at the origin, i.e., $\bar b_i=b_i (x_i^2+y_i^2) - b_i^{-1}$ if $b_i\ne 0$.
If one of the circles, say $C_i$, is a line, we interpret the center $(x_i,y_i)$ as the outward unit normal vector and set $b_i=\bar b_i=0$.

Then by \cite[Thm. 3.2]{GrahamLagariasMallowsWilksYanI}, for any ordered and oriented Descartes configuration $\mathcal D$,
 the map $g\mapsto W_{\mathcal D}^{-1} g W_{\mathcal D}$ gives an isomorphism
$$\psi_{\mathcal D}:\O_Q(\br)\to \O_{Q_W}(\br)$$ where
$Q_W$ is the Wilker quadratic form:
$$Q_W=\begin{pmatrix} 0 & -4 &0&0\\
-4 & 0&0 &0\\ 0&0&2&0\\ 0&0&0&2\end{pmatrix} .$$

On the other hand, if $\text{M\"ob}(2)$ denotes the group of M\"obius transformations and $\op{GM}^*(2):=\text{M\"ob}(2)\times \{\pm I\}$ denotes the extended
M\"obius group, we have the following by Graham et al:
\begin{Thm} \cite[Thm. 7.2]{GrahamLagariasMallowsWilksYanI} \label{wd}
There exists a unique isomorphism $$\pi: \op{GM}^*(2) \to \O_{Q_W}(\br)$$ such  that
for
any ordered and oriented Descartes configuration $\mathcal D$,
\begin{enumerate}
\item $  W_{\gamma (\mathcal D)} =W_{\mathcal D} \pi(\gamma )^{-1}
\quad\text{for any $\gamma\in\;\; $\rm{M\"ob}(2)} .$
\item $W_{-\mathcal D}=-W_{\mathcal D}$.
\end{enumerate}
\end{Thm}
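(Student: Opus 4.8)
The plan is to define $\pi$ on generators of $\text{M\"ob}(2)$, verify it takes values in $\O_{Q_W}(\br)$, and get bijectivity from a dimension count. I would begin from two facts about the augmented curvature--center coordinates of \cite[\S3]{GrahamLagariasMallowsWilksYanI}: first, $\mathcal D\mapsto W_{\mathcal D}$ is injective (the rows recover the oriented circles, lines included via the stated unit-normal convention); second, $W_{\mathcal D}$ satisfies the matrix identity $W_{\mathcal D}^{\,t}\,[Q]\,W_{\mathcal D}=Q_W$, where $[Q]$ is the matrix of the Descartes form $Q$ --- this is exactly what makes the conjugation $g\mapsto W_{\mathcal D}^{-1}gW_{\mathcal D}$ carry $\O_Q(\br)$ isomorphically onto $\O_{Q_W}(\br)$, as quoted just before the theorem.

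The heart of the argument is the behaviour of a single inversion. Any $\gamma\in\text{M\"ob}(2)$ is a finite product of inversions $\iota_C$ in circles and lines $C$, so it suffices to show $W_{\iota_C(\mathcal D)}=W_{\mathcal D}\,M_C$ for a matrix $M_C$ depending on $\iota_C$ but \emph{not} on the configuration $\mathcal D$. This is the substance of the theorem: the row $(\bar b,b,bx,by)$ attached to an oriented circle realizes the space of oriented circles of $\hat\c$ as the cone $\{Q_W=0\}$ in $\br^4$, on which $\text{M\"ob}(2)$ --- in particular each $\iota_C$ --- acts by a linear $Q_W$-orthogonal map; applying that single map to each of the four rows produces $M_C$, and the degenerate cases (a member of $\mathcal D$ being a line) are handled by continuity or by direct substitution. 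That $M_C\in\O_{Q_W}(\br)$ then follows from the identity $W^{\,t}[Q]W=Q_W$, valid for both $W_{\mathcal D}$ and $W_{\iota_C(\mathcal D)}$. Composing inversions gives $W_{\gamma(\mathcal D)}=W_{\mathcal D}\,M_\gamma$ with $M_\gamma\in\O_{Q_W}(\br)$ depending only on $\gamma$; and reversing all four orientations negates every entry of $W_{\mathcal D}$, so $W_{-\mathcal D}=-W_{\mathcal D}$, which is (2).

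It then remains to assemble the map. Set $\pi(\gamma):=M_\gamma^{-1}$ for $\gamma\in\text{M\"ob}(2)$ and $\pi(-I):=-I$; (1) and (2) hold by construction, and uniqueness is immediate because $W_{\mathcal D}$ is invertible, so (1) reads $\pi(\gamma)^{-1}=W_{\mathcal D}^{-1}W_{\gamma(\mathcal D)}$. From $W_{(\gamma_1\gamma_2)(\mathcal D)}=W_{\gamma_1(\gamma_2(\mathcal D))}$ one reads off $M_{\gamma_1\gamma_2}=M_{\gamma_2}M_{\gamma_1}$, so $\pi$ is a continuous homomorphism into $\O_{Q_W}(\br)$. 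It is injective: $\pi(g)=I$ forces $W_{g\mathcal D}=W_{\mathcal D}$, hence $g\mathcal D=\mathcal D$ for every ordered oriented Descartes configuration; applied to a configuration whose six (distinct) tangency points are not concyclic this rules out $g$ being orientation-reversing --- an orientation-reversing M\"obius map with three fixed points is an inversion, whose fixed locus is a single circle --- so $g$ is orientation-preserving, hence fixes three distinct points and equals the identity, while the $\{\pm I\}$-factor is excluded since it would send $\mathcal D$ to $-\mathcal D$. Finally $\pi$ is onto: $\op{GM}^*(2)$ and $\O_{Q_W}(\br)\cong\O(3,1)$ are both $6$-dimensional Lie groups with four connected components, so by invariance of domain $\pi$ carries the identity component $\PSL_2(\c)$ onto $\SO^+(3,1)$, and the remaining three components of $\O_{Q_W}(\br)$ are reached by $\pi$ of an inversion, by $\pi(-I)=-I$, and by their product. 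The only non-formal step is the assertion about $M_C$ in the middle paragraph --- that it depends on the inversion alone and is $Q_W$-orthogonal --- and this is precisely the statement that the curvature--center coordinates intertwine the M\"obius action on circles with the linear $\O_{Q_W}$-action on $\br^4$; everything else is bookkeeping once those coordinates are set up.
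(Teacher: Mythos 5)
This statement is not proved in the paper at all: it is quoted verbatim from Graham--Lagarias--Mallows--Wilks--Yan \cite[Thm.\ 7.2]{GrahamLagariasMallowsWilksYanI}, so there is no internal argument to compare yours against. Your reconstruction follows the standard (and essentially the original) line: the augmented curvature--center coordinates linearize the M\"obius action, the identity $W_{\mathcal D}^t\,[Q]\,W_{\mathcal D}=Q_W$ forces the resulting matrices into $\O_{Q_W}(\br)$, injectivity comes from faithfulness of the action on configurations, and surjectivity from a count of dimensions and components. The outline is sound, including the homomorphism bookkeeping $M_{\gamma_1\gamma_2}=M_{\gamma_2}M_{\gamma_1}$ and the fixed-point argument ruling out nontrivial elements of the kernel.

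Two points deserve care. First, your parenthetical that the row $(\bar b,b,bx,by)$ realizes oriented circles as the cone $\{Q_W=0\}$ is not right: a short computation with $\bar b=b(x^2+y^2)-b^{-1}$ shows these vectors lie on a \emph{nondegenerate} level set of the (suitably normalized) Wilker form, the null cone being the boundary $\hat\c$ of ``circles of zero radius''; this does not affect the argument, since a linear $Q_W$-orthogonal map preserves every level set, but the heart of the proof --- that each inversion $\iota_C$ acts on these coordinates by a linear, $Q_W$-orthogonal, configuration-independent matrix $M_C$ --- is exactly the computation you defer, and it is the entire content of the cited theorem rather than a formality. Second, surjectivity can be obtained more directly than by invariance of domain: the characterization in \cite[Thm.\ 3.3]{GrahamLagariasMallowsWilksYanI} says that the matrices satisfying $W^t[Q]W=Q_W$ are precisely the $W_{\mathcal D}$, so for $U\in\O_{Q_W}(\br)$ the matrix $W_{\mathcal D_0}U^{-1}$ is itself some $W_{\mathcal D'}$, and transitivity of $\op{GM}^*(2)$ on ordered oriented Descartes configurations produces $\gamma$ with $\pi(\gamma)=U$. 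That route avoids having to identify which component of $\O(3,1)$ receives an inversion.
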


Let $\mathcal D_0=(C_1, C_2, C_3, C_4)$ denote the
ordered and oriented Descartes configuration corresponding to the root quadruple
$\xi$ of $\mathcal P$. We obtain the following isomorphism:
 $$\Phi_{\mathcal D_0}:=\pi^{-1}\circ \psi_{\mathcal D_0}:\O_{Q}(\br) \to \op{GM}^*(2).$$

 Denote by $\mathfrak s_i:=\mathfrak s_i(\mathcal D_0)$
 the M\"obius transformation
  given by the inversion in the circle, say, $\hat C_i$,
 determined by the three intersection points
of the circles $C_j$, $j\ne i$.
Figure \ref{picDual} depicts the root quadruple $(C_1,\dots,C_4)$ as solid-lined circles and the corresponding dual quadruple $(\hat C_1,\dots,\hat C_4)$ as dotted-lined circles.
\begin{figure}
 \includegraphics [width=2in]{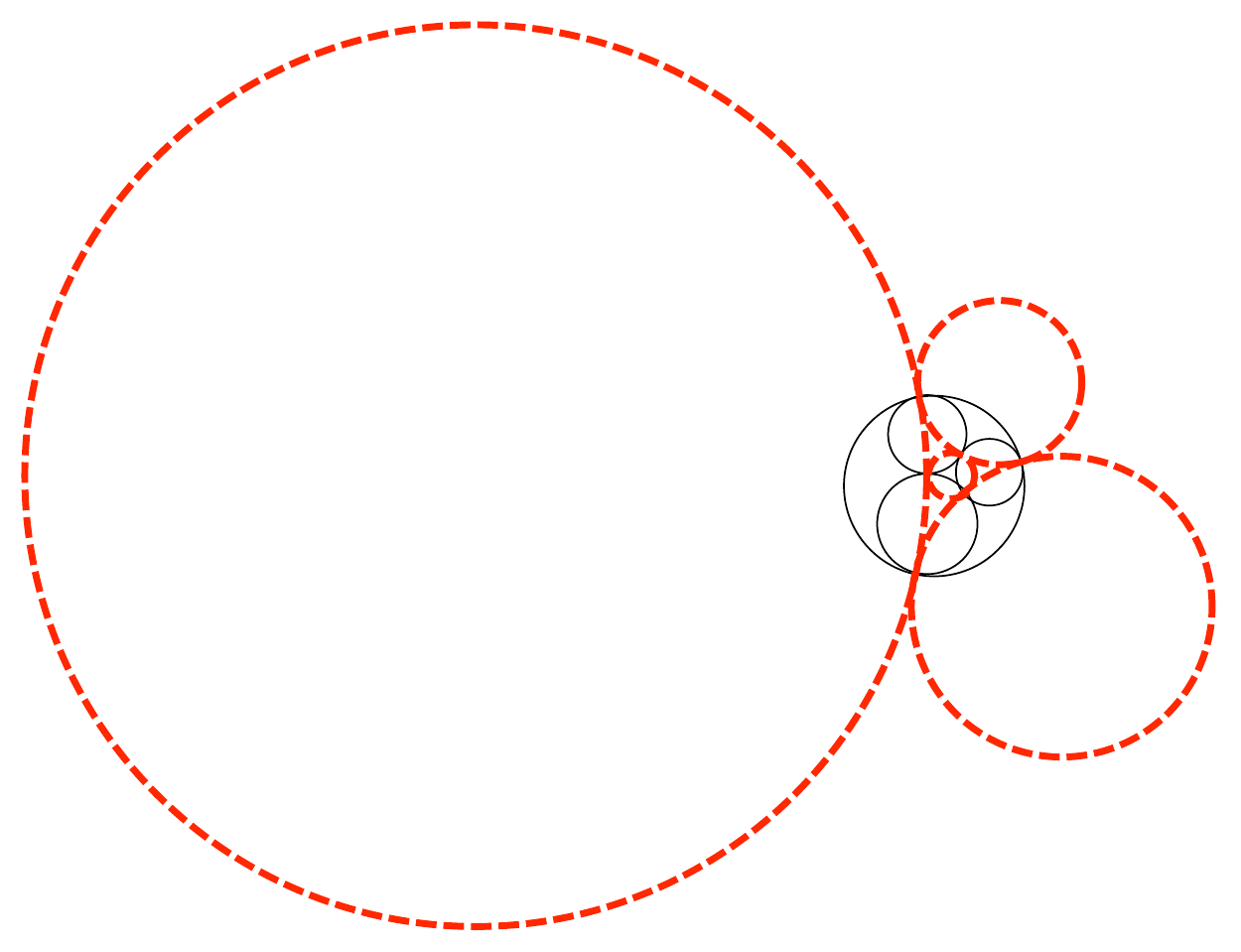}
\caption[Action of $\mathcal A$ as M\"obius transformations]{Action of $\mathcal A$ as M\"obius transformations}
\label{picDual}
\end{figure}

Note that $\mathfrak s_i$ fixes $C_j$, $j\ne i$
and moves $C_i$ to the unique other circle that is tangent to $C_j$'s, $j\ne i$.

We set
 $$G_{\mathcal A}(\mathcal P):=\langle \mathfrak s_1, \mathfrak s_2, \mathfrak s_3,\mathfrak s_4\rangle .$$

\begin{Lem} For each $1\le i\le 4$,
$$\Phi_{\mathcal D_0} (S_i)=\mathfrak s_i;$$
hence
 $$ \Phi_{\mathcal D_0} (\mathcal A) =G_{\mathcal A}(\mathcal P).$$
\end{Lem}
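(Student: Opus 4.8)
The plan is to verify the identity $\Phi_{\mathcal D_0}(S_i)=\mathfrak s_i$ for each $i$, since the equality of the generated groups $\Phi_{\mathcal D_0}(\mathcal A)=G_{\mathcal A}(\mathcal P)$ is then immediate from $\mathcal A=\langle S_1,S_2,S_3,S_4\rangle$ and $\Phi_{\mathcal D_0}$ being a homomorphism. Recall $\Phi_{\mathcal D_0}=\pi^{-1}\circ\psi_{\mathcal D_0}$, where $\psi_{\mathcal D_0}(g)=W_{\mathcal D_0}^{-1} g\, W_{\mathcal D_0}$ and $\pi$ is the Graham--Lagarias--Mallows--Wilks--Yan isomorphism of Theorem \ref{wd}. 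So it suffices to show that the M\"obius transformation $\mathfrak s_i$ satisfies $\pi(\mathfrak s_i)=\psi_{\mathcal D_0}(S_i)=W_{\mathcal D_0}^{-1} S_i\, W_{\mathcal D_0}$, or equivalently, by part (1) of Theorem \ref{wd}, that $W_{\mathfrak s_i(\mathcal D_0)}=W_{\mathcal D_0}\,\pi(\mathfrak s_i)^{-1}=W_{\mathcal D_0}\,\psi_{\mathcal D_0}(S_i)^{-1}=S_i^{-1} W_{\mathcal D_0}=S_i W_{\mathcal D_0}$ (using $S_i^2=I$). Thus the whole lemma reduces to the single matrix identity
\[
W_{\mathfrak s_i(\mathcal D_0)} = S_i\, W_{\mathcal D_0}.
\]

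The key geometric input is the description of $\mathfrak s_i$ given in the text: it fixes each circle $C_j$ with $j\neq i$ and sends $C_i$ to the unique other circle tangent to all of the $C_j$, $j\neq i$. Consequently the ordered, oriented configuration $\mathfrak s_i(\mathcal D_0)$ has the same $j$-th circle as $\mathcal D_0$ for $j\neq i$, and its $i$-th circle is the ``flipped'' circle $C_i'$. On the level of curvature vectors, the Descartes/flip relation \eqref{dprime} says precisely that the new curvature is $b_i'=2(b_j+b_k+b_l)-b_i$ where $\{j,k,l\}=\{1,2,3,4\}\setminus\{i\}$ — which is exactly the action of the reflection $S_i$ on the $i$-th coordinate, leaving the others fixed. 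So the curvature column of $W$ transforms by $S_i$. The remaining point is that the \emph{entire} row vector of $W_{\mathcal D_0}$, namely $(\bar b_i, b_i, b_i x_i, b_i y_i)$, transforms the same way: this is guaranteed because all four quantities $\bar b$, $b$, $bx$, $by$ satisfy the identical linear recursion \eqref{dprime} under a flip (this is the content of the ``augmented curvature-center coordinates'' formalism of \cite{GrahamLagariasMallowsWilksYanI}, where $W_{\gamma(\mathcal D)}=W_{\mathcal D}\pi(\gamma)^{-1}$ holds for M\"obius $\gamma$, and the flip $\mathfrak s_i$ is a M\"obius transformation). Hence $W_{\mathfrak s_i(\mathcal D_0)}$ is obtained from $W_{\mathcal D_0}$ by replacing its $i$-th row with $2$(sum of the other rows)$-$($i$-th row), which is exactly left multiplication by $S_i$.

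Concretely, I would carry out the steps in this order: (i) recall $\mathfrak s_i\in\text{M\"ob}(2)$ and apply Theorem \ref{wd}(1) to get $W_{\mathfrak s_i(\mathcal D_0)}=W_{\mathcal D_0}\,\pi(\mathfrak s_i)^{-1}$; (ii) identify $\mathfrak s_i(\mathcal D_0)$ as the configuration obtained from $\mathcal D_0$ by flipping the $i$-th circle, using the defining property of $\mathfrak s_i$ (it fixes $C_j$, $j\neq i$, and flips $C_i$); (iii) invoke the fact that in the augmented curvature-center coordinates $W$, a flip of the $i$-th circle replaces the $i$-th row by $S_i$ times the matrix — this follows from \eqref{dprime} applied componentwise, i.e. $W_{\mathfrak s_i(\mathcal D_0)}=S_i W_{\mathcal D_0}$; (iv) combine with (i) to conclude $\pi(\mathfrak s_i)^{-1}=W_{\mathcal D_0}^{-1}S_i W_{\mathcal D_0}=\psi_{\mathcal D_0}(S_i)$ (since $S_i=S_i^{-1}$), hence $\Phi_{\mathcal D_0}(S_i)=\pi^{-1}(\psi_{\mathcal D_0}(S_i))=\mathfrak s_i$; (v) take products to get $\Phi_{\mathcal D_0}(\mathcal A)=G_{\mathcal A}(\mathcal P)$.

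The main obstacle is step (iii): one must check that the \emph{full} augmented coordinate row — not just the curvature — transforms under the $i$-th flip by the same matrix $S_i$ that acts on curvature quadruples. This is essentially the assertion that the Apollonian group action on Descartes quadruples is intertwined, via the $W$-coordinates, with the M\"obius action, and it is exactly what the isomorphism $\pi$ of \cite{GrahamLagariasMallowsWilksYanI} encodes; the verification amounts to noting that each of the four functions $\bar b, b, bx, by$ of a circle in a tangent triple-plus-one configuration satisfies the linear relation ``$(\text{value on }C_i)+(\text{value on }C_i')=2\sum_{j\neq i}(\text{value on }C_j)$'', which for the curvature is \eqref{dprime} and for the others is the analogous Descartes-type relation in augmented coordinates established in \cite[Section 3]{GrahamLagariasMallowsWilksYanI}. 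Everything else is bookkeeping with $\psi_{\mathcal D_0}$, $\pi$, and the involutivity $S_i^2=I$.
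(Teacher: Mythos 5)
Your proposal is correct and follows essentially the same route as the paper: both arguments hinge on combining Theorem \ref{wd}(1) with the identity $W_{\mathfrak s_i(\mathcal D_0)}=S_iW_{\mathcal D_0}$ and the involutivity of $S_i$. The only difference is that the paper simply cites \cite[3.25]{GrahamLagariasMallowsWilksYanI} for that identity, whereas you sketch its verification via the componentwise flip relation in augmented curvature-center coordinates.
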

\begin{proof}
If $\gamma_i:=\Phi_{\mathcal D_0} (S_i)$,
then by \eqref{wd},
$$W_{\gamma_i(\mathcal D_0)}=W_{\mathcal D_0} (\psi_{\mathcal D_0} S_i)^{-1}
=  S_i W_{\mathcal D_0}.$$
On the other hand, by \cite[3.25]{GrahamLagariasMallowsWilksYanI},
$$W_{\mathfrak s_i (\mathcal D_0)}=S_i W_{\mathcal D_0} .$$
Therefore
$$\gamma_i (\mathcal D_0)=\mathfrak s_i (\mathcal D_0) ;$$
and hence $\gamma_i=\mathfrak s_i$.
\end{proof}

Each inversion $\frak s_i$ extends uniquely to an isometry of the hyperbolic space $\bH^3$, corresponding to
the inversion with respect to the hemisphere whose boundary is $\hat C_i$. The intersection of the exteriors
of these hemispheres is a fundamental domain
 for the action of $G_{\mathcal A}(\mathcal P)$ on $\bH^3$.

\begin{figure}
 \includegraphics [width=2in]{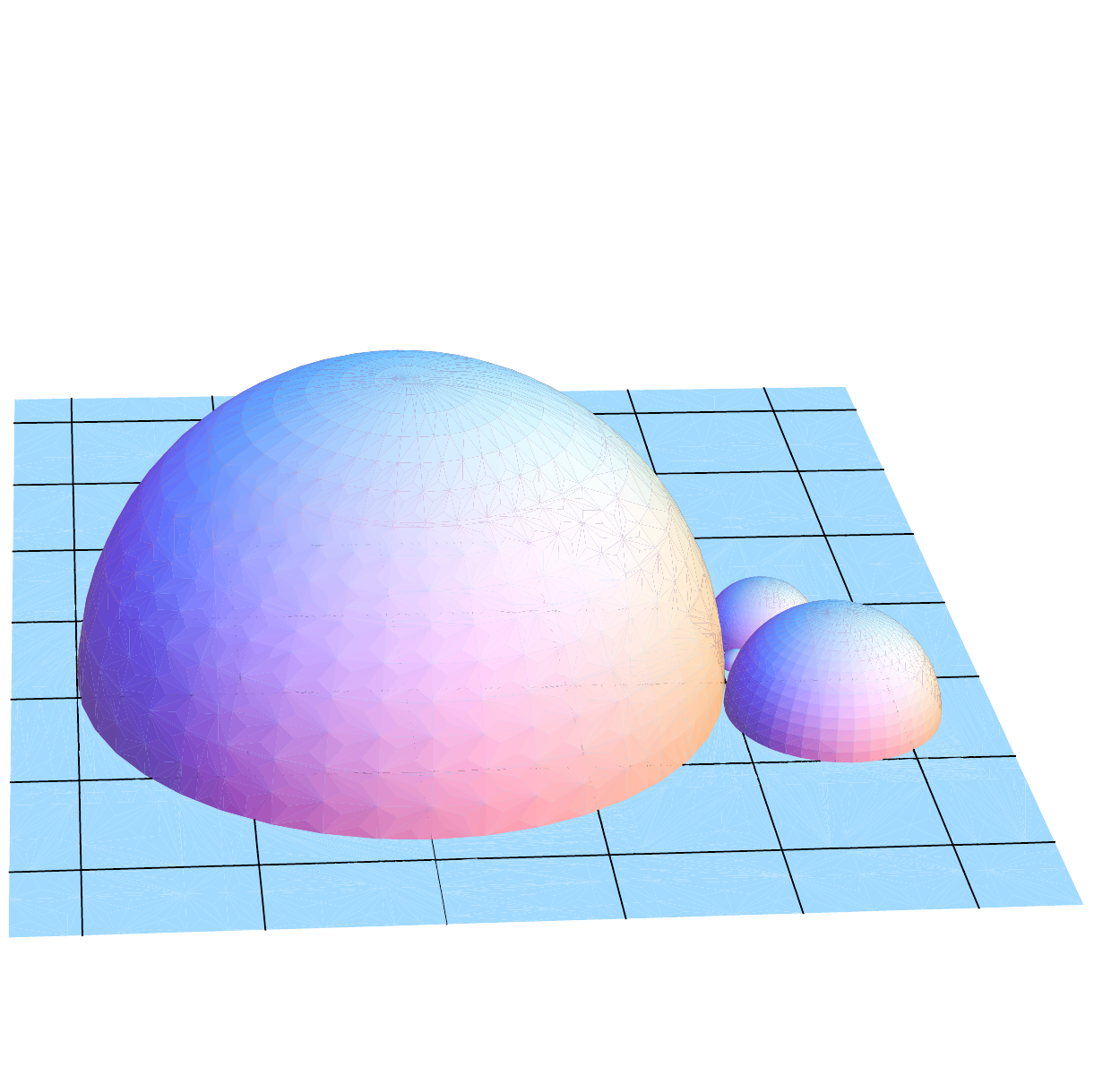}
\caption[Fundamental domain]{Fundamental domain}
\label{picFunDom}
\end{figure}

The Hausdorff dimension of $\Lambda(\mathcal P)$, say $\alpha$,  is independent of $\mathcal P$.
Hirst showed in \cite{Hirst1967} that
$\alpha$ is strictly between one and two.
For our purpose, we only need to know that $\alpha>1$,
though much more precise estimates were made by Boyd in \cite{Boyd1982} and
 McMullen \cite{McMullen1998}.

\begin{Prop}
\begin{enumerate}
\item  $G_{\mathcal A}(\mathcal P)$ is geometrically finite and discrete.
\item We have $\Lambda {(G_{\mathcal A}(\mathcal P))}=\Lambda (\mathcal P) $, and hence
$\alpha=\delta_{G_{\mathcal A}(\mathcal P)}$.
\end{enumerate} 
\end{Prop}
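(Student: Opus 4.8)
The plan is to prove the two assertions essentially in parallel, using the explicit description of $G_{\mathcal A}(\mathcal P)$ as a reflection group generated by the four inversions $\mathfrak s_1,\dots,\mathfrak s_4$ in the circles $\hat C_1,\dots,\hat C_4$ dual to the root quadruple. First I would recall from the previous lemma that $\Phi_{\mathcal D_0}$ is an isomorphism of $\O_Q(\br)$ onto $\op{GM}^*(2)$ carrying $\mathcal A$ onto $G_{\mathcal A}(\mathcal P)$; in particular $G_{\mathcal A}(\mathcal P)$ is a subgroup of the M\"obius group, and each $\mathfrak s_i$ extends to the hyperbolic isometry of $\bH^3$ given by inversion in the hemisphere $H_i$ bounded by $\hat C_i$. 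The key geometric input is the classical fact, already quoted in the text and visible in Figure~\ref{picFunDom}, that the four dual hemispheres $H_1,\dots,H_4$ have mutually disjoint (or tangent) interiors, so that the region $\Omega$ obtained as the intersection of their exteriors is a genuine fundamental polyhedron for $G_{\mathcal A}(\mathcal P)$ acting on $\bH^3$. By the Poincar\'e polyhedron theorem for reflection groups (the pairing being each face with itself via $\mathfrak s_i$, and all dihedral angles being $0$ since the $\hat C_i$ are pairwise tangent), the group $G_{\mathcal A}(\mathcal P)$ is discrete and $\Omega$ is an exact fundamental domain; this establishes discreteness in (1).

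For the geometric finiteness in (1), I would observe that $\Omega$ is bounded by only finitely many faces (exactly four), and a hyperbolic polyhedron with finitely many faces, all of whose dihedral angles are submultiples of $\pi$, has finitely many ideal vertices. Since the tangency points of the $\hat C_i$ give cusps of parabolic type (the stabilizer of such an ideal vertex is generated by two tangent reflections and is hence a rank-one parabolic group or dihedral, in any case the vertex is a bounded parabolic point), the quotient $G_{\mathcal A}(\mathcal P)\ba\bH^3$ has a fundamental domain of finite combinatorial complexity meeting the convex core in a compact set together with finitely many standard cusp neighborhoods. This is precisely one of the standard equivalent formulations of geometric finiteness (a finite-sided fundamental polyhedron, in the sense of, e.g., Bowditch), so $G_{\mathcal A}(\mathcal P)$ is geometrically finite.

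For (2), I would argue $\Lambda(G_{\mathcal A}(\mathcal P)) = \Lambda(\mathcal P)$ by a two-sided inclusion. The residual set $\Lambda(\mathcal P)$ is the complement in $\hat\c$ of the open interiors of all circles of $\mathcal P$, and by construction the orbit of the four root circles $C_1,\dots,C_4$ under $G_{\mathcal A}(\mathcal P)$ is exactly the set of all circles in $\mathcal P$ (each generator $\mathfrak s_i$ fixes $C_j$, $j\ne i$, and sends $C_i$ to the other circle tangent to the remaining three, which is how the packing is generated). Thus a point of $\hat\c$ fails to lie in $\Lambda(\mathcal P)$ iff it lies in the interior of some translate $g C_i$, $g\in G_{\mathcal A}(\mathcal P)$. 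Now the disc bounded by $C_i$ is contained in the exterior of the dual hemisphere $H_i$, hence $\Lambda(\mathcal P)$ is contained in the limit set of the reflection group — more precisely the boundary trace of the fundamental domain $\Omega$ meets $\hat\c$ exactly in $\Lambda(\mathcal P)$, which is the standard description of the limit set of a geometrically finite reflection group as the complement of the orbit of the faces. Conversely any orbit $G_{\mathcal A}(\mathcal P) z$ accumulates only on $\Lambda(\mathcal P)$, since the translates of $\Omega$ shrink towards the residual set; this gives $\Lambda(G_{\mathcal A}(\mathcal P)) \subseteq \Lambda(\mathcal P)$. Combining, the two sets coincide, and therefore $\alpha = \dim_H \Lambda(\mathcal P) = \dim_H \Lambda(G_{\mathcal A}(\mathcal P)) = \delta_{G_{\mathcal A}(\mathcal P)}$, the last equality by Sullivan's theorem for geometrically finite groups recalled above.

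I expect the main obstacle to be the careful verification that $\Omega$ really is a fundamental domain and that the ideal vertices are all bounded parabolic points rather than, say, points where infinitely many faces accumulate — i.e., cleanly invoking the Poincar\'e polyhedron theorem in the tangential (angle-zero) case and matching its output to whichever definition of ``geometrically finite'' one adopts. Everything else is either the explicit group-theoretic dictionary already set up via $\Phi_{\mathcal D_0}$, or the elementary observation that the $G_{\mathcal A}(\mathcal P)$-orbit of $\{C_1,\dots,C_4\}$ is the packing $\mathcal P$.
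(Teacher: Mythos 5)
Your route to part (1) is genuinely different from the paper's: the paper gets discreteness essentially for free from the fact that $\mathcal A$ lies in $\O_Q(\z)$ (hence is discrete) together with $\Phi_{\mathcal D_0}$ being a topological isomorphism, and then quotes \cite[Thm.13.1]{Kapovichbook} for geometric finiteness; you instead run the Poincar\'e polyhedron theorem on the four dual hemispheres. That is a legitimate alternative (it is the geometric picture behind Figure \ref{picFunDom}), provided you actually verify that $\hat C_1,\dots,\hat C_4$ form a Descartes configuration, so that the hemispheres have disjoint interiors and all dihedral angles are zero; with that in hand your finite-sided fundamental polyhedron does give both discreteness and geometric finiteness.

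The genuine gap is in part (2), in the inclusion $\Lambda(\mathcal P)\subseteq\Lambda(G_{\mathcal A}(\mathcal P))$. Your stated justification --- that the disc bounded by $C_i$ lies in the exterior of the dual hemisphere $H_i$ --- shows at best that the open discs of the packing lie in the domain of discontinuity, which yields the \emph{opposite} inclusion $\Lambda(G_{\mathcal A}(\mathcal P))\subseteq\Lambda(\mathcal P)$ (a direction you then prove a second time in your ``conversely'' sentence). Moreover the ``more precisely'' claim that $\bar\Omega\cap\hat\c$ equals $\Lambda(\mathcal P)$ is false: that boundary trace is the intersection of the four closed exteriors of the $\hat C_i$ and has nonempty interior, whereas the residual set has empty interior. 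What is missing is an argument that every point of the residual set is an accumulation point of a $G_{\mathcal A}(\mathcal P)$-orbit. The paper supplies this by observing that any neighborhood of a residual point contains infinitely many circles of $\mathcal P$, that the circles of $\mathcal P$ form finitely many $G_{\mathcal A}(\mathcal P)$-orbits of circles (the orbits of $C_1,\dots,C_4$), and hence infinitely many distinct translates $\gamma_i(C_j)$ enter the neighborhood, forcing the point into $\Lambda(G_{\mathcal A}(\mathcal P))$; equivalently one can invoke \cite[Thm 4.2]{GrahamLagariasMallowsWilksYanI}, which identifies $\Lambda(\mathcal P)$ with the closure of the tangency points, these being parabolic fixed points. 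You need one such argument; once it is inserted, the rest of your part (2) and the identity $\alpha=\delta_{G_{\mathcal A}(\mathcal P)}$ via Sullivan's theorem go through.
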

\begin{proof} For simplicity set $\G:=G_{\mathcal A}(\mathcal P)$.
The group $\G$ is geometrically finite (that is,
it admits a finite sided Dirichlet domain) by \cite[Thm.13.1]{Kapovichbook}
and discrete since $\mathcal A$ is discrete and $\Phi_{\mathcal D_0}$ is a topological isomorphism.

 Clearly, $\G$ is non-elementary.
It is well-known that $\Lambda (\G)$
is the same as the set of all accumulation points in the orbit $x_0$
under $\Gamma$ for any (fixed) $x_0\in \hat \c$.
On the other hand, by \cite[Thm 4.2]{GrahamLagariasMallowsWilksYanI}, $\Lambda(\mathcal P)$
is equal to the closure of all tangency points of circles in $\mathcal P$
and is invariant under $\G$.
This immediately yields
$$\Lambda (\Gamma)\subset \Lambda (\mathcal P).$$
If $x_0\in \Lambda(\mathcal P)$, then
any neighborhood, say $U$, of $x_0$ contains infinitely many circles.
Since $\cup_{1\le i\le 4} \G(C_i)$ is the set of all circles in
$\mathcal P$, there exist $j$ and an infinite sequence $\gamma_i\in \G$ such that $\gamma_i (C_j)\subset U$ for all $i$.
Therefore $x_0\in \Lambda(\G)$.
This proves that $ \Lambda (\mathcal P)\subset \Lambda (\Gamma).$\end{proof}


Set $\Gamma_{\mathcal P}$  to be the subgroup
 of holomorphic elements, that is,
$$\Gamma_{\mathcal P}:=G_{\mathcal A}(\mathcal P)\cap \PSL_2(\c),$$
since $\text{M\"ob}(2)$ is the semidirect product of complex conjugation with the subgroup  $\text{M\"ob}_+(2)=\PSL_2(\c)$
of orientation preserving
transformations.
Then the above lemma holds with $\Gamma_{\mathcal P}$ in place of $G_{\mathcal A}(\mathcal P)$,
as both properties are inherited by a subgroup of finite index.
By the well known Selberg's lemma, we can further replace $\G_{\mathcal P}$ a torsion-free
subgroup of finite index.

\subsection{Reduction to orbital counting for a Kleinian group}
Let $G:=\PSL_2(\c)$ and $\Gamma <G$ be a geometrically finite, torsion-free Kleinian subgroup.
Suppose we are given a real linear representation $\iota: G\to \SO_{F}({\br})$
where $F$ is a real quadratic form in $4$ variables with signature $(3,1)$.
As we prefer to work with a right action, we consider $\br^4$ as the set of row vectors and the action is given by $vg:=\iota(g) v^t$ for $g\in G$ and $v\in \br^4$.

By Lemma \ref{reduction} and the discussions in the previous subsection, Theorem \ref{main1} follows from:
\begin{Thm}\label{reduction2}  Suppose $\delta_\G >1$.
Let $v_0\in \br^4$ be a non-zero vector lying in the cone $F=0$
 with a discrete orbit $ v_0 \G \subset \br^4$.
Then for any norm $\|\cdot \|$ on $\br^4$, there exists $c>0$ such that
$$\# \{v\in v_0\G: \| v\|<T\}\sim  c \cdot T^{\delta_{\G}}
 .$$
\end{Thm}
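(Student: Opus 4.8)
The plan is to convert the orbit-counting problem into an equidistribution statement about expanding closed horospheres in $\G\ba\bH^3$ and then apply Theorem \ref{lh} (or rather Theorem \ref{lhtwo}, since the relevant norm is not rotation-invariant). First I would analyze the stabilizer $H=\op{Stab}_G(v_0)$ of $v_0$ under the representation $\iota$. Since $v_0$ lies on the cone $F=0$ and $F$ has signature $(3,1)$, the vector $v_0$ is isotropic, so $H$ is (conjugate to) the stabilizer of an isotropic line, which under $\iota$ corresponds — up to compact and diagonal factors — to the horospherical subgroup $N$. Concretely, after conjugating by a suitable element of $\SO_F(\br)$, I would arrange that $v_0\iota(g)$ depends on $g=n_x a_y k\in NAK$ only through the $AK$-part, with $\|v_0\iota(n_x a_y k)\|$ essentially $y^{-1}$ times a function of $k$ (and independent of $x$); this is the analogue of the classical fact that for $\SO(2,1)$ the norm of an isotropic vector translated by $a_y$ grows like $y^{-1}$. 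The condition that the orbit $v_0\G$ is discrete forces $N\cap\G$ to be a lattice in $N$, which by Dal'bo's classification (cited before Theorem \ref{lh}) means the closed horosphere $\G\ba\G N$ is indeed closed — exactly the hypothesis of Theorems \ref{lh} and \ref{lhtwo}.

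Next I would set up the counting integral. Writing $F_T$ for the indicator of the ball $\{v\in\br^4:\|v\|<T\}$ and lifting to $G$, the sum $\sum_{\g\in(H\cap\G)\ba\G}F_T(v_0\iota(\g))$ equals $\#\{v\in v_0\G:\|v\|<T\}$ (the finitely many fixed circles from the root quadruple being absorbed into an error term via Lemma \ref{reduction}). Unfolding this over $\G$ against a smooth bump function on $\G\ba G$ and using the $NAK$-coordinates above, the count becomes, up to smoothing, an integral over $(N\cap\G)\ba N$ of a translate $\psi(\G\ba\G n a_y)$ integrated against $da_y/a_y$ over the range dictated by $\|v\|<T$, i.e. roughly $y>1/T$, together with an integral over $K/M$ recording the shape of the norm ball. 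For each fixed small $y$ Theorem \ref{lhtwo} (or Theorem \ref{lh} in the $K$-invariant case) gives
$$\int_{(N\cap\G)\ba N}\psi(\G\ba\G n a_y)\,dn \sim c_{\phi_0}\, y^{2-\delta_\G}\,\hat\mu(\psi),$$
so integrating $y^{2-\delta_\G}$ against $dy/y$ over $y\gtrsim 1/T$ produces the main term of order $T^{\delta_\G-1}\cdot T = T^{\delta_\G}$ — wait, one must be careful: the extra factor $T$ comes from the volume growth of the $a_y$-range weighted correctly, and the exponent bookkeeping gives $\int_{1/T}^{1}y^{2-\delta_\G}\frac{dy}{y}\cdot(\text{boundary contribution})\asymp T^{\delta_\G-1}$, which after accounting for the $N$-direction volume factor $y^{-2}$ hidden in the change of variables yields precisely $T^{\delta_\G}$. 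The constant $c$ is then assembled from $c_{\phi_0}$, the total mass $\hat\mu$ assigns to the relevant region, and a Jacobian integral over $K/M$ depending on the norm; this is the content of the deferred Theorem \ref{reduction4}.

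The main obstacle is uniformity and the treatment of the $y\to 0$ regime. Theorem \ref{lh} holds only for small $y$ with an error $O(y^\e)$ whose implied constant depends on a Sobolev norm of $\psi$; but in the counting integral $y$ ranges all the way down to $1/T$, and the test function one really wants is the sharp indicator $F_T$, not a fixed smooth $\psi$. So the plan is the standard smoothing/thickening argument: approximate $F_T$ from above and below by smooth functions at scale $\eta$, transporting the $\eta$-thickening in the $\br^4$-ball into an $\eta$-thickening of the horosphere cross-section and of the $K/M$-integral; the error from thickening is $O(\eta\, T^{\delta_\G})$, while the smooth count has an error $O(\eta^{-B}T^{\delta_\G-\e'})$ for some $B$ coming from the Sobolev-norm dependence, and optimizing $\eta$ gives a power saving $T^{\delta_\G-\e''}$. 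The delicate point is that the effective statement Theorem \ref{lh} degrades near the cusps of $\G\ba\bH^3$ (the manifold is only geometrically finite, not compact), so one needs the fact — already built into facts (1)–(3) in the outline preceding Theorem \ref{lh} — that the horosphere integral is controlled by its restriction to a fixed compact piece $B\subset(N\cap\G)\ba N$ with an $O(y^{\delta_\G})$ tail; this tail is lower-order precisely because $\delta_\G>1$, which is where the standing hypothesis is used once more. Checking that the compact-support reduction is compatible with the smoothing and with the $K/M$-integration — i.e. that no mass escapes to the cusp in a way that ruins the asymptotic — is the technical heart of the argument.
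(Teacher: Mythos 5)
Your overall strategy coincides with the paper's (Section 7): conjugate so that the stabilizer of $v_0$ is the horospherical group times $M$, unfold $\la F_T,\psi\ra$ over $(N\cap\G)\ba G$ in $NAK$-coordinates (Lemma \ref{ftweak}), feed in the horospherical equidistribution --- Theorem \ref{ma1} for $K$-invariant data and the Burger--Roblin statement Theorem \ref{ft} for the $M$-invariant functions $\psi_k$ produced by a general norm --- integrate $y^{2-\delta_\G}$ against the Haar density $y^{-3}\,dy$ over $y>T^{-1}\|v_0k\|$ to get $\delta_\G^{-1}T^{\delta_\G}\|v_0k\|^{-\delta_\G}$, and remove the smoothing by sandwiching $B_{(1-\e)T}\subset\cap_{u}B_Tu$ and $B_TU_\e\subset B_{(1+\e)T}$. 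Your exponent bookkeeping in the middle is garbled (the integral $\int_{1/T}^{1}y^{1-\delta_\G}\,dy$ is bounded, not $\asymp T^{\delta_\G-1}$), but once the full Haar factor $y^{-3}$ is restored you land on $T^{\delta_\G}$, which is what the paper's computation gives. Also note that for a general norm the paper obtains only the asymptotic, via the non-effective Theorem \ref{ft}; your claim of a power saving after optimizing the smoothing scale is not available from the tools you cite (it is obtained in Theorem \ref{maincount} only for $g_0Kg_0^{-1}$-invariant norms).

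One step is wrong as stated and is worth correcting because it misidentifies the central difficulty. Discreteness of $v_0\G$ does \emph{not} force $N\cap\G$ to be a lattice in $N$, and Dal'bo's theorem is not a criterion in terms of lattices. The correct deduction is: $v_0G$ is identified with $NM\ba G$, so discreteness of the orbit gives closedness of $\G\ba\G NM$ in $\G\ba G$, and Lemma \ref{stab} upgrades this to closedness of $\G\ba\G N$; Theorem \ref{dalbo} then says this happens exactly when $\infty\notin\Lambda(\G)$ or $\infty$ is a bounded parabolic fixed point, so $(N\cap\G)\ba N$ may be a plane, a cylinder, or a torus. In the Apollonian application the stabilizer is finite or rank one, so the cross-section is genuinely non-compact --- this is precisely the obstruction (attributed to Sarnak in the introduction) that the propriety arguments of Section 3 and the tail estimate $O(y^{\delta_\G})$ of Proposition \ref{ppo} are designed to overcome. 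If your lattice claim were true, all of that machinery would be unnecessary; your later (correct) invocation of the compact piece $B$ with an $O(y^{\delta_\G})$ tail is inconsistent with it. With the justification repaired as above, the rest of your outline matches the paper's proof.
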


It also follows from
Lemma \ref{reduction} that this theorem implies
$$N_2^{\mathcal P}(T)\sim (3 c)\cdot  T^{\alpha}$$
for the same $c$ as in Theorem \ref{main1}.


\section{Geometry of closed horospheres on $\op{T}^1(\G\ba \bH^3)$}\label{closedh}
Let $G=\PSL_2(\c)$ and $\bH^3=\{(x_1, x_2, y): y>0\}$. We use the notations
$N$, $A$, $K$, $M$, $n_x$ and $a_y$ as defined in \eqref{nak}.
 Throughout this section, we suppose that
 $\G<G$ is a torsion-free, non-elementary (i.e., $\Lambda(\G)$ consists of
 more than $2$ points) and geometrically finite Kleinian group.

\begin{Def}\label{boundedparabolic}{\rm
\begin{enumerate}
\item A point $\xi\in \Lambda(\G)$ is called
a {\it parabolic fixed point} if $\xi$ is fixed by a parabolic isometry of $\G$ (that is, an element
of trace $\pm 2$). The {\it rank} of a parabolic fixed point $\xi$ is defined to be
rank of the abelian
group $\G_{\xi}$ which stabilizes $\xi$.

\item A parabolic fixed point $\xi$ is called a {\it bounded parabolic}
 if it is of rank $2$ or
 if
 there exists a pair of two open disjoint discs in $\hat \c$ whose union, say $U$, is precisely $\G_\xi$-invariant
  (that is, $U$ is invariant by $\G_{\xi}$ and $\gamma(U)\cap U\ne \emptyset$ for all $\gamma\in
  \Gamma $ not belonging to $\G_{\xi}$).
Or equivalently, $\xi$ is bounded parabolic if $\G_{\xi}\backslash (\Lambda(\G)-\{\xi\})$ is compact.
\item A point $\xi\in \Lambda(\G)$ is called a
{\it point of approximation} if there exist a sequence $\gamma_i\in \G$
and $z\in \bH^3$ such that $\gamma_i z\to \xi$ and $\gamma_i z$
is within a bounded distance from the geodesic ray ending at $\xi$.
\end{enumerate}}
\end{Def}

Since $\G$ is geometrically finite, it is known by the work of Beardon and Maskit
 \cite{BeardonMaskit1974}
that
any limit point $\xi\in \Lambda(\G)$ is either a point of approximation or
a bounded parabolic fixed point.

We denote by $\op{Vis}$ the visual map from $\op{T}^1( \bH^3)
$ to the ideal boundary $\partial_\infty(\bH^3)$
which maps the vector $(p, \vec v)$ to the end of the geodesic ray tangent to $\vec v$.

\begin{Def}\label{og} {\rm \begin{itemize}
                            \item
 We denote by $\tilde \Omega_\G\subset
\op{T}^1( \bH^3)$ the set of vectors $(p, \vec v)$ whose
 the image under the visual map belongs to $\Lambda(\G)$.
\item We denote by $\hat{\Omega}_\G$ the image of $\tilde \Omega_\G$ under the projection $
\op{T}^1( \bH^3)\to \op{T}^1( \G\ba \bH^3)$.
                           \end{itemize}}\end{Def}


We note that $\tilde \Omega_\G$ is a closed set
consisting of horospheres.
For comparison, if $\G$ has finite co-volume, then $\Lambda_\G=\c\cup\{\infty\}$ and  $\hat \Omega_\G=
\op{T}^1( \G\ba \bH^3)$.

For $u\in \PSL_2(\c)$, we denote by $[u]$ its image in $\PSL_2(\c)/M$,
which we may consider as a vector in $\op{T}^1(\bH^3)$.
The horosphere determined by the vector $[u]$ in $\G\ba \op{T}^1(\bH^3)$ corresponds to
$\G\ba \G uNM /M$, which will be simply denoted by $\G\ba \G [u]N$.
We note that $u(\infty)$ is identified with $\op{Vis}[u]$.

\begin{Thm}[Dal'Bo]\cite{Dalbo2000} \label{dalbo}  Let $u\in \PSL_2(\c)$.
\begin{enumerate}
\item If
$u(\infty)\in \Lambda(\G)$ is a point of approximation, then $\G \ba \G [u] N$ is dense in $\hat \Omega_\G$.
\item
The orbit  $\G\ba \G  u N$ is closed in $\G\ba G$ if and only if
either $u(\infty)\notin \Lambda(\G)$ or
$u(\infty)$ is a bounded parabolic fixed point for $\G$.
\end{enumerate}
\end{Thm}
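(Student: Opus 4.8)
The plan is to prove this theorem (due to Dal'bo) by splitting into three implications and gluing them with the Beardon--Maskit dichotomy recalled above --- each point of $\Lambda(\G)$ is either a point of approximation or a bounded parabolic fixed point, the two classes being disjoint. Namely: (A) if $u(\infty)\notin\Lambda(\G)$ then $\G\ba\G uN$ is closed; (B) if $u(\infty)$ is a bounded parabolic fixed point then $\G\ba\G uN$ is closed; (C) if $u(\infty)$ is a point of approximation then $\G\ba\G uN$ is dense in $\hat\Omega_\G$. Granting these, part (1) is exactly (C), while part (2) follows because a dense $N$-orbit cannot be closed: for non-elementary $\G$ the closure $\hat\Omega_\G$ strictly contains $\G\ba\G uN$, since $\hat\Omega_\G$ contains $N$-orbits based at every one of the uncountably many points of $\Lambda(\G)$ whereas $\G\ba\G uN$ only involves the countable set $\G\cdot u(\infty)$. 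Throughout we may reduce to $u=e$ and $u(\infty)=\infty$: replacing $\G$ by $\G'=u^{-1}\G u$ (still geometrically finite, torsion-free, non-elementary), using that left translation by $u^{-1}$ is a homeomorphism of $G$ carrying $\G uN$ to $\G'N$ and $\tilde\Omega_\G$ to $\tilde\Omega_{\G'}$, and noting that $u(\infty)$ has a given boundary type for $\G$ iff $\infty$ has the same type for $\G'$ (all three conditions being conjugation invariant).

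For (A): since $\infty\notin\Lambda(\G)$, the stabilizer $\G_\infty$ is trivial (a nontrivial element fixing $\infty$ would be parabolic or loxodromic, forcing $\infty\in\Lambda(\G)$), and $\G$ acts properly discontinuously on $\bH^3\cup(\hat\c\setminus\Lambda(\G))\ni\infty$; hence there is a horoball $H=\{y\ge y_0\}$ with $\gamma H\cap H=\emptyset$ for all $\gamma\ne e$. The family $\{\gamma H\}_{\gamma\in\G}$ is then disjoint and locally finite in $\bH^3$ (its accumulation set lies in $\Lambda(\G)\subset\partial_\infty\bH^3$), so $\G H$ is closed in $\bH^3$, $\G\ba\G H$ is an embedded copy of $H$, and therefore the closed set $\partial H$ descends to a closed subset of $\G\ba\bH^3$, as does its unit-normal lift $\G\ba\G a_{y_0}N$ to $\op{T}^1(\G\ba\bH^3)$. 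Since the geodesic flow is a self-homeomorphism of $\G\ba G$ with $\G a_{y_0}Na_t=\G a_{y_0}a_tN$, we may slide this closed set to $\G\ba\G N$, which is therefore closed. For (B): since $\infty$ is a bounded parabolic fixed point of the geometrically finite group $\G$, the standard structure theory of geometrically finite groups supplies a horoball $H=\{y\ge y_0\}$ precisely invariant under $\G_\infty$ in $\G$, with $\G\ba\G H\cong\G_\infty\ba H$ an embedded cusp neighborhood and $\G H$ locally finite in $\bH^3$; then $\G_\infty\ba\{y=y_1\}$ (a torus or cylinder, $y_1>y_0$) is closed inside this embedded neighborhood, hence closed in $\G\ba\bH^3$, so its unit-normal lift $\G\ba\G a_{y_1}N$ is closed in $\G\ba G$, and again the geodesic flow slides it to $\G\ba\G N$.

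The substantial case is (C): one must show $\G N$ is dense in $\tilde\Omega_\G=\{g\in G:g(\infty)\in\Lambda(\G)\}$, equivalently that $X:=\overline{\G\ba\G N}$, a closed right-$N$-invariant subset of $\hat\Omega_\G$, equals all of $\hat\Omega_\G$. The strategy is to show $X$ is in addition invariant under the geodesic flow: then $X$ is closed and invariant under $\G\ba\G P$ with $P=NA$, and since $g\mapsto g(\infty)$ is continuous and open and $\overline{\G\cdot\infty}=\Lambda(\G)$ by minimality of the $\G$-action on the limit set, one has $\overline{\G\ba\G P}=\hat\Omega_\G$, forcing $X=\hat\Omega_\G$. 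The flow-invariance is exactly where the hypothesis that $\infty$ is a point of approximation enters: the projection of the vertical geodesic $\{\G a_y:y>0\}$ recurs into a compact part of $\G\ba G$ as $y\to\infty$, i.e.\ $\G a_{y_k}\to\G g_0$ for some $y_k\to\infty$; since $\G a_{y_k}N=\G Na_{y_k}$ relates the expanding horospheres to $\G\ba\G N$ itself, this recurrence can be unwound --- using the $N$-action along the leaf and the $\G$-action transversally --- to show $Xa_s\subseteq X$ for a set of $s$ generating a dense subgroup of $\R$, and closedness of $X$ then yields full geodesic-flow invariance.

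I expect this last step --- converting the recurrence of the transversal geodesic into geodesic-flow invariance of $\overline{\G\ba\G N}$ --- to be the genuine obstacle, and it is the technical heart of Dal'bo's argument; the remaining ingredients (the reductions, cases (A) and (B), minimality of the boundary action, and the identification $\overline{\G\ba\G P}=\hat\Omega_\G$) rest only on proper discontinuity, the standard geometry of cusps, and elementary properties of the visual map.
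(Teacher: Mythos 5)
This theorem is not proved in the paper: the authors cite Dal'bo's paper for both claims and only supply Lemma \ref{stab} (that $\G\cap NM=\G\cap N$ for torsion-free discrete $\G$) to pass between Dal'bo's statements about $NM$-orbits and the $N$-orbit statement needed here. Your outline of the easy implications is sound: the reduction to $u=e$, case (A) via a precisely embedded horoball at $\infty\notin\Lambda(\G)$, case (B) via the standard cusp neighborhood at a bounded parabolic point, and the logical assembly of part (2) from (A), (B), (C) together with the Beardon--Maskit dichotomy and the observation that a dense orbit in $\hat\Omega_\G$ cannot be closed. These arguments, working directly with $\G\ba\G N$ in $\G\ba G$, also sidestep the $M$-direction issue that the paper handles with Lemma \ref{stab}.

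However, there is a genuine gap at the heart of the matter: step (C), which is part (1) of the theorem and the input to the ``only if'' direction of part (2), is not proved. You reduce it to showing that $X=\overline{\G\ba\G N}$ is invariant under $a_s$ for a set of $s$ generating a dense subgroup of $\R$, and then assert that the recurrence $\G a_{y_k}\to\G g_0$ ``can be unwound'' to produce such $s$. This is precisely the content of Dal'bo's Proposition C, and it is not automatic. Unwinding the recurrence produces elements $\gamma_k a_{y_k}\to g_0$ and, after combining two such returns, invariance of $X$ under translations $a_s$ where the available $s$ are governed by the marked length spectrum of $\G$ (translation lengths of loxodromic elements, or equivalently cross-ratios of pairs of fixed points in $\Lambda(\G)$). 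If that set were contained in a discrete subgroup of $\R$, one would only obtain invariance under a closed proper subgroup of $A$ and could not conclude $X=\hat\Omega_\G$. This is why Dal'bo's theorem is stated under the hypothesis that the length spectrum of $\G$ is not discrete in $\R$, a fact which for non-elementary Kleinian groups requires the separate input of Guivarc'h--Raugi, as the paper notes explicitly. Your sketch neither carries out the unwinding nor identifies this non-arithmeticity ingredient, so the density claim --- and with it part (1) and half of part (2) --- remains unestablished.
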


The first claim was proved in \cite[Prop. C and Cor. 1]{Dalbo2000} under the condition that
the length spectrum of $\G$ is not discrete in $\br$.  As remarked there, this condition holds
for non-elementary Kleinian groups by \cite{GuivarchRaugi1986}.
The second claim follows from \cite[Prop. C and Cor. 1]{Dalbo2000} together with
 the following lemma.
\begin{Lem}\label{stab}
\begin{enumerate}\item  We have $\G\cap NM=\G\cap N$.
\item The orbit $\G\ba \G  N M $ is closed if and only if $\G\ba \G N$ is closed.
\end{enumerate} \end{Lem}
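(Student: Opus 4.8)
The plan is to reduce both statements to concrete facts about the $N$-action on $\hat\c$ and the structure of $NM$. First I would dispose of (1). The group $M$ is the set of diagonal rotations $\mathrm{diag}(e^{i\theta},e^{-i\theta})$, which normalizes $N$, so $NM$ is a semidirect product and every element of $NM$ has the form $n_x m_\theta$. Such an element fixes $\infty$, and near $\infty$ it acts on $\hat\c$ as $z\mapsto e^{2i\theta}z + x$; thus its ``rotational part at $\infty$'' is $e^{2i\theta}$. Now suppose $\gamma = n_x m_\theta\in\G\cap NM$. The key observation is that $\gamma$ fixes $\infty$, so $\infty$ is a fixed point of an element of $\G$; since $\G$ is non-elementary and geometrically finite, and an element with nontrivial $M$-part (i.e.\ $e^{2i\theta}\neq 1$) that fixes $\infty$ is loxodromic/elliptic with $\infty$ as one of its fixed points, one can argue that such an element, together with the genuinely parabolic elements of $\G_\infty$, would force either a non-discrete group or an elementary configuration. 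Concretely: if $\G\cap N \neq \{e\}$ then $\infty\in\Lambda(\G)$ and (since $\G\ba\G N$ situations are the ones of interest) $\infty$ is a bounded parabolic fixed point; a bounded parabolic fixed point cannot also be fixed by a loxodromic element of $\G$ (that would make the stabilizer $\G_\infty$ non-amenable/non-virtually-abelian, contradicting boundedness). If $\G\cap N = \{e\}$, then any $\gamma=n_xm_\theta\in\G$ with $\theta\neq 0$ is elliptic or loxodromic fixing $\infty$, and torsion-freeness kills the elliptic case; the loxodromic case again contradicts geometric finiteness unless $\infty\notin\Lambda(\G)$, but a loxodromic fixed point lies in $\Lambda(\G)$. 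Hence $\theta=0$ necessarily, giving $\G\cap NM = \G\cap N$.

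For (2), I would argue the two closedness statements are equivalent essentially because $M$ is compact and normalizes $N$. In one direction: if $\G\ba\G N$ is closed in $\G\ba G$, then $\G\ba\G NM = (\G\ba\G N)M$ is the image of a closed set under the (proper, since $M$ compact) right $M$-action, hence closed. In the other direction: suppose $\G\ba\G NM$ is closed; I want $\G\ba\G N$ closed. Take a sequence $\gamma_i n_{x_i}\to g$ in $G$ with $g\in\overline{\G N}$. Then $\gamma_i n_{x_i}\in \G NM$, which is closed, so $g\in\G NM$, say $g = \gamma n_x m_\theta$. Multiplying on the left by $\gamma^{-1}$ and absorbing, we may assume $g = n_x m_\theta$ and $\gamma_i n_{x_i}\to n_x m_\theta$. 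Now I use part (1): passing to the quotient, $n_{x_i} m_\theta^{-1} \to n_x$ shows, after writing $\gamma_i n_{x_i} m_\theta^{-1} \to n_x$, that the limit lies in $N$; but the point is to show $g$ itself, not just a translate, lies in $\G N$. The cleaner route: the natural map $\G\ba\G N \to \G\ba\G NM/M = (\G\ba\G NM)/M$ is a bijection precisely by part (1) (the fibers of $\G N \to \G NM/M$ are single $N\cap NM/\ldots$ cosets, trivialized by $\G\cap NM = \G\cap N$), it is continuous, and since $M$ is compact the quotient map $\G\ba\G NM\to (\G\ba\G NM)/M$ is closed; combining, $\G\ba\G N$ is homeomorphic to a closed subset of $\G\ba G$ iff $\G\ba\G NM$ is, which gives the equivalence.

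The main obstacle I anticipate is part (1): the argument that no element of $\G$ with nontrivial $M$-component can fix $\infty$ when $\G\ba\G N$ is the relevant object. This needs the dichotomy ``$\infty\notin\Lambda(\G)$ or $\infty$ bounded parabolic'' to be invoked carefully — one must rule out that a loxodromic element shares the fixed point $\infty$ with parabolics, which is where geometric finiteness and the classification of limit points (Beardon–Maskit) enter. The rest is soft point-set topology exploiting compactness of $M$. I would be careful to state part (1) in the generality actually needed (it should hold for all torsion-free non-elementary geometrically finite $\G$, with the caveat that when $\infty$ is a point of approximation $\G\cap N$ may still be trivial and the statement is vacuous but true).
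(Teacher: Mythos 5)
Your part (1) is where the trouble is, and it is also where the paper does something much simpler than what you propose. Write $\gamma=n_xm_\theta=\begin{pmatrix} e^{i\theta} & xe^{-i\theta}\\ 0 & e^{-i\theta}\end{pmatrix}$. Its trace is $2\cos\theta$, so $\operatorname{Tr}^2(\gamma)$ is real and lies in $[0,4)$ whenever $\gamma\notin N$; hence every element of $NM\setminus N$ is \emph{elliptic} (its eigenvalues have modulus one), and in a discrete group an elliptic element has finite order, which torsion-freeness forbids. That is the entire proof in the paper, with no appeal to geometric finiteness, Beardon--Maskit, or the nature of $\infty$ as a limit point. Your write-up treats ``loxodromic'' as a live case: but no element of $NM$ is ever loxodromic, and this matters because in your branch ``$\G\cap N\neq\{e\}$'' you only exclude loxodromic elements and never address the elliptic case --- the only case that actually occurs --- so that branch of your argument is incomplete as written. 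The justification you do offer there is also incorrect: the stabilizer of a boundary point is always contained in a conjugate of $AMN$, hence solvable and amenable, so ``non-amenable stabilizer'' cannot be the obstruction (the standard fact that a parabolic and a loxodromic cannot share a fixed point in a discrete group is a Shimizu--Leutbecher-type discreteness argument, not an amenability one). A further logical slip: you invoke closedness of $\G\ba\G N$ (``the situations of interest'') inside the proof of (1), but (1) carries no closedness hypothesis and must hold unconditionally for the ``if and only if'' in (2) to be meaningful.

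Your part (2) is in the same spirit as the paper's: the paper simply observes that (1) makes the inclusion $(\G\cap N)\ba N\to(\G\cap NM)\ba NM$ proper, and your route via compactness of $M$ and the bijection $\G\ba\G N\to(\G\ba\G NM)/M$ amounts to the same thing. The middle of your argument (``the point is to show $g$ itself, not just a translate, lies in $\G N$'') correctly identifies the delicate step --- if $\gamma_in_{x_i}\to\gamma n_xm_\theta$ one must force $m_\theta=e$ --- and the way to finish is exactly through (1): the closed orbit $\G\ba\G NM$ is homeomorphic to $(\G\cap NM)\ba NM=(\G\cap N)\ba NM\cong\bigl((\G\cap N)\ba N\bigr)\times M$, and continuity of the $M$-coordinate forces the limit's $M$-part to be trivial. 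So (2) is fine once (1) is repaired by the trace computation above.
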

\begin{proof}
 If $\gamma\in\G \cap  NM$, but
not in $N$, then
$\operatorname{Tr}^2(\gamma)$ is real and $0\le \operatorname{Tr}^2(\gamma )<4$
where $\operatorname{Tr}(\gamma )$ denotes the trace of $\gamma$. Hence $\gamma$
is an elliptic element.
As $\G$ is discrete, $\gamma$ must be of finite order. However $\G$ is torsion-free,
which forces $\gamma=e$. This proves the first claim.
 The second claim follows easily since the first claim implies
the inclusion map $\G\cap N \ba N \to \G\cap NM \ba NM$ is proper.
\end{proof}

It follows that if $\G\ba \G N$ is closed in $\G\ba \PSL_2(\c)$,
then $\G\ba \G N$ is either
\begin{enumerate}
\item the embedding of a plane, if $\infty\notin \Lambda(\G)$;
\item the embedding of a cylinder, if $\infty$ is a bounded parabolic
fixed point of rank one; or
\item the embedding of a torus, if $\infty$ is a bounded parabolic fixed point
of rank two.
\end{enumerate}

For $X\subset \bH^3\cup\partial_\infty(\bH^3)$,
$\bar X$ denotes its closure in $\bH^3\cup\partial_\infty(\bH^3)$.
\begin{Prop} \label{ff} Assume that $\G\ba \G N$ is closed.
 There exists a finite-sided fundamental polyhedron $\mathcal F\subset
\bH^3$ for the action of $\G$, and also a fundamental domain $\mathcal F_N\subset \c$ for the action of $N\cap\G$ such that
 for some $r\gg 1$ and for some finite subset $I_\G\subset \G$,
\begin{equation*}
\{(x_1, x_2, y)\in \bH^3: x_1+ix_2\in \mathcal F_N, \; x_1^2+x_2^2+ y^2 >r\}\subset \mathcal \cup_{\gamma\in I_\G} \gamma \mathcal F .
\end{equation*}

\end{Prop}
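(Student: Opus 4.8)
The plan is to use the assumption that $\G$ is geometrically finite, so that it admits a finite-sided Dirichlet fundamental domain, and then to "unfold" this domain in the direction of the cusp at $\infty$ (when $\infty\in\Lambda(\G)$) or along the plane $\G\ba\G N$ (when $\infty\notin\Lambda(\G)$). First I would recall the Dal'Bo/Lemma \ref{stab} dichotomy: $\G\ba\G N$ closed means either $\infty\notin\Lambda(\G)$, or $\infty$ is a bounded parabolic fixed point, of rank one or two. In the rank-two case $N\cap\G$ is a lattice in $N$, a fundamental domain $\mathcal F_N$ for $N\cap\G$ acting on $\c$ is compact, and the whole region in the displayed inclusion is already relatively compact in $\G\ba\bH^3$ once intersected with the truncated space $\{x_1^2+x_2^2+y^2>r\}$ for $r$ large; since $\mathcal F$ can be chosen finite-sided, finitely many $\G$-translates of $\mathcal F$ cover any fixed compact set, giving the claim directly. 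In the rank-one case, $N\cap\G$ is infinite cyclic generated by a parabolic $n_\omega$; one chooses $\mathcal F_N$ to be a strip $\{x_1+ix_2 : 0\le \langle x_1+ix_2,\omega\rangle< |\omega|^2\}$ — bounded in the $\omega$-direction but unbounded transversally — and here the key point is the \emph{bounded parabolic} condition: $\G_\infty\ba(\Lambda(\G)-\{\infty\})$ is compact, so outside a bounded transversal neighborhood of $\Lambda(\G)$ the horoball region near $\infty$ maps into a fixed compact part of $\G\ba\bH^3$. Equivalently, on the strip $\mathcal F_N$, the vertical cusp region over it is covered, modulo finitely many $\G$-translates of $\mathcal F$, by the subregion lying over a bounded piece of $\mathcal F_N$.

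Concretely, I would argue as follows. Take a finite-sided Dirichlet domain $\mathcal F$ centered at a point $o\in\bH^3$ not fixed by any nontrivial element; its sides are finitely many hyperbolic hyperplanes (hemispheres and vertical half-planes), and $\mathcal F$ has finitely many ideal vertices, each either a bounded parabolic fixed point or (by Beardon–Maskit) handled by the finite-sidedness. The high part $\{x_1^2+x_2^2+y^2>r\}$ of $\bH^3$, for $r$ large, is contained in a single standard horoball $H_\infty=\{y>y_0\}$ if $\infty\notin\Lambda(\G)$ is \emph{not} a limit point — wait, more precisely: if $\infty\notin\Lambda(\G)$ the point $\infty$ is not even an ideal vertex of any $\G$-translate of $\mathcal F$, so $\G\mathcal F$ contains a full neighborhood of $\infty$ in $\bH^3\cup\{\infty\}$; choosing $r$ so that $\{x_1^2+x_2^2+y^2>r\}$ lies in that neighborhood, and using local finiteness of the tiling $\{\gamma\mathcal F\}$ together with relative compactness of $\overline{\mathcal F_N}$ in $\c$ (here $N\cap\G$ is trivial or finite-index-related to the plane case, so $\mathcal F_N$ is a bounded region) — actually when $\infty\notin\Lambda(\G)$, $N\cap\G$ may be trivial and $\mathcal F_N=\c$, so one instead uses that $\G\ba\G N$ closed and $\infty\notin\Lambda(\G)$ forces the image of $N$ in $\G\ba\bH^3$ to be a properly embedded plane whose "large-radius" end is relatively compact; this is again a consequence of bounded parabolicity applied with $\mathcal F_N$ replaced by the relevant fundamental region. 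In all three cases, the conclusion is that the set on the left-hand side of the displayed inclusion, being a union of pieces each lying over a \emph{bounded} subset of $\c$ (after applying elements of $N\cap\G$), meets only finitely many tiles $\gamma\mathcal F$; collecting those $\gamma$ into $I_\G$ finishes the proof.

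The main obstacle I expect is the rank-one (cylinder) case and the non-limit-point (plane) case, where $\mathcal F_N$ is genuinely unbounded, so one cannot simply invoke compactness. There the crux is to show that the vertical region over the unbounded part of $\mathcal F_N$ eventually leaves every compact set of $\G\ba\bH^3$ by being pushed \emph{away} from $\G$, i.e. that far out on $\mathcal F_N$ the geodesics dropping from $\infty$ do not return — and this is precisely what the definition of bounded parabolic fixed point (the $\G_\infty$-invariant pair of disjoint discs, equivalently compactness of $\G_\infty\ba(\Lambda(\G)-\{\infty\})$) guarantees: the limit set, viewed in $\c$, stays within a bounded $\mathcal F_N$-neighborhood, so outside that neighborhood the Dirichlet domain $\mathcal F$ (whose ideal boundary contains a neighborhood of $\partial\mathcal F_N\setminus(\text{bounded piece})$ in $\hat\c$) already covers everything, needing no extra translates. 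I would formalize this by choosing $r$ and the bounded piece $\mathcal F_N^{0}\subset\mathcal F_N$ so that $\Lambda(\G)\cap(\text{strip over }\mathcal F_N)$ lies in the vertical region over $\mathcal F_N^{0}$, then checking that over $\mathcal F_N\setminus\mathcal F_N^{0}$ a single translate ($\gamma=e$) suffices, while over the relatively compact $\overline{\mathcal F_N^{0}}$ at height $>\sqrt r$ only finitely many $\gamma$ occur by local finiteness. This last bookkeeping — patching the "over $\mathcal F_N^0$" and "over the complement" regions and extracting the finite set $I_\G$ uniformly in $r$ — is the only genuinely technical point; everything else is standard geometric finiteness.
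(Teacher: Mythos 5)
Your overall skeleton (the trichotomy $\infty\notin\Lambda(\G)$ / bounded parabolic of rank one / rank two, and the use of bounded parabolicity to control the \emph{transversal} unboundedness of $\mathcal F_N$ in the rank-one case) matches the paper. But there is a genuine gap in how you handle the \emph{vertical} direction $y\to\infty$ in both parabolic cases. The set $\{(x_1,x_2,y): x_1+ix_2\in \mathcal F_N,\ x_1^2+x_2^2+y^2>r\}$ is a chimney over $\mathcal F_N$ extending to $y=\infty$; even when $\mathcal F_N$ is compact (rank two), this set is not relatively compact in $\bH^3$, and its image in $\G\ba\bH^3$ is a cusp end, which is likewise not compact. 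So your appeals to ``finitely many translates of $\mathcal F$ cover any fixed compact set'' and to ``local finiteness over the relatively compact $\overline{\mathcal F_N^0}$ at height $>\sqrt r$'' do not apply: a priori a non-compact chimney could meet infinitely many tiles $\gamma\mathcal F$, unless one rules out elements of $\G\setminus\G_\infty$ returning high points of the chimney into itself.

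The missing ingredient is precise invariance of standard cusp neighborhoods. The paper invokes Maskit (Prop.\ A.13 and A.14 in Ch.\ VI of \cite{Maskit1988}): for $c$ large the set $S(c)=\{y>c\}$ (rank two), respectively $S(c)=\{(s_1v_1+s_2v_2,y): y>c\text{ or }|s_2|>c\}$ (rank one), is precisely invariant under $\G\cap N$, and $\mathcal F\setminus S(c)$ is bounded away from $\infty$. Precise invariance forces the part of $S(c)$ lying over $\mathcal F_N$ to be contained in $\mathcal F$ itself (so in these cases one can even take $I_\G=\{e\}$ for the high part), and the chimney in the statement sits inside $S(c)$ for $r$ large. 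Your bounded-parabolicity argument captures the $|s_2|>c$ half of the rank-one statement but supplies nothing for the $y>c$ half, which is exactly where compactness fails. The case $\infty\notin\Lambda(\G)$ is handled correctly in your sketch: there $\infty$ lies in the domain of discontinuity, so finitely many translates of $\mathcal F$ cover a neighborhood of $\infty$ in $\bH^3\cup\partial_\infty(\bH^3)$, and exteriors of hemispheres form a neighborhood basis of $\infty$.
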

\begin{proof}
 Choose a finite-sided fundamental polyhedron $\mathcal F$ for
$\G$ with $\infty\in \bar {\mathcal F}$.
If $\infty \notin
 \Lambda(\G)$, then $\infty$ lies in the interior of
$\cup_{\gamma\in I_\G} \gamma \mathcal F$ for some finite $I_\G\subset \Gamma$.
As the exteriors of hemispheres form
 a basis of neighborhoods
of $\infty$ in $\bH^3\cup\partial_\infty(\bH^3)$, and $\mathcal F_N=\c$,
the claim follows.

Now suppose that $\infty$ is a bounded parabolic fixed point of rank one.
Let $n_{v_1}\in \G\cap N$ be a generator for $\G\cap N$ for $v_1\in \c$,
and fix a vector $v_2\in \c$
 perpendicular to $v_1$.
The set  $\mathcal F_N:=\{s_1v_1+s_2v_2\in \c : 0\le s_1<1\}$ is a fundamental
domain in $\c$ for the action of $\G\cap N$.
By replacing $\mathcal F$ if necessary,
 we may assume that $\mathcal F \subset \mathcal F_N\times \br_{>0}$.
Then by \cite[Prop. A. 14 in VI]{Maskit1988}, for all large $c>1$, the sets
$$S(c):=\{ (s_1v_1+s_2v_2, y)\in \bH^3: y>c\text{ or }  |s_2| > c\}$$
is precisely invariant by $\G\cap N$ and
$\mathcal F - S(c)$ is bounded away from $\infty$.
Since $\mathcal F\cap S(c)\ne \emptyset$ as $\infty\in \bar{\mathcal F}$ and
$S(c)$ is precisely invariant by $N\cap \G$, it follows that
\begin{equation}\label{tc} \{(x_1, x_2, y) \in S(c): x_1+ix_2 \in \mathcal F_N \}
\subset \mathcal F . \end{equation}

Since $$\{(x_1, x_2, y): x_1+ix_2 \in \mathcal F_N,
 x_1^2+x_2^2 +y^2 >r\}\subset S(c)$$ for some large $c>0$,
 the claim follows in this case.

If $\infty$ is a bounded parabolic of rank two,
$S(c):=\{(x_1, x_2, y): y>c\}$ is precisely invariant by $\G\cap N$
and $\mathcal F-S(c)$ is bounded away from $\infty$ by
\cite[Prop. A. 13 in VI]{Maskit1988}. Choose $\mathcal F_N$ so that
$\mathcal F\subset \mathcal F_N\times \br_{>0}$.
Then \eqref{tc} holds for the same reason, and
since $\mathcal F_N$ is bounded,
$$\{(x_1, x_2, y): x_1+ix_2 \in \mathcal F_N,
 x_1^2+x_2^2 +y^2 >r\}\subset S(c)$$ for some large $c>0$.
This proves the claim.
\end{proof}

Considering the action of
 $N\cap \G$ on $\partial_\infty(\bH^3)\setminus \{\infty\}=\c $,
we denote by $\Lambda_N(\G)$
 the image of $\Lambda(\G)\setminus \{\infty\}$
 in the quotient $(N\cap \G)\ba \c$, that is,
 $$\Lambda_N(\G):=\{ [n_x]\in (N\cap \G)\ba N: x\in \Lambda(\G)\setminus \{\infty\} \}.$$

\begin{Prop} \label{lg} If $\G\ba \G N$ is closed, the set $\Lambda_N(\G)$
is bounded.
\end{Prop}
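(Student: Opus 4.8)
The plan is to use the classification of closed horospheres together with the structure of bounded parabolic fixed points. Recall that $\Lambda_N(\G)$ is the image of $\Lambda(\G)\setminus\{\infty\}$ in $(N\cap\G)\ba\c$; saying it is bounded means there is a bounded fundamental domain-type region in $\c$ meeting every $(N\cap\G)$-orbit of a point of $\Lambda(\G)\setminus\{\infty\}$. By Theorem \ref{dalbo}(2), since $\G\ba\G N$ is closed we are in one of two cases: either $\infty\notin\Lambda(\G)$, or $\infty$ is a bounded parabolic fixed point of $\G$.

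First I would dispose of the case $\infty\notin\Lambda(\G)$. Here $N\cap\G$ is trivial (any nontrivial element of $N$ fixes only $\infty$, which would then be parabolic in $\Lambda(\G)$), so $\Lambda_N(\G)=\Lambda(\G)\setminus\{\infty\}=\Lambda(\G)$. Since $\infty\notin\Lambda(\G)$ and $\Lambda(\G)$ is closed in $\hat\c=\c\cup\{\infty\}$, which is compact, $\Lambda(\G)$ is a compact subset of $\c$, hence bounded. So $\Lambda_N(\G)$ is bounded.

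Next, the case where $\infty$ is a bounded parabolic fixed point. By Definition \ref{boundedparabolic}(2), $\infty$ being bounded parabolic is equivalent to $\G_\infty\ba(\Lambda(\G)-\{\infty\})$ being compact. Now $\G_\infty$ is the stabilizer of $\infty$ in $\G$; since $\G$ is torsion-free and geometrically finite, $\G_\infty$ is a parabolic group fixing $\infty$, hence consists of translations, i.e. $\G_\infty\subset N$, and in fact $\G_\infty=N\cap\G$ (an element of $\G$ fixing $\infty$ and acting as a translation is in $N$; one must also check there is no parabolic with a rotational part, but torsion-freeness and discreteness rule out elements of $NM\setminus N$ — this is exactly Lemma \ref{stab}(1)). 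Therefore $(N\cap\G)\ba(\Lambda(\G)-\{\infty\})=\G_\infty\ba(\Lambda(\G)-\{\infty\})$ is compact. But $\Lambda_N(\G)$ is precisely this set viewed inside $(N\cap\G)\ba\c$ via $x\mapsto[n_x]$, so it is the continuous image of a compact set (or simply a compact subset of the locally compact space $(N\cap\G)\ba\c$), and a compact set is bounded.

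I expect the only real point requiring care is identifying $\G_\infty$ with $N\cap\G$ precisely — ruling out parabolic-with-rotation elements — but this is handled by Lemma \ref{stab}(1) combined with torsion-freeness, so there is no genuine obstacle; the proposition is essentially an unwinding of the definition of "bounded parabolic" through the already-established dictionary between closedness of $\G\ba\G N$ and the nature of $\infty$ relative to $\Lambda(\G)$.
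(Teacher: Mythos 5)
Your proof is correct, and it handles the bounded parabolic case by a different route than the paper. The paper splits into three cases: for $\infty\notin\Lambda(\G)$ it argues exactly as you do; for rank two it observes $(N\cap\G)\ba N$ is already compact; and for rank one it invokes the external geometric fact that $\Lambda(\G)$ lies in a strip of finite width (citing Thurston, or alternatively Proposition \ref{ff} together with compactness of the intersection of the convex core with the thick part). You instead treat both parabolic ranks uniformly by unwinding the characterization recorded in Definition \ref{boundedparabolic}(2) --- that $\xi$ is bounded parabolic iff $\G_\xi\ba(\Lambda(\G)-\{\xi\})$ is compact --- and then identifying $\G_\infty$ with $N\cap\G$. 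That identification is the one point needing care, and you handle it correctly: Lemma \ref{stab}(1) excludes elements of $NM\setminus N$, and the standard discreteness argument excludes loxodromics fixing a parabolic fixed point (you assert rather than prove the latter, but it is routine). Your approach is shorter and self-contained relative to what the paper has already stated, at the cost of leaning on an equivalence the paper asserts in its definition without proof; the paper's rank-one argument supplies an independent, more concrete geometric justification (the finite-width strip) for essentially the same compactness statement. Your observation that $N\cap\G$ is trivial when $\infty\notin\Lambda(\G)$ is correct but not needed --- the image of a bounded set in the quotient is bounded regardless.
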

\begin{proof} This is clear if $\infty$ is a bounded parabolic of rank two,
as $N\cap \G\ba N$ is compact. If $\infty\notin \Lambda(\G)$,
then $\Lambda(\G)$ is a compact subset of $\c$, and hence the claim follows.
In the case when $\infty$ is a bounded parabolic of rank one,
 the claim follows from the well-known fact that $\Lambda(\G)$ lies in a strip of finite width
(see \cite[Pf. of Prop. 8.4.3]{Thurstonbook}).
This can also be deduced from Proposition \ref{ff} using the fact that
the intersection of the convex core of $\G\ba \bH^3$ and the thick part of the manifold
$\G\ba \bH^3$
is compact for a geometrically finite group.
\end{proof}


\begin{Prop}\label{mp}  Assume that $\G\ba \G N$ is closed.
 For any compact subset $J \subset \G\ba G$, the following set is bounded:
 \begin{equation}\label{nj} N(J):=\{[n]\in (N\cap \G)\ba  N\ : \G\ba \G nA\cap  J \ne\emptyset\}.\end{equation}
\end{Prop}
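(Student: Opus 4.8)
The plan is to show that if $[n]\in (N\cap\G)\ba N$ lies in $N(J)$, then the full geodesic-type orbit $\G\ba\G nA$ meeting the compact set $J$ forces $[n_x]$ to stay within a bounded neighborhood of $\Lambda_N(\G)$, which is bounded by Proposition \ref{lg}. The mechanism is the following dichotomy for the base point: the horosphere through $[n]$ has visual point $n(\infty)=x\in\c$; if $x\notin\Lambda(\G)$, then the geodesic ray $\G\ba\G n a_y$ (as $y\to 0$) exits every compact set, since its forward endpoint is not a limit point and hence the ray eventually leaves the convex core and runs down a cusp or diverges in a funnel, so it can meet $J$ only for $y$ bounded below and above, with the bound depending on $\op{dist}(x,\Lambda(\G))$. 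If $x\in\Lambda(\G)$, then $[n_x]\in\Lambda_N(\G)$ directly. So the only way $[n]$ can be far from $\Lambda_N(\G)$ is through the first case, and there the distance of the ray to the core grows with $\op{dist}(x,\Lambda(\G))$, keeping it away from a fixed compact $J$.

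Concretely, I would first lift $J$ to a compact set $\tilde J\subset G$ with $J=\G\ba\G\tilde J$, and observe that $\G\ba\G n a_y\in J$ means $\g n a_y\in\tilde J$ for some $\g\in\G$. Using the Iwasawa decomposition $\g n a_y = n_{x'} a_{y'} k$, membership in the compact $\tilde J$ bounds $y'$ from above and below and bounds $x'$ in a compact set. Next, I would use Proposition \ref{ff}: choose the finite-sided fundamental polyhedron $\mathcal F$ with $\infty\in\bar{\mathcal F}$ and the fundamental domain $\mathcal F_N$ for $N\cap\G$, together with the finite set $I_\G\subset\G$ and radius $r$, so that the region $\{(x_1,x_2,y): x_1+ix_2\in\mathcal F_N,\ x_1^2+x_2^2+y^2>r\}$ is covered by $\cup_{\g\in I_\G}\g\mathcal F$. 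Representing $[n]$ by its unique representative $n_x$ with $x\in\mathcal F_N$, the point $n_x a_y(0,0,1)=(x,y)$ has $|x|^2+y^2>r$ once $|x|$ is large (uniformly in $y<1$, which is the relevant range since expanding translates have $y\to 0$); hence this point lies in $\g\mathcal F$ for some $\g\in I_\G$. Because $\mathcal F$ is a fundamental polyhedron, a point deep in a cusp neighborhood or a funnel of $\g\mathcal F$ is at large hyperbolic distance from any fixed compact subset of $\G\ba\bH^3$, and therefore its image in $\G\ba G$ is at large distance from $J$ — contradicting $\G\ba\G n a_y\cap J\neq\emptyset$ once $|x|$ exceeds a threshold depending only on $J$, $r$, and $I_\G$.

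I expect the main obstacle to be making precise, uniformly over the noncompact ($y\to 0$) direction, the claim that \emph{``the orbit $\G\ba\G nA$ avoids a fixed compact set once the $N$-coordinate $x$ is outside a bounded neighborhood of $\Lambda(\G)$.''} The cleanest route is to argue by contradiction: suppose there are $[n^{(k)}]$ in $N(J)$ with $|x^{(k)}|\to\infty$ (in the $\mathcal F_N$-representative), pick $y_k$ with $\G\ba\G n^{(k)}a_{y_k}\in J$, and extract a convergent subsequence of the $J$-points; passing to lifts via Proposition \ref{ff} and the finiteness of $I_\G$, one concludes that infinitely many distinct $\g_k\mathcal F$ meet a fixed compact set, which is impossible by local finiteness of the $\G$-tessellation. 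The delicate bookkeeping is that when $\infty$ is a bounded parabolic fixed point of rank one, $\mathcal F_N$ is an infinite strip, so "large $|x|$" must be interpreted along the strip direction, and one leans on Proposition \ref{lg} (the width-finiteness of $\Lambda(\G)$) to ensure that all of $\Lambda_N(\G)$ really is captured inside a bounded piece. Once the local-finiteness contradiction is set up, the rest is routine.
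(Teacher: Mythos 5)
Your proof is correct and follows essentially the same route as the paper: assume representatives $n^{(k)}\in\mathcal F_N$ with $n^{(k)}\to\infty$ and $n^{(k)}a_{y_k}=\gamma_k w_k$ with $w_k\in J$ convergent, place both $n^{(k)}a_{y_k}(0,0,1)$ (via Proposition \ref{ff}) and the limit points $w_k(0,0,1)$ in a fixed finite union of $\G$-translates of $\mathcal F$, and invoke local finiteness of the tessellation to force $\{\gamma_k\}$ finite, contradicting $n^{(k)}a_{y_k}\to\infty$. The heuristic in your first paragraph (that the whole $A$-orbit stays far from the convex core when $x$ is far from $\Lambda(\G)$) is actually delicate in the rank-one parabolic case, where the vertical geodesic approaches the cusp as $y\to\infty$, but your final contradiction argument does not depend on it, so the proof stands.
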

\begin{proof} Let $\mathcal F_N$ and $\mathcal F$ be as in Proposition \ref{ff}.
If $\mathcal F_N$ is bounded, there is nothing to prove.
Hence
by Theorem \ref{dalbo},
we may assume that either $\infty\notin \Lambda(\G)$ or $\infty$ is a
bounded parabolic fixed point of rank one.

To prove the proposition, suppose on the contrary that
there exist  sequences  $n_{j}\in \mathcal F_N \to \infty$,  $a_{j}\in A$, $\gamma_j\in \G$
and $w_j\in J $ such that
 $n_{j} a_{j}= \gamma_j w_j$.
As $J$ is bounded, we may assume $w_j\to w\in G$ by passing to a subsequence.
Then $ \gamma_j^{-1} n_{j}a_j \to w$. Let $\gamma_0\in \G$ be such that
 $\gamma_0 w (0,0,1)\in\bar {\mathcal F}$.
By the geometric finiteness of $\G$, there exists a finite union, say
$\mathcal F'$, of translates of $\mathcal F$
such that $\gamma_0w_j (0,0,1)\in \mathcal F'$ for all large $j$.

As $n_j\to \infty$,
the Euclidean norm of $n_ja_j(0,0,1)$ goes to infinity, and hence
by Proposition \ref{ff},
 $$ n_ja_j(0,0,1)\in \mathcal F '\quad\text{for all large $j$} ,$$
by enlarging $\mathcal F'$ if necessary.

Therefore for all large $j$,
$$\gamma_0 \gamma_j^{-1} n_j a_j (0,0,1)=\gamma_0 w_j (0,0,1) \in
\mathcal F'\cap \gamma_0\gamma_j ^{-1}(\mathcal F ') .$$
Since $\mathcal F'\cap \gamma_0\gamma_j^{-1} (\mathcal F ')\ne \emptyset$ for  only finitely many
  $\gamma_j$'s,
we conclude that $\{\gamma_j\}$ must be a finite set.
As $n_ja_j=\gamma_j w_j\in \gamma_j J$,
$n_ja_j$ must be a bounded sequence,
 contradicting
  $n_j\to \infty$.
\end{proof}

Note that $N(J)$ is defined so that for all $y>0$,
$$(N\cap \G)\ba  N a_y\cap J\subset N(J) .$$

\begin{Cor} \label{suppcom} Let $\psi\in C_c(\G\ba G)$ have support $J$.
\begin{enumerate}
\item The set $N(J)$ defined
in \eqref{nj} is bounded.
 \item For any function $\eta\in C_c(N\cap \G\ba N)$
with $\eta|_{ N(J)}\equiv 1$,
we have for all $y>0$,
$$\int_{(N\cap \G) \ba N}\psi (na_y)dn =\int_{(N\cap \G)\ba N }\psi(n a_y)\; \eta(n)\; dn .$$
\end{enumerate}
\end{Cor}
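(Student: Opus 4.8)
The statement to prove is Corollary~\ref{suppcom}, which has two parts: (1) the boundedness of $N(J)$ for $\psi \in C_c(\G\ba G)$ with support $J$, and (2) that one may insert a cutoff function $\eta$ without changing the horospherical integral.

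Part (1) is immediate from Proposition~\ref{mp}: since $\psi$ is compactly supported, its support $J$ is a compact subset of $\G\ba G$, so Proposition~\ref{mp} applies directly and $N(J)$ is bounded. Part (2) is the more substantive (though still short) statement, and the key observation is the inclusion noted right before the corollary: for every $y > 0$ one has $(N\cap\G)\ba N a_y \cap J \subset N(J)$. This means that if $[n] \in (N\cap\G)\ba N$ is such that the point $\G\ba\G n a_y$ lies in $J = \supp(\psi)$, then $[n] \in N(J)$, where $\eta \equiv 1$, so $\eta(n) = 1$ at every such point.

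So the plan is: write the difference of the two integrals as $\int_{(N\cap\G)\ba N} \psi(na_y)(1 - \eta(n))\, dn$, and observe that the integrand vanishes pointwise. Indeed, for a fixed $y>0$ and a point $[n]$ in the domain of integration, either $\G\ba\G n a_y \notin J$, in which case $\psi(n a_y) = 0$; or $\G\ba\G n a_y \in J$, in which case $[n] \in N(J)$ by the displayed inclusion, so $\eta(n) = 1$ and the factor $(1-\eta(n))$ vanishes. In both cases the integrand is zero, hence the two integrals agree. One should also remark that the integrals make sense — both integrands are continuous and, by part (1) together with the support of $\eta$, effectively supported on a bounded (hence, after quotienting, relatively compact) subset of $(N\cap\G)\ba N$, so there is no convergence issue; this is where part (1) is genuinely used.

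There is essentially no obstacle here: the whole content has been front-loaded into Proposition~\ref{mp}. The only point requiring a moment's care is making sure the pointwise-vanishing argument is applied for each fixed $y$ separately (the set of relevant $[n]$ depends on $y$, but the bound $N(J)$ and the function $\eta$ do not), and noting that $\eta$ can indeed be chosen with $\eta|_{N(J)} \equiv 1$ precisely because $N(J)$ is bounded, so its closure in $(N\cap\G)\ba N$ is compact and admits such a bump function.
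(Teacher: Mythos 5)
Your proof is correct and follows essentially the same route as the paper: part (1) is quoted directly from Proposition \ref{mp}, and part (2) rests on the same key observation that $\psi(na_y)=0$ for $[n]$ outside $N(J)$ while $\eta\equiv 1$ on $N(J)$, so the two integrands agree pointwise. The extra remarks on convergence and on the existence of such an $\eta$ are harmless elaborations, not a different argument.
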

\begin{proof} The first claim is immediate from the above proposition.
For (2), it suffices to note that
$\psi(na_y)\equiv0$ for $n$ outside of $N(J)$, and hence
$$
\int_{(N\cap \G)\ba N}\psi (na_y)\  \eta(n)\ dn =\int_{ N(J) }\psi(n a_y)\ \eta(n) \ dn .
$$
Using $\eta \equiv 1$ on $N(J)$, the claim follows.
\end{proof}

\section{The base eigenfunction $\phi_0$}\label{phizeroo}

In this section, we assume that $\G <G=\PSL_2(\c)$ is a
geometrically finite torsion-free discrete subgroup
and that the Hausdorff dimension $\delta_\G$ of the limit set $\Lambda(\G)$ is strictly bigger than one. Assume also that $\G\ba \G N$ is closed.


 By Sullivan \cite{Sullivan1979}, there
exists a positive $L^2$-eigenfunction $\phi_0$, unique up to a scalar multiple,
 of the Laplace operator
$\Delta$ on $\G\ba \bH^3$ with smallest eigenvalue $\delta_\G (2-\delta_\G)$ and which
 is square-integrable, that is,
 $$\int_{\G\ba \bH^3} \phi_0(x,y)^2 \frac{1}{y^3} dx dy <\infty .$$

 In this section, we study
the properties of $\phi_0$ along closed horospheres. For the base
point $o=(0,0,1)$, we denote by $\nu_o$ a weak limit as $s\to
\delta_\G$ of the family of measures on $\overline \bH^3$:
\begin{equation*}
\nu(s):=\frac{1}{\sum_{\gamma\in \G} e^{-sd(o, \gamma
o)}}\sum_{\gamma\in \G} e^{-sd(o, \gamma o)} \delta_{\gamma
o} .\end{equation*}

The measure $\nu_o$ is indeed the unique weak limit of
$\{\nu(s)\}$ as $s\to \delta_\G$. Sullivan \cite{Sullivan1979}
showed that $\nu_o$ is the unique finite measure (up to a constant
multiple) supported on $\Lambda(\G)$ with the property: for any $\gamma\in \G$,
\begin{equation}\label{conformal}\frac{d(\gamma_*\nu_o)}{d\nu_o}(\xi)=e^{-\delta_\G
\beta_\xi(\gamma(o), o)}\end{equation} where
$\beta_\xi(z_1,z_2):=\lim_{z\to \xi} d(z_1,z)-d(z_2,z)$ is the
Busemann function and $\gamma_*\nu_o(A):=\nu_o(\gamma^{-1}(A))$. The
measure $\nu_o$, called the Patterson-Sullivan measure,
 has no atoms.


The base eigenfunction
$\phi_0$ can be explicitly written as the integral of the Poisson kernel against $\nu_o$:
\begin{align}\label{PSS} \phi_0(x+iy)&=
\int_{u\in \Lambda(\G)} e^{-\delta_\G  \beta_u (x+iy, o) } d\nu_o(u) \\
&=\int_{u\in\Lambda(\G)\setminus \{\infty\}}
 \left({(\|u\|^2+1) y \over \|x-u\|^2+y^2}\right)^{\delta_\G} d\nu_o(u) ,
\end{align}
where $\Lambda(\G)$ is identified as a subset of $\c\cup\{\infty\}$.
The main goal of this section is to study  the average of $\phi_0$ along the
 translates $\G\ba \G N a_y$.

\begin{Def}\label{PS} {\rm For a given $\psi\in C (\G\ba G)^K$, define
$$\psi^N(a_y):=\int_{n\in (N\cap \G)\ba N} \psi(n a_y) \; dn .$$}
\end{Def}

We will show that the integral $\phi_0^N(a_y)$
above
converges, and moreover that all of the action in this integral takes
place only over the following bounded set (see Proposition \ref{lg}):
 $$\Lambda_N(\G)=\{[n_x]\in (N\cap \G)\ba N : x\in \Lambda(\G)\setminus \{\infty\}\} .$$

If $\infty$ is a bounded parabolic fixed point of rank one,
then
there exists
a vector $v_1\in \br^2$ such that $n_{v_1}$ is a generator for $N\cap \G$.
Fixing vector $v_2$ perpendicular to $v_1$,
 we can decompose $x\in\R^2$ as  $x=x_1v_1+x_2v_2$.
\begin{Def}\label{proper} {\rm We say that an open subset $B\subset  (N\cap \G)\ba N$
{\it properly covers} $\Lambda_N(\G)$ if the following holds:
\begin{enumerate}
 \item if $\infty\notin \Lambda(\G)$, then
$\e_0(B):=\inf_{u\in \Lambda_N(\G), x\notin B} \|x-u\|>0 $
\item if $\infty$ is a bounded parabolic of rank one, then
$$\e_0(B) :=\inf_{x \notin B, u\in \Lambda_N (\G)} |u_2-x_2| >0 .$$
\item if $\infty$ is a bounded parabolic of rank two, then $B=(N\cap \G)\ba N$.
\end{enumerate} } \end{Def}


\begin{Prop} \label{ppo} \begin{enumerate}
\item $\phi_0^N(a_y)\gg y^{2-\delta_\G}$ for all $0<y\ll 1$.
\item For any open subset $B\subset (N\cap \G)\ba N$ which properly covers
$\Lambda_N(\G)$, and all small $y>0$,
\be\label{phi0B}
\phi_0^N(a_y)=
\int_B \phi_0(n_xa_y)\; dx +  O_{\e_0(B)} (y^{\delta_\G }).
\ee
\end{enumerate}
\end{Prop}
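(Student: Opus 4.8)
The plan is to compute $\phi_0^N(a_y)$ directly from the Poisson-kernel representation \eqref{PSS} of $\phi_0$, fibering the integral over $(N\cap\G)\backslash N$ by the position of $x$ relative to the bounded set $\Lambda_N(\G)$, and showing that the contribution from the complement of any proper cover $B$ is $O_{\e_0(B)}(y^{\delta_\G})$. First I would substitute \eqref{PSS} into Definition \ref{PS}, obtaining (formally, before justifying convergence)
\[
\phi_0^N(a_y)=\int_{n_x\in (N\cap\G)\backslash N}\int_{u\in\Lambda(\G)\setminus\{\infty\}}\left(\frac{(\|u\|^2+1)\,y}{\|x-u\|^2+y^2}\right)^{\delta_\G}d\nu_o(u)\,dx.
\]
I would break the outer integral as $\int_B + \int_{((N\cap\G)\backslash N)\setminus B}$. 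On the complement of $B$, by the definition of "properly covers" (Definition \ref{proper}), every $x\notin B$ is uniformly bounded away from every $u\in\Lambda_N(\G)$: either $\|x-u\|\ge\e_0(B)$ directly (case $\infty\notin\Lambda(\G)$), or $|u_2-x_2|\ge\e_0(B)$, which combined with periodicity in the $v_1$-direction still forces the denominator $\|x-u\|^2+y^2$ to be $\gg\e_0(B)^2$ after folding into the fundamental domain (case $\infty$ bounded parabolic of rank one). Hence on the complement the integrand is $\ll_{\e_0(B)} y^{\delta_\G}(\|u\|^2+1)^{\delta_\G}$; integrating $d\nu_o(u)$ against $(\|u\|^2+1)^{\delta_\G}$ over $\Lambda(\G)\setminus\{\infty\}$ gives a finite constant — this finiteness I would verify using the conformal density property \eqref{conformal} of $\nu_o$ near $\infty$, exactly the fact that makes $\phi_0$ well-defined at the point $o$ — and then integrating $dx$ over the (bounded, by Proposition \ref{lg} applied to the complement structure, or rather over the relevant periodized region) yields the stated $O_{\e_0(B)}(y^{\delta_\G})$ bound. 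This simultaneously proves convergence of $\phi_0^N(a_y)$ once we know $\int_B\phi_0(n_xa_y)\,dx$ converges, which is clear since $B$ is bounded and $\phi_0$ is continuous.

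For part (1), the lower bound $\phi_0^N(a_y)\gg y^{2-\delta_\G}$, I would again use \eqref{PSS}: restricting the outer $x$-integral to a fixed small ball $B_0$ around a chosen point $u_0\in\Lambda_N(\G)$ (recall $\nu_o$ has no atoms but $\Lambda(\G)$ is the support of $\nu_o$, so small balls have positive $\nu_o$-mass), and restricting the inner $u$-integral to $\nu_o$-mass concentrated near $u_0$, I get
\[
\phi_0^N(a_y)\gg\int_{\|x\|\le c}\left(\frac{y}{\|x\|^2+y^2}\right)^{\delta_\G}dx
\]
for some $c>0$; the change of variables $x=yz$ turns this into $y^{2-\delta_\G}\int_{\|z\|\le c/y}(1+\|z\|^2)^{-\delta_\G}\,dz$, and since $\delta_\G>1$ the integral $\int_{\R^2}(1+\|z\|^2)^{-\delta_\G}\,dz$ converges, so the tail is bounded below by a positive constant for all small $y$. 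This is exactly where the hypothesis $\delta_\G>1$ enters. (When $\infty$ is a bounded parabolic of rank one, the $x$-integral is over a strip, but the ball $B_0$ around $u_0$ still fits inside a fundamental domain for $N\cap\G$, so the same estimate goes through.)

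The main obstacle I anticipate is the bookkeeping around the non-closed case and the rank-one cusp: one must be careful that folding the $x$-integral into a fundamental domain $\mathcal F_N$ for $N\cap\G$ (which in the rank-one case is an infinite strip, by Proposition \ref{ff}) does not destroy the separation estimate — the periodicity in the $v_1$-direction is harmless because translating $u$ by $v_1$ preserves both $\Lambda(\G)$ and the Poisson kernel, but one needs $\Lambda_N(\G)$ bounded (Proposition \ref{lg}) to ensure the complement of $B$ truly sees a uniform gap. A second, more technical point is justifying the interchange of the $dx$ and $d\nu_o(u)$ integrals and the convergence of $\int(\|u\|^2+1)^{\delta_\G}d\nu_o(u)$: this is where the precise decay of $\nu_o$ near $\infty$ coming from \eqref{conformal} — equivalently, the finiteness of $\phi_0(o)$ itself — is essential, and I would invoke it by Fatou/monotone convergence applied to $\nu(s)\to\nu_o$ rather than trying to estimate $\nu_o$ directly. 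Everything else is a routine change of variables of the form $x=yz$.
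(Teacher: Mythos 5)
Your overall strategy (substitute the Poisson integral representation \eqref{PSS}, apply Fubini, and change variables $x=u+yz$) is the same as the paper's, and your argument for part (1) is essentially correct once the Fubini ordering is made precise. But part (2) contains a genuine gap, concentrated exactly in the case that requires real work. You claim that $\int_{\Lambda(\G)\setminus\{\infty\}}(\|u\|^2+1)^{\delta_\G}\,d\nu_o(u)$ is finite and that this follows from the same fact that makes $\phi_0(o)$ finite. This is false when $\infty$ is a bounded parabolic fixed point. What makes $\phi_0(o)$ finite is the finiteness of $\nu_o$ itself, because for large $\|u\|$ the full Poisson kernel $\tfrac{\|u\|^2+1}{\|x-u\|^2+y^2}$ tends to a constant; once you strip off the denominator, the weight $(\|u\|^2+1)^{\delta_\G}\sim\|u\|^{2\delta_\G}$ is not $\nu_o$-integrable. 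Indeed, the transformation law \eqref{transLaw} shows that $\int_{[k,k+1]\times[-T,T]}(\|u\|^2+1)^{\delta_\G}d\nu_o(u)$ is \emph{independent of} $k\in\z$, so summing over $k$ the total weighted mass diverges. This is why the paper does not decouple the numerator from the kernel in the rank-one case: it interchanges the $x_1$ and $u_1$ integrations so that $u_1$ stays confined to an interval of length one depending on $x_1$, and proves the uniform two-sided bound $0<c_1\le e_{x_1}\le c_2<\infty$ on the resulting unit-box integrals via \eqref{transLaw}. Without some substitute for this step your estimate of $\int_{B^c}$ in Case II does not close.

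A second, smaller but still real, problem is the finishing step of part (2): you integrate a pointwise bound $\ll_{\e_0(B)}y^{\delta_\G}$ over the complement of $B$ and assert the region is bounded. It is not: in the cases $\infty\notin\Lambda(\G)$ and $\infty$ a rank-one parabolic, $B$ is a bounded (resp. bounded-in-$x_2$) subset of an unbounded fundamental domain, so $B^c$ has infinite area and the pointwise bound alone gives nothing. The correct finish is the explicit tail computation
\begin{equation*}
\int_{\|x\|>\e_0}\Bigl(\frac{y}{\|x\|^2+y^2}\Bigr)^{\delta_\G}dx
=2\pi y^{2-\delta_\G}\int_{r>\e_0/y}\frac{r\,dr}{(r^2+1)^{\delta_\G}}
\ll_{\e_0} y^{2-\delta_\G}\cdot(y/\e_0)^{2\delta_\G-2}= \e_0^{2-2\delta_\G}\,y^{\delta_\G},
\end{equation*}
which uses $\delta_\G>1$ and the decay of the kernel in $x$, not just the separation $\|x-u\|\ge\e_0$ (in the rank-one case the analogous one-variable integral in $x_2$ after integrating out $x_1$). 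Both gaps are repairable, but the repair is precisely the content of Cases II and III of the paper's proof, so as written the proposal proves the proposition only when $\infty\notin\Lambda(\G)$.
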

\begin{proof}
Choose a fundamental domain $\mathcal F_N\subset \c $ for $(N\cap \G)\ba N$. We first show that
 $\phi_0^N(a_y)\gg y^{2-\delta_\G}$ for all $0<y\ll 1$.
For a subset $Q\subset \mathcal F_N$,
the notation $Q^c$ denotes the complement of $Q$ in $ \mathcal F_N $.
Observe that for a subset $Q\subset \mathcal F_N$,
\begin{align}\label{phizero}
&\int_{x\in Q}\phi_0(n_xa_y) \;dx\\
& =
\int_{u\in\Lambda(\G)}
(\|u\|^2+1) ^{\delta_\G}
\int_{x\in Q}
\left({y \over \|x-u\|^2+y^2}\right)^{\delta_\G}
dx
d\nu_o(u) \notag\\
&= \int_{u\in\Lambda (\G) }
\left( \int_{x\in  Q-u} (\|u\|^2+1) ^{\delta_\G} \left({y \over \|x\|^2+\|y\|^2 } \right)^{\delta_\G} dx
\right) d\nu_o(u)\notag
\end{align}
where the interchange of orders is justified since everything is nonnegative.

We observe that by changing the variable $w=\frac{x}{y}$,
\begin{align}\label{whole}\int_{x\in \br^2}
 \left({y \over \|x\|^2+\|y\|^2 } \right)^{\delta_\G} dx
&=  y^{2-\delta_\G}
 \int_{w\in \br^2} \left({1 \over \|w\|^2+1} \right)^{\delta_\G} dw  \\
&= y^{2- \delta_\G} 2\pi \int_{r>0}\frac{r}{{(r^2+1)}^{\delta_\G} }\; dr
\notag \\
&=\frac{ \pi }{\delta_\G-1} y^{2-\delta_\G} .\notag
\end{align}

\noindent{\bf{Case I: $\infty\notin \Lambda(\G)$.}}
In this case, we have
$$\omega_0=\int_{u\in\Lambda (\G)}
(\|u\|^2+1) ^{\delta_\G} d\nu_o(u) \ll \nu_o(\Lambda(\G)) <\infty .$$
Therefore for $Q=\c$, we obtain from \eqref{phizero} and \eqref{whole} that
 \begin{align}\label{first} \phi_0^N(a_y)&=  \int_{u\in\Lambda (\G)}
(\|u\|^2+1) ^{\delta_\G}  d\nu_o(u)\cdot
 \int_{x\in \br^2}
 \left({y \over \|x\|^2+\|y\|^2 } \right)^{\delta_\G} dx\\
&=\frac{\omega_0 \cdot \pi }{\delta_\G-1} y^{2-\delta_\G} \notag.\end{align}

This proves (1). To prove (2),
let $B$ be an open subset of $\c$ which properly covers $\Lambda_N(\G)$. Then
we have $$\e_0:=\inf_{u\in \Lambda_N(\G), x\in B^c} \|x-u\|=
\inf\{\|x\| :  x\in B^c- \Lambda_N(\G) \}>0 .$$
By setting $Q=B^c$ in \eqref{phizero},
 we deduce
\begin{align*} \int_{x\in B^c }\phi_0(n_xa_y) \;dx
&\le \omega_0
 \cdot \int_{\|x\| > \e_0 } \left({y \over \|x\|^2+\|y\|^2} \right)^{\delta_\G} dx
\\
&\le \omega_0 y^{2-\delta_\G}
 \cdot \int_{\|w\| > \e_0{y}^{-1} } \left({1 \over \|w\|^2+1} \right)^{\delta_\G} dw
  \\ & = 2\pi  \omega_0 y^{2-\delta_\G}
\cdot \int_{r > \e_0 y^{-1}} \frac{r}{{(r^2+1)}^{\delta_\G} }\; dr\\
&= 2\pi  \omega_0 y^{2-\delta_\G} \left( \frac{\e_0^2}{y^2}+1\right)^{1-\delta_\G}
\\ &\ll y^{\delta_\G}.
\end{align*}

\noindent{{\bf Case II}:
$\infty$ is a bounded parabolic fixed point of rank one.}

We may assume without loss of generality that
$\G\cap N$ is generated by $(1, 0)$, that is, $x\mapsto x+1$, and
$\mathcal F_N=\{(x_1, x_2): 0\le x_1<1, x_2\in \br\}$.
There exists $ T >0$ such that
 $\Lambda (\G)\subset \br \times [-T, T]$
 by \cite[Pf. of Prop. 8.4.3]{Thurstonbook}.

By computing the Busemann function, we deduce from
\eqref{conformal} for $k=(k_1, k_2)\in \G\cap N$ that
 \be\label{transLaw}
 d ({(n_{-k})}_*\nu_o) (u)= \left(\frac{\|u\|^2+1}{\|k-u\| ^2+1}\right)^{\delta_{\G}} d\nu_o (u) .
 \ee

We have by changing orders of integrations of $x_1$ and $u_1$ that
\begin{align}\label{ranone}&
\int_{x\in \mathcal F_N} \phi_0(x,y) dx\\
&=\int_{u\in \br\times [-T, T]}\int_{x\in ([0,1]\times \br) -u}\left( { (\|u\|^2+1)y \over \|x\|^2 + y^2 } \right)^\gd dx d\nu_o(u) \notag\\ &=
\int_{x\in \br^2} \left( { y \over \|x\|^2 + y^2 } \right)^\gd
\int_{u\in [-x_1, 1-x_1] \times [-T, T]}  (\|u\|^2+1) ^\gd d\nu_o(u) dx .\notag
\end{align}

We set $c_1$ and $c_2$ to be, respectively, the infimum and the supremum of
$$e_{x_1}:=\int_{u\in [-x_1, 1-x_1] \times [-T, T]}  (\|u\|^2+1) ^\gd d\nu_o(u)$$
over all $x_1\in \br$.
We claim that $$0<c_1\le c_2<\infty .$$

For any $k_1\in \z$, by changing the variable $u_1\mapsto u_1+k_1$ and recalling
 \eqref{transLaw}, we deduce
\begin{align*}e_{x_1}=& \int_{u\in [-x_1, 1-x_1] \times [-T, T]}  (\|u\|^2+1) ^\gd d\nu_o(u) \\
&=\int_{u_1=k_1-x_1}^{u_1= k_1+1-x_1}
\int_{u_2=-T}^{u_2=T} (\|u-k\|^2+1)^\gd d((n_{-k})_* \nu_o)(u) \\
&=\int_{u_1=k_1-x_1}^{u_1= k_1+1-x_1}
\int_{u_2=-T}^{u_2=T}
(\|u\|^2+1)^\gd
d\nu_o(u).\end{align*}

Choosing $k_1$ so that $x_1\in [k_1-1, k_1)$, we have
 $e_{x_1}\ll \nu_o([0,2]\times [-T, T])$ and hence $c_2<\infty$.
Note that $e_{x_1}\asymp e_{x_1+1}$, where the implied
constant is independent of $x_1\in \br$,
and that $$e_{x_1}+e_{x_1+1}= \int_{u_1=k_1-x_1-1}^{u_1= k_1+1-x_1}
\int_{u_2=-T}^{u_2=T}
(\|u\|^2+1)^\gd
d\nu_o(u)\gg  \nu_o([0,1]\times [-T, T]) .$$

On the other hand,
since the $N\cap \G$-translates of
$[0,1]\times [-T,T]$ cover the support of $\nu_o$ except for $\infty$ and $\nu_o$ is atom-free,
we have $\nu_o([0,1]\times [-T, T])>0$. 
This proves $c_1=\inf e_{x_1} >0$.


We now deduce from \eqref{ranone} and \eqref{whole} that
$$\int_{x\in \mathcal F_N} \phi_0(x,y) dx
\gg c_1
\int_{x\in \br^2} \left( { y \over \|x\|^2 + y^2 } \right)^\gd  dx
\gg y^{2-\delta_\G},
$$
proving (1).

If $B$ is an open subset of $\mathcal F_N=[0,1]\times \br$ which properly
covers $\Lambda_N(\G)$,
 we have $$\e_0:=\inf_{x \in B^c, u\in \Lambda(\G)} |u_2-x_2| >0.$$



Hence
$\{(x, u)\in \br^2\times \Lambda_N(\G): x\in  B^c -u\}$ is contained in the set
$$\{u_1\in \R, -u_1 <x_1 < 1-u_1, |u_2| <T, |x_2| > \e_0\} .$$

Since $$
\{u_1\in\R,\ \  -u_1 < x_1 <1-u_1\} =
\{x_1\in\R, \ \ -x_1 < u_1 < 1-x_1\},
$$
we deduce from \eqref{phizero} by changing the order of integrations that
\begin{align*}
&\int_{x\in B^c} \phi_0(x,y) dx
\\&=
\int_{u\in\Lambda (\G)}
\int_{x \in B^c-u }
\left( { (\|u\|^2+1)y \over \|x\|^2 + y^2 } \right)^\gd
dxd\nu_o(u)\\
&\le \int_{x_1\in \br, |x_2|> \e_0 }
\left( { y \over \|x\|^2 + y^2 } \right)^\gd
\left( \int_{u_1=-x_1}^{u_1=1-x_1}\int_{u_2=-T}^{u_2=T}
(\|u\|^2+1)^\gd
d\nu_o(u)
\right)
dx_2dx_1  \\
&\le c_2\cdot
\int_{x_1\in \br, |x_2|> \e_0 }
\left( { y \over \|x\|^2 + y^2 } \right)^\gd
dx_2dx_1 .
\end{align*}



The $x_1$ integral can be evaluated explicitly and yields:
\beann
\int_{x\in B^c} \phi_0(x,y) dx
&\ll&
{\sqrt\pi \cdot \G(\gd-1/2)\over \G(\gd)}
y^\gd
\int_{|x_2|>\e_0}
\left(
{1\over x_2^2+y^2}\right)
^{\gd-1/2}
dx_2
\\
&\ll&
{\sqrt\pi \cdot \G(\gd-1/2)\over \G(\gd)}
y^\gd
\int_{|x_2|>\e_0}
\left(
{1\over x_2^2}\right)
^{\gd-1/2}
dx_2
\\
&\ll&
y^\gd
.
\eeann

Hence (2) is proved.

\noindent{{\bf Case III}:
$\infty$ is a bounded parabolic fixed point of rank two.}
In this case (2) holds for a trivial reason.
But we still need to show (1). The argument is similar to the case II.
Without loss of generality, we assume that $N\cap \G$ is generated by
$(1,0)$ and $(0,1)$, so that $\mathcal F_N=[0,1]\times [0,1]$.

Similarly to \eqref{ranone}
we have
\begin{align*}\label{rantwo}
 \phi_0^N(a_y) &=
\int_{x\in \br^2} \left( { y \over \|x\|^2 + y^2 } \right)^\gd
\int_{u\in [-x_1, 1-x_1] \times [-x_2, 1-x_2]}  (\|u\|^2+1) ^\gd d\nu_o(u) dx\notag
\end{align*}
By \eqref{whole},
it suffices to show that
$$c_0:=\inf_{x_1, x_2\in \br}
\int_{u\in [-x_1, 1-x_1] \times [-x_2, 1-x_2]}  (\|u\|^2+1) ^\gd d\nu_o(u) >0 .$$

For $(x_1, x_2)\in [k_1-1, k_1)\times [k_2-1, k_2)$,
by changing the variables $u_i\to u_i+k_i$ for $k_i\in \z$ and by using \eqref{transLaw},
we deduce
\begin{multline*} \int_{u_1=-x_1}^{u_1= 1-x_1}\int_{u_2=-x_2}^{u_2=1-x_2}   (\|u\|^2+1) ^\gd d\nu_o(u)
\\ =\int_{u_1=k_1-x_1}^{u_1= k_1+1-x_1}
\int_{u_2=k_2-x_2}^{u_2=k_2+1-x_2}
(\|u\|^2+ 1)^\gd
d\nu_o(u).\end{multline*}
Hence
$$ c_0\gg \inf_{k_i-1\le x_i\le k_i} \nu_o ([k_i-x_i, k_i-x_i +1-x_i])\gg \nu_o([0,1]^2)  >0 .$$
\end{proof}

 \begin{Cor}\label{ppoo} For any $y>0$,
there exist $c_{\phi_0}>0$ and $d_{\phi_0}\ge 0$ such that
$$ \phi_0^N(a_y) =
c_{\phi_0} y^{2-\delta_\G}+ d_{\phi_0} y^{\delta_\G} .$$
If $\infty\notin \Lambda(\G)$, then $d_{\phi_0}=0$.
\end{Cor}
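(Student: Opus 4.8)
The plan is to compute the exact functional form of $\phi_0^N(a_y)$ in $y$ by exploiting the scaling behavior of the Poisson-kernel representation \eqref{PSS} together with the transformation law \eqref{conformal} of the Patterson-Sullivan measure under $N\cap\G$. The key point is that $\phi_0$ is an eigenfunction of $\Delta$ with eigenvalue $\delta_\G(2-\delta_\G)$, and the horospherical integral $\phi_0^N(a_y)$ is an $N$-invariant (hence $K$-spherical on the $N$-leaf) function of $y$ alone; radial eigenfunctions of the reduced operator on the $y$-line are spanned by the two powers $y^{2-\delta_\G}$ and $y^{\delta_\G}$ (the indicial roots of $\Delta$ acting on $y$-only functions, since $\Delta(y^s) = s(2-s)y^s$ in dimension $3$). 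So once convergence of the integral is known—which is exactly the content of Proposition \ref{ppo}—the abstract structure forces
$$
\phi_0^N(a_y) = c_{\phi_0}\, y^{2-\delta_\G} + d_{\phi_0}\, y^{\delta_\G}
$$
for constants $c_{\phi_0}, d_{\phi_0}\in\R$, and it remains only to pin down the signs and to prove $d_{\phi_0}=0$ in the non-parabolic case.

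First I would make rigorous the claim that $\phi_0^N(a_y)$ is annihilated, as a function of $y$, by the operator $-y^2\partial_y^2 + y\partial_y - \delta_\G(2-\delta_\G)$. The cleanest route: since $B$ properly covers $\Lambda_N(\G)$, Proposition \ref{ppo}(2) lets me replace the integral over the full $N\cap\G$-quotient by an integral over a \emph{bounded} set $B$ up to an $O(y^{\delta_\G})$ error; on that bounded region one may differentiate under the integral sign in the defining integral \eqref{PSS}, use $\Delta_{(x,y)}\,[\text{Poisson kernel}]^{\delta_\G}=\delta_\G(2-\delta_\G)[\cdots]^{\delta_\G}$, and integrate the $x$-derivatives away (the boundary terms vanish because the integrand and its $x$-gradient decay like $y^{\delta_\G}$ in $y$, by the Case I/II estimates already carried out in the proof of Proposition \ref{ppo}). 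This gives an honest ODE in $y$ for the main part of $\phi_0^N(a_y)$, whose general solution is $c\,y^{2-\delta_\G}+d\,y^{\delta_\G}$; feeding this back into \eqref{phi0B} absorbs the $O(y^{\delta_\G})$ error into the $d_{\phi_0}y^{\delta_\G}$ term and yields the identity for all $y>0$ by analytic continuation (both sides being real-analytic in $y>0$ once convergence holds).

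Next I would read off the signs. Proposition \ref{ppo}(1) already gives $\phi_0^N(a_y)\gg y^{2-\delta_\G}$ as $y\to 0$; since $\delta_\G>1$ we have $2-\delta_\G<1<\delta_\G$, so the $y^{2-\delta_\G}$ term dominates as $y\to 0$, forcing $c_{\phi_0}>0$. For $d_{\phi_0}\ge 0$: as $y\to\infty$ the term $d_{\phi_0}y^{\delta_\G}$ dominates, and $\phi_0^N(a_y)=\int_{(N\cap\G)\ba N}\phi_0(na_y)\,dn\ge 0$ since $\phi_0>0$; hence $d_{\phi_0}\ge 0$. Finally, in the case $\infty\notin\Lambda(\G)$, equation \eqref{first} in the proof of Proposition \ref{ppo} computes $\phi_0^N(a_y)$ \emph{exactly} as $\tfrac{\omega_0\pi}{\delta_\G-1}\,y^{2-\delta_\G}$ with no $y^{\delta_\G}$ term, so $d_{\phi_0}=0$ there (and $c_{\phi_0}=\tfrac{\omega_0\pi}{\delta_\G-1}$).

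The main obstacle is the justification of differentiation under the integral sign and the vanishing of the $x$-boundary terms when deriving the ODE: one must be careful that after restricting to the bounded set $B$ the resulting boundary contributions on $\partial B$ are genuinely of size $O(y^{\delta_\G})$ (this uses the decay estimates from Cases I and II, namely that the Poisson-kernel integrand restricted to $x$ bounded away from $\Lambda_N(\G)$ contributes $\ll y^{\delta_\G}$), and that the interchange of $\Delta$ with $\int_B\cdots d\nu_o$ is legitimate—both follow from the uniform bounds on $\nu_o(\Lambda(\G))<\infty$ (Case I) or the local finiteness $c_2<\infty$ (Case II) established in Proposition \ref{ppo}. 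Everything else is bookkeeping. Since this corollary is stated immediately after Proposition \ref{ppo}, one may alternatively bypass the ODE entirely and simply quote that the harmonic analysis of $N$-periodic $\Delta$-eigenfunctions on $\bH^3$ forces exactly these two radial exponents; I would present the ODE argument as it is self-contained and makes the identity for all $y>0$ transparent.
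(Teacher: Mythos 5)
Your proposal is correct and follows essentially the same route as the paper: the paper's proof is precisely the observation that integrating $\Delta\phi_0=\delta_\G(2-\delta_\G)\phi_0$ over the $N$-leaf yields the Euler equation $-y^2\partial_y^2\phi_0^N+y\partial_y\phi_0^N=\delta_\G(2-\delta_\G)\phi_0^N$, whose solutions are spanned by $y^{2-\delta_\G}$ and $y^{\delta_\G}$, with $c_{\phi_0}>0$ from Proposition \ref{ppo}(1) and $d_{\phi_0}=0$ in the non-parabolic case from the exact computation \eqref{first}. The paper simply asserts the ODE without the justifications you supply (differentiation under the integral, vanishing of the $x$-boundary terms); your only loose step is ``absorbing'' an $O(y^{\delta_\G})$ error into an exact $d_{\phi_0}y^{\delta_\G}$ term, which is avoided by running the ODE argument directly on the full convergent integral over $(N\cap\G)\ba N$, as the paper does.
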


\begin{proof}
Since $\Delta \phi_0 =\delta_{\G}(2-\delta_{\G}) \phi_0$,
it follows that
$$-y^2 \frac{\partial^2}{\partial y^2} \phi_0^N +y
\frac{\partial}{\partial y} \phi_0^N =\delta_{\G}(2-\delta_{\G})\phi_0^N .$$

As both $y^{\delta_{\G}}$ and $y^{2-\delta_{\G}}$ satisfy the above differential equation,
we have $$\phi_0^N(a_y)=c_{\phi_0} y^{2-\delta_{\G}}+d_{\phi_0} y^{\delta_{\G}} $$
for some $c_{\phi_0}, d_{\phi_0} \ge 0$.
Proposition \ref{ppo} (1) implies that $c_{\phi_0}>0$.

In the case when $\mathcal F_N=\c$,
the last claim is proved in \eqref{first}. \end{proof}


\section{Spherical functions and spectral bounds}
We keep the notations set up in section \ref{closedh}. Let $0\le s\le 2$,
and consider the character $\chi_s$ on the subgroup $B:=AMN$ of $G$
defined by $$\chi_s(a_y m n)= y^{s} $$
where $a_y\in A$ is given as before, and $m\in M$ and $n\in N$.



The unitarily induced
representation $(\pi_s:=\op{Ind}_{B}^G \chi_s, V_s)$
 admits a unique $K$-invariant unit vector,
say $v_s$.

By the theory of spherical functions,
$$
f_s(g):=\langle \pi_s(g)v_s, v_s\rangle =\int_K v_s(kg)dk
$$
is the unique bi $K$-invariant function
of $G$
with $f_s(e)=1$ and with $\mathcal C f_s = s (2-s) f_s$ where $\mathcal C$ is the Casimir operator
of $G$.
Moreover, there exist  $c_s>0$ and $\e>0$ such that for all $y$ small
 \begin{equation}\label{sha} f_s(a_y)=c_s
\cdot y^{2-s}  (1+O(y^{\e }) )\end{equation}
by \cite[4.6]{GangolliVaradarajanbook}.

Since the Casimir operator is equal to the Laplace operator $\Delta$ on $K$-invariant functions,
this implies:
\begin{Thm}\label{as} Let $\G <G$ be a discrete subgroup.
 Let $\phi_s\in L^2(\G\ba G)^K\cap C^\infty(\G\ba G)$
satisfy $\Delta \phi_s= s(2-s) \phi_s$ and $\|\phi_s\|_2=1$.
Then there exist $c_s>0$ and $ \e>0$ such that for all small  $0<y<1$,
$$\langle a_y\phi_s, \phi_s\rangle_{L^2(\G\ba G)}=c_s\cdot y^{2-s} (1+O(y^{\e })) .$$
\end{Thm}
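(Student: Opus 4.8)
The plan is to reduce the statement on $\G\backslash G$ to the purely local asymptotic \eqref{sha} for the spherical function $f_s$ on $G$ itself, by exploiting the fact that $\phi_s$ is a $K$-invariant $\Delta$-eigenfunction with eigenvalue $s(2-s)$. First I would note that since $\Delta$ agrees with the Casimir operator $\mathcal C$ on $K$-invariant functions, $\phi_s$ generates (the $K$-fixed line of) a spherical automorphic representation of eigenvalue parameter $s$. The matrix coefficient $y \mapsto \langle a_y\phi_s,\phi_s\rangle_{L^2(\G\backslash G)}$ is then a bi-$K$-invariant function of $a_y$ (using $K$-invariance of $\phi_s$ on both sides together with $a_y$-biinvariance under $M$), and it satisfies the \emph{same} second-order ODE in $y$ as $f_s(a_y)$, namely $-y^2\partial_y^2 F + y\,\partial_y F = s(2-s) F$, obtained by applying $\Delta$ and moving it onto $\phi_s$. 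This ODE has the two-dimensional solution space spanned by $y^{2-s}$ and $y^{s}$ (for $s \ne 1$; a $\log$ correction at $s=1$, which we may avoid since the relevant eigenvalue $\delta_\G(2-\delta_\G)$ has $\delta_\G > 1$). Hence $\langle a_y\phi_s,\phi_s\rangle = c_s\, y^{2-s} + d_s\, y^{s}$ for constants $c_s, d_s$, and the $y^{2-s}$ term dominates as $y\to 0$; it remains to identify $c_s > 0$ and to produce the power-saving error $O(y^\e)$.

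For the positivity and the error term, the cleanest route is to go through the spherical function directly rather than re-deriving the asymptotics of the ODE solution. I would expand $\phi_s$ against its own $G$-translates: for $\psi \in C_c^\infty(\G\backslash G)^K$ one has $\langle \psi, a_y\phi_s\rangle = \int_K \langle \psi, k a_y \phi_s\rangle\,dk$ and, unfolding, the self-matrix-coefficient of the spherical vector $v_s$ in $\pi_s = \operatorname{Ind}_B^G \chi_s$ is exactly $f_s$. Since $\phi_s$ spans the $K$-fixed vector in an irreducible spherical subrepresentation of $L^2(\G\backslash G)$ isomorphic to $\pi_s$ (this is where one invokes that $\phi_s$ is an $L^2$ eigenfunction, so by Sullivan and Lax--Phillips the eigenvalue $s(2-s)$ sits at the bottom of the spectrum as an isolated point, giving genuine $L^2$-membership and irreducibility of the generated module), the matrix coefficient $\langle a_y \phi_s, \phi_s\rangle$ equals $f_s(a_y)$ up to the positive normalizing constant $\|\phi_s\|_2^2 = 1$. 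Then \eqref{sha} gives both $c_s = c_s > 0$ and the error $f_s(a_y) = c_s y^{2-s}(1 + O(y^\e))$ verbatim.

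The main obstacle — and the point requiring care — is justifying that $\langle a_y\phi_s,\phi_s\rangle$ is, on the nose, a scalar multiple of the \emph{spherical} matrix coefficient $f_s$, i.e.\ that the cyclic $G$-module generated by $\phi_s$ inside $L^2(\G\backslash G)$ is irreducible and isomorphic to $\pi_s$, rather than a general (possibly reducible) unitary representation with $\mathcal C$-eigenvalue $s(2-s)$ on its $K$-fixed vector. If one only knows $\phi_s$ is a $K$-fixed $\mathcal C$-eigenvector, one still gets that $\langle a_y\phi_s,\phi_s\rangle$ solves the spherical ODE and hence is a combination of $y^{2-s}$ and $y^s$, which already yields the leading term with the correct exponent; but the clean error term $O(y^\e)$ and the strict positivity $c_s>0$ come most naturally from \eqref{sha}. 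An alternative that sidesteps the representation-theoretic identification entirely: argue directly that $\langle a_y\phi_s,\phi_s\rangle \to 1$ is impossible unless one is on the $y^{2-s}$ branch, using that $\langle a_y\phi_s,\phi_s\rangle \le \|\phi_s\|_2^2 = 1$ (Cauchy--Schwarz, $a_y$ unitary) forces the coefficient of any growing branch to vanish, pinning down which linear combination occurs, and then borrow the refined $O(y^\e)$ from the known second-order behavior of spherical functions at the cusp \cite[4.6]{GangolliVaradarajanbook}. Either way the technical heart is the transfer of \eqref{sha} from $G$ to $\G\backslash G$; everything else is the elementary ODE analysis sketched above.
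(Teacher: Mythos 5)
Your ``cleanest route'' is in fact the paper's proof: the function $g\mapsto\langle \pi(g)\phi_s,\phi_s\rangle$ is bi-$K$-invariant, smooth, equals $1$ at $e$, and is a Casimir eigenfunction with eigenvalue $s(2-s)$ (since $\mathcal C=\Delta$ on $K$-invariant vectors), so it coincides with $f_s$ and \eqref{sha} transfers verbatim. However, the ``main obstacle'' you identify is not one, and the way you propose to surmount it is both unnecessary and not generally available. You do not need the cyclic module generated by $\phi_s$ to be irreducible or isomorphic to $\pi_s$: the identification with $f_s$ follows from the uniqueness, quoted in the paper, of the bi-$K$-invariant smooth Casimir eigenfunction normalized at $e$ (the radial ODE has a regular singular point at $r=0$, so only a one-dimensional space of solutions is smooth there). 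Your proposed justification of irreducibility via Sullivan/Lax--Phillips applies only to the base eigenvalue $s=\delta_\G$, whereas the theorem is stated for an arbitrary $L^2$-eigenfunction, so that route would not cover the general case.

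There is also a concrete error in your first paragraph: the matrix coefficient does \emph{not} satisfy $-y^2\partial_y^2F+y\partial_yF=s(2-s)F$, and the exact formula $\langle a_y\phi_s,\phi_s\rangle=c_sy^{2-s}+d_sy^{s}$ is false. That operator is the restriction of $\Delta$ to $N$-invariant functions (functions of the height $y$ alone); it governs the horospherical average and yields the exact two-power formula of Corollary \ref{ppoo}. A bi-$K$-invariant function is radial about the base point, and the relevant radial operator in the geodesic coordinate $r=-\log y$ is $-\partial_r^2-2\coth(r)\partial_r$, whose normalized solution is $f_s(a_y)=\frac{y^{2-s}-y^{s}}{(s-1)(1-y^2)}$ --- asymptotically $c_sy^{2-s}(1+O(y^{\e}))$ but not a pure combination of $y^{2-s}$ and $y^{s}$. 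Consequently the steps that lean on the exact two-power formula (``the $y^{2-s}$ term dominates,'' and the alternative argument that Cauchy--Schwarz ``pins down which linear combination occurs'') do not stand as written; once you replace them by the spherical-function identification above, the proof is complete and agrees with the paper's.
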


In the unitary dual of $G$, the spherical part
consists of the principal series and
the complimentary series. We use the parametrization of
 $s\in\{1 +i\br \}\cup [1, 2]$
so that $s=2$ corresponds to the trivial representation and the vertical line
$ 1 +i\br$
corresponds to the tempered spectrum.
Then the complimentary series is parametrized by $V_s$, $ 1 <  s\le 2$
defined before.

Let $\{X_i\}$ denote an orthonormal basis of the Lie algebra of $K$ with respect to an
$Ad$-invariant scalar product, and define $\omega:=1-\sum X_i^2$.
This is a differential operator in the center of the enveloping algebra of
$\op{Lie}(K)$ and acts as a scalar on each $K$- isotypic component of $V_s$.

\begin{Prop}\label{decay}
Fix $1<s_0 < 2$. Let $(V, \pi)$ be a representation of $G$
which does not weakly contain any  complementary series representation
$V_s$ with parameter $s\ge s_0$.
Then  for any $\e>0$, there exists $c_\e>0$ such that for any smooth vectors $w_1, w_2\in V$,
and $y<1$,
$$|\langle a_y w_1, w_2\rangle |\le c_\e \cdot  y^{2-s_0-\e}
\cdot  \|\omega(w_1)\|\cdot \| \omega (w_2)\|.
 $$
\end{Prop}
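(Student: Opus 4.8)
The plan is to reduce the matrix coefficient bound to the decay of spherical functions via a $K$-type expansion. First I would decompose each smooth vector into its $K$-isotypic components: writing $w_j = \sum_{\rho} w_j^{(\rho)}$ where $\rho$ runs over irreducible representations of $K=\operatorname{SU}(2)/\{\pm 1\}$ and $w_j^{(\rho)}$ lies in the $\rho$-isotypic subspace of $V$, we have $\langle a_y w_1, w_2\rangle = \sum_{\rho, \rho'} \langle a_y w_1^{(\rho)}, w_2^{(\rho')}\rangle$. The key structural input is the classical estimate on matrix coefficients of $G=\operatorname{PSL}_2(\mathbb{C})$: for a unitary representation not weakly containing complementary series with parameter $\ge s_0$, a single $K$-finite matrix coefficient between vectors in fixed $K$-types $\rho,\rho'$ is bounded by $C(\rho,\rho') \cdot y^{2-s_0-\e} \cdot \|w_1^{(\rho)}\| \cdot \|w_2^{(\rho')}\|$, where $C(\rho,\rho')$ grows at most polynomially in $\dim\rho, \dim\rho'$. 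This is where the hypothesis on the complementary series spectrum enters: by the spectral decomposition into irreducibles, it suffices to prove the bound on each irreducible summand, and there one uses the explicit leading-order asymptotics \eqref{sha} of the spherical function together with standard Clebsch--Gordan/recursion arguments (as in \cite{GangolliVaradarajanbook}) controlling how $a_y$ mixes adjacent $K$-types — the point being that in the tempered and bounded-parameter complementary range, the $y^{2-s_0-\e}$ rate is uniform, with only polynomial loss in the $K$-types.

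Next I would absorb the polynomial factors $C(\rho,\rho')$ into the operator $\omega = 1 - \sum X_i^2$. Since $\omega$ acts on the $\rho$-isotypic component as the scalar $1 + \lambda_\rho$ where $\lambda_\rho \asymp (\dim\rho)^2$ is the Casimir eigenvalue of $\rho$ on $\operatorname{Lie}(K)$, we have $\|\omega(w_j)\|^2 = \sum_\rho (1+\lambda_\rho)^2 \|w_j^{(\rho)}\|^2$. Choosing a large enough power — i.e. replacing $\omega$ by $\omega^N$ for suitable fixed $N$ would be cleanest, but the statement is as given with a single $\omega$, so one should check $N=1$ suffices — the sum $\sum_{\rho,\rho'} C(\rho,\rho') \|w_1^{(\rho)}\|\|w_2^{(\rho')}\|$ is dominated by $\big(\sum_\rho (1+\lambda_\rho)\|w_1^{(\rho)}\|\big)\big(\sum_{\rho'}(1+\lambda_{\rho'})\|w_2^{(\rho')}\|\big)$, and then by Cauchy--Schwarz (using that $\sum_\rho (\dim\rho)^2 (1+\lambda_\rho)^{-2} < \infty$ for $G=\operatorname{PSL}_2(\mathbb{C})$, where $K$-types are parametrized so this converges) this is $\ll \|\omega(w_1)\| \cdot \|\omega(w_2)\|$. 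Assembling these pieces gives $|\langle a_y w_1, w_2\rangle| \ll_\e y^{2-s_0-\e} \|\omega(w_1)\| \|\omega(w_2)\|$.

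The main obstacle is the uniformity of the decay rate over $K$-types within a single irreducible, and more subtly over the continuous family of irreducibles appearing in the direct integral decomposition of $V$. For a fixed irreducible $V_s$ with $s$ tempered or $s \le s_0$, the leading $y^{2-\max(s,1)}$ behavior of matrix coefficients between high $K$-types still carries constants depending on the $K$-types, and one must verify these grow only polynomially and that the exponent never exceeds $2-s_0-\e$ once we are below the $s_0$ threshold — the $\e$ is precisely the slack absorbing the transition between $K$-types and the boundary of the complementary series. I would handle this by the standard device of expressing the relevant matrix coefficient via the $K$-type structure of $\operatorname{Ind}_B^G \chi_s$ (functions on $K/M$), reducing to an integral of $f_s$-type kernels against products of matrix entries of $K$, and invoking \eqref{sha} uniformly; the convergence of the auxiliary series $\sum_\rho (\dim\rho)^2(1+\lambda_\rho)^{-2}$ is what ultimately makes $N=1$ work for $\operatorname{PSL}_2(\mathbb{C})$ (in higher rank one would need a larger power of $\omega$).
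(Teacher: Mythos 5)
Your overall architecture --- decompose over the direct integral into irreducibles, bound each $K$-finite matrix coefficient with a constant that is polynomial in the $K$-types, and absorb the polynomial loss into $\omega$ using $\sum_\rho(\dim\rho)^2(1+\lambda_\rho)^{-2}<\infty$ for $K=\SU(2)$ --- is sound, and the final absorption step is essentially what the paper does when passing from $K$-finite to smooth vectors. The gap is in the step you yourself flag as the main obstacle and then dispatch with ``standard Clebsch--Gordan/recursion arguments'' and ``invoking \eqref{sha} uniformly'': the asymptotic \eqref{sha} concerns only the bi-$K$-invariant coefficient $f_s(a_y)=\la \pi_s(a_y)v_s,v_s\ra$, and the bound you call classical, namely $|\la a_y w_1^{(\rho)},w_2^{(\rho')}\ra|\le C(\rho,\rho')\,y^{2-s_0-\e}\|w_1^{(\rho)}\|\|w_2^{(\rho')}\|$ for \emph{arbitrary} $K$-types of a complementary series $\pi_s$ with $1<s\le s_0$, with $C(\rho,\rho')$ polynomial and uniform in $s$ up to the endpoint $s_0$, is not available off the shelf in that form; it is essentially the proposition itself restricted to irreducibles. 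As written, the core estimate is asserted rather than proved, so the argument is circular at the decisive point.

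The device the paper uses to fill exactly this hole is a tensor-product trick. Since $1<3-s_0<2$, the complementary series $\pi_{3-s_0}$ exists; using \eqref{sha} and the Haar-measure asymptotics $d(k_1a_yk_2)\asymp y^{-3}dk_1\,dy\,dk_2$ one checks that matrix coefficients of $\pi_s$ ($1\le s\le s_0$) are $L^{2/(2-s_0)+\e}$ and those of $\pi_{3-s_0}$ are $L^{2/(s_0-1)+\e}$ on a dense set, so by H\"older each $\rho_z\otimes\pi_{3-s_0}$ is tempered, hence so is $\pi\otimes\pi_{3-s_0}$. The Cowling--Haagerup--Howe bound for tempered representations then controls $\la a_y(w_1\otimes v_{3-s_0}),w_2\otimes v_{3-s_0}\ra=\la a_yw_1,w_2\ra\cdot\la a_yv_{3-s_0},v_{3-s_0}\ra$ with only a $\prod_i\sqrt{\dim\la Kw_i\ra}$ loss, and dividing by $\la a_yv_{3-s_0},v_{3-s_0}\ra\sim c\,y^{-1+s_0}$ yields the claimed $y^{2-s_0-\e}$ decay for $K$-finite vectors, uniformly over the direct integral, without any analysis of non-spherical $K$-types of $\pi_s$. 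If you wish to keep your $K$-isotypic bookkeeping, you should insert this tensor argument (or a genuine uniform analysis of the $K$-type matrix coefficients of $\op{Ind}_B^G\chi_s$ on $L^2(K/M)$) to establish your ``classical estimate'' before summing over $\rho,\rho'$.
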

\begin{proof} (We refer to \cite{Shalom2000} for the arguments below)
As a $G$-representation, $\pi$ has a Hilbert integral decomposition
$\pi=\int_{z\in \hat G} \oplus ^{m_z} \rho_z d\nu(z)$
where $\hat G$ denotes the unitary dual of $G$, $\rho_z$ is
irreducible and $m_z$ is the multiplicity of $\rho_z$, and $\nu$ is the spectral measure
on $\hat G$. By the assumption on $\pi$, for almost all $z$, $\rho_z$ is either tempered
or isomorphic to $\pi_s$ for $1 \le s\le s_0$.
As $1 <3-s_0 < 2$, there exists
 the complementary series $(V_{3-s_0}, \pi_{3-s_0})$.
We claim that
the tensor product $\rho_z\otimes \pi_{3-s_0}$ is a tempered representation.
Recall that a unitary representation of $G$ is tempered if and only if
there exists a dense subset of vectors whose matrix coefficients are $L^{2+\e}$-integrable
for any $\e>0$.
If $\rho_z$ is tempered,  $\rho_z\otimes \pi_{3-s_0}$ is clearly tempered.
If $\rho_z$ is isomorphic to $\pi_s$ for some $1\le s\le s_0$,
and $v_z$ denotes the spherical vector of $\rho_z$ of norm one,
then
the matrix coefficient $g\mapsto \langle \rho_z(g) v_z, v_z\rangle$
is $L^{2/(2-s_0) +\e}$-integrable for any $\e>0$, by \eqref{sha} together with the fact that
 the Haar measure on $G$ satisfies $d(k_1a_y k_2)\asymp y^{-3} dk_1dy dk_2$ for all $0<y\le 1$.
Since $\rho_z$ is irreducible, it follows that
there exists a dense set of vectors whose matrix coefficients are  $L^{2/(2-s_0) +\e}$-integrable
 for any $\e>0$.
Similarly there exists a dense set of vectors in $V_{3-s_0}$
 whose matrix coefficients are  $L^{2/(s_0-1) +\e}$-integrable
 for any $\e>0$.
Hence by the H\"older inequality,
there exists a dense set of vectors in  $\rho_z \otimes \pi_{3-s_0}$
 whose matrix coefficients are  $L^{2 +\e}$-integrable
 for any $\e>0$,
implying that $\rho_z\otimes \pi_{3-s_0}$ is tempered.



Since $\pi \otimes \pi_{3-s_0} =\int_z \oplus ^{m_z} (\rho_z\otimes \pi_{3-s_0}) d\nu(z)$,
we deduce that $\pi\otimes \pi_{3-s_0}$ is tempered.

We now claim that
 for any $\e>0$, there is a constant $c_\e>0$ such that any $K$-finite unit vectors $w_1$
and $w_2$,
 we have \begin{equation}\label{mattt} |\langle a_y w_1, w_2\rangle |\le c_\e \cdot  y^{2-s_0-\e}
\cdot  \prod_i \sqrt{( \op{dim}\la  K w_i \ra)}.
\end{equation}
Noting that the $K$-span
of $w_i\otimes v_{3-s_0}$ has the same dimension as the $K$-span of $w_i$,
the temperedness of $\pi \otimes \pi_{3-s_0}$ implies that
for any $\e>0$, there exists a constant $c_\e>0$ such that
 \begin{align*}
 \la a_y (w_1\otimes v_{3-s_0}), (w_2\otimes v_{3-s_0})\ra &=\la a_y w_1, w_2\ra \cdot \la a_y v_{3-s_0} , v_{3-s_0} \ra\\ &
\le c_\e \cdot y^{1+\e}\cdot  \prod_i \sqrt{( \op{dim}\la  K w_i \ra)}.
 \end{align*}

As $\langle a_y v_{3-s_0}, v_{3-s_0}\rangle = c\cdot  y^{-1+s_0} (1 +O(y^\e_0))$ for some $\e_0>0$,
 the claim \eqref{mattt}  follows.
 Passing from the above bounds of \eqref{mat}  for
$K$-finite vectors to those for smooth vectors has been detailed in \cite[Pf. of Thm 6]{Maucourant2007}.
In particular, in the case of $G=\SL_2(\c)$, the above degree of Sobolev norm
suffices.
\end{proof}

\begin{Def}\label{sgamma} {\rm For a geometrically finite discrete subgroup $\G$ of $G$
with $\delta_\G >1$, we fix  $1 < s_\G <\delta_\G$ so that
 there is no
eigenvalue of $\Delta$ between $s_\G(2-s_\G)$ and  the base
eigenvalue $\lambda_0=\delta_{\G}(2-\delta_{\G})$ in $L^2(\G\ba G)$.}
\end{Def}

By the theorem of Lax-Phillips \cite{LaxPhillips},
the Laplace spectrum on $L^2(\G\ba G)^K$ has only finitely many eigenvalues outside the tempered spectrum.
Therefore $1< s_\G <\delta_\G$ exists. The maximum difference between $\delta_\G$ and $s_\G$
will be referred to as the spectral gap for $\G$.

Let $\{Z_1, \cdots, Z_6\}$ denote an orthonormal basis of the Lie algebra of $G$.
Let $\G <G$ be a discrete subgroup of $G$.
For $f\in C^\infty (\G\ba G)\cap L^2(\G\ba G)$, we consider the following
 Sobolev norm $\mathcal S_m(f)$:
$$  \mathcal S_m(f)=\max \{ \| Z_{i_1}\cdots Z_{i_\ell} (f) \|_2 :
1\le i_j\le 6 , 0\le \ell\le m\}.$$

\begin{Cor}\label{matco} Let $\G$ be a geometrically finite
discrete subgroup of $G$
with $\delta_\G >1$.
Then for any $\psi_1\in L^2(\G\ba G)\cap C^\infty(\G\ba G)^K$,
 $\psi_2\in  C_c^\infty(\G\ba G)$
 and $0<y<1$,
\begin{align*}
\langle a_y \psi_1, \psi_2\rangle &=\langle \psi_1, \phi_0\rangle
 \langle a_y \phi_0, \psi_2\rangle + O( y^{2-s_\G }\mathcal S_2(\psi_1)
\cdot \mathcal
 S_2(\psi_2) ).
\end{align*}
Here $\phi_0\in L^2(\Gamma\ba G)^K$ is the unique eigenfunction
of $\Delta$ with eigenvalue $\delta_\G(2-\delta_\G)$ with unit $L^2$-norm.
\end{Cor}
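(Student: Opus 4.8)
The plan is to decompose $L^2(\G\ba G)$ into the one-dimensional span of $\phi_0$ and its orthogonal complement $\mathcal H := \phi_0^\perp$, and apply Proposition \ref{decay} to the restriction of the regular representation to $\mathcal H$. First I would write $\psi_1 = \langle \psi_1,\phi_0\rangle \phi_0 + \psi_1'$ with $\psi_1' \in \mathcal H$, so that
\begin{equation*}
\langle a_y\psi_1,\psi_2\rangle = \langle \psi_1,\phi_0\rangle\,\langle a_y\phi_0,\psi_2\rangle + \langle a_y\psi_1',\psi_2\rangle.
\end{equation*}
It suffices to bound the second term by $O(y^{2-s_\G}\mathcal S_2(\psi_1)\mathcal S_2(\psi_2))$. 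Note $\psi_2$ need not lie in $\mathcal H$, but since the pairing already has $a_y\psi_1'\in \mathcal H$ we may as well replace $\psi_2$ by its projection $\psi_2'$ onto $\mathcal H$ (the $\phi_0$-component of $\psi_2$ pairs to zero with $a_y\psi_1'$); and $\|\psi_2'\|_2 \le \|\psi_2\|_2$, with the Sobolev norms of the projection controlled by those of $\psi_2$ since projection onto a smooth $G$-invariant... rather, onto $\mathbb C\phi_0$ is given by integrating against the smooth function $\phi_0$, hence bounded on each Sobolev space.

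The key point is that by Definition \ref{sgamma}, the representation of $G$ on $\mathcal H$ contains no $K$-invariant vector with Laplace eigenvalue in $(s_\G(2-s_\G), \lambda_0]$; more precisely, by Lax--Phillips together with the choice of $s_\G$, the representation $\mathcal H$ does not weakly contain any complementary series $V_s$ with parameter $s \ge s_\G$ (any such would produce, via its spherical vector, an $L^2$-eigenfunction of $\Delta$ of eigenvalue $s(2-s) \ge s_\G(2-s_\G)$ lying below or at $\lambda_0$ and orthogonal to $\phi_0$, contradicting the definition of $s_\G$ — here one uses that the full spherical spectrum on $L^2(\G\ba G)^K$ below the tempered line is discrete and that $\phi_0$ is the unique eigenfunction at $\lambda_0$). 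Hence Proposition \ref{decay} applies to $(V,\pi) = \mathcal H$ with $s_0 = s_\G$: for any $\e>0$,
\begin{equation*}
|\langle a_y\psi_1', \psi_2'\rangle| \le c_\e\, y^{2-s_\G-\e}\,\|\omega(\psi_1')\|\cdot\|\omega(\psi_2')\|.
\end{equation*}
Since $\psi_1$ is $K$-invariant, $\omega(\psi_1') = \psi_1'$, so $\|\omega(\psi_1')\| = \|\psi_1'\|_2 \le \mathcal S_2(\psi_1)$; and $\omega$ is a second-order differential operator in $\operatorname{Lie}(K)$, so $\|\omega(\psi_2')\| \ll \mathcal S_2(\psi_2')\ll \mathcal S_2(\psi_2)$. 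Finally, choosing $\e$ small enough (say $\e = (\delta_\G - s_\G)$-dependent, or simply absorbing it by replacing $s_\G$ by a slightly larger value, which is harmless), we get $y^{2-s_\G-\e}\le y^{2-s_\G}$ for $0<y<1$ after relabeling, yielding the claimed bound.

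The main obstacle I anticipate is the bookkeeping in the first paragraph: verifying that orthogonal projection onto $\mathbb C\phi_0$ and onto $\mathcal H$ is bounded in the relevant Sobolev norms (so that $\mathcal S_2(\psi_1'), \mathcal S_2(\psi_2')$ are controlled by $\mathcal S_2(\psi_1),\mathcal S_2(\psi_2)$), since $\psi_2$ has compact support but $\phi_0$ does not — one must check $\langle \psi_2,\phi_0\rangle$ and the function $\langle\psi_2,\phi_0\rangle\phi_0$ are handled correctly, using that $\phi_0 \in L^2$ and that all the derivatives $Z_{i_1}\cdots Z_{i_\ell}\phi_0$ are themselves in $L^2$ (they are, being derivatives of an $L^2$-eigenfunction of an elliptic operator, on the smooth manifold $\G\ba G$). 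The representation-theoretic input (Proposition \ref{decay}) and the spectral gap (Definition \ref{sgamma}, Lax--Phillips) do the real work; everything else is routine.
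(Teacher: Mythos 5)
Your overall strategy --- peel off the $\phi_0$-component of $\psi_1$ and apply Proposition \ref{decay} with $s_0=s_\G$ to the remainder --- is exactly the paper's. But there is one genuine flaw in your setup: $\mathcal H=\phi_0^\perp$ is not a $G$-invariant subspace. The function $\phi_0$ spans a $K$-fixed \emph{line}, not a $G$-subrepresentation; the closed $G$-span of $\phi_0$ is an infinite-dimensional subspace $W_{\delta_\G}$ isomorphic to the complementary series $V_{\delta_\G}$. So ``the restriction of the regular representation to $\mathcal H$'' does not exist, and Proposition \ref{decay}, which is a statement about unitary representations of $G$, cannot be applied to $\mathcal H$ as you have defined it; your parenthetical argument that $\mathcal H$ weakly contains no $V_s$ with $s\ge s_\G$ is an argument about a non-representation. (You half-notice the issue when you correct yourself mid-sentence about $G$-invariance, but you do not follow through on the consequence.)

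The fix is exactly what the paper does: decompose $L^2(\G\ba G)=W_{\delta_\G}\oplus V$ with $V=W_{\delta_\G}^{\perp}$, which \emph{is} $G$-invariant and contains no complementary series with parameter $>s_\G$ (Lax--Phillips plus the choice of $s_\G$). The crucial observation --- and the reason your computation nevertheless goes through --- is that since $\psi_1$ is $K$-invariant and $\phi_0$ is, up to scalar, the unique $K$-fixed vector of $W_{\delta_\G}$, the projection of $\psi_1$ onto $W_{\delta_\G}$ is precisely $\langle\psi_1,\phi_0\rangle\phi_0$; hence your $\psi_1'$ actually lies in $V^K$, not merely in $\phi_0^{\perp}$. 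This is also what legitimizes discarding part of $\psi_2$: since $a_y\psi_1'\in V$, it pairs to zero with the entire $W_{\delta_\G}$-component of $\psi_2$ (which, as $\psi_2$ is not assumed $K$-invariant, need not be a multiple of $\phi_0$), so $\langle a_y\psi_1',\psi_2\rangle=\langle a_y\psi_1',P_V\psi_2\rangle$ for the orthogonal projection $P_V$ onto $V$, and Proposition \ref{decay} applies inside $V$. Your remaining points (Sobolev control of the projections, absorbing the $\e$ in $y^{2-s_\G-\e}$ by slightly enlarging $s_\G$) are fine and consistent with the paper's treatment.
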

\begin{proof}  We have
$$L^2(\G\ba G)=W_{\delta_\G} \oplus V$$ where $W_{\delta_\G}$ is isomorphic to $V_{\delta_\G}$
as a $G$-representation and  $V$ does not contain
any complementary series $V_s$ with parameter $s >s_\G$.
Write $\psi_1=\langle \psi_1, \phi_0\rangle \phi_0+\psi_1^\perp$.
Since $\phi_0$ is the unique $K$-invariant vector in $W_{\delta_\G}$ up to
a constant multiple,
 we have $\psi_1^\perp\in V^K $. Hence
by Proposition \ref{decay}, for any $\e>0$
and
$y\le 1$,
\begin{align*} \langle a_y \psi_1, \psi_2\rangle
&=\langle \psi_1, \phi_0\rangle
 \langle a_y \phi_0, \psi_2\rangle
+
 \langle a_y \psi_1^\perp , \psi_2 \rangle
\\&= \langle \psi_1, \phi_0\rangle
 \langle a_y \phi_0, \psi_2\rangle
+O(y^{2-s_\G} \mathcal S_2 (\psi_2) \cdot \mathcal S_2 (\psi_2))
\end{align*}
since $\mathcal S_2(\psi_i^\perp) \ll \mathcal S(\psi_i)$.


\end{proof}

\section{Equidistribution of expanding closed horospheres with respect to the
 Burger-Roblin measure}\label{lhorof}
Let $\G<\PSL_2(\c)$ be a geometrically finite Kleinian group
with $\delta_\G>1$. Assume that $\G\ba \G N$ is closed.
Let $\mu$ denote the Haar measure on $G=NAK$ given by
$$d\mu(n_xa_y k)=
y^{-3} dxdydk$$ where $dk$ is the probability Haar measure on $K$.
We normalize $\phi_0$ so that $$\int_{\G\ba \bH^3}\phi_0 (x,y)^2 \frac{1}{y^3} dxdy =1 .$$

By Corollary \ref{ppoo}, we have
$$\int_{n_x\in (N\cap \G)\ba N} \phi_0(x, y)\; dx
=c_{\phi_0} y^{2-\delta_\G} +d_{\phi_0}y^{\delta_\G}$$
where $c_{\phi_0}>0$ and $d_{\phi_0}\ge 0$.

In this section, we aim to prove the following theorem.
\begin{Thm}\label{ma1}
For any $\psi\in C^\infty_c(\G\ba G)^K$,
\begin{align*}
\int_{n_x\in ( N\cap \G)\ba N} \psi (n_x a_y)\; dx
&=\langle \psi, \phi_0\rangle\cdot
 c_{\phi_0}  \cdot y^{2-\delta_{\G}}  (1 +O(y^{\frac{2}{7}( \delta_{\G}-s_\G)} ))
\end{align*}
 where the implied constant depends
only on the Sobolev norms of $\psi$, the volume of
  $N(\text{supp}
   (\psi))$ and the volume of an open subset of $(N\cap \G)\ba N$ which properly covers $\Lambda_N(\G)$.
\end{Thm}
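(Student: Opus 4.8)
The plan is to reduce the integral $\int_{(N\cap\G)\ba N}\psi(n_xa_y)\,dx$ to an inner product of the form $\la a_y\psi,\rho_{B,\e}\ra$ against an $\e$-thickening $\rho_{B,\e}\in C_c^\infty(\G\ba G)$ of a bounded set $B\subset (N\cap\G)\ba N$ that properly covers $\Lambda_N(\G)$, then invoke the spectral estimate of Corollary \ref{matco}. First I would use Corollary \ref{suppcom} to replace the full $N$-integral by an integral over the compact set $N(\operatorname{supp}\psi)$ (cutting off with a fixed $\eta\in C_c(N\cap\G\ba N)$), and enlarge if necessary so that $B$ both contains $N(\operatorname{supp}\psi)$ and properly covers $\Lambda_N(\G)$. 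Next, for a small transversal parameter $\e>0$, I would build $\rho_{B,\e}$ as (roughly) the characteristic function of $B$ convolved in the $AMK$-directions with a bump of size $\e$, normalized so that $\int \rho_{B,\e}\,d\mu$ matches the relevant $N$-measure; the point is that flowing $B\subset N$-leaf by $a_y$ and smearing transversally by $\e$ gives, by the contracting/expanding structure of the $NA$-action, $\int_B\psi(n_xa_y)\,dx = \la a_y\psi,\rho_{B,\e}\ra + O(\e\, y^{2-\delta_\G}\,\cdot)+O(y^{\delta_\G}\cdot)$, exactly the shape of the approximation appearing in fact (2) of the introduction's outline (now with $\psi$ in place of $\phi_0$, using that $\psi$ is $K$-invariant and smooth).

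Then I would apply Corollary \ref{matco} with $\psi_1=\psi$ (which is $K$-invariant, smooth, $L^2$) and $\psi_2=\rho_{B,\e}$:
\[
\la a_y\psi,\rho_{B,\e}\ra=\la\psi,\phi_0\ra\,\la a_y\phi_0,\rho_{B,\e}\ra+O\!\big(y^{2-s_\G}\,\mathcal S_2(\psi)\,\mathcal S_2(\rho_{B,\e})\big).
\]
The key tension, flagged in the introduction, is that $\mathcal S_2(\rho_{B,\e})$ blows up like a negative power of $\e$ (from differentiating the $\e$-bump, roughly $\e^{-5/2}$ or so in three transversal directions with the $L^2$-normalization), while the $\e$-error from the geometric approximation is $O(\e\,y^{2-\delta_\G})$. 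Balancing $\e\,y^{2-\delta_\G}$ against $y^{2-s_\G}\e^{-C}$ forces $\e$ to be a small power of $y$, and the residual error becomes a power $y^{2-\delta_\G+\theta}$ with a small positive $\theta$ — this is precisely why a single approximation does not suffice and one iterates. So the main step is a \emph{recursive bootstrap}: run the approximation-plus-spectral-estimate with an initial crude bound on $\la a_y\psi,\rho_{B,\e}\ra$ (from Cauchy--Schwarz and $\mathcal S_2$), feed the resulting sharper bound back in to choose a better $\e=\e(y)$ at the next stage, and show the error exponent improves by a fixed amount each round; after finitely many rounds (the number depending only on the size of the gap $\delta_\G-s_\G$) the main term $\la\psi,\phi_0\ra\,\la a_y\phi_0,\rho_{B,\e}\ra$ dominates. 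Finally I would identify $\la a_y\phi_0,\rho_{B,\e}\ra$ with $\int_B\phi_0(n_xa_y)\,dx+O(\e y^{2-\delta_\G})+O(y^{\delta_\G})$ by the same geometric approximation applied to $\phi_0$, and then with $\phi_0^N(a_y)=c_{\phi_0}y^{2-\delta_\G}+d_{\phi_0}y^{\delta_\G}$ via Proposition \ref{ppo}(2) and Corollary \ref{ppoo}; since $d_{\phi_0}y^{\delta_\G}$ is of strictly lower order (and vanishes if $\infty\notin\Lambda(\G)$), the main term is $\la\psi,\phi_0\ra\,c_{\phi_0}\,y^{2-\delta_\G}$, with all accumulated errors of the form $y^{2-\delta_\G}O(y^{\theta})$.

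Tracking the exponent through the recursion is where I would be most careful: writing $\e=y^{a}$ and supposing inductively $\la a_y\psi,\rho_{B,\e}\ra\ll y^{2-\delta_\G-b}$ for the current exponent $b\ge 0$, the next-stage error is $O(y^{2-\delta_\G+(1-a)})+O(y^{2-s_\G-Ca}\,y^{-?})$ — choosing $a$ to equalize the two gives a new $b$ roughly $(\delta_\G-s_\G)$ times a factor approaching the fixed ratio coming from $C$ (here $C$ is the transversal dimension weight, so one expects the optimal rate to be a definite fraction like $\tfrac{2}{7}(\delta_\G-s_\G)$ as in the statement). The hard part will be making this iteration rigorous and bookkeeping-clean: verifying that $N(\operatorname{supp}\psi)$ and the properly-covering $B$ can be chosen once and for all independently of $y$ (Propositions \ref{mp}, \ref{lg}, \ref{ff}), that the $\e$-approximation error is genuinely $O(\e y^{2-\delta_\G})$ with an implied constant depending only on $\operatorname{vol}(B)$ and Sobolev data, and that the recursion terminates after a bounded number of steps with the claimed exponent $\tfrac{2}{7}(\delta_\G-s_\G)$; the spectral input (Corollary \ref{matco}) and the $\phi_0$-computations (Propositions \ref{ppo}, \ref{ppoo}) are then essentially black boxes.
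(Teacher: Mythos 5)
Your proposal is correct and follows essentially the same route as the paper: cut off to a compact piece of $(N\cap\G)\ba N$ via Corollary \ref{suppcom}, thicken transversally to $\rho_{\eta,\e}$, apply the spectral bound of Corollary \ref{matco}, iterate to beat the $\e$-versus-$y^{2-s_\G}\e^{-5/2}$ tension, and identify the main term through Proposition \ref{hugh} and Corollary \ref{ppoo}, balancing at $\e=y^{(\delta_\G-s_\G)/(p+1)}$ with $p=5/2$ to get the exponent $\tfrac{2}{7}(\delta_\G-s_\G)$. The only organizational difference is that the paper runs the iteration with a single fixed $\e$ by nesting the dominating functions $\psi_i=\hat\psi_{i-1}$ of Lemma \ref{approx} (the error in Proposition \ref{mat} being $O((\e+y)\,\I_\eta(\hat\psi)(a_y))$ rather than the $O(\e y^{2-\delta_\G})$ you assert upfront, which is exactly what forces the recursion), accumulating factors $(\e+y)^k$ until $(\e+y)^\ell$ is negligible — equivalent in substance to your bootstrap on the a priori exponent.
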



Most of this section is devoted to a proof of Theorem \ref{ma1}.
\begin{Def} {\rm For a given $\psi\in C^\infty (\G\ba G)^K$ and
$\eta\in C_c((N\cap \G)\ba N)$, define the function
$\I_\eta(\psi)$ on $G$ by
$$\I_{\eta}(\psi)(a_y):=\int_{n_x\in (N\cap \G)\ba N} \psi(n_x a_y) \eta  (n_x)\; dx .$$}
\end{Def}

We denote by $N^-$ the strictly lower triangular subgroup of $G$:
$$N^-:=\{\begin{pmatrix} 1 & 0\\ x& 1\end{pmatrix}:x\in \c\} .$$

The product map $N \times A\times N^-\times M \to G$ is a diffeomorphism at a neighborhood
of $e$. Let $\nu$ be a smooth measure on $AN^-M$ such that $
dn \otimes \nu ( an^-m)=d\mu$.

Fix a left-invariant Riemannian metric $d$ on $G$ and denote by
  $U_\e$ the ball of radius $\e$ about $e$ in $G$.

\begin{Def} \label{def} {\rm \begin{itemize}
\item We fix a non-negative function $\eta\in C_c^\infty((N\cap \G)\ba N)$ with
 $\eta=1$ on a bounded open subset of $\mathcal F_N$
which properly covers
  $\Lambda_N(\G)$.
  \item Fix $\e_0 >0$ so that
for the $\e_0$-neighborhood $U_{\e_0}$ of $e$, the
multiplication map
$$\supp (\eta) \times (U_{\e_0}\cap AN^-M)
\to \supp (\eta) (U_{\e_0}\cap AN^-M)\subset  \G\ba G$$ is a bijection onto
its image.
\item  For each $\e<\e_0$, let $ r_\e $ be a non-negative smooth function
in $AN^-M$ whose support is contained in
$$ W_\e:=(U_\e \cap A)(U_{\e_0}\cap N^-) (U_{\e_0}\cap M)$$ and $\int_{W_{\e}}  r_\e \; d\nu  =1$.

\item We define the following function $\rho_{\eta, \e}$ on $\G\ba G$ which is $0$ outside $\op{supp}(\eta)U_{\e_0}$ and  for $g=n_xan^-m \in \op{supp}(\eta)
(U_{\e_0}\cap AN^-M)$,
$$\rho_{\eta, \e}(g) := \eta (n_x) \otimes r_\e(an^- m) .$$
            \end{itemize}}
\end{Def}

Recall from Proposition \ref{ppo} that
$$
 \phi_0^N(a_y)=\I_\eta( \phi_0)(a_y) +O(y^\gd) .$$

\begin{Prop}\label{hugh} We have for all small $0<\e\ll \e_0$ and for all $0<y < 1$,
 $$ \phi_0^N(a_y) = \langle a_y\phi_0, \rho_{\eta, \e} \rangle_{L^2(\G\ba G)}
 + O_\eta(  \e\cdot  y^{2-\delta_\G})+O(y^{\delta_\G})  $$
 where the first implied constant depends on the Lipschitz constant for $\eta$.
\end{Prop}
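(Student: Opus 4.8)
The plan is to compare the horospherical integral $\phi_0^N(a_y) = \I_\eta(\phi_0)(a_y) + O(y^\gd)$ with the $G$-inner product $\langle a_y\phi_0,\rho_{\eta,\e}\rangle$ by exploiting that $\rho_{\eta,\e}$ is a smooth bump which, in the transversal $AN^-M$-directions, is an $\e$-approximation of the identity with total $\nu$-mass one. Unwinding the definition of $\rho_{\eta,\e}$ and using that near $e$ the multiplication $N\times A\times N^-\times M\to G$ is a diffeomorphism with $dn\otimes\nu(an^-m)=d\mu$, I would write
\begin{align*}
\langle a_y\phi_0,\rho_{\eta,\e}\rangle
&=\int_{(N\cap\G)\ba N}\int_{W_\e}\phi_0(n_x a_y)\,\eta(n_x)\,r_\e(an^-m)\;d\nu(an^-m)\,dx\\
&\quad+\Big(\int_{(N\cap\G)\ba N}\int_{W_\e}\big[\phi_0(n_x h a_y)-\phi_0(n_x a_y)\big]\eta(n_x)\,r_\e(h)\,d\nu(h)\,dx\Big),
\end{align*}
where $h=an^-m$ ranges over $W_\e$; since $\int_{W_\e}r_\e\,d\nu=1$, the first term is exactly $\I_\eta(\phi_0)(a_y)$, and the whole task is to bound the second (error) term by $O_\eta(\e y^{2-\gd})$.

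The key step is the error estimate. For $h\in W_\e$, the conjugate $a_y^{-1} h a_y$ still lies in a set of size comparable to $\e$ in the $A$ and $M$ directions and of size $O(\e y^2)$ in the $N^-$ direction (the contracting direction under conjugation by $a_y$ as $y\to 0$), so in particular $a_y^{-1}h a_y$ stays in $U_{c\e}$ for $y<1$. Thus $\phi_0(n_x h a_y)=\phi_0((n_x a_y)(a_y^{-1}h a_y))$ differs from $\phi_0(n_x a_y)$ by an amount controlled by the Lipschitz constant of $\phi_0$ along the bounded region swept out; more precisely one gets a pointwise bound of the form $|\phi_0(n_x h a_y)-\phi_0(n_x a_y)|\le C\e\cdot \sup_{h'\in W_{c\e}}\phi_0(n_x a_y h')$ by the mean value theorem applied to $\phi_0$ restricted to the relevant curve, with $C$ depending on the Lipschitz constant. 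Integrating $\eta(n_x)$ against this over $(N\cap\G)\ba N$ and using that $\supp(\eta)$ properly covers $\Lambda_N(\G)$, I can invoke the same change-of-variables and Poisson-kernel computation as in the proof of Proposition \ref{ppo} (equations \eqref{phizero}, \eqref{whole}): the integral $\int \eta(n_x)\phi_0(n_x a_y h')\,dx$ is, uniformly for $h'\in W_{c\e}$, bounded by $O(y^{2-\gd})$. This yields the error $O_\eta(\e y^{2-\gd})$, and adding the $O(y^\gd)$ from Proposition \ref{ppo} gives the claim.

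The main obstacle I anticipate is the uniformity in the small right-translation $h'$ when bounding $\int\eta(n_x)\phi_0(n_x a_y h')\,dx$ by $O(y^{2-\gd})$. One must be careful that $\phi_0$ is not compactly supported (it is only $L^2$ and positive), so the bound cannot rely on a global sup of $\phi_0$; instead it must come from the explicit Patterson--Sullivan integral representation \eqref{PSS} for $\phi_0$, and the extra factor $h'\in W_{c\e}$ with $\e\ll\e_0$ small only perturbs the base point $(x,y)$ by a bounded hyperbolic distance, hence changes $\phi_0$ by a bounded multiplicative factor --- which is exactly why $\e_0$ was fixed small in Definition \ref{def}. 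Checking that this perturbation is absorbed into the implied constant (depending on $\e_0$ but not on $\e$ or $y$), and that the contracting/bounded behavior of $a_y^{-1}W_\e a_y$ holds uniformly for $0<y<1$, are the technical points requiring care; everything else is the routine mean-value-theorem and approximate-identity bookkeeping.
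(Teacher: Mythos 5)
Your proposal is essentially correct but follows a genuinely different route from the paper. The paper never differentiates $\phi_0$: for $h=a_{y_0}n^-_xm\in W_\e$ it writes $nha_y=na_{yy_0}n^-_{yx}m$, re-expands the contracted element $n^-_{yx}$ in $ANK$ coordinates, and then uses the right $K$-invariance of $\phi_0$ together with the $N$-invariance of $dn$ to convert the whole perturbation into (i) a small shift of the $N$-variable, absorbed by the Lipschitz continuity of $\eta$ (whence the dependence on the Lipschitz constant of $\eta$ in the statement), and (ii) a multiplicative change $y\mapsto yy_0y_1$ of the $A$-parameter, evaluated exactly via the closed-form $\phi_0^N(a_y)=c_{\phi_0}y^{2-\gd}+d_{\phi_0}y^{\gd}$ of Corollary \ref{ppoo}. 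You instead estimate $\phi_0(n_xha_y)-\phi_0(n_xa_y)$ directly by a mean-value/Harnack argument, using the gradient bound $|\nabla\phi_0|\ll\phi_0$ that follows from the Poisson-kernel representation \eqref{PSS} (equivalently the inequality $e^{-\gd d(uo,o)}\phi_0(g)\le\phi_0(gu)\le e^{\gd d(uo,o)}\phi_0(g)$ used in the Appendix), and then only need the upper bound $\I_\eta(\phi_0)(a_y)\ll y^{2-\gd}$ rather than the exact identity. Both arguments work; yours is more robust (it would apply to any positive eigenfunction with a Harnack-type gradient bound) and makes the error constant independent of the Lipschitz constant of $\eta$, while the paper's trick is softer in that it requires no regularity of $\phi_0$ beyond $K$-invariance.

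One point needs correction: your assertion that $a_y^{-1}ha_y\in U_{c\e}$ is false as stated, since $W_\e=(U_\e\cap A)(U_{\e_0}\cap N^-)(U_{\e_0}\cap M)$ is an $\e$-neighborhood only in the $A$-direction; the $M$-component has size $\e_0$ (harmless, since $M\subset K$ and $\phi_0$ is right $K$-invariant) and the $N^-$-component conjugates to size $O(\e_0 y)$ (linear in $y$, not $O(\e y^2)$). Your error term is therefore $O\bigl((\e+y)\,y^{2-\gd}\bigr)$ rather than $O(\e\,y^{2-\gd})$. This is the same extra $O(y^{3-\gd})$ that the paper's own proof implicitly produces (their $|y_1-1|=O(y\e)$ should read $O(y\e_0)$), and it is harmless where the Proposition is applied, since in the proof of Theorem \ref{ma1} one takes $\e=y^{(\gd-s_\G)/(p+1)}\gg y$; but you should record the $+O(y\cdot y^{2-\gd})$ term honestly rather than claim the conjugated neighborhood is $\e$-small in all directions.
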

\begin{proof}
Let $h=a_{y_0} n^-_{x} m \in W_\e$.
Then for $n\in N$ and $y>0$,
we have $$nha_y =n a_{y y_0} n^-_{yx} m .$$

As the product map $A\times N\times K\to G$ is a diffeomorphism and
hence a bi-Lipschitz map at a neighborhood of $e$, there exists $\ell >0$ such that
the $\e$-neighborhood of $e$ in $G$
is contained in the product  $A_{\ell \e}N_{\ell\e} K_{\ell\e}$ of $\ell\e$-neighborhoods
for all small $\e>0$.

Therefore we may write
$$n_{yx}^-=  a_{y_1} n_{x_1} k_1\in A_{\ell y \e_0}N_{\ell y\e_0} K$$
and hence
\begin{multline*}
 nha_y= na_{yy_0y_1} n_{x_1}k_1 m \\= n(a_{yy_0y_1} n_{x_1}  a_{yy_0y_1}^{-1}) a_{yy_0y_1}    k_1 m=
n (n_{x_1 yy_0y_1}) a_{yy_0y_1}  k_1 m .
\end{multline*}

As $\phi_0$ is $K$-invariant and $dn$ is $N$-invariant,
\begin{align*} \int_{N\cap \G\ba N} \phi_0(n h a_y) \cdot \eta(n)  dn&
=\int_{N\cap \G\ba N} \phi_0(n (n_{x_1 yy_0y_1}) a_{yy_0y_1} )  \cdot \eta(n) dn\\
&=\int_{N\cap \G\ba N} \phi_0(n a_{yy_0y_1} ) (\eta(n) +O(\e)) dn
\end{align*}
as $\eta(n)-\eta(nn')=O_\eta(\e)$ for all $n\in N$ and $n'\in N \cap U_\e$.
By Corollary \ref{ppoo}, we deduce

\begin{align*} &\int_{N\cap \G\ba N} \phi_0(n h a_y)  \cdot \eta(n) dn\\
&=  \int_{N\cap \G\ba N} \phi_0(n a_{yy_0y_1} ) \eta(n) dn  +O_\eta(\e\phi_0^N(a_{yy_0y_1}))
 \\&= c_{\phi_0} (yy_0y_1)^{2-\delta_{\G}} +d_{\phi_0} (yy_0y_1)^{\delta_{\G}}+
 O_\eta(\e y^{2-\delta_\G}) \\
&= c_{\phi_0} y^{2-\delta_{\G}} (1 +O(1- (y_0y_1)^{2-\delta_{\G}}))+  O_\eta (\e y^{2-\delta_\G})
+O(y^{\delta_\G}) \\
&= c_{\phi_0} y^{2-\delta_{\G}} (1 +O(\e))
     +O(y^{\delta_\G})  \end{align*}
 as $|y_0-1|=O(\e)$ and $|y_1-1|=O(y\e)$.

As $\int r_\e d\nu(h)=1$, we deduce
\begin{align*}
 \langle a_y\phi_0, \rho_{\eta, \e} \rangle& =\int_{W_\e} r_\e(h)
\left(\int_{N\cap \G\ba N}\phi_0(nha_y) \eta(n)  \; dn\right)
\,d\nu(h) \\&
=
c_{\phi_0} y^{2- \delta_{\G}} (1+O_\eta(\e)) +O(y^{\delta_{\G}}) \\ &=
\phi_0^N (a_y) + O_\eta (\e y^{2-\delta_\G} )  +O(y^{\delta_{\G}}).
\end{align*}
\end{proof}




%


\begin{Lem}\label{approx} For $\psi\in C^\infty_c(\G\ba G)^K$,
there exists $\hat \psi\in C^\infty_c(\G\ba G)^K$
such that \begin{enumerate}

\item

for all small $\e>0$ and $h\in U_\e$,
$$|\psi(g)-\psi(gh)|\le \e \cdot \hat \psi(g)\quad \text{ for all $g\in \Gamma\ba G$}. $$
\item $\mathcal S_{m}(\hat  \psi) \ll \mathcal S_{5}(\psi)$ for any $m\in \n$, where
the implied constant depends only on $\supp(\psi)$.          \end{enumerate}
\end{Lem}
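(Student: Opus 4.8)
The plan is to build $\hat\psi$ explicitly as a finite sum of translates of $|\psi|$ smoothed out over a small ball, so that the mean value theorem along one-parameter subgroups of $G$ controls $|\psi(g)-\psi(gh)|$. Concretely, fix the left-invariant metric $d$ and the orthonormal basis $\{Z_1,\dots,Z_6\}$ of $\op{Lie}(G)$ used elsewhere. For $h$ in the $\e$-ball $U_\e$ about $e$, write $h=\exp(X)$ with $\|X\|\le\e$ (for $\e$ small this is a diffeomorphism), and expand
$$\psi(gh)-\psi(g)=\int_0^1 \frac{d}{dt}\psi(g\exp(tX))\,dt=\int_0^1 (X\psi)(g\exp(tX))\,dt,$$
where $X\psi=\sum_i c_i Z_i\psi$ with $|c_i|\le\|X\|\le\e$. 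Hence
$$|\psi(gh)-\psi(g)|\le \e\cdot\sup_{h'\in U_\e}\ \sum_{i=1}^6 |(Z_i\psi)(gh')|.$$
This already suggests taking $\hat\psi$ to dominate $\sup_{h'\in U_1}\sum_i|(Z_i\psi)(gh')|$, uniformly over $\e\le1$.

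Next I would replace the awkward supremum over $h'$ and the non-smooth absolute values by a smooth majorant. Fix a single non-negative $\chi\in C_c^\infty(G)$ with $\chi\ge 1$ on $U_1$ and $\supp(\chi)\subset U_2$. Define, for each $i$, the smooth function
$$\Psi_i(g):=\int_G \chi(h')\,|(Z_i\psi)(gh')|^2\,d\mu(h')\quad\text{wait — better: }$$
to keep smoothness I instead set $\Psi_i(g):=\big((Z_i\psi)^2 * \chi\big)(g)=\int_G (Z_i\psi)^2(gh'^{-1})\chi(h')\,d\mu(h')$, which is smooth, $K$-invariant on the right if $\chi$ is bi-$K$-invariant (choose it so), compactly supported with support contained in a fixed compact set depending only on $\supp(\psi)$, and satisfies $\Psi_i(g)\ge (Z_i\psi)^2(gh')$ for all $h'\in U_1$. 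Then put
$$\hat\psi:=1+\sum_{i=1}^6\sqrt{\Psi_i+\delta}$$
for a small constant $\delta>0$ inserted under the square root so that $\hat\psi$ is genuinely $C^\infty$ (the square root of a smooth non-negative function can fail to be smooth at zeros, which the $+\delta$ fixes; the extra $\delta$ and the $+1$ are harmless since we only need an upper bound). By construction $\sum_i|(Z_i\psi)(gh')|\le \sum_i\sqrt{\Psi_i(g)}\le\hat\psi(g)$ for all $h'\in U_1$, and combining with the Taylor estimate above gives (1) for all $\e\le 1$; the case of general small $\e$ is contained in this. Note $\hat\psi\in C_c^\infty(\G\ba G)^K$ since convolution on the right by a bi-$K$-invariant compactly supported function preserves right-$K$-invariance and the support is controlled.

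Finally, for (2), I would differentiate $\hat\psi$ and track Sobolev norms. Each derivative $Z_{j_1}\cdots Z_{j_\ell}\hat\psi$ is, up to the chain rule for the square root (which produces bounded factors because of the $+\delta$ regularization) and the Leibniz rule, a finite combination of terms of the form $Z_{j_1}\cdots Z_{j_\ell}\big((Z_i\psi)^2*\chi\big)$, and convolution commutes with left-invariant differential operators while moving derivatives onto the smooth bump $\chi$ is not even needed — we simply bound $\|Z_{j_1}\cdots Z_{j_\ell}((Z_i\psi)^2*\chi)\|_2\le \|((Z_{j_1}\cdots Z_{j_\ell}(Z_i\psi)^2)*\chi\|_2\le \|\chi\|_1\cdot\|Z_{j_1}\cdots Z_{j_\ell}(Z_i\psi)^2\|_\infty\cdot(\text{vol factor})$, and expand $(Z_i\psi)^2$ by Leibniz so that at most five derivatives land on $\psi$ in each factor. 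Since all the constants (the $L^1$-norm of $\chi$, the volume of the relevant compact set) depend only on $\supp(\psi)$, this yields $\mathcal S_m(\hat\psi)\ll_{\supp(\psi)}\mathcal S_5(\psi)$ for every $m$. The main obstacle is purely bookkeeping: ensuring the square-root regularization does not spoil the pointwise majorization in (1) while keeping $\hat\psi$ smooth and its Sobolev norms governed by $\mathcal S_5(\psi)$ — there is no hard analysis, only the need to arrange the constants $\delta$, $\chi$ and the compact supports consistently.
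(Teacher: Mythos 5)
Your opening step (the mean value theorem along $t\mapsto g\exp(tX)$, giving $|\psi(g)-\psi(gh)|\le \e\sup_{h'\in U_\e}\sum_i|(Z_i\psi)(gh')|$) matches the paper's starting point, but the construction of $\hat\psi$ that follows has several genuine defects. First, the key inequality you assert for the convolution, namely $\Psi_i(g)=\bigl((Z_i\psi)^2*\chi\bigr)(g)\ge (Z_i\psi)^2(gh')$ for all $h'\in U_1$, is false: a convolution against a fixed bump is an average over translates, not a majorant of their supremum (a narrow spike in $(Z_i\psi)^2$ makes the right side equal to its peak while the left side is only the peak times the small volume it occupies). Second, the regularizations $+\delta$ and $+1$ destroy compact support: $\sqrt{\Psi_i+\delta}$ equals $\sqrt\delta>0$ off $\supp\Psi_i$, and $\G\ba G$ has infinite volume here, so your $\hat\psi$ is not in $C_c^\infty(\G\ba G)$. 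Third, even granting the construction, differentiating $\sqrt{\Psi_i+\delta}$ by the chain rule produces $Z\Psi_i/(2\sqrt{\Psi_i+\delta})$, whose numerator is quadratic in derivatives of $\psi$ and whose denominator is only bounded below by $\sqrt\delta$; this yields $\mathcal S_m(\hat\psi)\ll_\delta \mathcal S_5(\psi)^2$ rather than the linear bound $\mathcal S_m(\hat\psi)\ll\mathcal S_5(\psi)$ that the lemma asserts (and which is what makes the iteration $\psi_i=\hat\psi_{i-1}$ in the proof of Theorem \ref{ma1} behave).

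The repair is to notice that no pointwise majorant of the derivatives is needed. The mean value estimate already gives the \emph{constant} bound $|\psi(g)-\psi(gh)|\le \e\, c_0 C_\psi$ with $C_\psi:=\sup_{g\in\supp(\psi)}\sum_i|Z_i\psi(g)|$, and the left-hand side vanishes unless $g$ lies in the fixed neighborhood $\supp(\psi)U_{\e_0}^{-1}K$ of $\supp(\psi)$. So one takes a single cutoff $f_0\in C_c^\infty(\G\ba G)^K$ equal to $1$ on that neighborhood and supported in a slightly larger one, and sets $\hat\psi:=c_0C_\psi f_0$. Then (1) is immediate, and (2) follows because $f_0$ is a fixed function depending only on $\supp(\psi)$ (so $\mathcal S_m(\hat\psi)\ll C_\psi$) while $C_\psi\le\mathcal S_5(\psi)$ by the Sobolev embedding theorem. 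This is the route the paper takes; your attempt to build a genuinely pointwise smooth majorant is both unnecessary and, as written, unsound.
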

\begin{proof}
Fix $\e_0>0$. Let $f_0\in C_c^\infty(\G\ba G)^K$ such that
$f_0(g)=1$ for all $g\in \text{supp}(\psi)U_{\e_0}^{-1}K$ and
$f_0(g)=0$ for all $g\in \Gamma\backslash G -\text{supp}(\psi)U_{2\e_0}^{-1}K$.

Set $C_\psi:=\sup_{g\in \text{supp}(\psi)}\sum_{i=1}^6|X_i(\psi)(g)|$.
Then there exists a constant $c_0\ge 1$ such that
 for all $g\in \Gamma\ba G$ and $h\in U_\e$ for $\e<\e_0$,
$$|\psi (g)-\psi
(gh)|\le \e \cdot  c_0 C_\psi. $$

Hence if we define $\hat \psi\in C_c^\infty(\Gamma\ba G)^K$ by
$\hat \psi (g)=c_0 C_\psi f_0(g)$ for $g\in \Gamma \ba G$,
then (1) holds.

Now by the Sobolev imbedding theorem (cf. \cite[Thm. 2.30]{Aubinbook}),
we have $$C_\psi\le \mathcal S_5(\psi).$$
Since
 $\mathcal S_m(\hat \psi)\ll C_\psi $,
this proves (2).
\end{proof}


\begin{Prop}\label{mat} Let $\psi\in C^\infty(\G\ba G)^K$.
Then for any $0<y<1$ and any small $\e >0$,
$$|\I_{\eta}(\psi)(a_y) -\langle a_y \psi, \rho_{\eta, \e}  \rangle | \ll
(\e+y) \cdot \I_\eta (\hat \psi) (a_{y}) .  $$
\end{Prop}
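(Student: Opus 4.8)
The plan is to run the argument of Proposition~\ref{hugh} with the single change that the explicit computation there of the horospherical average of $\phi_0$ is replaced by the uniform Lipschitz-type estimate of Lemma~\ref{approx}.

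First I would unfold the inner product on the right-hand side. Exactly as in the proof of Proposition~\ref{hugh}, using Definition~\ref{def} together with the disintegration $d\mu=dn\otimes\nu$ of Haar measure along the local product $N\times AN^-M$, one obtains
$$\langle a_y\psi,\rho_{\eta,\e}\rangle=\int_{W_\e}r_\e(h)\left(\int_{(N\cap\G)\ba N}\psi(n h a_y)\,\eta(n)\,dn\right)d\nu(h),$$
while $\I_\eta(\psi)(a_y)=\int_{(N\cap\G)\ba N}\psi(na_y)\,\eta(n)\,dn$ and $\int_{W_\e}r_\e\,d\nu=1$. A key simplification over Proposition~\ref{hugh} is that the weight $\eta(n)$ occurs unchanged in both expressions, so no perturbation of $\eta$ is incurred. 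Subtracting and using $r_\e\ge 0$, $\eta\ge 0$ to pass the absolute value inside the integrals, it therefore suffices to prove the pointwise bound
$$|\psi(n h a_y)-\psi(na_y)|\ \ll\ (\e+y)\,\hat\psi(na_y)\qquad(h\in W_\e,\ n\in N),$$
and then integrate against $r_\e(h)\,\eta(n)$ and use $\int_{W_\e}r_\e\,d\nu=1$ to land on $(\e+y)\,\I_\eta(\hat\psi)(a_y)$.

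For the pointwise bound I would reuse the $NAK$-bookkeeping from the proof of Proposition~\ref{hugh}. Writing $h=a_{y_0}n^-_{x_0}m_0\in W_\e$ with $|y_0-1|=O(\e)$ and $n=n_x$, the commutation of $M$ with $A$ and the identity $a_{y_0}n^-_{x_0}a_y=a_{y_0y}n^-_{x_0y}$ give $n h a_y=n_x\,a_{y_0y}\,n^-_{x_0y}\,m_0$; since $\|x_0y\|=O(y)$ (the fixed $\e_0$ being harmless), decompose $n^-_{x_0y}=a_{y_1}n_{x_1}k_1$ with $|y_1-1|=O(y)$, $\|x_1\|=O(y)$, $k_1\in K$. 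Moving the unipotent factor past $a_{y_0yy_1}$ via $a_tn_s=n_{st}a_t$ yields $n h a_y=n_{x+x_1y_0yy_1}\,a_{y_0yy_1}\,k_1m_0$, so by $K$-invariance of $\psi$ we get $\psi(n h a_y)=\psi\!\left(n_xa_y\,g'\right)$ with $g'=n_{x_1y_0y_1}\,a_{y_0y_1}$. The $N$-component of $g'$ has size $O(y)$ and its $A$-component satisfies $|y_0y_1-1|=O(\e+y)$, so $g'\in U_{c(\e+y)}$ for an absolute constant $c$; Lemma~\ref{approx}(1) applied with $g=n_xa_y$ then gives $|\psi(na_y)-\psi(n h a_y)|\le c(\e+y)\,\hat\psi(na_y)$, which is what was needed.

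The only genuinely delicate point is the size bookkeeping just described: one must check that the single $O(\e)$ perturbation coming from $a_{y_0}$, together with the two $O(y)$ perturbations coming from $a_{y_1}$ and $n_{x_1}$ in the decomposition of $n^-_{x_0y}$, assemble into one element $g'$ lying in a ball of radius $O(\e+y)$ about $e$, so that Lemma~\ref{approx} may be invoked at that scale — the remainder is a routine majorization. A secondary convenience is that Lemma~\ref{approx} bounds $|\psi(g)-\psi(gh)|$ by $\e\hat\psi(g)$ for \emph{all} $g$, with $\hat\psi\ge0$, so there is no need for a separate discussion of whether $na_y$ lies near $\supp(\psi)$, nor of the possible non-compactness of $(N\cap\G)\ba N$ (the compactly supported cutoff $\eta$ confines the $n$-integration in any case).
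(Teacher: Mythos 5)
Your proposal is correct and follows essentially the same route as the paper: unfold $\langle a_y\psi,\rho_{\eta,\e}\rangle$ via the product structure $d\mu=dn\otimes\nu$, reduce to the pointwise comparison of $\psi(nha_y)$ with $\psi(na_y)$, observe that $ha_y=a_yh'$ with $h'\in U_{O(\e+y)}$ because conjugation by $a_y$ ($0<y<1$) contracts $N^-$, and invoke Lemma \ref{approx}. The only (harmless) cosmetic difference is that you further decompose the contracted $N^-$ factor in $ANK$ coordinates and use $K$-invariance, whereas the paper keeps the perturbation $h'=a\,n^-_{xy}$ in $AN^-M$ directly and discards the $M$-component by $M$-invariance of $\psi$.
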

\begin{proof}
If $an^- m\in W_\e=(U_\e\cap A)(U_{\e_0}\cap N^-)(U_{\e_0}\cap M)$, then
$$(an^- m)a_y= a_ya (a_{y^{-1}} n^- a_y) m \in a_y W_\e$$
since $a_{y^{-1}}$ contracts $N^-$ by conjugation as $0<y<1$.

As $\psi$ is $M$-invariant, for any $h=an^-m \in W_\e$,
there exists an $h'\in (U_\e\cap A)(U_{y\e_0}\cap N^-)$ such that
 $$
 |\psi(na_y )- \psi(n   h a_y) |=|\psi(na_y )- \psi(n  a_y h') |
 \ll  \hat \psi(na_y) (\e +y) .
 $$

Hence
\begin{align*}
|\psi(na_y) -\int_{h\in W_\e} \psi(n h a_y)r_\e (h) d\nu(h) |  \ll \hat \psi(na_y) (\e +y)
.\end{align*}
Therefore
$$
 |\I_{\eta}(\psi)(a_y) -\langle a_y \psi, \rho_{ \e}\rangle_{L^2(\G\ba G)} |
 \ll
(\e+y) \cdot \int_{( N\cap \G)\ba N}  \hat \psi(na_y) \eta(n) dn  .
$$

\end{proof}


\vs
\noindent{\bf Proof of Theorem \ref{ma1}:}
Recall that $\eta, \e_0, \rho_{\eta, \e}=\eta\otimes r_\e$ are as in Def \ref{def}.
For simplicity, we set $\rho_\e=\rho_{\eta, \e}$. Noting that $r_\e$ is essentially
an $\e$-approximation only in the $A$-direction,
we obtain that $\mathcal S_2 (\rho_\e)=O_\eta (\e^{-5/2})$.

We may further assume that
$\eta=1$ on $N(\op{supp}(\psi))$ by Corollary \ref{suppcom}
so that
$$\int_{( N\cap \G)\ba N} \psi (n a_y)\; dn=\I_\eta(\psi) (a_y) .$$

By Proposition \ref{ppo}, we also have
$$  \phi_0^N (a_y)=\I_\eta(\phi_0)(a_y)+O(y^{\delta_{\G}}) .$$

Set $p=5/2$.
Fix $\ell\in \n$ so that
 $$\ell >\frac{(2-\delta_{\G})(p+1)}{(\delta_{\G}-s_\G)} .$$

Setting $\psi_0(g):=\psi (g)$,
we define for $1\le i\le \ell $,
inductively
$$\psi_i (g) := \hat\psi_{i-1}(g) $$
where $\hat\psi_{i-1}$ is given by Lemma \ref{approx}.


Applying
Proposition \ref{mat} to each $\psi_i$, we obtain for $0\le i\le \ell -1$
$$
\I_{\eta} ( \psi_i)(a_y)=
 \langle a_y   \psi_i , \rho_\e\rangle
+O((\e +y) \I_{\eta} ( \psi_{i+1})(a_y) )
$$
and
$$
\I_{\eta }( \psi_\ell)(a_y)=
 \langle a_y   \psi_\ell , \rho_\e\rangle
+O_\eta((\e +y) \mathcal S_2({\psi_\ell }))
$$
where the implied constant in the $O_\eta$ notation depends on $\int \eta \,dn$.

Note that  by Corollary \ref{matco},
we have for each $1\le i\le \ell $,
\begin{align*}
 \langle a_y   \psi_i , \rho_\e\rangle &=
\la \psi_i, \phi_0\ra \la a_y \phi_0, \rho_\e\ra  + O(y^{2-s_\G}
\mathcal S_2 (\rho_\e) \mathcal S_2(\psi_i))\\
&=O(\la a_y \phi_0, \rho_\e\ra  \cdot \|\psi_i\|_2 )
+ O(y^{2-s_\G}
\mathcal S_2 (\rho_\e) \mathcal S_2(\psi_i))
\\ &=O(\la a_y \phi_0, \rho_\e\ra  \cdot \mathcal S_5 (\psi))
+ O(y^{2-s_\G}
\e^{-p} \mathcal S_{5}(\psi)).
\end{align*}

Hence for any $y<\e$,
\begin{align*}
&\I_{\eta} (\psi)(a_y) = \langle a_y\psi, \rho_\e\rangle +
 \sum_{k=1}^{\ell -1} O( \langle a_y\psi_k, \rho_\e\rangle (\e+y) ^k)  +  O_\eta(
\mathcal S_{5} ( \psi) (\e+y) ^\ell )
\\&=  \langle a_y \psi, \rho_\e\rangle
+ O( \langle a_y\phi_0, \rho_\e\rangle \e \mathcal S_{5 }(\psi) )+
O( \e \mathcal S_{5} ( \psi)  y^{2-s_\G} \e^{-p} )+
O(\mathcal S_{5 } ( \psi) \e^\ell )
\\&=
\langle \psi, \phi_0\rangle
 \cdot \langle a_y \phi_0, \rho_\e\rangle + O(
 \langle a_y\phi_0, \rho_\e\rangle \e  )
+O(y^{2-s_\G} \e^{-p}  ) + O(\e^\ell ) \\
&=
\langle \psi, \phi_0\rangle
 \cdot \phi_0^N(a_y) + O(y^{\delta_\G}) +O (\e y^{2-\delta_\G} )
 +O (y^{2-s_\G} \e^{-p}  )  + O (\e^\ell )
 \end{align*}
by Proposition \ref{hugh}, where the implied constants depend on the Sobolev norm
 $\mathcal S_{5}(\psi)$
and $\int \eta \,dn$.

Equating the two error terms $O(\e y^{2-\delta_\G})$ and
 $O(y^{2-s_\G}\e^{-p})$  gives the choice $\e=y^{(\delta_{\G}-s_\G)/(p+1)}$.
By the condition on $\ell$, we then have $\e^{\ell }\ll y^{2-\delta_{\G} +\frac 27 (\delta_{\G}-s_\G)}$.
Hence we deduce:
$$\int_{( N\cap \G)\ba N} \psi (n a_y)\; dn= \I_{\eta}(\psi)(a_y)
=
\langle \psi, \phi_0\rangle\cdot  \phi_0^N(a_y)\cdot
(1 +O(y^{ \frac 27 (\delta_\G-s_\G)}).
$$
Note that the implied constant depends on the Sobolev norm $\mathcal S_{5}(\psi)$ of $\psi$
and the $L^1$-norm $\int \eta \,dn$, which in turn depends only on the volumes
$N(\text{supp}(\psi))$ and any open subset of $(N\cap \G)\ba N$ which properly covers $\Lambda_N(\G)$.

\vs\vs

\noindent{\bf Burger-Roblin measure $\hat \mu$: } In identifying $\partial_\infty(\bH^3)$ with $K/M$,
we may define the following measure $\hat \mu$ on $\op{T}^1(\bH^3)=G/M$:
for $\psi\in C_c(G/M)$,
$$\hat \mu(\psi)=\int_{k\in K}\int_{a_yn_x\in AN} \psi(ka_yn_x) y^{\delta -1} dydxd\nu_o(k)$$
 where we consider the Patterson Sullivan measure $\nu_o$ in section \ref{phizeroo} as a measure on $K$
 via the projection $K\to K/M$: for $f\in C(K)$,
 $\nu_o(f)=\int_{k\in K/M} \int_{m\in
 M} f(km)dm d\nu_o(k)$ for the probability invariant
 measure $dm$ on $M$.

By the conformal property of $\nu_o$,
the measure $\hat \mu$ is left $\G$-invariant and hence
induces a Radon measure on $\op{T}^1(\G\ba \bH^3)$ via the canonical
projection.
\begin{Lem} For a $K$-invariant function
$\psi\in C_c(G)$, we have
\begin{equation}\label{nuhat}
\hat \mu(\psi)=\la \psi, \phi_0\ra .
 \end{equation}
\end{Lem}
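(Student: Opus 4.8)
The plan is to start from $\la\psi,\phi_0\ra=\int_G\psi(g)\,\phi_0(g)\,d\mu(g)$, insert the Poisson representation \eqref{PSS} of $\phi_0$, and unfold. Regarding $\phi_0$ as the right $K$-invariant function $g\mapsto\phi_0(g\cdot o)$ on $G$, equation \eqref{PSS} reads $\phi_0(g)=\int_{\Lambda(\G)}e^{-\delta_\G\beta_\xi(g\cdot o,\,o)}\,d\nu_o(\xi)$. First I would identify $\partial_\infty(\bH^3)$ with $K/M$ via $kM\mapsto k\cdot\infty$, so that the measure $\nu_o$ on $K$ used in the definition of $\hat\mu$ pushes forward to the Patterson--Sullivan measure appearing in \eqref{PSS}; then $\phi_0(g)=\int_K e^{-\delta_\G\beta_{k\cdot\infty}(g\cdot o,\,o)}\,d\nu_o(k)$. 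Substituting this into $\la\psi,\phi_0\ra$ and interchanging the two integrations — legitimate since $\psi\in C_c(G)$ and $\phi_0$ is continuous, hence bounded on $\operatorname{supp}\psi$ — gives $\la\psi,\phi_0\ra=\int_K\Big(\int_G\psi(g)\,e^{-\delta_\G\beta_{k\cdot\infty}(g\cdot o,\,o)}\,d\mu(g)\Big)d\nu_o(k)$.

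The next step is the elementary identity: if $k^{-1}g=n_xa_y\kappa$ is the Iwasawa decomposition of $k^{-1}g$ in $G=NAK$, then $e^{-\delta_\G\beta_{k\cdot\infty}(g\cdot o,\,o)}=y^{\delta_\G}$. This follows from isometry-invariance of the Busemann function, $\beta_{k\cdot\infty}(g\cdot o,\,o)=\beta_\infty(k^{-1}g\cdot o,\,o)$ (using $k\cdot o=o$), combined with the standard formula $\beta_\infty(p,\,o)=-\log(\operatorname{ht}(p))$ in the upper half-space model and $\operatorname{ht}(k^{-1}g\cdot o)=\operatorname{ht}(n_xa_y\cdot o)=y$. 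Writing $y(\cdot)$ for the $A$-height in the $NAK$-decomposition, the inner integral becomes $\int_G\psi(g)\,y(k^{-1}g)^{\delta_\G}\,d\mu(g)$; substituting $g=kh$ and using left-invariance of $\mu$ together with $y(k^{-1}g)=y(h)$ turns it into $\int_G\psi(kh)\,y(h)^{\delta_\G}\,d\mu(h)$.

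To finish, I would decompose $h=n_xa_y\kappa$ in $NAK$, apply the normalization $d\mu(n_xa_y\kappa)=y^{-3}\,dx\,dy\,d\kappa$ fixed at the start of this section, the right $K$-invariance of $\psi$, and $\int_K d\kappa=1$, obtaining $\int_K\!\int_N\!\int_A\psi(kn_xa_y)\,y^{\delta_\G-3}\,dx\,dy\,d\nu_o(k)$. Rewriting $n_xa_y=a_yn_{x/y}$ (conjugation by $a_y^{-1}$ scales $N\cong\C$ by $y^{-1}$) and changing variables $x\mapsto yx$, with Jacobian $y^2$ on $N\cong\R^2$, produces exactly $\int_K\!\int_A\!\int_N\psi(ka_yn_x)\,y^{\delta_\G-1}\,dx\,dy\,d\nu_o(k)=\hat\mu(\psi)$, which is \eqref{nuhat}. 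The same computation unfolded over $\G$ yields the identity for $\psi\in C_c(\G\ba G)^K$.

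The computation is short; the part needing care is the bookkeeping of conventions — fixing the correct orientation $kM\mapsto k\cdot\infty$ of the identification $K/M\cong\partial_\infty(\bH^3)$ so that the two occurrences of $\nu_o$ genuinely match, keeping the two Haar normalizations straight between the $NAK$ and $KAN$ orderings, and retaining the Jacobian $y^2$ in the substitution $n_xa_y\mapsto a_yn_{x/y}$. Both interchanges of integration are routine given the compact support of $\psi$ and the local boundedness of $\phi_0$; and, notably, the argument uses only \eqref{PSS} and the invariance of the Busemann function, not the conformality of $\nu_o$ (which was already invoked to see that $\hat\mu$ descends to $\G\ba G$).
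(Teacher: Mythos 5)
Your proof is correct and is essentially the paper's own argument run in reverse: the paper starts from $\hat\mu(\psi)$, recognizes $y^{\delta-1}\,dy\,dx\,dk$ as $e^{-\delta_\G\beta_\infty(a_yn_xo,o)}$ times Haar measure, and folds the $K$-integral into $\phi_0$ via $\beta_\infty(k^{-1}go,o)=\beta_{k(\infty)}(go,o)$, while you unfold $\la\psi,\phi_0\ra$ using the same three ingredients (Poisson representation, Busemann formula, left-invariance of Haar). Your extra care with the $NAK$ versus $KAN$ normalizations and the Jacobian $y^{2}$ in $n_xa_y=a_yn_{x/y}$ is exactly the bookkeeping the paper compresses into the assertion that $y^{-1}\,dy\,dx\,dk$ for $g=a_yn_xk$ is the same Haar measure, so the two proofs match step for step.
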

\begin{proof}
It is easy to compute that for the base point $o=(0,0,1)\in \bH^3$,
 $$\beta_\infty(a_yn_x o, o)= -\log y$$
for any $0<y\le 1$ and $x\in \c$.
We note that $dg=y^{-1}dydxdk$ for $g=a_yn_xk$
is a Haar measure on $G$.
Therefore, using $\psi$ is $K$-invariant,
\begin{align*}&
\hat \mu(\psi)=\int_{K}\int_{AN} \int_{k_0\in K}\psi(ka_yn_xk_0)d({k_0}) y^{\delta -1} dydx d\nu_o(k)
\\&=\int_{g\in G}\int_{K}\psi(kg) e^{-\delta\beta_\infty(go,o)} d\nu_o(k) dg
\\&=\int_{G}\psi(g) \int_{k\in K/M} e^{-\delta\beta_\infty(k^{-1} go,o)} d\nu_o(k) dg
\\&=\int_{G}\psi(g) \int_{K/M} e^{-\delta\beta_{k(\infty)}(go,o)} d\nu_o(k) dg
=\la \psi, \phi_0\ra
\end{align*}
as $\phi_0(g)=\int_{K/M} e^{-\delta\beta_{k(\infty)}(go,o)} d\nu_o(k)$.
\end{proof}

Generalizing Burger's result \cite{Burger1990},
 Roblin \cite[Thm 6.4]{Roblin2003} proved:
\begin{Thm}\label{roblin}  The measure $\hat \mu$ is, up to constant multiple, the unique Radon measure
on $\op{T}^1(\G\ba \bH^3)$ invariant for the horospherical foliations whose support is
$\hat{\Omega}_\G$ and which gives zero measure to closed horospheres.
\end{Thm}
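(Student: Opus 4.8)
The plan is to follow Burger \cite{Burger1990} (the convex cocompact case) and Roblin \cite{Roblin2003} (the general geometrically finite case): reformulate horospherically invariant Radon measures as $\G$-invariant transverse data on the space of horospheres, and then pin that data down using Patterson--Sullivan theory and the ergodic theory of the geodesic flow. That $\hat\mu$ itself satisfies the three properties is the easy direction. Left $\G$-invariance and horospherical invariance of $\hat\mu$ come from the conformal property \eqref{conformal} of $\nu_o$ (as already observed in the paragraph defining $\hat\mu$); its support is $\hat\Omega_\G$ because the boundary marginal of $\hat\mu$ is $\nu_o$, which is supported on $\Lambda(\G)$; and it gives zero mass to every closed horosphere because such a leaf is a single fibre over its centre point in $\partial_\infty(\bH^3)$ and $\nu_o$ has no atoms. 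So the substance is uniqueness.

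For uniqueness, first lift a candidate measure $\hat m$ to a $\G$-invariant, horospherically invariant Radon measure $\tilde m$ on $\op{T}^1(\bH^3)=G/M$ with support $\tilde\Omega_\G$. In Hopf-type coordinates $\op{T}^1(\bH^3)\cong(\partial_\infty(\bH^3)\times\partial_\infty(\bH^3)\setminus\Delta)\times\R$, each leaf of the horospherical foliation through a vector $v$ is the $N$-orbit obtained by varying the backward endpoint $v^-$ while keeping the forward endpoint $v^+$ and the time parameter fixed, and it is diffeomorphic to $\R^2$. Horospherical invariance of $\tilde m$ then forces a local product structure whose leafwise conditionals are invariant under the $N$-holonomy; since a translation-invariant Radon measure on $\R^2\cong N$ is Lebesgue up to a scalar, $\tilde m$ is entirely determined by a single $\G$-invariant transverse measure $\bar m$ on the space of horospheres $\partial_\infty(\bH^3)\times\R$.

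Next, the geodesic flow $\{a_t\}$ acts on $\partial_\infty(\bH^3)\times\R$ by translating the $\R$-coordinate and commutes with $\G$, so $\bar m$ decomposes into a $\G$-quasi-invariant boundary measure $\mu$ together with conditionals along the flow direction. The support hypothesis forces $\mu$ to be carried by $\Lambda(\G)$, and --- crucially --- the no-closed-horosphere hypothesis forces $\mu$ to have no atoms at the basepoints of closed horospheres, which by Theorem \ref{dalbo} together with the Beardon--Maskit dichotomy \cite{BeardonMaskit1974} are exactly the points of $\partial_\infty(\bH^3)$ lying outside $\Lambda(\G)$ or bounded parabolic; these periodic leaves carry their own horospherically invariant measures, and excluding them is precisely what the hypothesis achieves. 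One then invokes the ergodic package available when $\G$ is geometrically finite: the Bowen--Margulis--Sullivan measure on $\op{T}^1(\G\ba\bH^3)$ is finite (Sullivan), the geodesic flow is ergodic and mixing for it (Sullivan, Roblin), and equivalently the diagonal $\G$-action on $(\Lambda(\G)\times\Lambda(\G),\nu_o\otimes\nu_o)$ is ergodic with $\nu_o$ the unique $\delta_\G$-conformal density. A Hopf-type argument --- propagating the invariance of $\tilde m$ along geodesic rays to the $\nu_o$-full set of conical limit points, where the unstable horospheres spread out --- then shows that the only atom-free $\G$-invariant transverse measure supported on $\Lambda(\G)$ has boundary marginal proportional to $\nu_o$ and flow-conditional $e^{\delta_\G s}\,ds$, the compatibility being dictated by \eqref{conformal}. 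But that is precisely the recipe defining $\hat\mu$; the Lemma just established, $\hat\mu(\psi)=\langle\psi,\phi_0\rangle$, merely records this, since $\phi_0$ is the $\nu_o$-integral \eqref{PSS} of the Poisson kernel. Hence $\hat m$ is a constant multiple of $\hat\mu$.

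The main obstacle is this last step: passing from ``$\G$-invariant, atom-free, supported on $\hat\Omega_\G$'' to ``equal to $\hat\mu$''. All the analytic weight sits here --- finiteness of the Bowen--Margulis--Sullivan measure, ergodicity and mixing of the geodesic flow, uniqueness of the conformal density, and the Hopf-chain argument along conical geodesics --- while the cusps (bounded parabolic points of rank one or two) demand the extra bookkeeping near $\infty$ supplied by the support and no-closed-horosphere hypotheses and by a fundamental domain of the type constructed in Proposition \ref{ff}.
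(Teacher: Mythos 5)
First, a point of order: the paper does not prove this statement. Theorem \ref{roblin} is quoted verbatim from Roblin \cite[Thm 6.4]{Roblin2003} (generalizing Burger \cite{Burger1990}), and the only thing the authors establish themselves in this neighbourhood is the identity $\hat\mu(\psi)=\la\psi,\phi_0\ra$ for $K$-invariant $\psi$. So there is no in-paper argument to compare yours against; what you have written is a reconstruction of Roblin's proof, and your architecture (disintegrate a horospherically invariant Radon measure into Lebesgue conditionals on $N$-leaves times a $\G$-invariant transverse measure on the space of horospheres, then identify the transverse measure via Patterson--Sullivan theory and ergodicity of the geodesic flow for the Bowen--Margulis--Sullivan measure) is indeed the standard strategy.

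That said, as a proof your write-up has a genuine gap exactly where the theorem lives. The sentence beginning ``A Hopf-type argument \dots then shows that the only atom-free $\G$-invariant transverse measure supported on $\Lambda(\G)$ has boundary marginal proportional to $\nu_o$ and flow-conditional $e^{\delta_\G s}\,ds$'' asserts the entire content of the uniqueness statement. Two things are missing. (i) You never argue why the transverse measure disintegrates along the flow direction with an exponential density at all, nor why the exponent must equal $\delta_\G$: a priori the disintegration produces a $\beta$-conformal density for some $\beta$, and ruling out $\beta>\delta_\G$ (for geometrically finite $\G$ such densities are atomic, supported on parabolic fixed points, which is where the no-closed-horosphere hypothesis and conservativity of the $N$-action actually get used) is a substantial step, as is invoking the uniqueness of the $\delta_\G$-conformal density. (ii) A smaller slip in the easy direction: a closed horosphere based at a bounded parabolic fixed point $\xi$ lies \emph{inside} $\hat\Omega_\G$ and $\xi\in\Lambda(\G)$, so neither the support condition nor atomlessness of $\nu_o$ is what kills it; in the coordinates $g=ka_yn_x$ a single leaf sits over one value of $y$, and it is the Lebesgue factor $y^{\delta_\G-1}dy$ in the $A$-direction that gives it measure zero. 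If you intend this as a citation-level summary it is fine; if you intend it as a proof, the decisive identification of the transverse measure must be carried out, which is precisely why the paper defers to Roblin.
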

We call the measure $\hat \mu$ the Burger-Roblin measure.

In Appendix \ref{app}, the following theorem is deduced from Theorem \ref{ma1}.
\begin{Thm}\label{ft}
For $\psi\in C_c(\op{T}^1(\G\ba \bH^3))$,
$$  \int_{n_x\in ( N\cap \G)\ba N} \psi (n_x a_y)\; dx \sim  c_{\phi_0}\cdot \hat\mu(\psi) \cdot y^{2- \delta_{\G}} \quad\text{as $y\to 0$.}$$
\end{Thm}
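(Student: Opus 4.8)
\medskip
\noindent\textbf{Proof proposal for Theorem \ref{ft}.}
The plan is to recast the assertion as a weak-$*$ convergence of measures on the unit tangent bundle $\op{T}^1(\G\ba\bH^3)=\G\ba G/M$ and to identify the limit by the Burger--Roblin uniqueness theorem, feeding in Theorem \ref{ma1} as the analytic input that controls the $K$-invariant directions and pins down the constant. For $0<y<1$ let $\mu_y$ be the Radon measure on $\G\ba G/M$ given by
\[
\mu_y(\psi):=y^{\delta_\G-2}\int_{(N\cap\G)\ba N}\psi(\G n_x a_y)\,dx ,\qquad \psi\in C_c(\G\ba G/M),
\]
which is finite since the horosphere $\G\ba\G N a_y$ meets $\supp\psi$ only over the bounded set $N(\supp\psi)$ of Proposition \ref{mp}. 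The first step is to show $\{\mu_y\}$ is weak-$*$ precompact as $y\to0$: given a compact $J\subset\G\ba G/M$ and $0\le\psi\le\mathbf 1_J$, fix $\tilde\psi\in C_c^\infty(\G\ba\bH^3)$ with $\tilde\psi\ge 1$ on the compact image $q(J)$, where $q\colon\G\ba G/M\to\G\ba\bH^3$ is the base projection; then $\mu_y(\psi)\le\mu_y(\tilde\psi\circ q)$, and Theorem \ref{ma1} gives $\mu_y(\tilde\psi\circ q)=c_{\phi_0}\langle\tilde\psi,\phi_0\rangle(1+O(y^{\e}))$, which stays bounded for $y$ near $0$. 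Hence every sequence $y_k\to0$ has a subsequence along which $\mu_{y_k}\to\mu_\infty$ weak-$*$; moreover each $\mu_y$, and therefore $\mu_\infty$, is invariant under the horospherical foliation, because $n_x a_y n_t=n_{x+ty}a_y$ and $dx$ is translation invariant on $(N\cap\G)\ba N$.

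The second, and main, step is to identify $\mu_\infty$. Applying Theorem \ref{ma1} to $K$-invariant test functions $\psi\in C_c^\infty(\G\ba\bH^3)$ shows $\mu_y(\psi\circ q)\to c_{\phi_0}\langle\psi,\phi_0\rangle$, so $q_*\mu_\infty=c_{\phi_0}\,\phi_0\cdot m$, where $m$ is the hyperbolic volume on $\G\ba\bH^3$; in particular $q_*\mu_\infty$ is absolutely continuous, non-zero and of full support. Next I would show $\supp\mu_\infty\subseteq\hat\Omega_\G$: if $\supp\psi\cap\hat\Omega_\G=\emptyset$, then a lift of $\supp\psi$ has visual image a compact subset of $\partial_\infty(\bH^3)$ disjoint from $\Lambda(\G)$; since $\op{Vis}(\gamma n_x a_y)=\gamma(\infty)$ and the orbit $\G\!\cdot\!\infty$ accumulates only on $\Lambda(\G)$, only finitely many $\gamma\in\G$ can move $n_x a_y$ into that lift, and for each such $\gamma$ the point $n_x a_y\cdot o=(x,y)$ would have to lie in a fixed compact subset of $\bH^3$, impossible for small $y$; thus $\mu_y(\psi)=0$ for small $y$ and $\mu_\infty(\psi)=0$. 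Since closed horospheres project under $q$ onto immersed $2$-submanifolds of the $3$-manifold $\G\ba\bH^3$, hence $m$-null sets, the absolute continuity of $q_*\mu_\infty$ forces $\mu_\infty$ to give zero mass to every closed horosphere. Finally, the full support of $q_*\mu_\infty$ together with the foliation-invariance, the Beardon--Maskit dichotomy and Dal'bo's density theorem (Theorem \ref{dalbo}) gives $\supp\mu_\infty=\hat\Omega_\G$ exactly: a saturated closed subset of $\hat\Omega_\G$ surjecting onto $\G\ba\bH^3$ must contain a horosphere based at a point of approximation and hence, being closed, equals $\hat\Omega_\G$. By Roblin's uniqueness theorem (Theorem \ref{roblin}) we conclude $\mu_\infty=c\,\hat\mu$ with $c>0$, and comparing $q_*$ of both sides, using $\hat\mu(\psi\circ q)=\langle\psi,\phi_0\rangle$ (hence $q_*\hat\mu=\phi_0\cdot m$), gives $c=c_{\phi_0}$.

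Because every weak-$*$ subsequential limit of $\mu_y$ as $y\to0$ equals $c_{\phi_0}\,\hat\mu$, the family $\mu_y$ itself converges weak-$*$ to $c_{\phi_0}\,\hat\mu$; unwinding the definition of $\mu_y$ yields $\int_{(N\cap\G)\ba N}\psi(n_x a_y)\,dx\sim c_{\phi_0}\,\hat\mu(\psi)\,y^{2-\delta_\G}$ as $y\to0$, as claimed. I expect the main obstacle to be the identification step: checking that the softly obtained limit $\mu_\infty$ meets the hypotheses of Roblin's classification, i.e.\ that it is supported exactly on $\hat\Omega_\G$ and charges no closed horosphere. This is precisely where the geometry of the closed horosphere $\G\ba\G N$ enters (through the position of $\G\!\cdot\!\infty$ relative to $\Lambda(\G)$, which relies on Dal'bo's closedness criterion) and where the strict positivity of $\phi_0$ enters (through $q_*\mu_\infty\ll m$); and the rigidity input cannot be bypassed, since the $K$-invariant test functions supplied by Theorem \ref{ma1} are far from dense in $C_c(\G\ba G/M)$.
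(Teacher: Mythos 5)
Your overall strategy --- normalize the horospherical measures $\mu_y$, extract weak-$*$ limits, and identify every subsequential limit via the Burger--Roblin classification, pinning the constant with Theorem \ref{ma1} --- is exactly the paper's (this is the content of Appendix \ref{app}). The compactness step, the $N$-invariance, and the identity $q_*\mu_\infty=c_{\phi_0}\phi_0\cdot m$ obtained from $K$-invariant test functions are all correct.

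The gap is in verifying the hypotheses of Theorem \ref{roblin}. Your absolute-continuity argument shows $\mu_\infty(H)=0$ for each \emph{individual} closed horosphere $H$, since $q(H)$ is a $2$-dimensional immersed submanifold of the $3$-manifold and hence $m$-null. But the classification requires that $\mu_\infty$ give zero mass to the \emph{union} $\mathcal E_P$ of all closed horospheres, i.e.\ to the singular tubes $\G\ba\G gNA$ attached to the cusps. A foliation-invariant Radon measure of the form $\int \op{Leb}_{H_t}\,d\sigma(t)$ --- a continuous superposition of the Lebesgue measures on the leaves $H_t=\G\ba\G ga_tN$ of one tube --- assigns zero mass to every single closed horosphere yet is entirely carried by $\mathcal E_P$; adding such a measure to a multiple of $\hat\mu$ produces a competitor that your argument does not exclude. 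No pushforward argument can exclude it either: since $gNA\cdot o=\bH^3$, the set $q(\mathcal E_P)$ is all of $\G\ba\bH^3$, and $q_*$ of such a superposition can itself be absolutely continuous with respect to $m$ (coarea), so the absolute continuity of $q_*\mu_\infty$ says nothing about $\mu_\infty(\mathcal E_P)$. For the same reason your topological claim fails: the closed, saturated, thickened tube $\bigcup_{t\in[a,b]}H_t$ lies in $\hat\Omega_\G$, surjects onto the base, and contains no leaf based at a point of approximation, so the identification $\op{supp}\mu_\infty=\hat\Omega_\G$ also presupposes that the tubes carry no mass.

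Killing the tubes is the genuinely hard point and is where $\delta_\G>1$ and the quantitative inputs enter. The paper does it by pushing a candidate set $Q\subset\mathcal E_P$ deep into a cusp with $a_{y_\e}$, using the cusp asymptotics $\phi_0\asymp y^{r-\delta_\G}$ together with the bound $\int_{D_\e K}\phi_0^2\,d\mu\ll y_\e^{\,r-2\delta_\G}$ and Theorem \ref{ma1}, to conclude $\mu_0(\phi_0\cdot\chi_Q)\ll y_\e^{\,2-2\delta_\G}\to 0$; a second flow argument then reduces arbitrary compact subsets of $\mathcal E_P$ to the cusp case. Your proof needs this (or an equivalent) non-accumulation argument before Theorem \ref{roblin} can be invoked.
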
\vs

\section{Orbital counting for a Kleinian group}
Let $\iota: G=\PSL_2(\c)\to \SO_F(\br)$ be a real linear representation where $F$
is a real quadratic form in $4$ variables of signature $(3,1)$. Let $\G<G$ be a geometrically finite torsion-free discrete subgroup with $\delta_\G>1$.
Let $v_0\in \br^4$ be a non-zero (row) vector with $F(v_0)=0$
such that the orbit $v_0\G$ is discrete in $\br^4$.

Since the orthogonal group  $\O_F(\br)$ acts transitively on
the set of non-zero vectors of the cone $F=0$, there exists $g_0^t\in \O_{F}(\br)$ such that
  the stabilizer of $g_0^t v_0 ^t$  is equal to $N^-M$
 where $N^-$ is the strictly lower triangular subgroup.
In fact, $g_0^tv_0^t$ is unique up to homothety.
 Set $$\G_0=g_0^{-1}\G g_0 .$$  As $v_0\G$ is discrete, it follows
that $\G_0 \ba \G_0 NM$ is closed.
Hence by Lemma \ref{stab},
the orbit $\G\ba \G g_0 N $ is closed, equivalently $\G_0\ba \G_0 N$ is closed.

Denote by $\phi_0\in L^2(\G_0\ba \bH^3 ) $ the unique positive eigenfunction of $\Delta$ with eigenvalue
$\delta_\G(2-\delta_\G)$ and of unit $L^2$-norm $\int_{\G_0\ba \bH^3}\phi_0(x,y)^2 y^{-3} dx dy=1$.
By Corollary \ref{ppoo}, we have
$$\phi_0^N(a_y) =c_{\phi_0 }y^{2-\delta_\G} +d_{\phi_0} y^{\delta_\G} $$
where $c_{\phi_0}>0$ and $d_{\phi_0}\ge 0$.
Recall that the Patterson-Sullivan measure $\nu_o$
on $K$ which is normalized so that
\begin{equation*} \phi_0(x,y)=\int_{u\in\Lambda(\G)\setminus \{\infty\}}
 \left({(\|u\|^2+1) y \over \|x-u\|^2+y^2}\right)^{\delta_\G}d\nu_o(u) .
\end{equation*}

\begin{Thm}\label{reduction4}
For any norm $\|\cdot \|$ on $\br^4$, we have, as $T\to \infty$,
$$\# \{v\in v_0\G: \| v\|<T\}\sim  \delta_\G^{-1} \cdot c_{\phi_0} \cdot \left(\int_{k\in K}\|v_0 (g_0k^{-1} g_0^{-1})\|^{-\delta _\G} d\nu_o(k) \right) \cdot T^{\delta_{\G}}
. $$

If $\|\cdot \|$ is $g_0 Kg_0^{-1}$-invariant, then there exists $\e>0$ such that
$$\# \{v\in v_0\G: \| v\|<T\}=  \delta_\G^{-1}\cdot
\phi_0(e) \cdot c_{\phi_0}\cdot
\|v_0 \|^{-\delta _\G} \cdot T^{\delta_{\G}}
 (1+O(T^{-\e}))$$
 where $\e$ depends only on the spectral gap $\delta_\G -s_\G$
 and the implied constant depends only on $\Lambda_N(\G)$.
\end{Thm}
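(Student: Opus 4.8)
The plan is to realize $\#\{v\in v_0\Gamma:\|v\|<T\}$ as an integral of a test function along the expanding closed horospheres of $\Gamma_0\backslash\mathbb H^3$ (with $\Gamma_0=g_0^{-1}\Gamma g_0$), and then to feed in the equidistribution statements already proved — Theorem \ref{ma1} for the effective (second) assertion, where $\|\cdot\|$ is $g_0Kg_0^{-1}$-invariant, and Theorem \ref{ft} for the first. First I would reduce to a count on $(\Gamma_0\cap N)\backslash\Gamma_0$: writing $w_0=v_0g_0$ one has that $w_0$ is fixed by $NM$, and since $\Gamma$ is torsion-free, $\Gamma_0\cap NM=\Gamma_0\cap N$ by the $N$-analogue of Lemma \ref{stab}; thus $\gamma_0\mapsto w_0\gamma_0g_0^{-1}$ descends to a bijection between $(\Gamma_0\cap N)\backslash\Gamma_0$ and $v_0\Gamma$. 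Decomposing $\gamma_0=n\,a_{y(\gamma_0)}\,\kappa(\gamma_0)$ in Iwasawa coordinates along $G=NAK$ (so $y(\gamma_0)$ and $\kappa(\gamma_0)M$ descend to $(\Gamma_0\cap N)\backslash\Gamma_0$), and using that $w_0$ is $NM$-fixed and scales under $A$, one gets $\|v_0\gamma\|=y(\gamma_0)^{-1}\,\|w_0\,\kappa(\gamma_0)\,g_0^{-1}\|$ after a suitable rescaling of $w_0$ (the sign of the exponent of $y$ is forced by finiteness of the count below). Hence, with $f_T(g):=\chi_{\{\|w_0\kappa(g)g_0^{-1}\|<T\,y(g)\}}$ on $G$ (which is left $N$-invariant) and $F_T:=\sum_{\gamma_0\in(\Gamma_0\cap N)\backslash\Gamma_0}f_T(\gamma_0\,\cdot)$ on $\Gamma_0\backslash G$, we have $F_T(\Gamma_0 e)=\#\{v\in v_0\Gamma:\|v\|<T\}$; recall $\Gamma_0\backslash\Gamma_0 N$ is closed, so Theorems \ref{ma1} and \ref{ft} apply to $\Gamma_0$.

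Next I would unfold against a bump. Choose $0\le\psi\in C^\infty_c(\Gamma_0\backslash G)$ with $\int_{\Gamma_0\backslash G}\psi\,d\mu=1$ supported near $\Gamma_0 e$, and write $\langle F_T,\psi\rangle:=\int_{\Gamma_0\backslash G}F_T\,\psi\,d\mu$. Unfolding the $\Gamma_0$-sum and decomposing $d\mu=y^{-3}\,dn\,dy\,dk$ along $NAK$ gives
\[
\langle F_T,\psi\rangle \;=\; \int_K\int_{\|w_0kg_0^{-1}\|/T}^{\infty}\left(\int_{(\Gamma_0\cap N)\backslash N}\psi(\Gamma_0\,n a_y k)\,dn\right) y^{-3}\,dy\,dk .
\]
For $y$ bounded away from $0$ the inner integral is $O(\|\psi\|_\infty\,\vol N(\supp\psi))$ by Corollary \ref{suppcom} and vanishes for $y$ large, so that part of the $y$-integral is $O(1)$; the main term and its error come from the regime $y\to0$, governed by horospherical equidistribution. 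When $\|\cdot\|$ is $g_0Kg_0^{-1}$-invariant one may take $\psi$ $K$-invariant, $\|w_0kg_0^{-1}\|=\|v_0\|$, so the $k$-integral trivializes and Theorem \ref{ma1} gives $\int_{(\Gamma_0\cap N)\backslash N}\psi(\Gamma_0\,n a_y)\,dn=c_{\phi_0}\langle\psi,\phi_0\rangle\,y^{2-\delta_\Gamma}+O(\mathcal S_5(\psi)\,y^{2-\delta_\Gamma+\e})$ for small $y$; since $\int_{\|v_0\|/T}^{\infty}y^{-1-\delta_\Gamma}\,dy\cdot(\text{relevant range})=\delta_\Gamma^{-1}\|v_0\|^{-\delta_\Gamma}T^{\delta_\Gamma}+O(1)$, one obtains $\langle F_T,\psi\rangle=\delta_\Gamma^{-1}c_{\phi_0}\langle\psi,\phi_0\rangle\,\|v_0\|^{-\delta_\Gamma}T^{\delta_\Gamma}+O(\mathcal S_5(\psi)\,T^{\delta_\Gamma-\e})+O(\|\psi\|_\infty)$. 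As $\psi$ concentrates at $\Gamma_0 e$, $\langle\psi,\phi_0\rangle\to\phi_0(e)$, and $\phi_0(e)=\nu_o(\Lambda(\Gamma))=\|\nu_o\|$ by evaluating the Poisson representation of $\phi_0$ at $o$; this is exactly the constant in the second formula, and in the invariant case it also equals $\delta_\Gamma^{-1}c_{\phi_0}\bigl(\int_K\|v_0(g_0k^{-1}g_0^{-1})\|^{-\delta_\Gamma}d\nu_o(k)\bigr)$, so the two formulas agree. The first (general) formula follows from the same unfolding with Theorem \ref{ft} in place of Theorem \ref{ma1}: the inner integral then contributes $c_{\phi_0}y^{2-\delta_\Gamma}\,\hat\mu(\psi(\,\cdot\,k))$, and integrating the resulting $K$-average of $\hat\mu$-values against the Haar $dk$ from the $KAN$-decomposition assembles into $\int_K\|v_0(g_0k^{-1}g_0^{-1})\|^{-\delta_\Gamma}\,d\nu_o(k)$, since the $K$-marginal of $\hat\mu$ is the Patterson--Sullivan measure $\nu_o$ and $\hat\mu(\psi)=\langle\psi,\phi_0\rangle$ for $K$-invariant $\psi$.

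To effectivize, take $\psi=\psi_\rho$ supported in the $\rho$-ball about $\Gamma_0 e$ with $\mathcal S_m(\psi_\rho)\ll\rho^{-O(1)}$, $\|\psi_\rho\|_\infty\ll\rho^{-O(1)}$, and $\langle\psi_\rho,\phi_0\rangle=\phi_0(e)+O(\rho)$. A standard thickening (wave-front) comparison gives $F_{T/(1+c\rho)}(\Gamma_0 e)\le\langle F_T,\psi_\rho\rangle\le F_{T/(1-c\rho)}(\Gamma_0 e)$, hence it suffices to estimate $\langle F_{T(1\pm c\rho)},\psi_\rho\rangle$ by the previous paragraph. Since the implied constant in Theorem \ref{ma1} depends only on $\mathcal S_5(\psi_\rho)$, on $\vol N(\supp\psi_\rho)=O(1)$, and on a fixed open set properly covering $\Lambda_N(\Gamma)$, the net relative error is $O(\rho)+O(\rho^{-O(1)}T^{-\e})$ with $\e$ proportional to the spectral gap $\delta_\Gamma-s_\Gamma$ (the $O(\rho)$ also absorbing $(1\pm c\rho)^{\delta_\Gamma}-1$ and the tail $O(\|\psi_\rho\|_\infty)/T^{\delta_\Gamma}$). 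Choosing $\rho=T^{-\e/O(1)}$ balances the two errors and yields $\#\{v\in v_0\Gamma:\|v\|<T\}=\delta_\Gamma^{-1}\phi_0(e)\,c_{\phi_0}\,\|v_0\|^{-\delta_\Gamma}\,T^{\delta_\Gamma}(1+O(T^{-\e'}))$ with $\e'>0$ depending only on $\delta_\Gamma-s_\Gamma$ and the implied constant depending only on $\Lambda_N(\Gamma)$.

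The genuinely analytic input — effective equidistribution of expanding closed horospheres, with power-saving error controlled by the spectral gap — is Theorem \ref{ma1}, already in hand, so what remains is essentially bookkeeping; the one delicate point is the balancing in the effectivization step. The error of Theorem \ref{ma1} carries an implied constant growing like $\mathcal S_5(\psi_\rho)\sim\rho^{-O(1)}$, and this blow-up must be beaten by the power $T^{-\e}$ gained from the spectral gap precisely at the dominant scale $y\asymp1/T$ in the $y$-integral; checking that the two competing errors can be pushed strictly below $1$ is what forces $\e'$ to degrade with $\delta_\Gamma-s_\Gamma$. A secondary subtlety, relevant only to the first formula, is verifying that the $K$-marginal of the Burger--Roblin measure $\hat\mu$ is exactly $\nu_o$ in the normalization used here — so that the constant emerges as $\int_K\|\cdot\|^{-\delta_\Gamma}d\nu_o$ rather than some other average — for which the identity $\hat\mu(\psi)=\langle\psi,\phi_0\rangle$ on $K$-invariant functions, established just before the theorem, is the key.
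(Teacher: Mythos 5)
Your proposal is correct and follows essentially the same route as the paper: conjugating by $g_0$ so that the stabilizer of $v_0$ becomes $NM$ (hence $\G\cap N$ in $\G$ by Lemma \ref{stab}), unfolding $\la F_T,\psi\ra$ along $G=NAK$ exactly as in Lemma \ref{ftweak}, invoking Theorem \ref{ft} (general norm) and Theorem \ref{ma1} (for $g_0Kg_0^{-1}$-invariant norms) as in Corollary \ref{smc}, and concluding with the same sandwich $\langle F_{(1-\e)T},\Phi_\e\rangle\le F_T(e)\le\langle F_{(1+\e)T},\Phi_\e\rangle$ and the same balancing of the bump width against the power saving from the spectral gap. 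The only cosmetic differences are notational (your $f_T$ via Iwasawa coordinates versus the paper's $\chi_{B_T}$, and your identification of the $K$-marginal of $\hat\mu$ with $\nu_o$ in place of the paper's explicit computation of $\hat\mu(\xi_{v_0}*\Phi_\e)$), and both are sound.
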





By replacing $\G$ with $\G_0=g_0^{-1}\G g_0$, we may assume henceforth
 that $g_0=e$, and thus
the stabilizer of $v_0$ in $G$ is $NM$.
By Lemma \ref{stab}, the stabilizer of $v_0$ in $\G$ is simply $\G\cap N$.

Note that $N^-M$ fixes $v_0^t$, and that the highest weight $\beta$
 of the (irreducible) representation
of $\iota$ is given by $\beta(a_y)=y^{-1}$.
It follows  that $\iota(a_y) v_0^t =y^{-1}v_0^t$, and hence  $v_0a_y=y^{-1} v_0$.

Set $$B_T:=\{v\in v_0 G: \|v\|<T\}.$$
Define the following function
on $\G\ba G$:
$$
F_T(g):=\sum_{\g\in(N\cap \G)\ba \G}\chi_{B_T} (v_0\g g).
$$

Since  $v_0\G$ is discrete, $F_T(g)$ is well-defined and
$$F_T(e)=
\# \{v\in v_0\G: \|v\|<T\}  .$$

We use the notation: for $\psi\in C_c(\G\ba G)$,
$$\psi^N(a_y):=\int_{(N\cap \G)\ba N} \psi(na_y) \, dn .$$

\begin{Lem}\label{ftweak}
For any $\psi\in C_c(\G\ba G)$ and $T>0$,
$$
\la F_T, \psi \ra = \int_{k\in M\ba K} \int_{y> T^{-1} \|v_0 k\|}
  \psi_k ^N(a_y )  y^{-3}  dydk $$
where $\psi_k(g)= \int_{m\in M} \psi ( g mk )dm$. \end{Lem}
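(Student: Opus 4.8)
The plan is to unfold the definition of $F_T$ against $\psi$ using the standard $\Gamma\backslash G \to (N\cap\Gamma)\backslash G$ unfolding, and then to coordinatize $G$ via a decomposition adapted to the orbit, namely $G = NA(M\backslash K)$ with $M$ absorbed into the $K$-average. First I would write
$$
\la F_T, \psi\ra = \int_{\Gamma\backslash G} \sum_{\gamma\in (N\cap\Gamma)\backslash\Gamma} \chi_{B_T}(v_0\gamma g)\,\psi(g)\,dg
= \int_{(N\cap\Gamma)\backslash G} \chi_{B_T}(v_0 g)\,\psi(g)\,dg,
$$
which is valid because $\psi$ descends to $\Gamma\backslash G$ while $\chi_{B_T}(v_0 g)$ is left $(N\cap\Gamma)$-invariant (as $N\cap\Gamma$ stabilizes $v_0$), and $F_T$ is locally finite since $v_0\Gamma$ is discrete. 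The next step is to choose Haar measure coordinates on $(N\cap\Gamma)\backslash G$: using the Iwasawa-type decomposition $G = NAK$ with $M$ normalizing $N$ and centralizing $A$, one has $dg = y^{-3}\,dn\,dy\,dk$ for $g = n\,a_y\,k$, and pushing $M$ to the right this becomes an integral over $(N\cap\Gamma)\backslash N$, over $A$, and over $M\backslash K$, with the extra $M$-integration accounting for the fiber; this is exactly where the definition $\psi_k(g) = \int_M \psi(gmk)\,dm$ enters.

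Concretely, for $g = n a_y k$ I would use that $v_0 g = v_0 n a_y k = y^{-1}(v_0 k)$, since $v_0 n = v_0$ (as $N$ stabilizes $v_0$, the stabilizer of $v_0$ in $G$ being $NM$) and $v_0 a_y = y^{-1} v_0$ (from the highest-weight computation $\iota(a_y)v_0^t = y^{-1}v_0^t$ already recorded in the text). Moreover $v_0 m = v_0$ for $m\in M$ as well (since $M\subset NM$). Hence $\chi_{B_T}(v_0 g) = \chi_{B_T}(y^{-1} v_0 k) = 1$ precisely when $y^{-1}\|v_0 k\| < T$, i.e. $y > T^{-1}\|v_0 k\|$; note this condition is independent of $n$ and of the $M$-part of $k$, so it descends to a condition on $k \in M\backslash K$. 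Plugging in and carrying out the $dn$ and (inner) $M$-integrations turns $\int_{(N\cap\Gamma)\backslash N}\psi(n a_y m k)\,dn$ into $\psi_k^N(a_y)$ after the $M$-average, giving
$$
\la F_T,\psi\ra = \int_{M\backslash K}\int_{y > T^{-1}\|v_0 k\|} \psi_k^N(a_y)\,y^{-3}\,dy\,dk,
$$
as claimed.

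The main technical point to handle carefully — and the step I expect to require the most care rather than being purely routine — is the bookkeeping of the decomposition $G = NA\,(M\backslash K)$ and the correct normalization of the Haar measure: one must verify that with $\psi_k(g) = \int_M \psi(gmk)\,dm$ the measures match up so that the $M$-fiber is integrated exactly once (not zero or two times), that $dg = y^{-3}\,dn\,dy\,dk$ is consistent with the normalizations of $dn$ (Lebesgue on $N\cong\mathbb C$) and $dk$ (probability on $K$) fixed earlier, and that the $(N\cap\Gamma)$-quotient on the $N$-factor is compatible with the left-invariance used in the unfolding. Everything else — the substitution $v_0 g = y^{-1}(v_0 k)$, the resulting cutoff $y > T^{-1}\|v_0 k\|$, and the identification of the inner integral with $\psi_k^N(a_y)$ — is then a direct computation. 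No convergence subtlety arises beyond local finiteness of $F_T$, which is guaranteed by discreteness of $v_0\Gamma$, so Fubini is legitimate throughout.
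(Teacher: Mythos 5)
Your proposal is correct and follows essentially the same route as the paper: unfold the $\Gamma$-sum to an integral over $(N\cap\Gamma)\backslash G$, write $g=n_x a_y m k$ in $NAMK$ coordinates with $d\mu = y^{-3}dx\,dy\,dk$, use $v_0 n a_y m k = y^{-1}v_0 k$ to convert the cutoff into $y>T^{-1}\|v_0k\|$, and absorb the $M$-average into $\psi_k$. Nothing is missing.
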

\begin{proof}
Observe:
\begin{align*}
\la F_T, \psi \ra &=
\int_{\G\ba G} \sum_{\g\in (N\cap \G)\ba \G } \chi_{B_T}(v_0\g g)
\psi (g) dg
\\
\nonumber
&=
\int_{g \in (N\cap \G)\ba G} \chi_{B_T}(v_0g)
\psi (g) dg
\\
\nonumber
&=\int_{k\in M\ba K} \int_{y:\|v_0a_y k\|<T}
 \int_{n_xm \in  (N\cap \G)\ba NM} \psi(n_x m a_y k) y^{-3}  \,dx\,  dm\, dy \, dk
\\
\nonumber
&=\int_{k\in M\ba K} \int_{y> T^{-1} \|v_0 k\|}
 \left(\int_{n_x \in  (N\cap \G)\ba N} \int_{m\in M} \psi (n_xa_y mk )\, dm \, dx \right)
y^{-3}  dy\, dk
\\
\nonumber
&=\int_{k\in M\ba K} \int_{y> T^{-1} \|v_0 k\|}
 \left(\int_{n_x \in  (N\cap \G)\ba N}  \psi_k (n_xa_y )  dx \right)
y^{-3}  dy\, dk
\\
\nonumber &=\int_{k\in M\ba K} \int_{y> T^{-1} \|v_0 k\|}
  \psi_k^N (a_y ) \,
y^{-3}  dy\, dk
.
\end{align*}

\end{proof}

Define a function $\xi_{v_0}:K\to \br$
by $$\xi_{v_0}(k)=\|v_0 k\|^{-\delta_\G}.$$
For $\psi\in C_c(\G\ba G)$,
the convolution $\xi_{v_0}*\psi$ is then given by
$$\xi_{v_0}*\psi (g)=\int_{k\in K}\psi(gk) \|v_0k\|^{-\delta_\G}
dk .$$

\begin{Cor} \label{smc}
  For any $\psi\in C_c(\G\ba G)$, we have, as $T\to \infty$,
$$\la F_T, \psi \ra \sim  \delta_\G^{-1} \cdot c_{\phi_0}
 \cdot T^{\delta_{\G}} \cdot \hat\mu(\xi_{v_0}*\psi).
$$
 For $\psi\in C_c^\infty(\G\ba G)^K$ and $\|\cdot \|$ $K$-invariant,
 $$\la F_T, \psi \ra =  \langle  \psi , \phi_0 \rangle \cdot
\delta_\G^{-1} \cdot c_{\phi_0}
 \cdot T^{\delta_{\G}} \cdot  \|v_0 \|^{-\delta_{\G}}  (1+O(T^{-\frac{2}{7} (\delta_\G -s_\G)}))
$$
where $s_\G$ is as in Def. \ref{sgamma}
and the implied constant depends only on the Sobolev norm
of $\psi$, $\Lambda_N(\G)$ and $N(\text{supp}(\psi))$.
\end{Cor}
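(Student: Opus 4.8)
The plan is to reduce everything to the horospherical integrals $\psi_k^N(a_y)$ via the unfolding formula of Lemma~\ref{ftweak}, and then feed in the equidistribution of expanding closed horospheres: Theorem~\ref{ft} for the qualitative first statement and the effective, $K$-invariant refinement Theorem~\ref{ma1} for the second. The first point to note is that for each $k\in K$ the slice $\psi_k(g)=\int_M\psi(gmk)\,dm$ is right $M$-invariant, hence descends to a function on $\op{T}^1(\G\ba\bH^3)=\G\ba G/M$ (and is $C_c^\infty$ if $\psi$ is); so Theorem~\ref{ft} applies to it and yields $\psi_k^N(a_y)\sim c_{\phi_0}\,\hat\mu(\psi_k)\,y^{2-\delta_\G}$ as $y\to0$, with the analogous effective expansion from Theorem~\ref{ma1} whenever $\psi$, hence $\psi_k$, is $K$-invariant.

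For the first assertion, in each $k$-slice $\int_{y>T^{-1}\|v_0k\|}\psi_k^N(a_y)\,y^{-3}\,dy$ I would substitute $y=T^{-1}\|v_0k\|\,u$ and let $T\to\infty$. Since then $T^{-1}\|v_0k\|\,u\to0$ for fixed $u$ and $\int_1^\infty u^{-1-\delta_\G}\,du=\delta_\G^{-1}$, one gets $T^{-\delta_\G}\int_{y>T^{-1}\|v_0k\|}\psi_k^N(a_y)\,y^{-3}\,dy\to\delta_\G^{-1}c_{\phi_0}\,\hat\mu(\psi_k)\,\|v_0k\|^{-\delta_\G}$ for each $k$. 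To integrate this convergence over the compact group $M\ba K$ one dominates: the family $\{\psi_k:k\in K\}$ is bounded with uniformly bounded support, so the $|\psi_k|$ are all majorized by a single nonnegative $M$-invariant $\tilde\psi\in C_c(\G\ba G)$, and Theorem~\ref{ft} (applied to $\tilde\psi$) gives $\tilde\psi^N(a_y)\ll y^{2-\delta_\G}$ for small $y$ while the range of $y$ away from $0$ contributes only $O(1)$; dominated convergence on $M\ba K$ then produces $\la F_T,\psi\ra\sim\delta_\G^{-1}c_{\phi_0}\,T^{\delta_\G}\int_{M\ba K}\|v_0k\|^{-\delta_\G}\hat\mu(\psi_k)\,dk$. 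Finally I would identify this integral with $\hat\mu(\xi_{v_0}*\psi)$: because $v_0m=v_0$ for $m\in M$, the weight $\|v_0k\|^{-\delta_\G}$ is $M$-invariant, so $\|v_0k\|^{-\delta_\G}\psi_k(g)=\int_M\|v_0mk\|^{-\delta_\G}\psi(gmk)\,dm$, and integrating over $M\ba K$ reassembles the full $K$-integral $\int_K\psi(gk)\|v_0k\|^{-\delta_\G}\,dk=(\xi_{v_0}*\psi)(g)$; applying $\hat\mu$, which is legitimate since $\xi_{v_0}*\psi$ is again right $M$-invariant, completes the first part.

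For the second assertion I would specialize: right $K$-invariance of $\psi$ forces $\psi_k=\psi$ for every $k$, and $K$-invariance of $\|\cdot\|$ forces $\|v_0k\|=\|v_0\|$, so Lemma~\ref{ftweak} collapses to $\la F_T,\psi\ra=\int_{T^{-1}\|v_0\|}^{\infty}\psi^N(a_y)\,y^{-3}\,dy$. Split this at a small fixed $y_0$: on $(y_0,\infty)$ the contribution is $O(1)$ since $\psi^N(a_y)$ is bounded and $\int_{y_0}^\infty y^{-3}\,dy<\infty$; on $(T^{-1}\|v_0\|,y_0)$ insert the effective expansion $\psi^N(a_y)=\la\psi,\phi_0\ra c_{\phi_0}\,y^{2-\delta_\G}(1+O(y^{\frac27(\delta_\G-s_\G)}))$ of Theorem~\ref{ma1}. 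The main term integrates to $\la\psi,\phi_0\ra c_{\phi_0}\delta_\G^{-1}(T^{\delta_\G}\|v_0\|^{-\delta_\G}-y_0^{-\delta_\G})$ and the error to $O(T^{\delta_\G-\frac27(\delta_\G-s_\G)})$ (using $\frac27(\delta_\G-s_\G)<\delta_\G$); absorbing the $O(1)$ and $O(y_0^{-\delta_\G})$ terms into the relative error yields $\la F_T,\psi\ra=\la\psi,\phi_0\ra c_{\phi_0}\delta_\G^{-1}\|v_0\|^{-\delta_\G}T^{\delta_\G}\bigl(1+O(T^{-\frac27(\delta_\G-s_\G)})\bigr)$, the implied constant being controlled, via Theorem~\ref{ma1}, by $\mathcal S_5(\psi)$ and the volumes of $N(\supp\psi)$ and of an open set properly covering $\Lambda_N(\G)$.

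The step I expect to be the main obstacle is the passage to the limit under the $dk$-integral in the first part: Theorem~\ref{ft} is a statement about one fixed test function, so one must check that the scaling limit $T^{-\delta_\G}\int_{y>T^{-1}\|v_0k\|}\psi_k^N(a_y)\,y^{-3}\,dy\to\delta_\G^{-1}c_{\phi_0}\hat\mu(\psi_k)\|v_0k\|^{-\delta_\G}$ is accompanied by a $k$-uniform integrable majorant. This is exactly what the uniform boundedness and uniformly bounded supports of the compact family $\{\psi_k\}$, together with a single fixed majorant $\tilde\psi$, provide, but it is the only place where care beyond routine bookkeeping is needed. (Note also that the effectivity in the second part genuinely relies on Theorem~\ref{ma1} being available only for $K$-invariant test functions, which is why that part is restricted to $\psi\in C_c^\infty(\G\ba G)^K$.)
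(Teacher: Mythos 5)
Your proposal is correct and follows essentially the same route as the paper: unfold $\la F_T,\psi\ra$ via Lemma~\ref{ftweak}, apply Theorem~\ref{ft} (resp.\ the effective Theorem~\ref{ma1} in the $K$-invariant case) to each slice $\psi_k$, evaluate the $y$-integral to produce the factor $\delta_\G^{-1}T^{\delta_\G}\|v_0k\|^{-\delta_\G}$, and reassemble the $M\ba K$-integral into $\hat\mu(\xi_{v_0}*\psi)$. The only difference is that you spell out the dominated-convergence justification for interchanging the $T\to\infty$ limit with the $dk$-integral and the $O(1)$ bound for the range of $y$ bounded away from $0$, details the paper leaves implicit.
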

\begin{proof}
Note that for any $\psi\in C_c(\G\ba G)$ and $k\in K$,
  the function $\psi_k$, defined in Lemma \ref{ftweak}, is $M$-invariant.
Applying Theorem \ref{ft} to $\psi_k$, we obtain that as $y\to 0$,
\begin{equation}\label{errort}
\int_{n_x \in  (N\cap \G)\ba N}  \psi_k (n_xa_y )  dx \sim c_{\phi_0}\cdot
 y^{2-\delta_{\G}}
\cdot \hat\mu(\psi_k) .
 \end{equation}

Hence by applying Lemma \ref{ftweak}, inserting the definition of
$\psi_k$, and evaluating the $y$-integral, we get
\begin{align*} \la F_T, \psi\rangle &\sim c_{\phi_0}\cdot
\int_{M\ba K}\int_{y>T^{-1}\|v_0k\|}y^{-1-\delta_{\G}} \hat \mu(\psi_k)
\; dy\, dk\\
 &= \delta_\G^{-1}\cdot c_{\phi_0}\cdot
T^{\delta_{\G}} \int_{M\ba K}\int_M \hat \mu( \psi (g mk) )\cdot \|v_0 k\|^{-\delta_{\G}} \; dm dk\\
&=\delta_\G^{-1}\cdot
c_{\phi_0}\cdot
T^{\delta_{\G}}\cdot \hat\mu ( \xi_{v_0}* \psi) ,
\end{align*}
proving the first claim.

Now suppose that
both $\psi$ and the norm $\|\cdot \|$
are $K$-invariant.

As $\psi_k=\psi$, by
 Theorem \ref{ma1},
 we can
replace \eqref{errort}
by an asymptotic formula with power savings error term:
$$\int_{n_x \in  (N\cap \G)\ba N}  \psi (n_xa_y )  dx = c_{\phi_0} y^{2-\delta_{\G}}\la \psi, \phi_0  \ra (1+O(y^{\frac{2}{7} (\delta_\G -s_\G)}))$$
 and the implied constant depends on the Sobolev norm
of $\psi$ and  $\Lambda_N(\G)$.

On the other hand,
 $$\xi_{v_0}* \psi=\|v_0\|^{-\delta_\G}\cdot \psi , $$
 and hence
$$\hat\mu (\xi_{v_0}* \psi)
= \|v_0 \|^{-\delta_{\G}} \cdot\la \psi, \phi_0\ra .$$

Therefore
\begin{align*} \la F_T, \psi\rangle &=
c_{\phi_0}
\int_{y>T^{-1}\|v_0\|} y^{-1-\delta_{\G}}\la \psi, \phi_0  \ra
 (1+O(y^{\frac{2}{7} (\delta_\G -s_\G)})) \; dy \\
 &= \delta_\G^{-1}\cdot
 c_{\phi_0}
 \cdot
T^{\delta_{\G}}\cdot
\|v_0 \|^{-\delta_{\G}} (1+O(T^{-\frac{2}{7} (\delta_\G -s_\G)}))
\end{align*}
 where the implied constant depends only on the Sobolev norm
of $\psi$ and the set $\Lambda_N(\G)$.
\end{proof}

\begin{Thm}\label{maincount}
As $T\to \infty$,
$$F_T(e)\sim  \delta_\G^{-1} \cdot c_{\phi_0} \cdot
\left( \int_{K} \|v_0 k^{-1}\|^{-\delta_{\G}} \; d\nu_o(k) \right) \cdot   T ^{\delta_{\G}} .$$
 If $\|\cdot \|$ is $K$-invariant, then for some $\e>0$
 (depending only on the spectral gap $\delta_\G -s_\G$),
$$F_T(e)=  \delta_\G^{-1} \cdot  \phi_0(e) \cdot c_{\phi_0} \cdot
\|v_0 \|^{-\delta_{\G}}  \cdot   T ^{\delta_{\G}} (1+O(T^{-\e})) .$$
where  the implied constant depends only on $\Lambda_N(\G)$.
\end{Thm}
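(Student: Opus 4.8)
\textbf{Proof proposal for Theorem~\ref{maincount}.} The plan is to extract the pointwise count $F_T(e)$ from the smoothed quantities $\langle F_T,\psi\rangle$, whose asymptotics are supplied by Corollary~\ref{smc}, via a standard monotonicity-and-sandwich argument. Since $B_T\subseteq B_{T'}$ whenever $T\le T'$, the function $g\mapsto F_T(g)$ is nondecreasing in $T$. Moreover, for $g\in U_\epsilon$ the operator norms of $\iota(g)^{\pm1}$ on $\br^4$ are $1+O(\epsilon)$, so $(1-c\epsilon)\|v_0\gamma\|\le\|v_0\gamma g\|\le(1+c\epsilon)\|v_0\gamma\|$ for every $\gamma\in\G$, whence
$$F_{T/(1+c\epsilon)}(e)\ \le\ F_T(g)\ \le\ F_{T/(1-c\epsilon)}(e)\qquad\text{for all }g\in U_\epsilon.$$
First I would fix a nonnegative $\psi_\epsilon\in C^\infty_c(\G\backslash G)$ supported in $U_\epsilon$ with $\int_{\G\backslash G}\psi_\epsilon\,dg=1$; integrating the last display against $\psi_\epsilon$ and then rescaling $T$ gives
$$\langle F_{T(1-c\epsilon)},\psi_\epsilon\rangle\ \le\ F_T(e)\ \le\ \langle F_{T(1+c\epsilon)},\psi_\epsilon\rangle.$$
Applying the first part of Corollary~\ref{smc} to $\psi_\epsilon$ (with $\epsilon$ fixed) and letting $T\to\infty$ yields
$$(1-c\epsilon)^{\delta_\G}\,\delta_\G^{-1}c_{\phi_0}\,\hat\mu(\xi_{v_0}*\psi_\epsilon)\ \le\ \liminf_{T\to\infty}\frac{F_T(e)}{T^{\delta_\G}}\ \le\ \limsup_{T\to\infty}\frac{F_T(e)}{T^{\delta_\G}}\ \le\ (1+c\epsilon)^{\delta_\G}\,\delta_\G^{-1}c_{\phi_0}\,\hat\mu(\xi_{v_0}*\psi_\epsilon).$$

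It then remains to compute $\lim_{\epsilon\to0}\hat\mu(\xi_{v_0}*\psi_\epsilon)$. Unwinding the definitions of $\hat\mu$ and of the convolution $\xi_{v_0}*\psi_\epsilon$,
$$\hat\mu(\xi_{v_0}*\psi_\epsilon)=\int_{k\in K}\int_{a_yn_x\in AN}\int_{k'\in K}\psi_\epsilon(k\,a_yn_x\,k')\,\|v_0k'\|^{-\delta_\G}\,y^{\delta_\G-1}\,dk'\,dy\,dx\,d\nu_o(k).$$
For small $\epsilon$ the integrand is supported where $a_yn_x\in U_{O(\epsilon)}\cap AN$ (so $y^{\delta_\G-1}=1+O(\epsilon)$) and $k'\in U_{O(\epsilon)}k^{-1}$ (so $\|v_0k'\|^{-\delta_\G}=\|v_0k^{-1}\|^{-\delta_\G}(1+O(\epsilon))$); using $dg=y^{-1}\,dy\,dx\,dk'$ and the left $\G$-invariance of Haar measure, the inner triple integral tends to $\int_{\G\backslash G}\psi_\epsilon\,dg=1$ as $\epsilon\to0$. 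Hence $\hat\mu(\xi_{v_0}*\psi_\epsilon)\to\int_K\|v_0k^{-1}\|^{-\delta_\G}\,d\nu_o(k)$, and passing to the limit $\epsilon\to0$ in the displayed inequalities gives the first assertion. (As a check, when $\|\cdot\|$ is $K$-invariant this limit equals $\|v_0\|^{-\delta_\G}\nu_o(\Lambda(\G))=\phi_0(e)\,\|v_0\|^{-\delta_\G}$, consistent with the second assertion.)

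For the effective statement I would assume $\|\cdot\|$ is invariant under $K$, so that $F_T(gk)=F_T(g)$ and $F_T$ descends to a function on $\G\backslash\bH^3$; one may then take $\psi_\epsilon\in C^\infty_c(\G\backslash G)^K$ nonnegative, supported in the $\epsilon$-neighbourhood $U_\epsilon K$ of the compact set $\G\backslash\G K$, with $\int\psi_\epsilon\,dg=1$, and the sandwich above goes through with $U_\epsilon$ replaced by $U_\epsilon K$. The second part of Corollary~\ref{smc} now gives
$$\langle F_{T'},\psi_\epsilon\rangle=\langle\psi_\epsilon,\phi_0\rangle\,\delta_\G^{-1}c_{\phi_0}\,(T')^{\delta_\G}\,\|v_0\|^{-\delta_\G}\bigl(1+O(\mathcal S_5(\psi_\epsilon)\,(T')^{-\frac27(\delta_\G-s_\G)})\bigr),$$
where $\langle\psi_\epsilon,\phi_0\rangle=\phi_0(e)+O(\epsilon)$ by continuity of $\phi_0$ near $\G\backslash\G K$, $\mathcal S_5(\psi_\epsilon)\ll\epsilon^{-p_0}$ for an absolute constant $p_0$, and $N(\supp\psi_\epsilon)$ remains inside a fixed bounded set by Proposition~\ref{mp} (so all implied constants are uniform in $\epsilon<\epsilon_0$). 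Feeding this into the sandwich,
$$F_T(e)=\phi_0(e)\,\delta_\G^{-1}c_{\phi_0}\,\|v_0\|^{-\delta_\G}\,T^{\delta_\G}\bigl(1+O(\epsilon)+O(\epsilon^{-p_0}T^{-\frac27(\delta_\G-s_\G)})\bigr);$$
choosing $\epsilon=T^{-\beta}$ with $0<\beta<\tfrac{2(\delta_\G-s_\G)}{7(p_0+1)}$ balances the two errors and yields the asserted power saving $O(T^{-\epsilon'})$, with $\epsilon'>0$ depending only on $\delta_\G-s_\G$ and implied constant depending only on $\Lambda_N(\G)$.

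The hard part will be the second paragraph: making rigorous the limit of $\hat\mu(\xi_{v_0}*\psi_\epsilon)$ requires a careful description of the Burger--Roblin measure in a neighbourhood of the compact $K$-orbit (it disintegrates as $\nu_o$ along $K$ times a Haar-like factor along $AN$), together with dominated-convergence-type control that is uniform in $\epsilon$; Proposition~\ref{mp} and the uniform bounds on the Sobolev norms of the $\psi_\epsilon$ are precisely what make the $\epsilon\to0$ (and, in the effective case, $\epsilon=\epsilon(T)$) passage legitimate. The monotonicity/Lipschitz sandwich and the rescaling bookkeeping are routine.
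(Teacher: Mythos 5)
Your proposal is correct and follows essentially the same route as the paper: a sandwich $\langle F_{(1-\e)T},\Phi_\e\rangle\le F_T(e)\le\langle F_{(1+\e)T},\Phi_\e\rangle$ via an $\e$-approximate identity, the asymptotics of Corollary \ref{smc}, the evaluation $\hat\mu(\xi_{v_0}*\Phi_\e)=\int_K\|v_0k^{-1}\|^{-\delta_\G}\,d\nu_o(k)+O(\e)$ (which the paper makes rigorous exactly as you anticipate, by writing the integrand as a Lipschitz function $R_{v_0}(g)=y^{\delta_\G}\xi_{v_0}(k_0)$ in $KAN$ coordinates and translating by $k^{-1}$ using left invariance of Haar measure), and, in the $K$-invariant case, balancing $O(\e)$ against $O(\e^{-q}T^{-\frac27(\delta_\G-s_\G)})$ by taking $\e$ a suitable negative power of $T$.
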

\begin{proof} For all small $\e>0$,
we choose a symmetric $\e$- neighborhood $U_{\e}$ of $e$ in $G$, which injects to $\G\ba G$, such that for all $T\gg 1$ and all $0< \e\ll 1$,
$$ B_T U_\e \subset B_{(1+\e ) T}\quad\text{and}\quad B_{(1-\e)T}
\subset \cap_{u\in U_\e} B_T u
.$$

For $\e>0$, let $\phi_\e\in C_c^\infty(G)$ denote a non-negative function
 supported on $U_\e$ and
 with $\int_{G} \phi_\e\, dg=1$.
We lift $\phi_\e$ to $\G\ba G$ by
averaging over $\G$:
$$\Phi_\e(\G g)=\sum_{\gamma\in \G} \phi_\e(\gamma g) .$$


Then
\begin{equation}\label{ineq1} \langle F_{(1-\e)T}, \Phi_\e\rangle\le
 F_T(e)\le \langle F_{(1+\e) T}, \Phi_\e\rangle
.\end{equation}

Note that
\begin{equation}\label{ftfinal}
\langle F_{(1\pm \e) T}, \Phi_\e\rangle
\sim \delta_\G^{-1}\cdot
c_{\phi_0}
 \cdot  (T(1\pm \e))^{\delta_{\G}} \cdot\hat\mu(\xi_{v_0}* \Phi_\e)
.\end{equation}

Considering the function $R_{v_0}:G\to \br$ given by
$$R_{v_0}(g):= y^{\delta_\G}
\xi_{v_0}(k_0) $$
for $g=a_yn_xk_0\in ANK$,
we have \begin{align*}
\hat\mu(\xi_{v_0}* \Phi_\e)
&=\int_{g\in U_\e} \int_{k_0\in K} \phi_\e(gk_0)\xi_{v_0}(k_0) d(k_0)
d\hat\mu(g)\\
&=\int_{ka_yn_x\in KAN\cap U_\e} \int_{k_0\in K} \phi_\e(ka_yn_x k_0)\xi_{v_0}(k_0) y^{\delta -1} d(k_0)dydx
d\nu_o(k) \\
&=\int_{k\in K}\int_{g\in G} \phi_\e(kg) R(g) dg d\nu_o(k)
\\ &=\int_{g\in U_\e}\phi_\e(g) \int_{k\in K}R(k^{-1} g) d\nu_o(k) dg
\\
&= \int_{k\in K}R(k^{-1})d\nu_o(k) +O(\e) \\
& =
\int_{k\in K/M}\|v_0 k^{-1}\|^{-\delta_\G} d\nu_o(k) + O(\e ).
\end{align*}
 as $\int_G \phi_\e \; dg =1$, where  the implied constant depends only
 on the Lipschitz constant for $R$.

As $\e>0$ is arbitrary, we deduce that
$$F_T(e)\sim  \delta_\G^{-1} \cdot c_{\phi_0} \cdot
\int_{k\in K}\|v_0 k^{-1}\|^{-\delta_\G} d\nu_o(k)
\cdot   T ^{\delta_{\G}} .$$

If $\|\cdot \|$ is $K$-invariant, we may take both $U_\e$ and $\phi_\e$ to be $K$-invariant. Hence by Corollary \ref{smc},
we may replace \eqref{ftfinal} by
\begin{equation*}
\langle F_{(1\pm \e) T}, \Phi_\e\rangle
= \delta_\G^{-1}\cdot c_{\phi_0}\cdot  \|v_0 \|^{-\delta_{\G}}  \cdot  (T(1\pm \e))^{\delta_{\G}} \cdot\la \phi_0, \Phi_\e \ra\cdot  (1+O(T^{-\frac{2}{7}(\delta_\G-s_\G)}))
\end{equation*}
where the implied constant depends on $\mathcal S_{5} (\Phi_\e)=\mathcal S_{5}(\phi_\e)$, and the sets $\Lambda_N(\G)$ and
 $N(U_\e):=\{[n]\in (N\cap \G)\ba N:\G n A\cap U_\e \ne\emptyset\}$.


Since
\begin{align*}
\langle \phi_0, \Phi_\e\rangle& =\phi_0(e) +  O(\e),
\end{align*}
 we have \begin{align*}
\langle F_{(1\pm \e) T}, \Phi_\e\rangle
&= \delta_\G^{-1}\cdot c_{\phi_0} \cdot \|v_0 \|^{-\delta_{\G}}  \cdot  T^{\delta_{\G}} \cdot \phi_0 (e)
+ O(\e T^{\delta_\G}) + O(\e^{-q} T^{\delta_\G -\frac{2}{7}(\delta_\G-s_\G)})
\end{align*}
for some $q>0$ depending on $\mathcal S_{5 }(\phi_\e)$.
Therefore by setting $\e^{1+q} =T^{ -\frac{2}{7}(\delta_\G-s_\G)}$,
$$F_T(e) =
\delta_\G^{-1}\cdot c_{\phi_0}\cdot \|v_0 \|^{-\delta_{\G}}  \cdot  T^{\delta_{\G}} \cdot \phi_0 (e)\cdot
(1+ O( T^{-\e'})) $$
 for $\e'=\frac{2}{7(1+q)} (\delta_\G -s_\G)$, where the implied constant depends only on $\Lambda_N(\G)$.

\end{proof}

\renewcommand{\cA}{\mathsf A}

\section{The Selberg sieve and circles of prime curvature}

\subsection{Selberg's sieve} We first recall the Selberg upper bound sieve. Let $\cA$ denote the finite sequence of real non-negative numbers $\mathsf A=\{a_n\}$, and let $P$ be a finite product of distinct primes. We are interested in an upper bound for the quantity
$$
S(\mathsf A,P):=\sum_{(n,P)=1}a_n.
$$
To estimate $S(\mathsf A,P)$ we need to know how $\cA$ is distributed along certain arithmetic progressions. For $d\nmid P$, define
$$
\cA_d:=\{a_n\in\cA:n\equiv0(d)\}
$$
and set
$
|\cA_d|:=\sum_{n\equiv0(d)}a_n.
$
We record
\begin{Thm}\cite[Theorem 6.4]{IwaniecKowalski} \label{sieveThm} Suppose that
there exists a finite set $S$ of primes
such that $P$ has no prime factor in $S$. Suppose that
 there exist $\X>1$ and a function $g$ on square-free integers  with $0<g(p)<1$ for $p|P$ and $g$ is multiplicative
outside $S$ (i.e., $g(d_1d_2)=g(d_1)g(d_2)$ if $d_1$ and $d_2$ are square-free integers with no factors in $S$)  such that for all $d\nmid P$ square-free,
\be\label{Adecomp}
|\cA_d|= g(d)\X+r_d(\cA).
\ee
Let $h$ be the multiplicative function on square-free integers (outside $S$) given by $h(p)={g(p)\over 1-g(p)}$.
Then for any $D>1$, we have that
$$
S(\cA,P) \le \X \left(\sum_{d<\sqrt D, d|P} h(d)\right)^{-1}
+
\sum_{d<D, d|P}\tau_3(d)\cdot  |r_d(\cA)|
,
$$
where $\tau_3(d)$ denotes the number of representations of $d$ as the product of three natural numbers.
\end{Thm}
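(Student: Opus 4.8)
This is the classical Selberg $\Lambda^2$ upper bound sieve, so the plan is to reproduce the standard argument, the only mild point being to carry along the finite exceptional set $S$ (which by hypothesis is disjoint from the primes dividing $P$, so that $g$ is in fact genuinely multiplicative on every divisor of $P$). First I would introduce the Selberg weights: put $z:=\sqrt D$ and choose real numbers $\lambda_d$, indexed by squarefree $d\mid P$ with $d<z$, subject to the single normalization $\lambda_1=1$, and set $\lambda_d=0$ otherwise. The starting point is the pointwise inequality
$$
\mathbf 1_{(n,P)=1}\ \le\ \Bigl(\sum_{d\mid(n,P)}\lambda_d\Bigr)^{2},
$$
valid because the right-hand side equals $\lambda_1^2=1$ when $(n,P)=1$ and is a nonnegative square otherwise. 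Multiplying by $a_n\ge0$, summing over $n$, expanding the square and interchanging the order of summation gives
$$
S(\cA,P)\ \le\ \sum_{d_1,d_2}\lambda_{d_1}\lambda_{d_2}\,\bigl|\cA_{[d_1,d_2]}\bigr|,
$$
where $d_1,d_2$ range over the support of the $\lambda$'s, so that $[d_1,d_2]\mid P$ is squarefree and $<D$.

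Next I would substitute the hypothesis \eqref{Adecomp}, namely $|\cA_d|=g(d)\X+r_d(\cA)$, which splits the bound into a main term $\X\cdot Q(\lambda)$, with $Q(\lambda):=\sum_{d_1,d_2}\lambda_{d_1}\lambda_{d_2}\,g([d_1,d_2])$, plus an error term $\sum_{d_1,d_2}\lambda_{d_1}\lambda_{d_2}\,r_{[d_1,d_2]}(\cA)$. The core of the proof is the minimization of the positive definite quadratic form $Q(\lambda)$ under the constraint $\lambda_1=1$. Using $g([d_1,d_2])=g(d_1)g(d_2)/g((d_1,d_2))$ together with the identity $1/g(d)=\sum_{e\mid d}1/h(e)$, equivalent to $h(p)=g(p)/(1-g(p))$, one performs the usual substitution diagonalizing $Q$, after which the constrained minimization follows by Cauchy--Schwarz. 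This produces
$$
\min_{\lambda_1=1}Q(\lambda)\ =\ \Bigl(\sum_{d<z,\ d\mid P}h(d)\Bigr)^{-1},
$$
attained at an explicit choice of weights for which moreover $|\lambda_d|\le1$ for every $d$; consequently the main term is bounded by $\X\bigl(\sum_{d<\sqrt D,\ d\mid P}h(d)\bigr)^{-1}$.

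Finally, for the error term I would use $|\lambda_d|\le1$ and group the terms by the value $d=[d_1,d_2]$, obtaining
$$
\Bigl|\sum_{d_1,d_2}\lambda_{d_1}\lambda_{d_2}\,r_{[d_1,d_2]}(\cA)\Bigr|\ \le\ \sum_{d<D,\ d\mid P}\#\{(d_1,d_2):[d_1,d_2]=d\}\cdot|r_d(\cA)|,
$$
and note that for squarefree $d$ each prime $p\mid d$ can be placed in $d_1$ alone, $d_2$ alone, or both, while a prime not dividing $d$ can divide neither; hence the count equals $3^{\omega(d)}=\tau_3(d)$. Adding the two estimates yields the asserted bound. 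The one step that genuinely requires work is the diagonalization and optimization of $Q(\lambda)$ (and the bound $|\lambda_d|\le1$); the rest is bookkeeping. Since this is precisely Theorem 6.4 of \cite{IwaniecKowalski}, I would in fact simply cite it, remarking only that allowing a finite exceptional set $S$ disjoint from the prime divisors of $P$ changes nothing in that proof.
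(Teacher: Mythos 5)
The paper gives no proof of this statement: it is quoted verbatim as Theorem 6.4 of Iwaniec--Kowalski, exactly as you propose to do in your final sentence. Your sketch of the standard Selberg $\Lambda^2$ argument (the pointwise square inequality, the diagonalization of the quadratic form via $h$, the bound $|\lambda_d|\le 1$, and the count $3^{\omega(d)}=\tau_3(d)$ of pairs with $[d_1,d_2]=d$) is accurate, so your treatment matches the paper's.
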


\subsection{Executing the sieve}
Recall from section \ref{red} that $Q$ denotes the Descartes quadratic form
and $\mathcal A$ denotes the Apollonian group in $\O_Q({\z})$.
 Fix a primitive integral Apollonian packing $\mathcal P$ with its root quadruple $\xi\in \z^4$.
Then $\mathcal P$ is either bounded or given by the one in Figure \ref{InfPack}.

To execute the sieve, it is important to work with a simply connected group. Hence
we will set $\G_\mathcal A$ to be the preimage of $\SO_{Q}(\br)^\circ \cap \mathcal A$ in the spin double cover $\operatorname{Spin}_{Q}(\br)$ of
$\SO _Q(\br)$.
Recall that $\alpha$ denotes the Hausdorff dimension of the residual set of the packing $\mathcal P$.
As shown in section \ref{reduction}, $\alpha$ is equal to $\delta_{\G_\mathcal A}$, the Hausdorff dimension
of the limit set of $\G_{\mathcal A}$.
As
$\operatorname{Spin}_{Q}(\br)$ is isomorphic to $\SL_2(\c)$,
we have a real linear representation $\iota: \SL_2(\c)\to \SO_Q(\br)$ which
factors through the quotient map $\SL_2(\c) \to \PSL_2(\c)$. By setting $\G$ to be the preimage
of $\G_{\mathcal A}$ under $\iota$, the counting results in the previous section
are all valid for $\G$.



As before, $B_T$ denotes the ball in the cone:
$$B_T:=\{v\in \br^4: Q(v^t)=0,\;\; \|v\|<T\}.$$
Since we are only looking for an upper bound, we may assume $\|\cdot \|$ is
$g_0 Kg_0^{-1}$-invariant, where $g_0\in \O_Q(\br)$
is such that the stabilizer of $g_0^t v_0 ^t$  is equal to $N^-M$.
We fix a small $\e>0$ and let $\phi_\e \in C_c^\infty(\bH^3)=C_c^\infty(G)^K$
 be as in the proof of Theorem \ref{maincount} with $G=\SL_2(\c)$ and $K=\SU(2)$.


\begin{Def}[Weight] {We define the smoothed weight
to each $\g\in\G$, $$
w_T(\g):=\int_{G/K} \chi_{B_T}(\xi  \g g)\phi_\e(g)
\, d\mu(g) .
$$} \end{Def}

Let $f$ be a primitive integral polynomial in $4$ variables.
Consider the sequence $\cA(T)=\{a_n(T)\}$ where
$$
a_n(T):= \sum_{\g\in\op{Stab}_\G(\xi)\ba \G\atop f(\xi\g)=n}w_T(\g).
$$

Clearly,
$$|\cA(T)|=\sum_n a_n(T)=\sum_{\g\in\op{Stab}_\G(\xi)\ba \G}w_T(\g) $$

and
\be\label{Ad}
|\cA_d(T)| =
\sum_{n\equiv0(d)} a_n(T)=
\sum_{\g\in\G\atop f(\xi\g)\equiv0(d)}w_T(\g).
\ee

For any subgroup $\G_0$ of $\G$ with $$\op{Stab}_\G(\xi)=\op{Stab}_{\G_0} (\xi) ,$$
we define
$$
F_T^{\G_0}(g) := \sum_{\g\in  \op{Stab}_\G(\xi)\ba  \G_0}\chi_{B_T}(\xi \g g),
$$
and for each element $\gamma_1\in \G$, we also define a function on $\G_0\ba \bH^3$ by
$$
\Phi_{\e,\g_1}^{\G_0}(g):=\sum_{\g\in\G_0} \phi_\e(\g_1^{-1}\gamma g)
$$
which is an $\e$-approximation to the identity about $[\g_1^{-1}]$ in $\G_0\bk\bH^3$.
The function $\Phi_{\e, e}^{\G_0} $ is simply the lift of $\phi_\e$ to $\G_0\ba \bH^3$ and will be
denoted by $\Phi_\e^{\G_0}$.

For $d\in \z$,
let $\G_\xi(d)$ be the subgroup of $\G$ which stabilizes $\xi \mod d$, i.e.,
$$
\G_{\xi}(d):=\{ \g\in\G: \xi\g\equiv \xi(d) \}.
$$
Note that
$$\op{Stab}_\G(\xi)=\op{Stab}_{\G_\xi (d)} (\xi) .$$

\begin{Lem}\label{ad}
\begin{enumerate}
 \item $ |\cA (T)|=\<F_T^{\G},\Phi_{\e}^{\G}\>
_{L^2(\G\bk\bH^3)};$
\item  for any integer $d$, $$ |\cA_d(T)|=\sum_{\g_1\in\G_\xi(d)\bk\G\atop f(\xi\g_1)\equiv0(d)}\<F_T^{\G_\xi(d)},\Phi_{\e,\g_1}^{\G_\xi(d)}\>
_{L^2(\G_\xi(d)\bk\bH^3)}.$$
\end{enumerate}
\end{Lem}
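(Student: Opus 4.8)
The plan is to prove both parts by a routine unfolding of definitions; since part (1) is the $d=1$ special case of part (2) (take $\G_\xi(1)=\G$ and the single coset representative $\g_1=e$, so $\Phi^{\G}_{\e,e}=\Phi^\G_\e$ and the congruence $f(\xi)\equiv0(1)$ is vacuous), it suffices to carry out the argument for (2). Starting from \eqref{Ad} in the form $|\cA_d(T)|=\sum_{\g\in\op{Stab}_\G(\xi)\bk\G,\ f(\xi\g)\equiv 0(d)}w_T(\g)$, I would insert the definition $w_T(\g)=\int_{G/K}\chi_{B_T}(\xi\g g)\phi_\e(g)\,d\mu(g)$ and interchange the sum and the integral; this is harmless since $\xi\G$ is discrete (Lemma \ref{reduction}), so for each fixed $g$ only finitely many cosets contribute $\xi\g g\in B_T$, and $\phi_\e$ has compact support.

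The one genuinely combinatorial step is to reorganize the $\g$-sum according to cosets of $\G_\xi(d)$. Writing $\G=\bigsqcup_{\g_1}\G_\xi(d)\g_1$ over representatives $\g_1\in\G_\xi(d)\bk\G$, and using $\op{Stab}_\G(\xi)\subset\G_\xi(d)$ together with the identity $\op{Stab}_\G(\xi)=\op{Stab}_{\G_\xi(d)}(\xi)$ recorded just above the lemma, the index set $\op{Stab}_\G(\xi)\bk\G$ decomposes as $\bigsqcup_{\g_1}\bigl(\op{Stab}_\G(\xi)\bk\G_\xi(d)\bigr)\g_1$. Moreover, for $\g=\gamma\g_1$ with $\gamma\in\G_\xi(d)$ one has $\xi\g=\xi\gamma\g_1\equiv\xi\g_1\ (d)$, so the condition $f(\xi\g)\equiv 0(d)$ is constant on each coset $\G_\xi(d)\g_1$ and can be pulled outside the inner sum. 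I would state this well-definedness explicitly, since it is precisely why $\G_\xi(d)$, rather than some smaller congruence subgroup, is the correct group here. This gives
\[
|\cA_d(T)|=\sum_{\substack{\g_1\in\G_\xi(d)\bk\G\\ f(\xi\g_1)\equiv0(d)}}\ \sum_{\gamma\in\op{Stab}_\G(\xi)\bk\G_\xi(d)}\int_{G/K}\chi_{B_T}(\xi\gamma\g_1 g)\,\phi_\e(g)\,d\mu(g).
\]

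Finally I would substitute $g\mapsto\g_1^{-1}g$ in the inner integral and invoke left-invariance of $\mu$, turning the integrand into $\chi_{B_T}(\xi\gamma g)\phi_\e(\g_1^{-1}g)$; summing over $\gamma\in\op{Stab}_\G(\xi)\bk\G_\xi(d)$ collapses $\sum_\gamma\chi_{B_T}(\xi\gamma g)$ into $F_T^{\G_\xi(d)}(g)$, which is left $\G_\xi(d)$-invariant and well defined exactly because $\op{Stab}_\G(\xi)=\op{Stab}_{\G_\xi(d)}(\xi)$. The remaining integral $\int_{G/K}F_T^{\G_\xi(d)}(g)\,\phi_\e(\g_1^{-1}g)\,d\mu(g)$ is then identified with an $L^2$-pairing on $\G_\xi(d)\bk\bH^3$ by unwinding the defining sum of $\Phi^{\G_\xi(d)}_{\e,\g_1}(g)=\sum_{\gamma\in\G_\xi(d)}\phi_\e(\g_1^{-1}\gamma g)$, via the standard folding $\int_{\G_\xi(d)\bk G}F(g)\sum_{\gamma}\tilde\phi(\gamma g)\,dg=\int_G F(g)\tilde\phi(g)\,dg$ for left-$\G_\xi(d)$-invariant $F$; this yields $\langle F_T^{\G_\xi(d)},\Phi^{\G_\xi(d)}_{\e,\g_1}\rangle_{L^2(\G_\xi(d)\bk\bH^3)}$. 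Reassembling the $\g_1$-sum gives (2), and setting $d=1$ gives (1). I do not expect a real obstacle here: the entire content is careful bookkeeping of which quotient each sum, coset, and integral lives over, plus keeping the already-established relation $\op{Stab}_\G(\xi)=\op{Stab}_{\G_\xi(d)}(\xi)$ in hand; discreteness of $\xi\G$ keeps the $F_T$'s locally finite and compact support of $\phi_\e$ makes every interchange of sum and integral trivial, so the only point deserving an explicit sentence is the coset-invariance of the congruence $f(\xi\g)\equiv0(d)$.
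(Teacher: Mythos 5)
Your proposal is correct and follows essentially the same route as the paper: expand $|\cA_d(T)|$ as a double sum over cosets of $\G_\xi(d)$ in $\G$, substitute $g\mapsto\g_1^{-1}g$ in the integral defining $w_T(\g\g_1)$, collapse the inner sum into $F_T^{\G_\xi(d)}$, and fold the $G/K$-integral into the $L^2$-pairing against $\Phi_{\e,\g_1}^{\G_\xi(d)}$. The only differences are expository (you make explicit the coset-invariance of the congruence condition and observe that (1) is the $d=1$ case, whereas the paper proves (1) separately by the same unfolding), so there is nothing to add.
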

\begin{proof}
We have
\begin{align*} |\cA(T)|&=\sum_{\g\in  \op{Stab}_\G(\xi)\ba  \G}w_T(\g) \\
&=
\sum_{\g\in  \op{Stab}_\G(\xi)\ba \G}  \int_{G/K} \chi_{B_T}(\xi \gamma g)\phi_\e (g) d\mu(g) \\ &=
\int_{G/K}F_T^\G(g)\phi_\e(g)\; d\mu(g)\\
 &=
\int_{\Gamma\ba  G/K}F_T^\G(g)\Phi_\e^\Gamma(g)\; d\mu(g)
\\ &= \<F_T^\G,\Phi_\e^\G\>_{L^2(\G\bk\bH^3)}
\end{align*}

Expand \eqref{Ad} as
$$
|\cA_d(T)|=\sum_{\g_1\in\G_\xi(d)\bk\G\atop f(\xi\g_1)\equiv0(d)}\sum_{\g\in \op{Stab}_\G(\xi)\ba
\G_\xi(d)} w_T(\g\g_1).
$$
The inner sum is
\beann
\sum_{\g\in \op{Stab}_\G(\xi)\ba \G_\xi(d)} w_T(\g\g_1)
&=&
\sum_{\g\in \op{Stab}_\G(\xi)\ba \G_\xi(d)}
\int_{G/K} \chi_{B_T}(\xi  \g g) \phi_\e(\g_1^{-1}g)\;d\mu(g)
\\
&=&
\int_{G/K} F_T^{\G_\xi(d)}(g)\phi_\e (\g_1^{-1}g)\;d\mu(g)
\\
&=&
\int_{\G_\xi(d)\bk G/K} F_T^{\G_\xi(d)}(g)\Phi_{\e,\gamma_1}^{\G_\xi(d)}(g)
\; d\mu(g).
\eeann

Thus
$$
\sum_{\g\in\op{Stab}_\G(\xi)\ba \G_\xi(d)} w_T(\g\g_1)
=
\<F_T^{\G_\xi(d)},\Phi_{\e,\g_1}^{\G_\xi(d)}\>
_{L^2(\G_\xi(d)\bk\bH^3)}
.$$

\end{proof}

Denote by $\op{Spec}(\G\ba \bH^3)$ the spectrum of the Laplace operator on $L^2(\Gamma\ba \bH^3)$.
As mentioned before, the work of Sullivan, generalizing Patterson's, implies that
 $\op{Spec}(\G\ba \bH^3)\cap [0,1)\ne \emptyset$ in which case $\lambda_0=\alpha(2-\alpha)$ is
the base eigenvalue of $\Delta$. For the principal congruence subgroup $$\G(d):=\{\gamma\in \G: \gamma = I \;\;
(\text{mod } d)\}$$ of $\G$ of level $d$,
we have $$\op{Spec}(\G\ba \bH^3)\subset \op{Spec}(\G(d) \ba \bH^3).$$

The following is obtained by Bourgain, Gamburd and Sarnak in \cite{BourgainGamburdSarnak2008}
and \cite {BourgainGamburdSarnak2008-p}.
\begin{Thm}\label{bgs} Let $\G$ be a Zariski dense subgroup of $\Spin _{Q}({\z})$ with $\delta_{\G}>1$.
 Then
there exist $1\le \theta <\delta_\G$  such that
we have for all square-free integers $d$,
$$\op{Spec}(\G(d) \ba \bH^3)\cap [\theta(2-\theta), \delta_{\G}(2-\delta_\G)]=\{ \delta_{\G}(2-\delta_\G) \}.$$
\end{Thm}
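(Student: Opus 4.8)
The statement is the uniform spectral gap of Bourgain, Gamburd and Sarnak \cite{BourgainGamburdSarnak2008, BourgainGamburdSarnak2008-p}; what follows is a sketch of the architecture of its proof. The plan is to reduce the spectral assertion to a uniform expansion (``$\ell^2$-mixing'') property of the finite quotients $\G/\G(d)$, $d$ square-free, and then to establish that expansion by the Bourgain--Gamburd method adapted to the Patterson--Sullivan weighting. First one invokes strong approximation: since $\Spin_Q$ is simply connected and $\G$ is Zariski dense in it, the theorems of Matthews--Vaserstein--Weisfeiler and of Weisfeiler give a finite set $S_0$ of primes such that for $d$ coprime to $S_0$ the reduction $\G\to\Spin_Q(\Z/d\Z)$ is onto, and by the Chinese remainder theorem the target factors as $\prod_{p\mid d}\Spin_Q(\F_p)$, each factor being essentially $\SL_2(\F_p)$ or $\SL_2(\F_{p^2})$. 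Thus $\G/\G(d)$ is, up to bounded index, a product of copies of $\SL_2$ over finite fields, for which the relevant product-theorem machinery is available.

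Next one translates bottom-of-spectrum information into operator norms. Via a Bowen-type symbolic coding of the geodesic flow on $\G\ba\bH^3$ and the associated Ruelle transfer operator $\mathcal L_s$ (whose leading eigenvalue at $s=\delta_\G$ equals $1$, with eigenfunction the Patterson--Sullivan density), one has the classical dictionary: the location of the first eigenvalue of $\Delta$ on $\G(d)\ba\bH^3$ above $\lambda_0=\delta_\G(2-\delta_\G)$ is governed by the spectral radius on $\ell^2_0(\G/\G(d))$ of the $d$-twisted operator $\mathcal L_{\delta_\G,d}$. Decomposing $L^2(\G(d)\ba\bH^3)$ over divisors $e\mid d$ (``new'' versus ``old'', the old part contributing only the spectra of the $\G(e)\ba\bH^3$, which in the relevant window is just $\lambda_0$ by Lax--Phillips \cite{LaxPhillips} and Definition \ref{sgamma}), the sought uniform gap becomes the statement that there is $\eta>0$ with $\|\mathcal L_{\delta_\G,d}\|_{\ell^2_0}\le 1-\eta$ for every square-free $d$. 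Equivalently, by a Brooks/Burger-type comparison, the random walk on $\G$ driven by a fixed symmetric generating set and weighted by the Patterson--Sullivan cocycle has a uniform $\ell^2$-spectral gap after projection to $\G/\G(d)$.

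The heart of the argument is the Bourgain--Gamburd expansion machine carried out for this weighted walk, resting on two inputs. \emph{Escape from subgroups}: for $\ell\asymp\log d$ the $\ell$-step distribution $\mu_\ell$ on $\G/\G(d)$ puts mass at most $d^{-\kappa_0}$ on any proper subgroup; for prime $d$ this is Helfgott's product theorem in $\SL_2(\F_p)$ and its non-concentration corollary, and for square-free composite $d$ one uses the Bourgain--Varj\'u product theorem in $\prod_p\SL_2(\F_p)$ together with a Chinese-remainder bootstrap. \emph{$\ell^2$-flattening}: the $2\ell$-step return probability is $\ll d^{-\kappa_1}$ once $\ell\gg\log d$, derived from the exponential (tree-like) growth of reduced words in $\G$ together with a bound on the number of coincidences. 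The hypothesis $\delta_\G>1$ enters precisely here: it makes the Patterson--Sullivan measure sufficiently spread out (equivalently $\lambda_0=\delta_\G(2-\delta_\G)<1$, so that $\phi_0\in L^2$ with $\lambda_0$ isolated, \cite{Sullivan1979,LaxPhillips}) for the relevant weighted word sums to decay, and the gap one extracts degrades as $\delta_\G\downarrow 1$. Combining the two inputs through the Balog--Szemer\'edi--Gowers theorem and the sum--product/product-theorem estimates gives $\|\mu_\ell*\mu_\ell\|_2\le\|\mu_\ell\|_2^{2-\kappa}$, so that $\mu_\ell$ becomes nearly uniform for $\ell\asymp\log d$; this is the uniform $\ell^2$-gap and yields the asserted $\theta\in(1,\delta_\G)$.

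The principal obstacle I anticipate is the $\ell^2$-flattening step together with its interface with the transfer operator: one must transport the combinatorial flattening on $\G/\G(d)$ to a genuine gap for $\Delta$ on $\G(d)\ba\bH^3$ while tracking Patterson--Sullivan weights through the symbolic coding, and must do so uniformly in $d$ (hence with product theorems over $\prod_p\SL_2(\F_p)$ rather than over a single prime). A secondary technical point is the behavior of the spin cover and of the congruence structure -- namely that $\op{Spec}(\G\ba\bH^3)\subset\op{Spec}(\G(d)\ba\bH^3)$ and that the old spectrum contributes nothing but $\lambda_0$ in $[\lambda_0,\theta(2-\theta)]$ -- which follows from the inclusion $L^2(\G\ba\bH^3)\hookrightarrow L^2(\G(d)\ba\bH^3)$ and from applying Definition \ref{sgamma} at each level $e\mid d$.
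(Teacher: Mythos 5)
The paper does not prove this statement at all: Theorem \ref{bgs} is imported verbatim from Bourgain--Gamburd--Sarnak \cite{BourgainGamburdSarnak2008, BourgainGamburdSarnak2008-p}, and the only in-paper content is the sentence attributing it to them. So there is no internal argument to compare yours against; what you have written is a summary of the external literature, and as such it is a fair description of the BGS architecture (strong approximation via Matthews--Vaserstein--Weisfeiler, reduction of the spectral gap to uniform expansion of the congruence quotients, and the Bourgain--Gamburd machine of escape from subgroups, $\ell^2$-flattening, and Balog--Szemer\'edi--Gowers, interfaced with the Laplace spectrum either through transfer operators or through a Gamburd/Sarnak--Xue multiplicity count). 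You are candid that every substantive step is itself a deep cited theorem, which is the honest thing to do here, but it does mean your text is a road map rather than a proof.

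Two cautions. First, your claim that ``the hypothesis $\delta_\G>1$ enters precisely'' at the $\ell^2$-flattening step is not accurate for either known route: in the transfer-operator approach the expansion input is insensitive to the size of $\delta_\G$ (the method is designed to work for all $\delta_\G>0$), and $\delta_\G>1$ is needed only so that $\delta_\G(2-\delta_\G)$ is genuinely an isolated $L^2$-eigenvalue (Lax--Phillips) and the statement about $\op{Spec}(\G(d)\ba\bH^3)$ makes sense; in the multiplicity-counting route the threshold on $\delta_\G$ in $\bH^3$ is strictly larger than $1$, which is precisely why the second BGS reference (the transfer-operator generalization of Selberg's theorem) is the one that must carry the weight here. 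Second, your old/new decomposition over divisors $e\mid d$ is a plausible organizing device but is not how the uniformity in $d$ is actually obtained in BGS; the uniformity comes from running the expansion argument directly on $\prod_{p\mid d}\SL_2(\F_p)$ (or $\SL_2(\F_{p^2})$) after Goursat, exactly as the present paper does at the level of orbit counts in Proposition \ref{g}. Neither point invalidates your outline as a description of the literature, but neither your sketch nor the paper contains a proof of the theorem.
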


In order to control the error term for $|\cA_d(T)|$,
we need a version of Corollary
\ref{smc}
uniform over all congruence subgroups $\G_{\xi}(d)$ of $\Gamma$.

\begin{Prop}\label{uniformc} There exists $\e_0>0$, uniform over all square-free integers $d$,
 such that for any $\gamma_1\in \G$ and for any congruence
subgroup $\G_{\xi}(d)$ of $\Gamma$, we have
$$\<F_T^{\G_{\xi}(d)},\Phi_{\e, \g_1} ^{\G_{\xi}(d)}\>
_{L^2(\G_{\xi}(d) \bk\bH^3)}
=\frac{c_{\phi_0} \cdot d_\e }{\delta_\G \cdot [\Gamma:\G_{\xi}(d)]} \cdot \|\xi \|^{\alpha} \cdot T^\alpha
+O( \e^{-3}T^{\alpha-\e_0}) $$
for some $d_\e>0$ where the implied constant depends only on $\Lambda_N(\G)$.
\end{Prop}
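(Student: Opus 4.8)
The plan is to obtain Proposition~\ref{uniformc} as the version of the second part of Corollary~\ref{smc} that is uniform in $d$ and in the translate $\g_1$, by rerunning the proof of Theorem~\ref{maincount} with the congruence subgroup $\G_\xi(d)$ in place of $\G$ and with $\psi=\Phi_{\e,\g_1}^{\G_\xi(d)}\in C^\infty_c(\G_\xi(d)\ba\bH^3)^K$ in place of $\Phi_\e$. There are exactly two things to verify beyond a mechanical substitution: that every implied constant can be taken independent of $d$ and $\g_1$, and that the main term does not depend on $\g_1$.

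For the first point, I would note that the geometry required by Theorem~\ref{ma1} is inherited from $\G$. Since every $n\in N\cap\G$ fixes $\xi$, hence fixes $\xi$ modulo $d$, one has $N\cap\G_\xi(d)=N\cap\G$, so $(N\cap\G_\xi(d))\ba N=(N\cap\G)\ba N$, whence $\Lambda_N(\G_\xi(d))=\Lambda_N(\G)$ (using $\Lambda(\G_\xi(d))=\Lambda(\G)$, the index being finite), and $\G_\xi(d)\ba\G_\xi(d)N$ is closed, being a finite cover of a closed $N$-orbit; so Theorem~\ref{ma1} and Lemma~\ref{ftweak} apply to $\G_\xi(d)$. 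Because $\g_1\in\G$, the support of $\Phi_{\e,\g_1}^{\G_\xi(d)}$ projects into $\op{supp}\Phi_\e^{\G}$ under $\G_\xi(d)\ba G\to\G\ba G$, so $N(\op{supp}\Phi_{\e,\g_1}^{\G_\xi(d)})\subset N(\op{supp}\Phi_\e^{\G})$, a fixed bounded set; hence every appeal to Theorem~\ref{ma1} for $\G_\xi(d)$ has implied constant depending only on $\mathcal S_5(\phi_\e)$ and $\Lambda_N(\G)$. For the spectral input, $\delta_{\G_\xi(d)}=\delta_\G=\alpha>1$, so the pullback of $\phi_0$ is, up to scalar, the unique positive $L^2$-eigenfunction on $\G_\xi(d)\ba\bH^3$ of bottom eigenvalue $\alpha(2-\alpha)$, and its $L^2$-norm is $[\G:\G_\xi(d)]^{1/2}$; thus the unit-norm bottom eigenfunction is $[\G:\G_\xi(d)]^{-1/2}\phi_0$, with horospherical constant $[\G:\G_\xi(d)]^{-1/2}c_{\phi_0}$ by Corollary~\ref{ppoo} (the horospherical integral being over the same $(N\cap\G)\ba N$). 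Finally $\G(d)\le\G_\xi(d)$ gives $\op{Spec}(\G_\xi(d)\ba\bH^3)\subset\op{Spec}(\G(d)\ba\bH^3)$, so Theorem~\ref{bgs} allows the uniform choice $s_{\G_\xi(d)}=\theta$, and Theorem~\ref{ma1} holds for all $\G_\xi(d)$ with the common exponent $\e_0:=\tfrac27(\alpha-\theta)>0$.

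Feeding all of this into Lemma~\ref{ftweak} and the $y$-integration of Theorem~\ref{maincount} (the norm being $g_0Kg_0^{-1}$-invariant, so the angular integral collapses, and $\phi_\e$ being $K$-invariant, so no $M$-averaging is needed) one obtains, exactly as in Corollary~\ref{smc},
$$
\langle F_T^{\G_\xi(d)},\Phi_{\e,\g_1}^{\G_\xi(d)}\rangle_{L^2(\G_\xi(d)\ba\bH^3)}=\frac{c_{\phi_0}}{\delta_\G\,[\G:\G_\xi(d)]}\,\big\langle\Phi_{\e,\g_1}^{\G_\xi(d)},\phi_0\big\rangle_{L^2(\G_\xi(d)\ba\bH^3)}\,\|\xi\|^{\alpha}\,T^{\alpha}+O\big(\e^{-3}T^{\alpha-\e_0}\big),
$$
where the error is uniform because $[\G:\G_\xi(d)]^{-1}\le1$ and $\mathcal S_5(\Phi_{\e,\g_1}^{\G_\xi(d)})=\mathcal S_5(\phi_\e)$, whose polynomial growth in $\e^{-1}$ is tracked exactly as the factor $\e^{-q}$ in the proof of Theorem~\ref{maincount}, with implied constant depending only on $\Lambda_N(\G)$. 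It then remains to see that the inner product is independent of $\g_1$: unfolding $\Phi_{\e,\g_1}^{\G_\xi(d)}$ and using $\g_1\in\G$ together with the $\G$-invariance of $\phi_0$,
$$
\big\langle\Phi_{\e,\g_1}^{\G_\xi(d)},\phi_0\big\rangle_{L^2(\G_\xi(d)\ba\bH^3)}=\int_{G/K}\phi_\e(\g_1^{-1}g)\,\phi_0(g)\,d\mu(g)=\int_{G/K}\phi_\e(g)\,\phi_0(g)\,d\mu(g)=\phi_0(e)+O(\e)=:d_\e,
$$
which is positive for $\e$ small; substituting yields the asserted identity.

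The hard part is really the uniformity bookkeeping of the middle paragraph: one must be certain that passing to the congruence subgroups $\G_\xi(d)$ degrades none of the relevant data --- $\Lambda_N$, the bounded sets $N(\op{supp}\cdot)$, closedness of $\G_\xi(d)\ba\G_\xi(d)N$, and, through Theorem~\ref{bgs}, the spectral gap --- and one must spot that $\G$-invariance of $\phi_0$ forces the main term to be independent of the translate $\g_1$, which is precisely the $\g_1$-uniformity the Selberg sieve will require. Granting these, the proposition is a rerun of Section~\ref{lhorof} and the proof of Theorem~\ref{maincount}.
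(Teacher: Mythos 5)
Your proposal is correct and follows essentially the same route as the paper: apply the power-saving, $K$-invariant case of Corollary \ref{smc} to $\G_\xi(d)$, get uniformity of the spectral gap from $\G(d)\le\G_\xi(d)$ and Theorem \ref{bgs}, observe $N\cap\G_\xi(d)=N\cap\G$ and $\Lambda(\G_\xi(d))=\Lambda(\G)$ so that $\Lambda_N$ (hence the implied constant) is unchanged, renormalize the base eigenfunction by $[\G:\G_\xi(d)]^{-1/2}$ to produce the index factor, and unfold against the $\G$-invariant $\phi_0$ to see the main term is independent of $\g_1$, giving $d_\e=\la\Phi_{\e,e},\phi_0\ra$.
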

\begin{proof}
Note that the congruence subgroup $\G (d)$ of
level $d$ is a finite index subgroup of $\G_{\xi}(d)$.
Since $$\op{Spec}(\G_{\xi}(d)\ba \bH^3)\subset \op{Spec}(\G(d)\ba \bH^3)$$
the spectral gap Theorem \ref{bgs} holds for the family $\G_{\xi}(d)$, $d$ square-free, as well.

As we are assuming $\|\cdot \|$ is $g_0K g_0^{-1}$-invariant, by
Corollary \ref{smc}, we have
\begin{equation}\label{laste}\la F_T, \Phi_{\e, \gamma_1}^{\G_{\xi}(d)}  \ra =  \langle  \Phi_{\e, \gamma_1}^{\G_{\xi}(d)} ,
 \phi_0^{\G_{\xi}(d)} \rangle \cdot\delta_{\G_{\xi}(d)}^{-1}\cdot
 c_{\phi_0^{\G_{\xi}(d)}}
 \cdot T^{\delta_{\G}} \cdot \|\xi\|^{\alpha} \cdot (1+O(T^{-\e_0})) ,
\end{equation}
 where $\e_0$ depends only on the spectral gap for $L^2(\G_\xi(d)\ba \bH^3)$ and the implied constant
 depends only on $$\Lambda_N (g_0^{-1} \Gamma_\xi(d) g_0)=\{[n_x]\in ( N\cap g_0^{-1} \G_\xi(d)g_0) \ba N : x\in \Lambda(g_0^{-1} \G_\xi(d)g_0)\} .$$
As $\G_{\xi}(d)$ is a subgroup of finite index in $\G$,
$\delta_{\G_{\xi}(d)}=\delta_\G$ and
$$ \Lambda(g_0^{-1} \G_\xi(d) g_0 ) =\Lambda(g_0^{-1}\G g_0),$$ and moreover
by the definition of $\G_{\xi}(d)$, we have
$$\op{Stab}_\G (\xi) =\op{Stab}_{\G_\xi(d)} (\xi),$$ implying
$$
N\cap g_0^{-1} \G_\xi(d)g_0 =\ N\cap g_0^{-1} \G g_0 .$$
Hence $$\Lambda_N (g_0^{-1} \Gamma_\xi(d)g_0)=\Lambda_N(g_0^{-1} \G g_0),$$ yielding that
 the implied constant in \eqref{laste} is independent of $d$.

By Theorem \ref{bgs}, $\e_0$ can be taken to be uniform over all $d$.
If $\Gamma_0<\Gamma$ is a subgroup of finite index, the base eigenfunction in $L^2(\Gamma_0\ba \bH^3)$
with the unit $L^2$-norm  is given by $$\phi_0^{\Gamma_0}:=\frac{1}{\sqrt{[\Gamma:\Gamma_0]}}\phi_0$$
with $\phi_0=\phi_0^\Gamma$.
Therefore $$
\langle  \Phi_{\e, \gamma_1}^{\G_{\xi}(d)} ,
 \phi_0^{\G_{\xi}(d)} \rangle\cdot c_{\phi_0^{\G_{\xi}(d)}}
= \langle  \Phi_{\e, e},
 \phi_0 \rangle\cdot c_{\phi_0} \cdot \frac{1}{[\Gamma:\G_{\xi}(d)]}.
 $$
This finishes the proof as $\delta_\G=\alpha$, by setting $d_\e=\langle  \Phi_{\e, e},
 \phi_0 \rangle$.
\end{proof}

Setting $$\X= \delta_\G^{-1}\cdot
c_{\phi_0} \cdot d_\e \cdot \|\xi \|^{\alpha} \cdot T^\alpha ,$$
the following corollary is immediate from Lemma \ref{ad} and Proposition \ref{uniformc}.
\begin{Cor}\label{sl}
There exists $\e_0>0$ uniform over all square-free integers $d$ such that
$$|\cA_d(T)| = \Or^0_f(d)\left({1\over [\G:\G_\xi(d)]}\X+O(\X^{1-\e_0})\right),$$ where
$$
\Or^0_f(d) = \sum_{\g\in\G_\xi(d)\bk\G\atop f(\xi\g)\equiv0(d)}1.
$$
\end{Cor}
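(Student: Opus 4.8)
The plan is to read the corollary off by inserting the uniform asymptotic of Proposition \ref{uniformc} into the coset-sum expression for $|\cA_d(T)|$ supplied by Lemma \ref{ad}(2), and then to count the number of summands. First I would recall from Lemma \ref{ad}(2) that
$$ |\cA_d(T)|=\sum_{\g_1\in\G_\xi(d)\bk\G\atop f(\xi\g_1)\equiv0(d)}\<F_T^{\G_\xi(d)},\Phi_{\e,\g_1}^{\G_\xi(d)}\>_{L^2(\G_\xi(d)\bk\bH^3)}, $$
where the outer index set is finite since $\G_\xi(d)$ has finite index in $\G$, and its cardinality is exactly $\Or^0_f(d)$ by definition of the latter.

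Next I would substitute Proposition \ref{uniformc} for each inner product. With $\X:=\delta_\G^{-1}c_{\phi_0}d_\e\|\xi\|^{\alpha}T^{\alpha}$, the main term of Proposition \ref{uniformc} is precisely $\X/[\G:\G_\xi(d)]$, so each summand equals $\X/[\G:\G_\xi(d)]+O(\e^{-3}T^{\alpha-\e_0})$. Since $\e$ was fixed at the outset, the factor $\e^{-3}$ is absorbed into the implied constant; and since $\X\asymp_\e T^{\alpha}$, the error $T^{\alpha-\e_0}$ is $O(\X^{1-\e_0/\alpha})$, and I relabel $\e_0/\alpha$ as $\e_0$. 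Crucially, the implied constant in Proposition \ref{uniformc} depends only on $\Lambda_N(\G)$, hence on neither $d$ nor $\g_1$, so a single $\e_0$ and a single implied constant serve all summands and all square-free $d$. Summing this identity over the $\Or^0_f(d)$ cosets $\g_1$ gives
$$ |\cA_d(T)| = \Or^0_f(d)\left(\frac{1}{[\G:\G_\xi(d)]}\X+O(\X^{1-\e_0})\right), $$
the asserted formula.

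There is essentially no obstacle remaining: all the analytic input — the equidistribution of expanding closed horospheres with power-saving error (Theorems \ref{ma1} and \ref{ft}, and Corollary \ref{smc}) together with the uniform spectral gap over congruence covers (Theorem \ref{bgs}) — has already been packaged into Proposition \ref{uniformc}. The only point deserving a moment's care is the bookkeeping that the error constant does not deteriorate with $d$; this is handled inside the proof of Proposition \ref{uniformc} via the observations that $\op{Stab}_\G(\xi)=\op{Stab}_{\G_\xi(d)}(\xi)$ forces $N\cap g_0^{-1}\G_\xi(d)g_0=N\cap g_0^{-1}\G g_0$ and $\Lambda(g_0^{-1}\G_\xi(d)g_0)=\Lambda(g_0^{-1}\G g_0)$, so that the set $\Lambda_N$ entering the implied constant is literally independent of $d$.
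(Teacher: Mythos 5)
Your proposal is correct and follows exactly the route the paper intends: the paper states that Corollary \ref{sl} is immediate from Lemma \ref{ad} and Proposition \ref{uniformc}, and your argument is precisely that substitution — each of the $\Or^0_f(d)$ summands in Lemma \ref{ad}(2) is replaced by the uniform asymptotic of Proposition \ref{uniformc}, with the key point (which you correctly identify) being that the error exponent $\e_0$ and the implied constant there depend only on $\Lambda_N(\G)$ and the fixed $\e$, hence on neither $d$ nor $\g_1$.
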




\subsection{Proof of Theorem \ref{thmTwo}}
We set
$$f_1(x_1,x_2,x_3,x_4)=x_1\quad \text{and }\quad f_2(x_1,x_2,x_3,x_4)=x_1x_2 .$$

For $d$ square-free and $i=1,2$, set $$g_i(d)=\Or^0_{f_i}(d)/[\G:\G_\xi(d)] .$$
\begin{Prop}\label{g} There exists a finite set $S$ of primes such that: 
\begin{enumerate}\item for any square-free integer $d=d_1d_2$ with no prime factors in
 $ S$ and for each $i=1,2$,
$$g_i(d_1d_2)=g_i(d_1)\cdot g_i(d_2);$$
\item for any prime $p$ outside $S$,
$$g_1(p)\in(0,1)\quad\text{ and }\quad g_1(p)={ p}^{-1}+O(p^{-3/2}) .$$
\item for any prime $p$ outside $S$,
$$g_2(p)\in(0,1)\quad\text{ and }\quad g_2(p)= 2{ p}^{-1}+O(p^{-3/2}) .$$
\end{enumerate}
\end{Prop}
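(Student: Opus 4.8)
The plan is to interpret each $g_i(d)$ as a local density for the $\G$-orbit of $\xi$ and to proceed in three steps: a reduction-mod-$d$ dictionary, a strong-approximation input that pins the orbit down exactly, and a Lang--Weil point count. For square-free $d$ the map $\g\mapsto\xi\g\bmod d$ descends to a map $\G_\xi(d)\bk\G\to(\Z/d\Z)^4$, and it is injective: if $\xi\g_1\equiv\xi\g_2\ (d)$ then right-multiplying by the integral matrix $\iota(\g_2)^{-1}\in\SO_Q(\Z)$ gives $\xi(\g_1\g_2^{-1})\equiv\xi\ (d)$, i.e.\ $\g_1\g_2^{-1}\in\G_\xi(d)$. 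Hence it is a bijection onto the orbit $O_d$ of $\xi$ under the image of $\G$ in $\GL_4(\Z/d\Z)$, so that $[\G:\G_\xi(d)]=\#O_d$ and $\mathcal{O}^0_{f_i}(d)=\#\{v\in O_d:f_i(v)\equiv 0\ (d)\}$; therefore
\[
g_i(d)=\frac{\#\{v\in O_d:f_i(v)\equiv 0\ (d)\}}{\#O_d}.
\]
Write $V$ for the punctured cone $\{v\neq0:Q(v)=0\}$ inside $4$-space; away from the primes dividing $2\det Q$ this is a geometrically irreducible variety of dimension $3$.

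The heart of the argument is to show that $O_p$ equals $V(\F_p)$ for all but finitely many $p$, and to deduce claim (1). Since $\G$ is Zariski dense in the simply connected group $\Spin_Q$, strong approximation --- the same input underlying Theorem \ref{bgs}, see \cite{BourgainGamburdSarnak2008} --- supplies a finite set $S_0$ of primes such that for every square-free $d$ coprime to $\prod_{p\in S_0}p$ the reduction $\G\to\prod_{p\mid d}\Spin_Q(\F_p)$ is onto; by the Chinese remainder theorem $O_d=\prod_{p\mid d}O_p$. Enlarge $S_0$ to a finite set $S$ by adjoining $2$ and the primes dividing $\det Q$. For $p\notin S$, Witt's extension theorem together with a spinor-norm computation --- the stabiliser in $\SO_Q(\F_p)$ of a non-zero isotropic vector meets both spinor-norm classes, so the image $\Omega_Q(\F_p)$ of $\Spin_Q(\F_p)$ is still transitive on non-zero isotropic vectors --- gives $O_p=V(\F_p)$. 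Consequently, for square-free $d=d_1d_2$ with $(d,\prod_{p\in S}p)=1$, both $\#O_d$ and, via the equivalence $f_i(v)\equiv0\ (d_1d_2)\iff f_i(v)\equiv 0\ (d_1)$ and $f_i(v)\equiv0\ (d_2)$, the numerator $\#\{v\in O_d:f_i(v)\equiv0\ (d)\}$ factor over $d_1,d_2$; hence $g_i(d_1d_2)=g_i(d_1)g_i(d_2)$, which is (1).

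It remains to evaluate $g_i(p)=\#\{v\in V(\F_p):f_i(v)=0\}/\#V(\F_p)$ for $p\notin S$. By Lang--Weil, $\#V(\F_p)=p^3+O(p^{5/2})$. The section $V\cap\{v_1=0\}$ is the punctured cone of the ternary form $Q|_{v_1=0}$, which is non-degenerate over $\Q$; adjoining to $S$ the finitely many primes where it degenerates mod $p$, this section is geometrically irreducible of dimension $2$, so $\#(V\cap\{v_1=0\})(\F_p)=p^2+O(p^{3/2})$, and likewise for $\{v_2=0\}$. Since $Q|_{v_1=v_2=0}$ is proportional to $(v_3-v_4)^2$, the locus $V\cap\{v_1=v_2=0\}$ is a punctured line, contributing $O(p)$ points. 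Hence
\[
g_1(p)=\frac{p^2+O(p^{3/2})}{p^3+O(p^{5/2})}=\frac1p+O(p^{-3/2}),
\]
and by inclusion--exclusion $\#\{v\in V(\F_p):v_1v_2=0\}=2p^2+O(p^{3/2})$, so $g_2(p)=2p^{-1}+O(p^{-3/2})$. In particular both $g_1(p)$ and $g_2(p)$ lie in $(0,1)$ for $p$ large, and enlarging $S$ by the remaining small primes makes this hold for all $p\notin S$; this gives (2) and (3).

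The step I expect to be the main obstacle is the exact identification $O_p=V(\F_p)$: the precise constants $1$ and $2$ in (2)--(3), and the clean multiplicativity in (1), all depend on the orbit filling out the whole cone modulo $p$ rather than a proper subvariety. This is exactly where strong approximation for the thin group $\G$ (available through \cite{BourgainGamburdSarnak2008,BourgainGamburdSarnak2008-p}) and the spinor-norm bookkeeping are essential; once they are in place, the surviving point counts are routine Lang--Weil estimates, the only care needed being to check non-degeneracy of the ternary and binary restrictions of the Descartes form away from a finite set of primes, which is absorbed into $S$.
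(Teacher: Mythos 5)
Your argument is correct and follows essentially the same route as the paper: identify $g_i(d)$ with the density of $\{f_i\equiv 0\}$ on the orbit of $\xi$ mod $d$, use strong approximation for the Zariski-dense subgroup of the simply connected group $\Spin_Q$ (the paper cites Matthews--Vaserstein--Weisfeiler plus Goursat's lemma for the product over $d_1,d_2$) to get $O_d=\prod_{p\mid d}V(\F_p)$, and then apply Lang--Weil-type point counts to $V$ and its hyperplane sections. The only cosmetic difference is that you justify $O_p=V(\F_p)$ by a Witt-extension/spinor-norm transitivity argument, whereas the paper invokes the connectedness of the stabilizer of an isotropic vector together with Lang's theorem (via Platonov--Rapinchuk); both are valid.
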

\begin{proof}
According to
 the theorem of Matthews, Vaserstein and Weisfeiler  \cite{MatthewsVasersteinWeisfeiler1984},
there exists a finite set of primes $S$ so that
\begin{itemize}\item for all primes $p$ outside $S$, $\G$ projects onto
 $G(\F_p)$
\item  for $d=p_1\cdots p_t$ square-free with $p_i\notin S$,
the diagonal reduction
$$\G\to G(\z/d\z) \to G(\F_{p_1})\times \cdots G(\F_{p_t})$$ is surjective.
\end{itemize}
Enlarge $S$ 
so that
$G(\F_p)$'s have no common composition factors for different $p$'s outside $S$.
This is possible because $G=\op{Spin}(Q)$ can be realized as $\SL_2$ over
$\q[\sqrt{-1}]$.
Hence there exists a finite set $S$ of primes such that for $p$ outside $S$,
 \begin{equation} G(\F_p)= \begin{cases} \SL_2(\F_p)\times\SL_2(\F_p)
 & \text{for $p\equiv1(4)$ }\\
 \SL_2(\F_{p^2})&\text{ for $p\equiv3(4)$.}\end{cases}\end{equation}

It then follows from Goursat's lemma \cite[p.75]{LangAlgebra} that
$\G$ surjects onto $G(\z/d_1\z)\times G(\z/d_2\z)$ for any square-free $d_1$ and $d_2$ with
no prime factors in $S$.
This implies that for $d=d_1d_2$ square-free and without any prime factors in $S$,
the orbit of $\xi$ mod $d$, say $\mathcal O(d)$, is equal to $\mathcal O(d_1)\times \mathcal O(d_2)$
in $(\z /d_1\z)^4\times (\z/d_2\z)^4= (\z/d\z)^4$.
It also follows that  $\mathcal O^0(d)$ is equal to $\mathcal O^0(d_1)\times \mathcal O^0(d_2)$.
Therefore $g(d)=g(d_1)g(d_2)$ as desired.

Denote by $V$ the cone defined by $Q=0$ minus the origin, i.e.,
$$V=\{(x_1, x_2, x_3 ,x_4)\ne 0: \sum_{i=1}^4 2 x_i^2 -(\sum_{i=1}^4 x_i)^2=0\} .$$

Note that
$$W_1:=\{x\in V: f_1(x)=0\}=\{(0, x_2,x_3, x_4)\ne 0: \sum_{i=2}^4 2 x_i^2 -(\sum_{i=2}^4 x_i)^2=0\} .$$

Since both quadratic forms $$Q(x_1,x_2,x_3,x_4)=
\sum_{i=1}^4 2 x_i^2 -(\sum_{i=1}^4 x_i)^2\;\;\text{and}\;\;
Q(0,x_2,x_3,x_4)=\sum_{i=2}^4 2 x_i^2 -(\sum_{i=2}^4 x_i)^2$$ are absolutely irreducible,
we have  by \cite[Thm. 1.2.B]{BorevichShafarevich},
$$\# V(\F_p)=p^3+O(p^{5/2}) ,\quad \# W_1 (\F_p)=p^2+O(p^{3/2}).$$

Since $V$ is a homogeneous space of $G$ with a connected stabilizer,
by \cite[Prop 3.22]{PlatonovRapinchuk1994},
 $$\mathcal O(p)=V(\F_p),\;\;
\text{and hence }\mathcal O^0_{f_1}(p)=W_1(\F_p) .$$
Therefore we deduce  $g_1(p)=\frac{\# \Or ^0_{f_1}(p)}{\# \Or (p) }= p^{-1}+ O(p^{-3/2})$, proving (1).

Now $W_2:=\{x\in V: f_2(x)=0\}$ is the union of
two quadrics given by $V\cap \{x_1=0\}$ and $V\cap \{x_2=0\}$.
Hence $$\# W_2(\F_p)=2 p^2 +O(p^{3/2}).$$
This yields that $g(p)=2p^{-1}+O(p^{-3/2})$.
\end{proof}

We are now ready to prove Theorem \ref{thmTwo}.
First consider $f_1(x_1,x_2,x_3,x_4)=x_1$ so that $\cA(T)=\{a_n(T)\}$ where
$$
a_n(T):= \sum_{\g\in\op{Stab}_\G(\xi)\ba \G\atop f_1(\xi\g)=n}w_T(\g)
$$
 is a smoothed count for the number of vectors $(x_1,x_2,x_3,x_4)$ in the orbit $\xi \mathcal A^t$ of max
norm bounded above by $T$ and $x_1=n$.

By Lemma \ref{sl} and as $\# \mathcal O^0_{f_1}(d) $ is multiplicative with $\#\mathcal O^0_{f_1}
(p)=p^2+O(p^{3/2})$,
the quantity $ r_d(\cA)$ in the decomposition \eqref{Adecomp} satisfies
for some $\e_0>0$, $$
r_d(\cA)\ll d^2 T^{\alpha-\e_0}.
$$
Thus for any $\vep_1>0$,
$$
\sum_{d<D, d|P}\tau_3(d) |r_d(\cA)|
\ll_{\vep_1}
D^{3+\vep_1}T^{\alpha-\e_0},
$$
which is $\ll T^\alpha/\log T$ for $D=T^{\e_0/4}$, say. The key here is that $D$ can be taken as large as a fixed power of $T$. Let $P$ be the product of all primes $p<D=T^{\e_0/4}$ outside of the bad set $S$.

As $h$ is a multiplicative function defined by
$h(p)=\frac{g_1(p)}{1-g_1(p)}$ for $p$ primes with $g_1$ in Proposition \ref{g},
we deduce that (cf. \cite[6.6]{IwaniecKowalski})
$$
\sum_{d<\sqrt D,d|P} h(d) \gg \log D\gg \log T,
$$
and Theorem \ref{sieveThm} gives
$$ S(\cA(T),P)\ll T^\alpha/\log T.
$$
Therefore
\begin{align*} &\#\{(x_1, x_2, x_3, x_4) \in \xi \mathcal A ^t:
 \max_{1\le i\le 4} |x_i| <T,\,\, (x_1, \prod_{p<T^{\e_0/4}} p )=1\} \\
&\ll
 S(\cA((1+\e) T),P) \ll T^\alpha/\log T .\end{align*}
Hence
$$\#\{(x_1, x_2, x_3, x_4) \in\xi \mathcal A ^t:
 \max_{1\le i\le 4} |x_i| <T,\,\, x_1: \,\text{prime}\} \ll T^\alpha/\log T .$$

Since this argument is symmetric in the $x_i$'s,
we have $$\#\{(x_1, x_2, x_3, x_4) \in \xi \mathcal A ^t:
 \max_{1\le i\le 4} |x_i|=\text{prime at most T} \} \ll T^\alpha/\log T .$$
By Lemma \ref{reduction}, this proves
$$\pi^{\mathcal P}(T)\ll  \frac{T^{\alpha}}{\log T} .$$

In order to prove
\begin{equation}\label{final}\pi^{\mathcal P}_2(T)\ll  \frac{T^{\alpha}}{(\log T)^2} ,\end{equation}
we proceed the same way with the polynomial $f_2(x_1, x_2, x_3, x_4)=x_1x_2$
and with the sequence $\cA(T)=\{a_n(T)\}$ where
$$
a_n(T):= \sum_{\g\in\op{Stab}_\G(\xi)\ba \G\atop f_2(\xi\g)=n} w_T(\g).
$$
 is a smoothed count for the number of vectors $(x_1,x_2,x_3,x_4)$ in the orbit $\xi \mathcal A^t$ of max
norm bounded above by $T$ and $x_1x_2=n$.

Note that $g_2(p)=2p^{-1}+O(p^{-3/2})$ by Proposition \ref{g},
and that $$
\sum_{d<\sqrt D,d|P} h(d) \gg (\log D)^2\gg (\log T)^2
$$
for $h(p)=\frac{g_2(p)}{1-g_2(p)}$ for $p$ primes.

Therefore
 Theorem \ref{sieveThm} gives
$$ S(\cA(T),P)\ll T^\alpha/(\log T)^2
$$
which implies
$$\#\{(x_1, x_2, x_3, x_4) \in \xi \mathcal A ^t:
 \max_{1\le i\le 4} |x_i|<T, x_1, x_2:\text{ primes} \} \ll T^\alpha/{(\log T)^2} .$$
Again by the symmetric property of $x_i$'s,
we have
$$\#\{x \in \xi \mathcal A ^t: \max_{1\le i\le 4} |x_i|<T, x_i, x_j :\text{primes for some $i\ne j$}
 \}\\
 \ll T^\alpha/{(\log T)^2} .$$

By Lemma \ref{reduction},
this proves \begin{align*}\pi^{\mathcal P}_2(T)   \ll T^\alpha/{(\log T)^2} .\end{align*}

\newpage
\renewcommand{\thesection}{A}

\section{Appendix: Non-accumulation of expanding closed horospheres on singular tubes
 (by Hee Oh and Nimish Shah)}
\label{app}
In this appendix, we deduce
Theorem \ref{fttt} from Theorem \ref{ma1}:
Recall that $\G <G=\PSL_2(\c)$ is a torsion free discrete geometrically finite group
with $\delta_\G >1$ and that $\phi_0\in L^2(\G\ba \bH^3)$ denotes the positive
base eigenfunction
of $\Delta$ of eigenvalue $\delta_\G (2-\delta_\G)$ and of norm $\int_{\G\ba \bH^3}
\phi_0^2(g)\, d\mu(g)=1$.

We continue the notations $N, a_y, N^-$, etc., from section \ref{lhorof}.
We assume that $\G\ba \G N$ is closed.
By Corollary \ref{ppoo}, for some $c_{\phi_0}>0$ and $d_{\phi_0}\ge 0$,
$$\int_{\G \ba \G N}\phi_0(na_y)dn =c_{\phi_0}y^{2-\delta_\G} +d_{\phi_0}y^{\delta_\G} .$$

We also recall the Burger-Roblin measure $\hat \mu$ defined
in Theorem \ref{ft} which is normalized so that
$\hat \mu (\phi_0)=1$.

\begin{Thm}\label{fttt}
For $\psi\in C_c(\op{T}^1(\G\ba \bH^3))$,
$$  \int_{n_x\in ( N\cap \G)\ba N} \psi (n_x a_y)\; dx \sim  c_{\phi_0}\cdot \hat\mu(\psi) \cdot y^{2- \delta_{\G}} \quad\text{as $y\to 0$.}$$
\end{Thm}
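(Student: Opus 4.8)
The plan is to upgrade Theorem \ref{ma1}, which handles $K$-invariant test functions $\psi$ (equivalently functions on $\G\ba\bH^3$), to arbitrary $\psi\in C_c(\op{T}^1(\G\ba\bH^3))=C_c(\G\ba G)^M$, at the cost of losing the power-saving error term. The basic mechanism: Theorem \ref{ma1} already tells us that for any $\psi_1\in C_c^\infty(\G\ba G)^K$,
\[
\int_{(N\cap\G)\ba N}\psi_1(n_xa_y)\,dx \sim c_{\phi_0}\langle\psi_1,\phi_0\rangle\, y^{2-\delta_\G}=c_{\phi_0}\hat\mu(\psi_1)\,y^{2-\delta_\G},
\]
the last equality by \eqref{nuhat}. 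So the statement is known on $K$-invariant functions, and the task is to interpolate/bootstrap to all of $C_c(\G\ba G)^M$ using the geometry of the $N$-action and the Burger--Roblin measure's invariance properties.

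First I would reduce to $C_c^\infty(\G\ba G)^M$ by density (and by a sandwiching argument, since all quantities are monotone in $\psi\ge0$). Next, for $\psi\ge 0$ smooth and $M$-invariant, I would sandwich $\psi$ between $K$-averages: let $\psi^{(\e)}_{\pm}$ be suitable $K$-invariant functions with $\psi^{(\e)}_-\le\psi\le\psi^{(\e)}_+$ obtained by averaging $\psi$ over small $K$-neighborhoods, exploiting that $K/M=\partial_\infty(\bH^3)=\mathbb S^2$ is where the angular variation lives. Applying Theorem \ref{ma1} to $\psi^{(\e)}_\pm$ gives upper and lower bounds for $\int_{(N\cap\G)\ba N}\psi(n_xa_y)\,dx$ of the form $c_{\phi_0}\hat\mu(\psi^{(\e)}_\pm)y^{2-\delta_\G}(1+o(1))$. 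The crux is then to show $\hat\mu(\psi^{(\e)}_\pm)\to\hat\mu(\psi)$ as $\e\to 0$, which follows from continuity of $\psi$ together with the explicit integral formula for $\hat\mu$ in section \ref{lhorof} — here one must be careful that the $K$-averaging is done compatibly with the fibration $G\to\G\ba\bH^3$, which is exactly why we pass through the decomposition $g=a_yn_xk$ and average in the $k$-variable against the Patterson--Sullivan measure $\nu_o$.

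The main obstacle — and the reason the appendix invokes Burger--Roblin's measure classification rather than doing a soft averaging argument — is that the naive $K$-sandwich loses too much: after averaging, $\psi^{(\e)}_\pm$ need not have support with controlled $N(\supp(\cdot))$, and more seriously the transversal ($N^-$) directions are not addressed by $K$-averaging alone, so one cannot directly conclude equidistribution \emph{along the horosphere $N$}. The right fix is to observe that any weak-$*$ limit of the normalized measures $\mu_y:=y^{\delta_\G-2}\int_{(N\cap\G)\ba N}\delta_{na_y}\,dn$ on $\G\ba G$ (or rather on $\op{T}^1(\G\ba\bH^3)$), after restricting to the non-wandering set, is a finite Radon measure invariant under the horospherical foliation, supported on $\hat\Omega_\G$, and giving zero mass to closed horospheres; tightness and the zero-mass property follow from Propositions \ref{mp}, \ref{ppo}, and Corollary \ref{suppcom} (the non-accumulation on singular tubes promised in the appendix title). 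By Roblin's Theorem \ref{roblin} such a limit is a scalar multiple of $\hat\mu$, and evaluating against a single $K$-invariant bump function and using Theorem \ref{ma1} pins the scalar to $c_{\phi_0}$ (with the normalization $\hat\mu(\phi_0)=1$), forcing uniqueness of the limit and hence convergence. This gives Theorem \ref{fttt}; testing against $\psi\in C_c(\G\ba G)^M$ and unwinding yields the stated asymptotic.
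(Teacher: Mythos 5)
Your final paragraph lands on the same architecture as the paper's appendix: form the normalized measures $\mu_y$, establish relative compactness via Theorem \ref{ma1} applied to a dominating $K$-invariant function, note that any weak-$*$ limit $\mu_0$ is $N$-invariant, show it gives zero mass to the union $\mathcal E_P$ of closed horospheres, invoke Roblin's classification (Theorem \ref{roblin}) to get $\mu_0=\alpha_1\hat\mu$, and pin $\alpha_1=1$ by testing against $\phi_0$ with Theorem \ref{ma1}. The preliminary $K$-sandwiching discussion is a detour you correctly abandon, so the route is essentially the paper's.

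The genuine gap is in the sentence asserting that ``tightness and the zero-mass property follow from Propositions \ref{mp}, \ref{ppo}, and Corollary \ref{suppcom}.'' Those results concern only the single closed horosphere $\G\ba\G N$ being translated (boundedness of $N(J)$, convergence of $\phi_0^N(a_y)$, restriction of the integral to a compact piece of $(N\cap\G)\ba N$); they say nothing about whether a weak limit $\mu_0$ can charge \emph{other} closed horospheres in $\op{T}^1(\G\ba\bH^3)$ --- in particular those entering the cusps of $\G\ba\bH^3$, which is exactly where mass could escape into a singular $N$-invariant measure not covered by the uniqueness in Theorem \ref{roblin}. This non-accumulation is the entire content of the appendix and is not soft: one needs the Sullivan-type pointwise asymptotic $\phi_0\asymp y^{r-\delta_\G}$ in a cusp of rank $r$ (see \eqref{1984}), the $L^2$ bound $\int_{D_\e K}\phi_0^2\,d\mu\ll y_\e^{r-2\delta_\G}$ (see \eqref{des}), a Harnack-type inequality $e^{-\delta_\G d(uo,o)}\phi_0(g)\le\phi_0(gu)\le e^{\delta_\G d(uo,o)}\phi_0(g)$ to compare $\phi_0$ along the thickening, an application of Theorem \ref{ma1} to convert the horospherical integral over the cusp neighborhood into $\int\phi_0^2\chi_{D_\e K}\,d\mu$, and finally the hypothesis $\delta_\G>1$ to force $y_\e^{2-2\delta_\G}\to0$; a separate geodesic-flow argument then pushes an arbitrary relatively compact subset of $\mathcal E_P$ into a cusp neighborhood to conclude $\mu_0(\mathcal E_P)=0$. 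Without supplying this argument (or an equivalent), the appeal to Roblin's theorem is not justified, since an accumulation point could a priori be a combination of $\hat\mu$ and measures supported on closed horospheres.
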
\vs
\begin{proof} The idea of proof is motivated by the proof of Lemma 2.1 in
\cite{RatnerDuke}.
For each $0<y\le 1$, define the measure $\mu_y$ on $\G\ba G/M=\op{T}^1(\G\ba \bH^3)$
by
\begin{align*}
\mu_y(\psi)
&=c_{\phi_0} ^{-1} y^{-2+\delta_{\G}}\int_{(N\cap \G)\ba N} \psi(\G \ba \G [n_x a_y]) \,dx
\end{align*}
for $\psi\in C_c(\G\ba G/M)$.

Consider the family
$$
\mathcal M:=\{\mu_y: 0<y<1\}.
$$

We claim that $\mathcal M$ is
relatively compact
in the set of locally finite Borel measures on $\G\ba G/M $ with respect to the weak$^*$-topology. For any
compact subset $C\subset \G\ba G/M$,
let $\psi$ be a $K$-invariant smooth non-negative function of compact support which is one
over $C$.
Then $$\mu_y(C)\le \mu_y(\psi) .$$
As $\mu_y(\psi) \to  \hat \mu (\psi)=\la \psi, \phi_0\ra $ by Theorem \ref{ma1},
 the claim follows.

It now suffices to show that every accumulation point of $\mathcal M$ is equal to $\hat\mu$.
Let $\mu_0$ be an accumulation point of $\mathcal M$, which is clearly  $N$-invariant.

We denote by $\mathcal E_P$ the set
of vectors $v\in \op{T}^1(\G\ba \bH^3)$ the horospheres determined by which
 is closed. In the identification of $ \op{T}^1(\G\ba \bH^3)$
with $ \G \ba G/M$, the set $\mathcal E_P$ corresponds to the
image under the projection $\G\ba G\to \G\ba G/M$
of the sets $\G\ba \G g NA$ for $\G\ba \G gN$ closed.

 Fix a bounded parabolic fixed point $\xi_0$ of $\G$. If
$\xi_0=g_0(\infty)$ for $g_0\in \PSL_2(\c)$, then a cusp, say,
$D(\xi_0)$, at $\xi_0$ is the image of $\cup_{y>y_1} g_0 N a_y $
 for some $y_1\gg 1$ under the projection
$\pi:G\to \G\ba G/M$.

There exists $c_0>1$ such that for any $z=\G\ba \G g_0n a_y\in D
(\xi_0)$,
\begin{equation}\label{1984} c_0^{-1} y^{r-\delta} \le
\phi_0(z)\le c_0  y^{r-\delta} \end{equation}
 where $r\in \{1,2\}$ is the rank
of $\xi_0$ (see \cite[Sec. 5]{Sullivan1984} as well as the proof
of \cite[Lem. 3.5]{CorletteIozzi}).



Noting that $D_0=D_0(\xi_0)\subset \mathcal E_P$, we first claim
that for any weak limit, say, $\mu_0$, of $\mathcal M$,
\be\label{shah} \mu_0(D_0)=0 .\ee

Let $Q$ be a relatively compact open subset of $D_0$. For
$y_\e>1$, setting $D_\e:=D_0a_{y_\e}\subset D_0$,
  we have
  that \be\label{des} \int_{D_\e K} \phi_0^2 \, d\mu(g)
\ll y_\e^{r-2\delta} .\ee For the part of $D_\e K$ inside the unit
neighborhood of the convex core of $\G$, this estimate follows
from
 the proof of \cite[Lem.
4.2]{CorletteIozzi}. When $r=1$, the integral of $\phi_0^2$ over
the part of $D_\e K$ outside the unit neighborhood of the convex
core of $\G$ is comparable to $$\int_{\log y_\e}^\infty
\int_{x=e^t}^\infty e^{-2\delta t} dx dt \asymp
y_\e^{1-2\delta}.$$

Fixing a neighborhood $U$ of $e$ in
$N^-M$ such that $D_0U\subset D_0K$, the set $D_\e U
a_{y_{\e}}^{-1}$ is a neighborhood of $Q$. Therefore if
$\mu_{y_i}$ weakly converges to $\mu_0$,
\begin{align*}
c_{\phi_0}\cdot  \mu_0(\phi_0\cdot \chi_{Q})& \le \lim_{y_i \to 0}
\frac{1}{y_i^{2-\delta}}\int_{\G\ba \G N}
(\phi_0\cdot \chi_{D_\e U a_{y_\e}^{-1}})(na_{y_i})\, dn \\
&=\lim_{y_i \to 0} \frac{1}{y_i^{2-\delta}}\int_{\G\ba \G N}
\phi_0(na_{y_i}) \cdot \chi_{D_\e U}(na_{y_i}a_{y_\e})\, dn .
 \end{align*}

As $y_\e>1$ and $U\subset N^-M$,
$$a_{y_\e}U a_{y_\e^{-1}}\subset U .$$
Hence if $na_{yy_\e}\in D_\e U$, then  $$na_y\in D_\e
a_{y_\e^{-1}} (a_{y_\e}U a_{y_\e^{-1}})\subset D_\e a_{y_\e^{-1}}
U \subset D_0U .$$ Moreover if $na_{y y_\e}\in \G g_0 n'a_{y'} MU$
for some $n'\in N$ and $y'>y_1$,
\begin{equation}\label{comp} na_y\in \G g_0n'a_{y'}a_{y_\e}^{-1} (a_{y_\e}
MU a_{y_\e}^{-1}) \subset g_0n'a_{y'y_\e^{-1}} MU .\end{equation}

Using the formula $$\phi_0(g)=\int_{\xi\in \Lambda(\G)} e^{-\delta
\beta_{\xi}(go,o)} d\nu_0(\xi)$$ for $g\in G$ (see \eqref{PSS}),
it is easy to check that
 $$ e^{-\delta
d(uo, o)} \phi_0(g)\le \phi_0(gu)\le e^{\delta d(uo, o)}
\phi_0(g)$$ for any $g, u\in G$.
 Therefore we can
deduce from \eqref{1984} and \eqref{comp} that for some constant
$c'>1$,
$$\phi_0(na_y)\le c' \cdot y_\e^{\delta-r}\cdot\phi_0( na_ya_{y_\e}) $$
for all $na_ya_{y_\e}\in D_\e U$.
 Hence
\begin{align*}
& c_{\phi_0}\cdot \mu_0(\phi_0\cdot \chi_{Q})\\
 & \le c_{\phi_0} c' {y_\e}^{2-r}
\lim_{y_i\to 0}\frac{1}{(y_iy_\e)^{2-\delta}}
\int_{\G\ba \G N} (\phi_0\cdot \chi_{D_\e U}) (\G\ba \G na_{y_i}a_{y_\e}) \, dn
\\
&\le
c_{\phi_0} c' y_\e^{2-r} \int_{\G\ba \bH^3} (\phi_0^2 \cdot \chi_{D_\e K})(g)\, d\mu(g)
\quad\text{by Theorem \ref{ma1}}\\
&\ll y_\e^{2-2\delta} \quad\text{by \eqref{des}}.
\end{align*}

Therefore
$$\mu_0(\phi_0\cdot \chi_{Q})\ll y_{\e}^{2-2\delta} .$$
As $\delta >1$, $y_{\e}>1$ is arbitrary, and $\min_{g\in Q}
\phi_0(g) >0$, we have $\mu_0(Q)=0$. This proves the claim
\eqref{shah}.

We now claim that $\mu_0(\mathcal E_P)=0$ for any weak limit
$\mu_0$: $\mu_{y_i}\to \mu_0$.
Suppose not. Since there are only finitely many cusps,
 there exist relatively compact open subset $Q\subset \mathcal E_P$
 and a bounded parabolic fixed point $\xi_0 \in\Lambda(\G)$ such that $\mu_0(Q)>0$ and
its image $a_y(Q)=Qa_y $ under the geodesic flow converges to $\xi_0$ as $y\to \infty$.
Fix $y_0>1$ such that
$$Qa_{y_0}\subset D_0(\xi_0) .$$
By passing to a subsequence, we may assume
$\mu_{y_iy_0}$ is convergent with a weak limit, say, $\mu_0'$.
Since $Qa_{y_0}\subset D_0(\xi_0)$,
by \eqref{shah},
we have $$\mu_0'(Qa_{y_0})=0 .$$
Therefore for any $\e>0$,
there exists a neighborhood $U_\e\subset N^-M$ of $e$
such that
\be\label{ssmal} \mu_0'(Qa_{y_0} U_\e)< y_0^{-2+\delta} \e .\ee

Noting that $Qa_{y_0}U_\e a_{y_0}^{-1}$ is a neighborhood of $Q$,
we have
\begin{align*}c_{\phi_0}\cdot \mu_0(Q)&\le\lim_{y_i\to 0}\frac{1}{y_i^{2-\delta}}\int_{\G\ba \G N}
 \chi_{Qa_{y_0}U_\e a_{y_0}^{-1}}(na_{y_i}) dn\\ &=
y_0^{2-\delta} \lim_{y_i\to 0}\frac{1}{(y_0y_i)^{2-\delta}}\int_{\G\ba \G N}
 \chi_{Qa_{y_0}U_\e}(na_{y_i}a_{y_0}) dn\\&
=y_0^{2-\delta} \mu_0' (Qa_{y_0}U_\e)
\\&\le \e\quad\text{by \eqref{ssmal}}.
\end{align*}

Since $\e>0$ is arbitrary,
$\mu_0(Q)=0$.
This proves
$$\mu_0(\mathcal E_P)=0 .$$

We deduce from Theorem \ref{roblin} that
$$\mu_0=\alpha_1\hat \mu$$ for some $\alpha_1\ge 0$.
On the other hand, by Theorem \ref{ma1},
 $$\mu_0 (\phi_0) =\hat \mu(\phi_0)=1 .$$
It follows that $\alpha_1=1$.
\end{proof}\vs\vs


\begin{thebibliography}{10}

\bibitem{Aubinbook}
Thierry Aubin.
\newblock {\em Nonlinear analysis on manifolds. {M}onge-{A}mp\`ere equations},
  volume 252 of {\em Grundlehren der Mathematischen Wissenschaften [Fundamental
  Principles of Mathematical Sciences]}.
\newblock Springer-Verlag, New York, 1982.

\bibitem{BeardonMaskit1974}
Alan~F. Beardon and Bernard Maskit.
\newblock Limit points of {K}leinian groups and finite sided fundamental
  polyhedra.
\newblock {\em Acta Math.}, 132:1--12, 1974.

\bibitem{BorevichShafarevich}
A.~I. Borevich and I.~R. Shafarevich.
\newblock {\em Number theory}.
\newblock Translated from the Russian by Newcomb Greenleaf. Pure and Applied
  Mathematics, Vol. 20. Academic Press, New York, 1966.

\bibitem{BourgainGamburdSarnak2008}
Jean Bourgain, Alex Gamburd, and Peter Sarnak.
\newblock Affine linear sieve, expanders and sum-product, 2008.
\newblock To appear in Inventiones.

\bibitem{BourgainGamburdSarnak2008-p}
Jean Bourgain, Alex Gamburd, and Peter Sarnak.
\newblock Generalization of {S}elberg's theorem and {S}elberg's sieve, 2009.
\newblock Preprint.

\bibitem{Boyd1982}
David~W. Boyd.
\newblock The sequence of radii of the {A}pollonian packing.
\newblock {\em Math. Comp.}, 39(159):249--254, 1982.

\bibitem{Burger1990}
Marc Burger.
\newblock Horocycle flow on geometrically finite surfaces.
\newblock {\em Duke Math. J.}, 61(3):779--803, 1990.

\bibitem{CorletteIozzi}
Kevin Corlette and Alessandra Iozzi.
\newblock Limit sets of discrete groups of isometries of exotic hyperbolic
  spaces.
\newblock {\em Trans. Amer. Math. Soc.}, 351(4):1507--1530, 1999.

\bibitem{Coxeter1968}
H.~S.~M. Coxeter.
\newblock The problem of {A}pollonius.
\newblock {\em Amer. Math. Monthly}, 75:5--15, 1968.

\bibitem{Dalbo2000}
F.~Dal'bo.
\newblock Topologie du feuilletage fortement stable.
\newblock {\em Ann. Inst. Fourier (Grenoble)}, 50(3):981--993, 2000.

\bibitem{dani1981}
S.~G. Dani.
\newblock Invariant measures and minimal sets of horospherical flows.
\newblock {\em Invent. Math.}, 64(2):357--385, 1981.

\bibitem{EskinMcMullen1993}
Alex Eskin and C.~T. McMullen.
\newblock Mixing, counting, and equidistribution in {L}ie groups.
\newblock {\em Duke Math. J.}, 71(1):181--209, 1993.

\bibitem{EFuchs}
E.~Fuchs.
\newblock 2010.
\newblock Ph. D. Thesis, Princeton University.

\bibitem{GangolliVaradarajanbook}
Ramesh Gangolli and V.~S. Varadarajan.
\newblock {\em Harmonic analysis of spherical functions on real reductive
  groups}, volume 101 of {\em Ergebnisse der Mathematik und ihrer Grenzgebiete
  [Results in Mathematics and Related Areas]}.
\newblock Springer-Verlag, Berlin, 1988.

\bibitem{GrahamLagariasMallowsWilksYanI-n}
Ronald~L. Graham, Jeffrey~C. Lagarias, Colin~L. Mallows, Allan~R. Wilks, and
  Catherine~H. Yan.
\newblock Apollonian circle packings: number theory.
\newblock {\em J. Number Theory}, 100(1):1--45, 2003.

\bibitem{GrahamLagariasMallowsWilksYanI}
Ronald~L. Graham, Jeffrey~C. Lagarias, Colin~L. Mallows, Allan~R. Wilks, and
  Catherine~H. Yan.
\newblock Apollonian circle packings: geometry and group theory. {I}. {T}he
  {A}pollonian group.
\newblock {\em Discrete Comput. Geom.}, 34(4):547--585, 2005.

\bibitem{GuivarchRaugi1986}
Y.~Guivarc'h and A.~Raugi.
\newblock Products of random matrices: convergence theorems.
\newblock In {\em Random matrices and their applications ({B}runswick, {M}aine,
  1984)}, volume~50 of {\em Contemp. Math.}, pages 31--54. Amer. Math. Soc.,
  Providence, RI, 1986.

\bibitem{Hirst1967}
K.~E. Hirst.
\newblock The {A}pollonian packing of circles.
\newblock {\em J. London Math. Soc.}, 42:281--291, 1967.

\bibitem{IwaniecKowalski}
Henryk Iwaniec and Emmanuel Kowalski.
\newblock {\em Analytic number theory}, volume~53 of {\em American Mathematical
  Society Colloquium Publications}.
\newblock American Mathematical Society, Providence, RI, 2004.

\bibitem{Kapovichbook}
Michael Kapovich.
\newblock {\em Hyperbolic manifolds and discrete groups}, volume 183 of {\em
  Progress in Mathematics}.
\newblock Birkh\"auser Boston Inc., Boston, MA, 2001.

\bibitem{Kontorovich2007}
Alex Kontorovich.
\newblock The hyperbolic lattice point count in infinite volume with
  applications to sieves.
\newblock {\em Duke Math. J.}, 149(1):1--36, 2009.

\bibitem{KontorovichOh2008}
Alex Kontorovich and Hee Oh.
\newblock Almost prime {P}ythagorean triples in thin orbits.
\newblock {\em Preprint, 2009}.

\bibitem{LangAlgebra}
Serge Lang.
\newblock {\em Algebra}, volume 211 of {\em Graduate Texts in Mathematics}.
\newblock Springer-Verlag, New York, third edition, 2002.

\bibitem{LaxPhillips}
Peter~D. Lax and Ralph~S. Phillips.
\newblock The asymptotic distribution of lattice points in {E}uclidean and
  non-{E}uclidean spaces.
\newblock {\em J. Funct. Anal.}, 46(3):280--350, 1982.

\bibitem{Margulisthesis}
Gregory Margulis.
\newblock {\em On some aspects of the theory of {A}nosov systems}.
\newblock Springer Monographs in Mathematics. Springer-Verlag, Berlin, 2004.
\newblock With a survey by Richard Sharp: Periodic orbits of hyperbolic flows,
  Translated from the Russian by Valentina Vladimirovna Szulikowska.

\bibitem{Maskit1988}
Bernard Maskit.
\newblock {\em Kleinian groups}, volume 287 of {\em Grundlehren der
  Mathematischen Wissenschaften [Fundamental Principles of Mathematical
  Sciences]}.
\newblock Springer-Verlag, Berlin, 1988.

\bibitem{MatthewsVasersteinWeisfeiler1984}
C.~Matthews, L.~Vaserstein, and B.~Weisfeiler.
\newblock Congruence properties of {Z}ariski-dense subgroups.
\newblock {\em Proc. London Math. Soc}, 48:514--532, 1984.

\bibitem{Maucourant2007}
F.~Maucourant.
\newblock Homogeneous asymptotic limits of {H}aar measures of semisimple linear
  groups and their lattices.
\newblock {\em Duke Math. J.}, 136(2):357--399, 2007.

\bibitem{McMullen1998}
C.~T. McMullen.
\newblock Hausdorff dimension and conformal dynamics. {III}. {C}omputation of
  dimension.
\newblock {\em Amer. J. Math.}, 120(4):691--721, 1998.

\bibitem{MohammadiGolsefidy}
Amir Mohammadi and Alireza Salehi~Golsefidy.
\newblock Translates of horospherical measures and counting problems.
\newblock {\em Preprint}, 2008.

\bibitem{OhICM}
Hee Oh.
\newblock Dynamics on {G}eometrically finite hyperbolic manifolds with
  applications to {A}pollonian circle packings and beyond.
\newblock {\em Proc. of ICM (Hyderabad, 2010)}.

\bibitem{OhShahGFH}
Hee Oh and Nimish Shah.
\newblock Equidistribution and counting for orbits of geometrically finite
  hyperbolic groups.
\newblock {\em Preprint}.

\bibitem{OhShahcircle}
Hee Oh and Nimish Shah.
\newblock The asymptotic distribution of circles in the orbits of {K}leinian
  groups.
\newblock {\em Preprint}.


\bibitem{Patterson1976}
S.J. Patterson.
\newblock The limit set of a {F}uchsian group.
\newblock {\em Acta Mathematica}, 136:241--273, 1976.

\bibitem{PlatonovRapinchuk1994}
Vladimir Platonov and Andrei Rapinchuk.
\newblock {\em Algebraic groups and number theory}, volume 139 of {\em Pure and
  Applied Mathematics}.
\newblock Academic Press Inc., Boston, MA, 1994.
\newblock Translated from the 1991 Russian original by Rachel Rowen.

\bibitem{Ratner1991}
Marina Ratner.
\newblock On {R}aghunathan's measure conjecture.
\newblock {\em Ann. of Math. (2)}, 134(3):545--607, 1991.

\bibitem{RatnerDuke}
Marina Ratner.
\newblock Raghunathan's topological conjecture and distributions of unipotent
  flows.
\newblock {\em Duke Math. J.}, 63(1):235--280, 1991.

\bibitem{Roblin2003}
Thomas Roblin.
\newblock Ergodicit\'e et \'equidistribution en courbure n\'egative.
\newblock {\em M\'em. Soc. Math. Fr. (N.S.)}, (95):vi+96, 2003.

\bibitem{Sarnak1981}
Peter Sarnak.
\newblock Asymptotic behavior of periodic orbits of the horocycle flow and
  eisenstein series.
\newblock {\em Comm. Pure Appl. Math.}, 34(6):719--739, 1981.

\bibitem{SarnakToLagarias}
Peter Sarnak.
\newblock Letter to {J}. {L}agarias, 2007.
\newblock available at www.math.princeton.edu/{~}sarnak.

\bibitem{Schapira2005}
Barbara Schapira.
\newblock Equidistribution of the horocycles of a geometrically finite surface.
\newblock {\em Int. Math. Res. Not.}, (40):2447--2471, 2005.

\bibitem{Shalom2000}
Yehuda Shalom.
\newblock Rigidity, unitary representations of semisimple groups, and
  fundamental groups of manifolds with rank one transformation group.
\newblock {\em Ann. of Math. (2)}, 152(1):113--182, 2000.

\bibitem{Soddy1937}
F.~Soddy.
\newblock The bowl of integers and the hexlet.
\newblock {\em Nature}, 139:77--79, 1936.

\bibitem{Soddy1936}
F.~Soddy.
\newblock The kiss precise.
\newblock {\em Nature}, 137:1021, 1937.

\bibitem{Sullivan1979}
Dennis Sullivan.
\newblock The density at infinity of a discrete group of hyperbolic motions.
\newblock {\em Inst. Hautes \'Etudes Sci. Publ. Math.}, (50):171--202, 1979.

\bibitem{Sullivan1984}
Dennis Sullivan.
\newblock Entropy, {H}ausdorff measures old and new, and limit sets of
  geometrically finite {K}leinian groups.
\newblock {\em Acta Math.}, 153(3-4):259--277, 1984.

\bibitem{Thurstonbook}
William Thurston.
\newblock {\em The Geometry and Topology of Three-Manifolds}.
\newblock available at www.msri.org/publications/books. Electronic
  version-March 2002.

\bibitem{Wilker1977}
J.~B. Wilker.
\newblock Sizing up a solid packing.
\newblock {\em Period. Math. Hungar.}, 8(2):117--134, 1977.

\end{thebibliography}
\end{document}